\newtheorem{theorem}{Theorem}
\newtheorem{lemma}{Lemma}
\newtheorem{corollary}{Corollary}
\newtheorem{proposition}{Proposition}
\newcommand{\keywords}[1]{\par\addvspace\baselineskip
\noindent\enspace\ignorespaces#1}
\begin{document}
\title{Unbiased truncated quadratic variation for volatility estimation in jump diffusion processes.}

\author{Chiara Amorino$^{*}$, Arnaud Gloter\thanks{Laboratoire de Math\'ematiques et Mod\'elisation d'Evry, CNRS, Univ Evry, Universit\'e Paris-Saclay, 91037, Evry, France.}} 

\maketitle

\begin{abstract}
The problem of integrated volatility estimation for an Ito semimartingale is considered under
discrete high-frequency observations in short time horizon. We provide an asymptotic expansion for the integrated volatility that gives us, in detail, the contribution deriving from the jump part. The knowledge of such a contribution allows us to build an unbiased version of the truncated quadratic variation, in which the bias is visibly reduced. In earlier results to have the original truncated realized volatility well-performed the condition $\beta > \frac{1}{2(2 - \alpha)}$ on $\beta$ (that is such that $(\frac{1}{n})^\beta$ is the threshold of the truncated quadratic variation) and on the degree of jump activity $\alpha$ was needed (see \cite{Mancini}, \cite{J}). In this paper we theoretically relax this condition and we show that our unbiased estimator achieves excellent numerical results for any couple ($\alpha$, $\beta$).
\keywords{L\'evy-driven SDE, integrated variance, threshold estimator, convergence speed, high frequency data.}
\end{abstract}

\section{Introduction}
In this paper, we consider the problem of estimating the integrated volatility of a discretely-observed one-dimensional It\^o semimartingale over a finite interval. 
The class of It\^o semimartingales has many applications in various area such as neuroscience, physics and finance. Indeed, it includes the stochastic Morris-Lecar neuron model \cite{5 GLM} as well as important examples taken from finance such as the Barndorff-Nielsen-Shephard model \cite{2 GLM}, the Kou model \cite{13 GLM} and the Merton model \cite{22 GLM}; to name just a few. \\
In this work we aim at estimating the integrated volatility based on discrete observations $X_{t_0}, ... , X_{t_n}$ of the process $X$, with $t_i = i \frac{T}{n}$. Let $X$ be a solution of 
$$X_t= X_0 + \int_0^t b_s ds + \int_0^t a_s dW_s + \int_0^t \int_{\mathbb{R} \backslash \left \{0 \right \}} \gamma(X_{s^-}) \, z \, \tilde{\mu}(ds, dz), \quad t \in \mathbb{R}_+,$$
with $W=(W_t)_{t \ge 0}$ a one dimensional Brownian motion and $\tilde{\mu}$ a compensated Poisson random measure. We also require the volatility $a_t$ to be an It\^o semimartingale.

We consider here the setting of high frequency observations, i.e. $\Delta_n : = \frac{T}{n} \rightarrow 0$ as $n \rightarrow \infty$. We want to estimate $IV := \frac{1}{T}\int_0^T a^2_s f(X_s) ds$, where $f$ is a polynomial growth function. Such a quantity has already been widely studied in the literature because of its great importance in finance. Indeed, taking $f \equiv 1$, $IV$ turns out being the so called integrated volatility that has particular relevance in measuring and forecasting the asset risks; its estimation on the basis of discrete observations of $X$ is one of the long-standing problems. \\
In the sequel we will present some known results denoting by $IV$ the classical integrated volatility, that is we are assuming $f$ equals to 1.

When X is continuous, the canonical way for estimating the integrated volatility is to use the realized volatility or approximate quadratic variation at time T:
$$[X,X]_T^n := \sum_{i =0}^{n -1}(\Delta X_i)^2, \qquad \mbox{where } \Delta X_i = X_{t_{i + 1}} - X_{t_i}.$$
Under very weak assumptions on $b$ and $a$ (namely when $\int_0^T b^2_s ds$ and $\int_0^T a^4_s ds$ are finite for all $t \in (0,T]$), we have a central limit theorem (CLT) with rate $\sqrt{n}$: the processes $\sqrt{n}([X,X]_T^n - IV)$ converge in the sense of stable convergence in law for processes, to a limit $Z$ which is defined on an extension of the space and which conditionally is a centered Gaussian variable whose conditional law is characterized by its (conditional) variance $V_T := 2 \int_0^T a^4_s ds$.

When $X$ has jumps, the variable $[X, X]_T^n$ no longer converges to $IV$. However, there are other known methods to estimate the integrated volatility. \\
The first type of jump-robust volatility estimators are the \textit{Multipower variations} (cf \cite{Power 1}, \cite{Power 2}, \cite{13 in Maillavin}), which we do not explicitly recall here. These estimators satisfy a CLT with rate $\sqrt{n}$ but with a conditional variance bigger than $V_T$ (so they are rate-efficient but not variance-efficient). \\
The second type of volatility estimators, introduced by Jacod and Todorov in \cite{JT}, is based on estimating locally the volatility from the empirical characteristic function of the increments of the process over blocks of decreasing length but containing an increasing number of observations, and then summing the local volatility estimates. \\
Another method to estimate the integrated volatility in jump diffusion processes, introduced by Mancini in \cite{Mancini soglia}, is the use of the \textit{truncated realized volatility} or \textit{truncated quadratic variance} (see \cite{13 in Maillavin}, \cite{Mancini}):
$$\hat{IV}_T^n :=  \sum_{i = 0}^{n -1} (\Delta X_i)^2 1_{\left \{ |\Delta X_i| \le v_n \right \}},$$
where $v_n$ is a sequence of positive truncation levels, typically of the form $(\frac{1}{n})^\beta$ for some $\beta\in (0, \frac{1}{2})$. \\
Below we focus on the estimation of $IV$ through the implementation of the truncated quadratic variation, that is based on the idea of summing only the squared increments of X whose absolute value is smaller than some threshold $v_n$. \\
It is shown in \cite{J} that $\hat{IV}_T^n$ has exactly the same limiting properties as $[X,X]_T^n$ does for some $\alpha \in [0,1)$ and $\beta \in [\frac{1}{2(2 - \alpha)}, \frac{1}{2})$. The index $\alpha$ is the degree of jump activity or Blumenthal-Getoor index
$$\alpha := \inf \left \{ r \in [0,2] : \int_{|x| \le 1} |x|^r F(dx) < \infty \right \},$$
where $F$ is a L\'evy measure which accounts for the jumps of the process and it is such that the compensator $\bar{\mu}$ has the form $\bar{\mu}(dt, dz)= F(z)dz dt$. \\
Mancini has proved in \cite{Mancini} that, when the jumps of $X$ are those of a stable process with index $\alpha \ge 1$, the truncated quadratic variation is such that 
\begin{equation}
(\hat{IV}_T^n - IV) \overset{\mathbb{P}}{\sim} (\frac{1}{n})^{\beta (2 - \alpha)}.
\label{eq: result Mancini}
\end{equation}
This rate is less than $\sqrt{n}$ and no proper CLT is available in this case.

In this paper, in order to estimate $IV := \frac{1}{T}\int_0^T a^2_s f(X_s) ds$, we consider in particular the truncated quadratic variation defined in the following way:  $$Q_n : =  \sum_{i= 0}^{n-1} f(X_{t_i})(X_{t_{i+1}} - X_{t_i})^2 \varphi_{\Delta_{n}^\beta}(X_{t_{i+1}} - X_{t_i}),$$
where $\varphi$ is a $C^\infty$ function that vanishes when the increments of the data are too large compared to the typical increments of a continuous diffusion process, and thus can be used to filter the contribution of the jumps. \\
We aim to extend the results proved in short time in \cite{Mancini} characterising precisely the noise introduced by the presence of jumps and finding consequently some corrections to reduce such a noise. \\
The main result of our paper is the asymptotic expansion for the integrated volatility. Compared to earlier results, our asymptotic expansion provides us precisely the limit to which $n^{\beta(2 - \alpha)}(Q_n - IV)$ converges when $(\frac{1}{n})^{\beta(2 - \alpha)} > \sqrt{n}$, which matches with the condition $\beta < \frac{1}{2(2 - \alpha)}$. \\
Our work extends equation \eqref{eq: result Mancini} (obtained in \cite{Mancini}). Indeed, we find
$$Q_n - IV =  \frac{Z_n}{\sqrt{n}} + (\frac{1}{n})^{\beta (2 - \alpha)} c_\alpha \int_\mathbb{R} \varphi (u) |u|^{1 - \alpha} du \int_0^T |\gamma|^\alpha(X_s) f(X_s) ds + o_\mathbb{P}((\frac{1}{n})^{\beta(2 - \alpha)}),$$
where $Z_n \xrightarrow{\mathcal{L}} N(0, 2 \int_0^T a^4_s f^2(X_s) ds)$ stably with respect to $X$. The asymptotic expansion here above allows us to deduce the behaviour of the truncated quadratic variation for each couple $(\alpha, \beta)$, that is a plus compared to \eqref{eq: result Mancini}. \\
Furthermore, providing we know $\alpha$ (and if we do not it is enough to estimate it previously, see for example \cite{alpha} or \cite{Fabien alpha}), we can improve the performance of the truncated quadratic variation subtracting the bias due to the presence of jumps to the original estimator or taking particular functions $\varphi$ that make the bias derived from the jump part equal to zero. Using the asymptotic expansion of the integrated volatility we also provide the rate of the error left after having applied the corrections. It derives from the Brownian increments mistakenly truncated away, when the truncation is tight.  \\
Moreover, in the case where the volatility is constant, we show numerically that the corrections gained by the knowledge of the asymptotic expansion for the integrated volatility allows us to reduce visibly the noise for any $\beta \in (0, \frac{1}{2})$ and $\alpha \in (0, 2)$. It is a clear improvement because, if the original truncated quadratic variation was a well-performed estimator only if $\beta > \frac{1}{2(2 - \alpha)}$ (condition that never holds for $\alpha \ge 1$), the unbiased truncated quadratic variation achieves excellent results for any couple $(\alpha, \beta)$.

The outline of the paper is the following. In Section 2 we present the assumptions on the process X. In Section 3.1 we define the truncated quadratic variation, while Section 3.2 contains the main results of the paper. In Section 4 we show the numerical performance of the unbiased estimator. The Section 5 is devoted to the statement of propositions useful for the proof of the main results, that is given in Section 6. In Section 7 we give some technical tools about Malliavin calculus, required for the proof of some propositions, while other proofs and some technical results are presented in the Appendix.   

\section{Model, assumptions}\label{S:Model}
The underlying process $X$ is a one dimensionale It\^o semimartingale on the space $(\Omega, \mathcal{F}, (\mathcal{F}_t)_{t \ge 0}, \mathbb{P})$, where $(\mathcal{F}_t)_{t \ge 0}$ is a filtration, and observed at times $t_i = \frac{i}{n}$, for $i = 0, 1, … , n$. \\
Let $X$ be a solution to
\begin{equation}
X_t= X_0 + \int_0^t b_s ds + \int_0^t a_s dW_s + \int_0^t \int_{\mathbb{R} \backslash \left \{0 \right \}} \gamma(X_{s^-}) \, z \, \tilde{\mu}(ds, dz), \quad t \in \mathbb{R}_+,
\label{eq: model}
\end{equation}
where $W=(W_t)_{t \ge 0}$ is a one dimensional Brownian motion and $\tilde{\mu}$ a compensated Poisson random measure on which conditions will be given later. \\ 
We will also require the volatility $a_t$ to be an It\^o semimartingale and it thus can be represented as
\begin{equation}
a_t= a_0 + \int_0^t \tilde{b}_s ds + \int_0^t \tilde{a}_s dW_s + \int_0^t \hat{a}_s d\hat{W}_s + \int_0^t \int_{\mathbb{R} \backslash \left \{0 \right \}} \tilde{\gamma}_s \, z \, \tilde{\mu}(ds, dz) + \int_0^t \int_{\mathbb{R} \backslash \left \{0 \right \}} \hat{\gamma}_s \, z \, \tilde{\mu}_2(ds, dz).
\label{eq: model vol}
\end{equation}
The jumps of $a_t$ are driven by the same Poisson compensated random measure $\tilde{\mu}$ as $X$ plus another Poisson compensated measure $\tilde{\mu}_2$. We need also a second Brownian motion $\hat{W}$: in the case of "pure leverage" we would have $ \hat{a} \equiv 0$ and $\hat{W}$ is not needed; in the case of "no leverage" we rather have $\tilde{a} \equiv 0$. In the mixed case both $W$ and $\hat{W}$ are needed.

\subsection{Assumptions}
The first assumption is a structural assumption describing the driving
terms $W, \hat{W}$, $\tilde{\mu}$ and $\tilde{\mu}_2$; the second one being a set of conditions on the coefficients
implying in particular the existence of the various stochastic integrals involved above. \\ 
\\
\textbf{A1}: The processes $W$ and $\hat{W}$ are two independent Brownian motion, $\mu$ and $\mu_2$ are Poisson random measures on $[0, \infty) \times \mathbb{R}$ associated to the L\'evy processes $L=(L_t)_{t \ge 0}$ and $L_2 = (L^2_t)_{t \ge 0}$ respectively, with $L_t:= \int_0^t \int_\mathbb{R} z \tilde{\mu} (ds, dz)$ and $L^2_t:= \int_0^t \int_\mathbb{R} z \tilde{\mu}_2 (ds, dz)$. The compensated measures are $\tilde{\mu}= \mu - \bar{\mu}$ and $\tilde{\mu}_2= \mu_2 - \bar{\mu}_2$; we suppose that the compensator has the following form: $\bar{\mu}(dt,dz): = F(dz) dt $, $\bar{\mu}_2(dt,dz): = F_2(dz) dt $. Conditions on the Levy measures $F$ and $F_2$ will be given in A3 and A4. The initial condition $X_0$, $a_0$, $W$, $\hat{W}$, $L$ and $L_2$ are independent. The Brownian motions and the L\'evy processes are adapted with respect to the filtration $(\mathcal{F}_t)_{t \ge 0}$. We suppose moreover that there exists $X$, solution of \eqref{eq: model}. \\
\\
\textbf{A2}: The processes $b$, $\tilde{b}$, $\tilde{a}$, $\hat{a}$, $\tilde{\gamma}$, $\hat{\gamma}$ are bounded, $\gamma$ is Lipschitz. The processes $b$, $\tilde{a}$ are c\'adl\'ag adapted, $\gamma$, $\tilde{\gamma}$ and $\hat{\gamma}$ are predictable, $\tilde{b}$ and $\hat{a}$ are progressively measurable. Moreover it exists an $\mathcal{F}_t$ -measurable random variable $K_t$ such that
$$\mathbb{E}[|b_{t + h} - b_t|^2 | \mathcal{F}_t] \le K_t \, |h|; \quad \forall p \ge 1, \, \mathbb{E}[|K_t|^p]< \infty. $$
\\
We observe that the last condition on $b$ holds true regardless if, for example, $b_t = b(X_t)$; $b : \mathbb{R} \rightarrow \mathbb{R}$ Lipschitz. \\
The next assumption ensures the existence of the moments: \\
\\
\textbf{A3}: For all $q >0$, $\int_{|z|> 1} |z|^q F(dz) < \infty$ and $\int_{|z|> 1} |z|^q F_2(dz) < \infty$. Moreover, $\mathbb{E}[|X_0|^q] < \infty$ and $\mathbb{E}[|a_0|^q] < \infty$. \\
\\
\textbf{A4 (Jumps)}:
\begin{enumerate}
\item The jump coefficient $\gamma$ is bounded from below, that is $\inf_{x \in \mathbb{R}}|\gamma(x)|:= \gamma_{min} >0$.
\label{it:1}
\item The L\'evy measures $F$ and $F_2$ are absolutely continuous with respect to the Lebesgue measure and we denote $F(z) = \frac{F(dz)}{dz}$, $F_2(z) = \frac{F_2(dz)}{dz}$.
\label{it:3}
\item The L\'evy measure $F$ satisfies  $F(dz)= \frac{\bar{g}(z)}{|z|^{1 + \alpha}} dz$, where $\alpha \in (0,2)$ and $\bar{g}: \mathbb{R}\rightarrow \mathbb{R}$ is a continuous symmetric nonnegative bounded function with $\bar{g}(0)= 1$.
\label{it:2}
\item The function $\bar{g}$ is differentiable on $\left \{ 0 < |z| \le \eta \right \}$ for some $\eta> 0$ with continuous derivative such that $\sup_{0 < |z| \le \eta} |\frac{\bar{g}'}{\bar{g}}| < \infty$.
\label{it: plus Maillavin}
\item The jump coefficient $\gamma$ is upper bounded, i.e. $\sup_{x \in \mathbb{R}}|\gamma(x)| := \gamma_{max} < \infty$.
\label{it: 4}
\item The Levy measure $F_2$ satisfies $\int_\mathbb{R} |z|^2 F_2(z) dz < \infty$.
\end{enumerate}
The first and the fifth points of the assumptions here above are useful to compare size of jumps of $X$ and $L$. The fourth point is required to use Malliavin calculus and it is satisfied by a large class of processes: $\alpha$- stable process ($\bar{g} = 1$), truncated $\alpha$-stable processes ($\bar{g} = \tau$, a truncation function), tempered stable process ($\bar{g}(z) = e^{- \lambda |z|}$, $\lambda > 0$). \\
In the following, we will use repeatedly some moment inequalities for jump diffusion, which are gathered in Lemma \ref{lemma: Moment inequalities} below and showed in the Appendix.
\begin{lemma}
Suppose that A1 - A4 hold. Then, for all $t > s$, \\
1)for all $p \ge 2$, $\mathbb{E}[|a_t - a_s|^p] \le c |t-s|$; for all $q > 0$ $\sup_{t \in [0,T]} \mathbb{E}[|a_t|^q] < \infty$. \\
2) for all $p \ge 2$, $p \in \mathbb{N}$, $\mathbb{E}[|a_t - a_s|^p|\mathcal{F}_s] \le c|t-s|$. \\
3) for all $p \ge 2$, $\mathbb{E}[|X_t - X_s|^p]^\frac{1}{p} \le c |t-s|^\frac{1}{p}$; for all $q > 0$ $\sup_{t \in [0,T]} \mathbb{E}[|X_t|^q] < \infty$, \\
4) for all $p \ge 2$, $p \in \mathbb{N}$, $\mathbb{E}[|X_t - X_s|^p|\mathcal{F}_s] \le c|t-s|(1 + |X_s|^p)$. \\
5) for all $p \ge 2$, $p \in \mathbb{N}$, $\sup_{h \in [0,1]} \mathbb{E}[|X_{s+h}|^p|\mathcal{F}_s] \le c(1 + |X_s|^p)$. \\
6) for all $p > 1$, $\mathbb{E}[|X_t^c - X_s ^c|^p]^\frac{1}{p} \le |t - s|^\frac{1}{2}$ and $\mathbb{E}[|X_t^c - X_s ^c|^p|\mathcal{F}_s]^\frac{1}{p} \le c |t - s|^\frac{1}{2}(1 + |X_s|^p)$, \\
where we have denoted by $X^c$ the continuous part of the process $X$, which is such that
$$X^c_{t} - X^c_{s} := \int_{s}^{t}a_u dW_u + \int_{s}^{t} b_u du.$$
\label{lemma: Moment inequalities}
\end{lemma}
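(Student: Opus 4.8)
The plan is to establish the six assertions in the order $1,2,3,4,5$, treating $6$ along the way as the jump-free special case, relying on three classical ingredients: the Burkholder--Davis--Gundy (BDG) inequality for the continuous martingale parts, Kunita's moment inequality for the stochastic integrals against $\tilde{\mu}$ and $\tilde{\mu}_2$, and the H\"older (Jensen) inequality for the drift terms, together with their conditional versions, which apply here because all the integrands are adapted by A2. A preliminary observation, used repeatedly, is that under A3 and A4 one has $\int_{\mathbb{R}}|z|^p F(dz)<\infty$ and $\int_{\mathbb{R}}|z|^p F_2(dz)<\infty$ for every $p\ge 2$: near the origin $\int_{|z|\le 1}|z|^p F(dz)=\int_{|z|\le 1}|z|^{p-1-\alpha}\bar{g}(z)\,dz<\infty$ since $p\ge 2>\alpha$ and $\bar{g}$ is bounded, while for $F_2$ one uses $|z|^p\le|z|^2$ on $\{|z|\le 1\}$ together with A4.6, and the large jumps are handled by A3; in particular $L$ and $L_2$ have finite moments of all orders. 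Combined with the boundedness of $\gamma$ (A4.1, A4.5), of $b$ (A2) and of $\tilde{b},\tilde{a},\hat{a},\tilde{\gamma},\hat{\gamma}$ (A2), this means no Gronwall or fixed-point argument is needed: it is enough to estimate each stochastic integral separately, provided the steps are carried out in the right order.

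First I would prove the estimates for the volatility, i.e. parts 1 and 2. In \eqref{eq: model vol} every coefficient is bounded, so the drift contributes $c|t-s|^p$, each Brownian integral contributes $c|t-s|^{p/2}$ by BDG, and each jump integral contributes at most $c(|t-s|^{p/2}+|t-s|)$ by Kunita's inequality, the $|t-s|$ term coming from the $\int|z|^p F$ contribution; since $|t-s|\le T$, each of these is $\le c|t-s|$, giving $\mathbb{E}[|a_t-a_s|^p]\le c|t-s|$, and, combined with $\mathbb{E}[|a_0|^q]<\infty$, also $\sup_{t\le T}\mathbb{E}[|a_t|^q]<\infty$. Part 2 is the same computation performed with the conditional BDG and conditional Kunita inequalities; since all the coefficient bounds are deterministic, no $\mathcal{F}_s$-measurable factor appears and one obtains $\mathbb{E}[|a_t-a_s|^p|\mathcal{F}_s]\le c|t-s|$.

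Next I would pass to $X$, using the volatility bounds to control $\int a\,dW$. For part 3, BDG gives $\mathbb{E}[|\int_s^t a_u\,dW_u|^p]\le c|t-s|^{p/2-1}\int_s^t\mathbb{E}[|a_u|^p]\,du\le c|t-s|^{p/2}$ by the uniform moment bound just obtained, the drift gives $c|t-s|^p$ since $b$ is bounded, and the jump integral gives $c(|t-s|^{p/2}+|t-s|)$ by Kunita and the boundedness of $\gamma$; hence $\mathbb{E}[|X_t-X_s|^p]\le c|t-s|$, which is $\mathbb{E}[|X_t-X_s|^p]^{1/p}\le c|t-s|^{1/p}$, and running the same three estimates on $[0,t]$ together with $\mathbb{E}[|X_0|^q]<\infty$ yields $\sup_t\mathbb{E}[|X_t|^q]<\infty$. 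For part 4 I would repeat this conditionally on $\mathcal{F}_s$: the drift and jump contributions are again $\le c|t-s|$, while the conditional BDG inequality reduces the Brownian part to $c|t-s|^{p/2-1}\int_s^t\mathbb{E}[|a_u|^p|\mathcal{F}_s]\,du$, which by part 2 and the $\mathcal{F}_s$-measurability of $a_s$ is $\le c|t-s|$ times a polynomial factor in the state controlled by parts 1--2; collecting the terms gives $\mathbb{E}[|X_t-X_s|^p|\mathcal{F}_s]\le c|t-s|(1+|X_s|^p)$. Part 5 then follows immediately from part 4 via $|X_{s+h}|^p\le 2^{p-1}(|X_s|^p+|X_{s+h}-X_s|^p)$ and $h\le 1$. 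Finally part 6 is the case with the jump integral removed: applying BDG (or, for $p<2$, the $L^2$ estimate combined with Jensen), respectively its conditional form with part 2, to $\int_s^t a_u\,dW_u$ and bounding the bounded drift crudely gives $\mathbb{E}[|X^c_t-X^c_s|^p]\le c|t-s|^{p/2}$ and $\mathbb{E}[|X^c_t-X^c_s|^p|\mathcal{F}_s]\le c|t-s|^{p/2}(1+|X_s|^p)$, and taking $p$-th roots gives the two assertions; the improvement of the rate from $|t-s|^{1/p}$ (part 3) to $|t-s|^{1/2}$ here is precisely due to the absence of the $\int|z|^p F$ term.

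I do not expect a serious obstacle; the points that will need the most care are (i) the L\'evy-measure moment bounds, where convergence near the origin genuinely uses $\alpha<2$, and the observation that for small $|t-s|$ it is the $\int|z|^p F$ term in Kunita's inequality that governs the weaker rate in part 3; and (ii) arranging the argument so that the volatility estimates (parts 1--2) are established first and only then inserted into the bounds for $X$ — this ordering, together with the boundedness of $\gamma$ and $b$, is what makes a purely term-by-term argument work without invoking Gronwall's lemma. The conditional statements need nothing beyond the conditional BDG and Kunita inequalities, and the hypothesis $p\in\mathbb{N}$ in parts 2, 4, 5 is used only to split the finitely many integrals via $|\sum_i x_i|^p\le c_p\sum_i|x_i|^p$.
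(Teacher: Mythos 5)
Your argument is correct and, for parts 1 and 2, is essentially identical to the paper's: the paper also expands the dynamics \eqref{eq: model vol} term by term, applies Burkholder--Davis--Gundy to the Brownian integrals, Kunita's inequality to the jump integrals and Jensen to the drift, and uses the boundedness of all coefficients of $a$ to conclude $\mathbb{E}[|a_t-a_s|^p]\le c|t-s|$, with the conditional version obtained the same way. Where you diverge is parts 3--6: the paper does not prove these at all but cites Theorem 66 of \cite{Protter GLM}, Proposition 3.1 of \cite{Shimizu}, \cite{Chapitre 1} and \cite{GLM}, whereas you give a self-contained term-by-term argument. Your version buys transparency (in particular it makes explicit that the weaker rate $|t-s|^{1/p}$ in part 3 comes from the $\int|z|^pF$ term in Kunita's inequality, and that no Gronwall argument is needed because $b$ and $\gamma$ are bounded and the moment bounds for $a$ are established first), at the cost of having to check the conditional versions by hand. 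One detail deserves attention in part 4 (and in the conditional half of part 6): your conditional BDG step bounds the Brownian contribution by $c|t-s|^{p/2-1}\int_s^t\mathbb{E}[|a_u|^p\,|\,\mathcal{F}_s]\,du$, and part 2 together with the $\mathcal{F}_s$-measurability of $a_s$ turns this into $c|t-s|^{p/2}(1+|a_s|^p)$ --- a factor $1+|a_s|^p$, not the factor $1+|X_s|^p$ appearing in the statement. The phrase ``a polynomial factor in the state controlled by parts 1--2'' glosses over this; the two factors are not comparable in general (the cited references assume $a_u=a(X_u)$ with linear growth, which is where $|X_s|^p$ comes from, but that is not the setting here). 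You should either state your bound with $1+|a_s|^p$ or add the observation that in all subsequent uses only the unconditional expectation of the bound matters, so either factor is harmless. The remaining steps --- the L\'evy-measure moment bounds, part 5 from part 4 via the elementary $|X_{s+h}|^p\le 2^{p-1}(|X_s|^p+|X_{s+h}-X_s|^p)$, and the Jensen reduction for $1<p<2$ in part 6 --- are all fine.
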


\section{Setting and main results}\label{S:Construction_and_main}
The process $X$ is observed at regularly spaced times $t_i = i \Delta_n = \frac{i \, T}{n}$ for $i = 0, 1, ... , n$, within
a finite time interval $[0,T]$. We can assume, WLOG, that $T = 1$. \\
Our goal is to estimate the integrated volatility $IV := \frac{1}{T}\int_0^T a^2_s f(X_s) ds$, where $f$ is a polynomial growth function. To do it, we propose the estimator $Q_n $, based on the truncated quadratic variation introduced by Mancini in \cite{Mancini soglia}. Given that the quadratic variation was a good estimator for the integrated volatility in the continuous framework, the idea is to filter the contribution of the jumps and to keep only the intervals in which we judge no jumps happened. We use the size of the increment of the process $X_{t_{i+1}} - X_{t_i}$ in order to judge if a jump occurred or not in the interval $[t_i, t_{i + 1})$: as it is hard for the increment of $X$ with continuous transition to overcome the threshold $\Delta_{n}^\beta = (\frac{1}{n})^\beta$ for $\beta \le \frac{1}{2}$, we can assert the presence of a jump in $[t_i, t_{i + 1})$ if $|X_{t_{i+1}} - X_{t_i}| > \Delta_{n}^\beta $. \\ 
We set
\begin{equation}
Q_n : = \sum_{i= 0}^{n-1} f(X_{t_i})(X_{t_{i+1}} - X_{t_i})^2 \varphi_{\Delta_{n}^\beta}(X_{t_{i+1}} - X_{t_i}),
\label{eq: definition Qn}
\end{equation}
where $$\varphi_{\Delta_{n}^\beta}(X_{t_{i+1}} - X_{t_i}) = \varphi( \frac{X_{t_{i+1}} - X_{t_i}}{\Delta_{n}^\beta}),$$ with $\varphi$ a smooth version of the indicator function, such that
$\varphi(\zeta) = 0$ for each $ \zeta$, with $|\zeta| \ge 2$ and $\varphi(\zeta) = 1$ for each $ \zeta $, with $ |\zeta| \le 1$. \\
It is worth noting that, if we consider an additional constant $k$ in $\varphi$ (that becomes $\varphi_{k \Delta^\beta_{n}}(X_{t_{i+1}} - X_{t_i})= \varphi( \frac{X_{t_{i+1}} - X_{t_i}}{k \Delta_{n}^\beta})$), the only difference is the interval on which the function is $1$ or $0$: it will be $1$ for $|X_{t_{i+1}} - X_{t_i}| \le k \Delta_{n}^\beta$; $0$ for $|X_{t_{i+1}} - X_{t_i}| \ge 2k \Delta_{n}^\beta$. Hence, for shortness in notations, we restrict the theoretical analysis to the situation where $k = 1$ while, for applications, we may take the threshold level as $k\Delta_{n}^\beta$ with $k \neq 1$.
 
\subsection{Main results}
The main result of this paper is the asymptotic expansion for the truncated integrated volatility.\\
We show first of all it is possible to decompose the truncated quadratic variation, separating the continuous part from the contribution of the jumps. We consider right after the difference between the truncated quadratic variation and the discretized volatility, showing it consists on the statistical error (which derives from the continuous part), on a noise term due to the jumps and on a third term which is negligible compared to the other two. From such an expansion it appears clearly the condition on $(\alpha, \beta)$ which specifies whether or not the truncated quadratic variation performs well for the estimation of the integrated volatility. It is also possible to build some unbiased estimators. Indeed, through Malliavin calculus, we identify the main bias term which arises from the presence of the jumps. We study then its asymptotic behavior and, by making it equal to zero or by removing it from the original truncated quadratic variation, we construct some corrected estimators. \\
We define as $\tilde{Q}_n^J$ the jumps contribution present in the original estimator $Q_n$:
\begin{equation}
{\tilde{Q}}_n^J : = n^{\beta (2 - \alpha)} \sum_{i= 0}^{n-1} (\int_{t_i}^{t_{i + 1}}\int_{\mathbb{R} \backslash \left \{0 \right \}} \gamma(X_{s^-}) \, z \, \tilde{\mu}(ds, dz))^2 f(X_{t_i}) \varphi_{\Delta_{n}^\beta}(X_{t_{i+1}} - X_{t_i}).
\label{eq: definition tilde Qn}
\end{equation}
Denoting as $o_\mathbb{P}((\frac{1}{n})^k)$ a quantity such that $\frac{o_\mathbb{P}((\frac{1}{n})^k)}{(\frac{1}{n})^k}\overset{\mathbb{P}}{\rightarrow} 0$,
the following decomposition holds true:
\begin{theorem}
Suppose that A1 - A4 hold and that $\beta \in (0, \frac{1}{2})$ and $\alpha \in (0,2)$ are given in definition \eqref{eq: definition Qn} and in the third point of A4, respectively. Then, as $n \rightarrow \infty$,
\begin{equation}
Q_n = \sum_{i = 0}^{n - 1} f(X_{t_i})(X^c_{t_{i+1}} - X^c_{t_i})^2 + (\frac{1}{n})^{\beta(2 - \alpha)} \tilde{Q}_n^J + \mathcal{E}_n =
\label{eq: Qn parte continua}
\end{equation}
\begin{equation}
= \sum_{i = 0}^{n - 1} f(X_{t_i})(\int_{t_i}^{t_{i + 1}}a_s dW_s)^2 + (\frac{1}{n})^{\beta(2 - \alpha)} \tilde{Q}_n^J + \mathcal{E}_n,
\label{eq: estensione Qn}
\end{equation}
where $\mathcal{E}_n$ is both $o_\mathbb{P}((\frac{1}{n})^{\beta(2 - \alpha)})$ and, for each $\tilde{\epsilon}  > 0$, $o_\mathbb{P}((\frac{1}{n})^{(1 - \alpha \beta - \tilde{\epsilon}) \land (\frac{1}{2} - \tilde{\epsilon})})$. 
\label{th: estensione Qn}
\end{theorem}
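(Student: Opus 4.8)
The plan is to split every increment into its continuous and its purely discontinuous part, $\Delta X_i:=X_{t_{i+1}}-X_{t_i}=\Delta X_i^c+\Delta X_i^J$, where $\Delta X_i^c:=X^c_{t_{i+1}}-X^c_{t_i}=\int_{t_i}^{t_{i+1}}a_s\,dW_s+\int_{t_i}^{t_{i+1}}b_s\,ds$ and $\Delta X_i^J:=\int_{t_i}^{t_{i+1}}\int_{\mathbb{R}\setminus\{0\}}\gamma(X_{s^-})\,z\,\tilde{\mu}(ds,dz)$, and to expand the square in \eqref{eq: definition Qn}, so that $Q_n=(I)+2(II)+(III)$ with
\[
(I)=\sum_{i}f(X_{t_i})(\Delta X_i^c)^2\varphi_{\Delta_n^\beta}(\Delta X_i),\quad
(II)=\sum_{i}f(X_{t_i})\,\Delta X_i^c\,\Delta X_i^J\,\varphi_{\Delta_n^\beta}(\Delta X_i),\quad
(III)=\sum_{i}f(X_{t_i})(\Delta X_i^J)^2\varphi_{\Delta_n^\beta}(\Delta X_i).
\]
By the very definition \eqref{eq: definition tilde Qn} one has $(III)=(\tfrac1n)^{\beta(2-\alpha)}\tilde{Q}_n^J$ \emph{identically}, so no error is committed there. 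Removing the truncation factor from $(I)$, and (for the passage from \eqref{eq: Qn parte continua} to \eqref{eq: estensione Qn}) the drift from $\Delta X_i^c$, reduces the statement to proving that the three remainders
\[
R_n^{(1)}:=\sum_i f(X_{t_i})(\Delta X_i^c)^2\bigl(\varphi_{\Delta_n^\beta}(\Delta X_i)-1\bigr),\qquad R_n^{(2)}:=(II),
\]
\[
R_n^{(3)}:=\sum_i f(X_{t_i})\Bigl[\,2\int_{t_i}^{t_{i+1}}a_s\,dW_s\int_{t_i}^{t_{i+1}}b_s\,ds+\Bigl(\int_{t_i}^{t_{i+1}}b_s\,ds\Bigr)^{2}\Bigr]
\]
are each $o_\mathbb{P}\bigl((\tfrac1n)^{\beta(2-\alpha)}\bigr)$ and, for every $\tilde{\epsilon}>0$, $o_\mathbb{P}\bigl((\tfrac1n)^{(1-\alpha\beta-\tilde{\epsilon})\wedge(1/2-\tilde{\epsilon})}\bigr)$; these are exactly the bounds I would isolate as the auxiliary propositions of Section 5. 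Since $f$ grows polynomially and $X$ has moments of every order (Lemma \ref{lemma: Moment inequalities}), a Cauchy--Schwarz step always replaces $f(X_{t_i})$ by a bounded factor, which I do tacitly below.

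\emph{The two easy remainders.} For $R_n^{(3)}$, boundedness of $b$ gives $\mathbb{E}[(\int_{t_i}^{t_{i+1}}b_s\,ds)^2]\le c\,\Delta_n^{2}$, hence a contribution of order $n\Delta_n^{2}=\Delta_n$; for the cross term, writing $\int_{t_i}^{t_{i+1}}b_s\,ds=b_{t_i}\Delta_n+\int_{t_i}^{t_{i+1}}(b_s-b_{t_i})\,ds$ and using $\mathbb{E}[\int_{t_i}^{t_{i+1}}a_s\,dW_s\mid\mathcal{F}_{t_i}]=0$, the term $\sum_i f(X_{t_i})b_{t_i}\Delta_n\int_{t_i}^{t_{i+1}}a_s\,dW_s$ is a sum of $\mathcal{F}_{t_i}$-martingale increments with conditional variance $O(\Delta_n^{3})$, and the leftover is controlled by $\mathbb{E}[|b_s-b_{t_i}|^2\mid\mathcal{F}_{t_i}]\le K_{t_i}|s-t_i|$ from \textbf{A2}; altogether $R_n^{(3)}=O_\mathbb{P}(\Delta_n)$, which is both $o_\mathbb{P}((\tfrac1n)^{\beta(2-\alpha)})$ and $o_\mathbb{P}((\tfrac1n)^{1/2-\tilde{\epsilon}})$ because $\beta<\tfrac12$. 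For $R_n^{(1)}$, $\varphi_{\Delta_n^\beta}(\Delta X_i)\neq1$ forces $|\Delta X_i^c|>\tfrac12\Delta_n^{\beta}$ or $|\Delta X_i^J|>\tfrac12\Delta_n^{\beta}$. On the first event the essentially Gaussian tail of $\Delta X_i^c$ — Lemma \ref{lemma: Moment inequalities}, Markov's inequality with an arbitrarily large moment, and $\beta<\tfrac12$ — makes $\mathbb{E}[(\Delta X_i^c)^2\,1_{\{|\Delta X_i^c|>\Delta_n^\beta/2\}}]$ smaller than every power of $\Delta_n$. On the second event, H\"older's inequality bounds $\mathbb{E}[(\Delta X_i^c)^2\,1_{\{|\Delta X_i^J|>\Delta_n^\beta/2\}}]$ by high moments of $\Delta X_i^c$ (of order $\Delta_n$) times a power arbitrarily close to $1$ of $\mathbb{P}(|\Delta X_i^J|>\Delta_n^{\beta}/2)$, and the latter is $O(\Delta_n^{1-\alpha\beta})$ — the jumps of the driving process of size $\le\Delta_n^{\beta}$ being handled through a moment estimate built on $F(dz)=\bar{g}(z)|z|^{-1-\alpha}dz$ with $\bar{g}$ bounded (\textbf{A4}), and the finitely many larger jumps through their Poisson rate $\asymp\Delta_n^{-\alpha\beta}\Delta_n$. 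Summing over $i$ gives, for every $\tilde{\epsilon}>0$, $R_n^{(1)}=O_\mathbb{P}((\tfrac1n)^{1-\alpha\beta-\tilde{\epsilon}})$ plus a super-polynomially small term, and this yields both required rates since $1-\alpha\beta>\beta(2-\alpha)$ (itself equivalent to $\beta<\tfrac12$).

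\emph{The main obstacle: the cross term $R_n^{(2)}=(II)$.} A brute-force Cauchy--Schwarz over $i$ only gives $\bigl(\sum_i(\Delta X_i^c)^2\bigr)^{1/2}\bigl(\sum_i(\Delta X_i^J)^2\varphi_{\Delta_n^\beta}^2(\Delta X_i)\bigr)^{1/2}=O_\mathbb{P}((\tfrac1n)^{\beta(2-\alpha)/2})$, which is too large; the gain must come from the fact that $\Delta X_i^c$ is, conditionally on $\mathcal{F}_{t_i}$ and on the jumps of $\tilde{\mu},\tilde{\mu}_2$ on $(t_i,t_{i+1}]$, essentially a centred Gaussian of variance $\asymp\Delta_n$. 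First one freezes the truncation, $\varphi_{\Delta_n^\beta}(\Delta X_i)=\varphi_{\Delta_n^\beta}(\Delta X_i^J)+\rho_i$ with $|\rho_i|\le c\,\Delta_n^{-\beta}|\Delta X_i^c|$ and $\rho_i\neq0$ only when $|\Delta X_i|$ or $|\Delta X_i^J|$ lies in $[\Delta_n^{\beta},2\Delta_n^{\beta}]$; the $\rho_i$-part is absorbed with the very same moment and jump-probability estimates used for $R_n^{(1)}$. In the frozen part $\sum_i f(X_{t_i})\,\Delta X_i^c\,\Delta X_i^J\,\varphi_{\Delta_n^\beta}(\Delta X_i^J)$ one splits each summand into its $\mathcal{F}_{t_i}$-conditional expectation and the associated martingale difference. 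The martingale-difference part has squared $L^2$-norm $\sum_i\mathbb{E}[f^2(X_{t_i})(\Delta X_i^c)^2(\Delta X_i^J)^2\varphi_{\Delta_n^\beta}^2(\Delta X_i^J)]$, where the decisive point is \emph{not} to bound $(\Delta X_i^J)^2\varphi_{\Delta_n^\beta}^2$ crudely by $\Delta_n^{2\beta}$, but to use the L\'evy-density estimate $\mathbb{E}[(\Delta X_i^J)^2\varphi_{\Delta_n^\beta}^2(\Delta X_i^J)\mid\mathcal{F}_{t_i}]\le c\,\Delta_n\,\Delta_n^{\beta(2-\alpha)}$ together with $\mathbb{E}[(\Delta X_i^c)^2\mid\mathcal{F}_{t_i}]\le c\,\Delta_n(1+|X_{t_i}|^2)$; this gives $\sum_i\mathbb{E}[\cdots]\le c\,\Delta_n^{1+\beta(2-\alpha)}$, so the martingale part is $O_\mathbb{P}((\tfrac1n)^{(1+\beta(2-\alpha))/2})$, which meets all the required rates since $\beta<\tfrac12$ and $\alpha<2$. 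For the conditional-mean part $\sum_i f(X_{t_i})\,\mathbb{E}[\Delta X_i^c\,\Delta X_i^J\,\varphi_{\Delta_n^\beta}(\Delta X_i^J)\mid\mathcal{F}_{t_i}]$, the conditional independence given $\mathcal{F}_{t_i}$ of the leading Gaussian part of $\Delta X_i^c$ from the leading part of $\Delta X_i^J$ — which rests on the independence of $W$ and $\tilde{\mu}$ (\textbf{A1}) — annihilates the main term, leaving only a drift contribution and a leverage contribution coming from the jump part of the volatility (the coefficients $\tilde{\gamma},\hat{\gamma}$ in \eqref{eq: model vol}); the latter is made explicit by a Malliavin integration by parts — this is where the fourth point of \textbf{A4} and the Malliavin tools of Section 7 enter — and turns out to be of lower order. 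Re-running this bookkeeping while keeping sharp H\"older exponents in the drift, leverage and $\rho_i$ corrections and in the rare events where $|\Delta X_i^J|$ is large but $|\Delta X_i|$ is not, is precisely what sharpens everything to the finer threshold $(1-\alpha\beta-\tilde{\epsilon})\wedge(\tfrac12-\tilde{\epsilon})$. I expect this conditional-centring-plus-Malliavin control of $(II)$ to be the technical heart of the argument; everything else is an assembly of Lemma \ref{lemma: Moment inequalities} with elementary jump-size estimates.
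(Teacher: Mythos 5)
Your decomposition of $Q_n$ into $(I)+2(II)+(III)$ with $(III)=(\tfrac1n)^{\beta(2-\alpha)}\tilde Q_n^J$ holding identically is exactly the paper's starting point, and your treatments of $R_n^{(1)}$ (splitting on $|\Delta X_i^c|>\tfrac12\Delta_n^\beta$ versus $|\Delta X_i^J|>\tfrac12\Delta_n^\beta$, with the $\Delta_n^{1-\alpha\beta}$ tail bound for the jump part), of $R_n^{(3)}$, and of the martingale part of $(II)$ (via $\mathbb{E}_i[(\Delta X_i^J)^2\varphi^2]\le c\,\Delta_n^{1+\beta(2-\alpha)}$) coincide with the paper's Lemma \ref{lemma: brownian increments}, the $A_1^n,A_2^n$ estimates, and \eqref{eq: fine per tesi 2 Catalot} respectively.

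The one place where your argument has a genuine gap is the conditional-mean part of the cross term. You correctly observe that $\mathbb{E}_i[\Delta X_i^c\,\Delta X_i^J\,\varphi_{\Delta_n^\beta}(\Delta X_i^J)]$ does not vanish, both because $\gamma(X_{s^-})$ inside $\Delta X_i^J$ depends on the whole path (hence on $W$) and because of leverage in $a$; but you then assert that the resulting correction ``turns out to be of lower order'' via a Malliavin integration by parts, which is precisely the estimate that needs proving and which you do not carry out. The paper resolves this differently and more elementarily, in two moves. First, Proposition \ref{prop: espansione salti} (proved in the appendix by elementary jump-size estimates, resting on Lemma \ref{lemma: differenza dei salti}, i.e.\ $\mathbb{E}|\Delta X_i^J-\Delta\tilde X_i^J|^2\le c\,\Delta_n^2$) replaces $\Delta X_i^J\varphi_{\Delta_n^\beta}(\Delta X_i)$ by $\Delta\tilde X_i^J\varphi_{\Delta_n^\beta}(\Delta\tilde X_i^J)$ with $\gamma$ frozen at $X_{t_i}$, at a cost $o_{L^1}(\Delta_n^{1+\beta(2-\alpha)})$ per term; the frozen increment is then a function of $X_{t_i}$ and of the jumps of $L$ alone, hence genuinely conditionally independent of the Brownian increment. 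Second, writing $\int_{t_i}^{t_{i+1}}a_s\,dW_s=\int_{t_i}^{t_{i+1}}a_{t_i}\,dW_s+\int_{t_i}^{t_{i+1}}(a_s-a_{t_i})\,dW_s$, the first piece gives an exactly vanishing conditional expectation by independence of $W$ and $L$ (A1), while the second is killed by a crude Cauchy--Schwarz using only $\mathbb{E}_i[|a_s-a_{t_i}|^2]\le c\,\Delta_n$ (Lemma \ref{lemma: Moment inequalities}, point 2), yielding $\Delta_n^{3/2+\frac{\beta}{2}(2-\alpha)}$ per term with no cancellation needed --- see \eqref{eq: altra parte I32}. No Malliavin calculus enters the proof of this theorem at all; it is needed only later, for Proposition \ref{prop: estimation stable} and the identification of $\hat Q_n$. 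You should either adopt the freeze-$\gamma$-then-split-$a_s$ route, or supply the integration-by-parts computation you invoke together with an explicit rate beating $(1-\alpha\beta-\tilde\epsilon)\wedge(\tfrac12-\tilde\epsilon)$; as written, the decisive bound is asserted rather than proved. A minor additional imprecision: $\varphi_{\Delta_n^\beta}(\Delta X_i)-\varphi_{\Delta_n^\beta}(\Delta X_i^J)$ can be nonzero even when neither argument lies in $[\Delta_n^\beta,2\Delta_n^\beta]$ (e.g.\ one below $\Delta_n^\beta$ and the other above $2\Delta_n^\beta$), which is why the paper's Proposition \ref{prop: espansione salti} runs through several event splittings rather than a single mean-value bound.
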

To show Theorem \ref{th: estensione Qn} here above, the following lemma will be useful. It illustrates the error we commit when the truncation is tight and therefore the Brownian increments are mistakenly truncated away.
\begin{lemma}
Suppose that A1 - A4 hold. Then, $\forall \epsilon > 0$, 
 $$\sum_{i = 0}^{n - 1} f(X_{t_i})(X^c_{t_{i + 1}} - X^c_{t_i})^2(\varphi_{\Delta_{n}^\beta}(X_{t_{i+1}} - X_{t_i})- 1) = o_\mathbb{P} ((\frac{1}{n})^{1 - \alpha \beta - \epsilon}).$$
\label{lemma: brownian increments}
\end{lemma}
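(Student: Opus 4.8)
The plan is to reduce the statement to an estimate in $L^1$. Since $\varphi\equiv 1$ on $[-1,1]$ and $\varphi$ is bounded, there is a constant $C$ with $|\varphi_{\Delta_n^\beta}(\zeta)-1|\le C\,1_{\{|\zeta|>\Delta_n^\beta\}}$ for all $\zeta$, so the absolute value of the sum in the statement is dominated by $C\sum_{i=0}^{n-1}|f(X_{t_i})|\,(X^c_{t_{i+1}}-X^c_{t_i})^2\,1_{\{|X_{t_{i+1}}-X_{t_i}|>\Delta_n^\beta\}}$. Because convergence in $L^1$ implies convergence in probability, it suffices to show that for every $\epsilon>0$ the expectation of this sum is $o(\Delta_n^{1-\alpha\beta-\epsilon})$. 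I will use freely that $f$ has polynomial growth together with $\sup_t\mathbb{E}[|X_t|^q]<\infty$ (point 3 of Lemma \ref{lemma: Moment inequalities}), giving $\sup_i\mathbb{E}[|f(X_{t_i})|^p]<\infty$ for every $p$, and the bound $\mathbb{E}[|X^c_{t_{i+1}}-X^c_{t_i}|^p]\le c_p\Delta_n^{p/2}$ for $p\ge 2$ (point 6 of the same lemma).

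The core of the argument is to split the event $\{|\Delta X_i|>\Delta_n^\beta\}$ according to which part of the increment crosses the threshold. Writing $X=X^c+X^J$ with $X^J_t=\int_0^t\int_{\mathbb{R}\setminus\{0\}}\gamma(X_{s^-})z\,\tilde\mu(ds,dz)$, I decompose $X^J$ over $[t_i,t_{i+1})$ as $M_i+J_i$, where $J_i$ collects the finitely many jumps of $X^J$ produced by jumps of $L$ of size larger than $\Delta_n^\beta$ (so $J_i\neq0$ only on $\{N_i\ge1\}$, with $N_i$ the number of such jumps of $L$), and $M_i$ is the complementary $L^2$-martingale increment; by the symmetry of $\bar g$ (point 3 of A4) the large-jump compensator vanishes, whence $M_i=\int_{t_i}^{t_{i+1}}\int_{|z|\le\Delta_n^\beta}\gamma(X_{s^-})z\,\tilde\mu(ds,dz)$, $\mathbb{E}[M_i]=0$ and $\mathbb{E}[M_i^2]\le\gamma_{max}^2\,\Delta_n\int_{|z|\le\Delta_n^\beta}z^2F(z)\,dz\le c\,\Delta_n(\Delta_n^\beta)^{2-\alpha}$. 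Since $\{|\Delta X_i|>\Delta_n^\beta\}\subseteq A_i\cup B_i\cup C_i$ with $A_i=\{|X^c_{t_{i+1}}-X^c_{t_i}|>\Delta_n^\beta/2\}$, $B_i=\{N_i\ge1\}$, $C_i=\{|M_i|>\Delta_n^\beta/4\}$, it is enough to bound the three resulting sums.

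On $A_i$ I use Chebyshev with a large power: $(X^c_{t_{i+1}}-X^c_{t_i})^2\,1_{A_i}\le (2\Delta_n^{-\beta})^{q}|X^c_{t_{i+1}}-X^c_{t_i}|^{2+q}$, and a Hölder splitting of $|f(X_{t_i})|$ bounds the $i$-th term by $c\,\Delta_n^{-q\beta}\Delta_n^{(2+q)/2}=c\,\Delta_n^{1+q(1/2-\beta)}$; the sum over the $n$ indices is then $c\,\Delta_n^{q(1/2-\beta)}$, which for $q$ large is negligible because $\beta<1/2$. On $B_i$ and $C_i$ a possible leverage between $X$ and $a$ prevents $X^c_{t_{i+1}}-X^c_{t_i}$ from being independent of the jump increments, so I apply Hölder's inequality with exponents $p_1,p_2,p_3$ ($\sum 1/p_j=1$) to separate $|f(X_{t_i})|$, $(X^c_{t_{i+1}}-X^c_{t_i})^2$ (contributing $(\mathbb{E}|X^c_{t_{i+1}}-X^c_{t_i}|^{2p_2})^{1/p_2}\le c\Delta_n$) and the indicator, and estimate the probabilities using the form of $F$: since $F(z)=\bar g(z)/|z|^{1+\alpha}$ with $\bar g$ bounded, $\mathbb{P}(B_i)\le\mathbb{E}[N_i]=\Delta_n\,F(\{|z|>\Delta_n^\beta\})\le c\,\Delta_n^{1-\alpha\beta}$, and from the moment bound on $M_i$, $\mathbb{P}(C_i)\le c\,\Delta_n^{-2\beta}\Delta_n^{1+\beta(2-\alpha)}=c\,\Delta_n^{1-\alpha\beta}$. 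Hence each of these two sums is $\le c\,n\,\Delta_n\,\Delta_n^{(1-\alpha\beta)/p_3}=c\,\Delta_n^{(1-\alpha\beta)/p_3}$, and choosing $p_3>1$ close enough to $1$ makes the exponent exceed $1-\alpha\beta-\epsilon$; combining the three bounds gives the claim.

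\textbf{Main obstacle.} The only genuinely delicate point is the pair of tail estimates $\mathbb{P}(N_i\ge1)$, $\mathbb{P}(|M_i|>\Delta_n^\beta/4)\lesssim\Delta_n^{1-\alpha\beta}$: this is where the jump-activity index $\alpha$ and the precise form of $F$ enter, through $F(\{|z|>u\})\le c\,u^{-\alpha}$ and $\int_{|z|\le u}z^2F(z)\,dz\le c\,u^{2-\alpha}$, and through checking that the large-jump compensator disappears thanks to the symmetry of $\bar g$. The $\epsilon$ in the statement is precisely the loss incurred by using Hölder's inequality to decouple the continuous increment from the jump indicator in the presence of leverage; in the leverage-free case one could instead condition on $\mathcal{F}_{t_i}$ and recover the sharper rate $\Delta_n^{1-\alpha\beta}$.
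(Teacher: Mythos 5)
Your proposal is correct and follows essentially the same route as the paper: reduce to an $L^1$ bound, dominate $|\varphi_{\Delta_n^\beta}(\Delta X_i)-1|$ by the indicator of $\{|\Delta X_i|>\Delta_n^\beta\}$, split that event into a continuous-increment tail (killed by Chebyshev with an arbitrarily large power, using $\beta<\tfrac12$) and a jump tail of probability $O(\Delta_n^{1-\alpha\beta})$, then decouple the squared Brownian increment from the indicator by H\"older, which is exactly where the $\epsilon$-loss appears in the paper as well. The only cosmetic difference is that you re-derive the bound $\mathbb{P}(|\Delta X_i^J|\gtrsim\Delta_n^\beta)\le c\,\Delta_n^{1-\alpha\beta}$ from scratch via the decomposition into big jumps of $L$ (first-moment count) plus a compensated small-jump martingale (second-moment Chebyshev, using the symmetry of $\bar g$ to cancel the large-jump compensator), whereas the paper obtains the same estimate through the event $N_{i,n}$ together with the truncated moment bounds of Lemma \ref{lemma: estensione 10 capitolo 1}.
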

Theorem \ref{th: estensione Qn} anticipates that the size of the jumps part is $(\frac{1}{n})^{\beta(2 - \alpha)}$ (see Theorem \ref{th: reformulation th T fixed}) while the size of the Brownian increments wrongly removed is upper bounded by $(\frac{1}{n})^{1 - \alpha \beta - \epsilon}$ (see Lemma \ref{lemma: brownian increments}). As $\beta \in (0, \frac{1}{2}) $, we can always find an $\epsilon > 0$ such that $1 - \alpha \beta - \epsilon > \beta(2 - \alpha)$ and therefore the bias derived from a tight truncation is always smaller compared to those derived from a loose truncation. 
However, as we will see, after having removed the contribution of the jumps such a small downward bias will represent the main error term if $\alpha \beta > \frac{1}{2} $. \\
In order to eliminate the bias arising from the jumps, we want to identify the term $\tilde{Q}_n^J$ in details. For that purpose we introduce 
\begin{equation}
\hat{Q}_n := (\frac{1}{n})^{\frac{2}{\alpha} - \beta(2 - \alpha)} \sum_{i = 0}^{n-1} f(X_{t_i}) \gamma^2(X_{t_i}) d(\gamma(X_{t_i}) n^{ \beta - \frac{1}{\alpha}}),
\label{eq: definizione finale hat Q}
\end{equation}
where $d(\zeta) : = \mathbb{E}[(S_1^\alpha)^2 \varphi(S_1^\alpha \zeta)]$; $(S_t^\alpha)_{t \ge 0}$ is an $\alpha$-stable process. \\
We want to move from $\tilde{Q}_n^J$ to $\hat{Q}_n$. 
The idea is to move from our process, that in small time behaves like a conditional rescaled L\'evy process, to an $\alpha$ stable distribution.
\begin{proposition}
Suppose that A1 - A4 hold. Let $(S_t^\alpha)_{t \ge 0}$ be an $\alpha$-stable process. Let $g$ be a measurable bounded function such that $\left \| g  \right \|_{pol} := \sup_{x \in \mathbb{R}} (\frac{|g(x)|}{1 + |x|^p}) < \infty$, for some $p\ge 1$, $p\ge \alpha$  hence
\begin{equation}
|g(x)| \le \left \| g  \right \|_{pol} (|x|^p + 1).
\label{eq: conditon on h} 
\end{equation}
Moreover we denote $\left \| g \right \|_\infty: = \sup_{x \in \mathbb{R}} |g(x)|$.
Then, for any $\epsilon > 0$, $0 < h < \frac{1}{2}$,
\begin{equation}
|\mathbb{E}[g(h^{- \frac{1}{\alpha}} L_{h})] - \mathbb{E}[g(S_1^\alpha)]| \le  C_\epsilon h \, |\log(h) | \left \| g \right \|_\infty + C_\epsilon h^\frac{1}{\alpha}\left \| g \right \|_\infty^{1 - \frac{\alpha}{p} - \epsilon} \left \| g \right \|_{pol}^{\frac{\alpha}{p} + \epsilon} |\log(h) |+
\label{eq: tesi prop stable}
\end{equation}
$$+ C_\epsilon h^\frac{1}{\alpha}\left \| g \right \|_\infty^{1+ \frac{1}{p} - \frac{\alpha}{p} + \epsilon} \left \| g \right \|_{pol}^{- \frac{1}{p} + \frac{\alpha}{p} - \epsilon} |\log(h) |1_{\left \{ \alpha > 1 \right \}} ,$$
where $C_\epsilon$ is a constant independent of $h$.
\label{prop: estimation stable}
\end{proposition}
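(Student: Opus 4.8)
The plan is to view \eqref{eq: tesi prop stable} as a quantitative version of the small–time scaling limit $h^{-1/\alpha}L_{h}\Rightarrow S_{1}^{\alpha}$, proved through characteristic functions. Since by the third point of A4 the L\'evy measure is $F(dz)=\bar g(z)|z|^{-1-\alpha}dz$ with $\bar g$ symmetric, $h^{-1/\alpha}L_{h}$ has the (real) characteristic function $\phi_{h}(u)=\exp\!\big(h\,\psi(h^{-1/\alpha}u)\big)$ with $\psi(v)=\int_{\mathbb R}(\cos(vz)-1)F(dz)$, and the substitution $z=h^{1/\alpha}w$ gives the identity
\[
h\,\psi(h^{-1/\alpha}u)=\int_{\mathbb R}\big(\cos(uw)-1\big)\,\bar g(h^{1/\alpha}w)\,|w|^{-1-\alpha}\,dw ,
\]
to be compared with $\log\phi_{S_{1}^{\alpha}}(u)=\int_{\mathbb R}(\cos(uw)-1)|w|^{-1-\alpha}\,dw=-\kappa_{\alpha}|u|^{\alpha}$ (here $\bar g(0)=1$ is used). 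Two estimates are needed. First, a uniform Gaussian–type bound $|\phi_{h}(u)|\le e^{-c|u|^{\alpha}}$ for small $h$: since $\bar g$ is continuous with $\bar g(0)=1$ it is $\ge\tfrac12$ on $\{|z|\le\eta'\}$, and restricting the integral giving $\mathrm{Re}\,h\psi(h^{-1/\alpha}u)$ to $\{|w|\le\eta' h^{-1/\alpha}\}$ and rescaling gives $\mathrm{Re}\,h\psi(h^{-1/\alpha}u)\le -c|u|^{\alpha}$. Second, an estimate of the remainder
\[
R_{h}(u):=h\,\psi(h^{-1/\alpha}u)+\kappa_{\alpha}|u|^{\alpha}=\int_{\mathbb R}(\cos(uw)-1)\big(\bar g(h^{1/\alpha}w)-1\big)|w|^{-1-\alpha}\,dw,
\]
obtained by splitting at $|w|=\eta h^{-1/\alpha}$: on the inner range the fourth point of A4 together with $\bar g(0)=1$ give $|\bar g(h^{1/\alpha}w)-1|\le L\,h^{1/\alpha}|w|$, on the outer range $|\bar g(h^{1/\alpha}w)-1|\le\|\bar g\|_{\infty}+1$; computing the two one–dimensional integrals — the inner one being $\int|\cos(uw)-1|\,|w|^{-\alpha}\,dw$ over a window of length $\sim h^{-1/\alpha}$, whose behaviour at infinity changes according as $\alpha<1$, $=1$ or $>1$ — gives $|R_{h}(u)|\le C\big(h|\log h|+h^{1/\alpha}(1+|u|^{\alpha-1})\,1_{\{\alpha>1\}}\big)$. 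Since $|\phi_{h}-\phi_{S_{1}^{\alpha}}|\le|R_{h}|\,e^{-c|u|^{\alpha}}$ (from $|e^{a}-e^{b}|\le|a-b|e^{\max(a,b)}$ for $a,b\in\mathbb R$), this already accounts for the rate $h|\log h|$ and for the $1_{\{\alpha>1\}}$ term in \eqref{eq: tesi prop stable}.

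The delicate step is converting these Fourier estimates into a bound on $|\mathbb E[g(h^{-1/\alpha}L_{h})]-\mathbb E[g(S_{1}^{\alpha})]|$ when $g$ is only bounded with polynomial growth: $\widehat g$ is merely a tempered distribution, and the stable density $p_{S_{1}^{\alpha}}$ has only $|x|^{-1-\alpha}$ tails, so one cannot integrate the Fourier bound against $\|g\|_{pol}(1+|x|^{p})$. The plan is to work with the densities (both bounded and continuous by Fourier inversion, since the characteristic functions are in $L^{1}$ by the Gaussian–type bound), write $\mathbb E[g(h^{-1/\alpha}L_{h})]-\mathbb E[g(S_{1}^{\alpha})]=\int_{\mathbb R}g(x)\,(p_{h}-p_{S_{1}^{\alpha}})(x)\,dx$, introduce one or two truncation levels growing like suitable powers of $1/h$, and split $g$ accordingly. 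On the bulk region the integral is bounded by $\|g\|_{\infty}$ times $\int|p_{h}-p_{S_{1}^{\alpha}}|$ over that region, controlled by $\int|R_{h}(u)|e^{-c|u|^{\alpha}}\,du$ (and, when $\alpha>1$, with an improved dependence on the truncation level obtained from the one $u$–derivative of $R_{h}$ that is still an absolutely convergent integral in that regime — the same integrability threshold $\alpha>1$ that produces the $1_{\{\alpha>1\}}$ term). On the tail region, where $g$ may be as large as $\|g\|_{pol}(1+|x|^{p})$, one uses the two competing bounds $|g(x)|\le\|g\|_{\infty}$ and $|g(x)|\le\|g\|_{pol}(1+|x|^{p})$ in geometric interpolation, together with the stable/L\'evy tail estimates $\mathbb P(|h^{-1/\alpha}L_{h}|>B)+\mathbb P(|S_{1}^{\alpha}|>B)\le CB^{-\alpha}$ (for $L$ via the ``one large jump'' decomposition and a Bernstein estimate for the truncated small–jump part, in the spirit of Lemma~\ref{lemma: Moment inequalities}) and the uniform small–time moment bounds of $h^{-1/\alpha}L_{h}$ and $S_{1}^{\alpha}$ for orders below $\alpha$, plus a Markov step. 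Optimizing the truncation level(s) and the interpolation exponent, and absorbing the residual $\epsilon$–losses and the logarithmic windows into the stated $\epsilon$ and $|\log h|$, is what produces the three terms of \eqref{eq: tesi prop stable}: $\|g\|_{\infty}$ coming from the bulk and $\|g\|_{pol}$ entering only through the tail region, with the exponents $\tfrac\alpha p+\epsilon$ and $1+\tfrac1p-\tfrac\alpha p+\epsilon,\ -\tfrac1p+\tfrac\alpha p-\epsilon$ coming out of the interpolation and Markov exponents used there.

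The main obstacle is exactly this last transfer step: the Fourier estimates of the first paragraph are routine once the scaling identity is written down, but handling a merely polynomially bounded test function against densities with power–law tails forces the truncation/interpolation machinery and the careful bookkeeping of the exponents, and it is this bookkeeping — together with verifying the $\alpha$–dependent integrability of the first $u$–derivative of $R_{h}$ used in the bulk estimate when $\alpha>1$ — that constitutes the heart of the proof.
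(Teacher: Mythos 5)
Your route is genuinely different from the paper's, which never touches characteristic functions: there, one truncates the large jumps of $L$ (cost $O(h)\|g\|_\infty$ in probability), couples the rescaled truncated process $L_1^h$ with a truncated stable variable $L_1^{\alpha,h}=\int_0^1\int g_h(z)\tilde\mu^h(ds,dz)$ built from the \emph{same} Poisson measure through a deterministic map $g_h$ satisfying $|g_h(z)-z|\le c z^2h^{1/\alpha}+c|z|^{1+\alpha}h$, and compares $\mathbb{E}[g(L_1^h)]$ with $\mathbb{E}[g(L_1^{\alpha,h})]$ by a Malliavin integration by parts, $\mathbb{E}[g(\Phi)W^h]=\mathbb{E}[G(\Phi)\mathcal{H}_{\Phi}(W^h)]$ with $G$ a primitive of $g$. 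The factor $h^{1/\alpha}$ then multiplies $\mathbb{E}[|g(\hat L_1)|^{1+\epsilon}]^{1/(1+\epsilon)}$ directly, and the stated exponents come from writing $|g|^{(1+\epsilon)}=|g|^{(1+\epsilon)r}|g|^{(1+\epsilon)(1-r)}$ and choosing $r$ so that the resulting moment of $|\hat L_1|$ has order exactly $\alpha$ (whence the $|\log h|$, since the truncated stable has a logarithmically divergent $\alpha$-moment); the $1_{\{\alpha>1\}}$ term records the variant $p(1+\epsilon)(1-r)+(1+\epsilon)=\alpha$ forced by an extra factor $\int\int|z|\,\mu^h$ in the Malliavin weight. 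Your Fourier set-up in the first paragraph (scaling identity, Gaussian-type decay of $\phi_h$, the bound on $R_h$) is essentially sound, up to the minor point that the $|u|^{\alpha-1}$ singularity of $R_h$ at $u=0$ is present for all $\alpha$, not only $\alpha>1$ (it is harmless after integration against $e^{-c|u|^\alpha}du$).

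The genuine gap is in the transfer step, and it is quantitative, not merely a matter of bookkeeping. With only $\|p_h-p_{S_1^\alpha}\|_\infty\le\delta:=C(h|\log h|+h^{1/\alpha})$ from Fourier inversion, your bulk term is $\|g\|_\infty\int_{|x|\le B}|p_h-p_S|\le 2B\delta\|g\|_\infty$, while the tail term, after interpolating $|g|\le\|g\|_\infty^{r}(\|g\|_{pol}(1+|x|^p))^{1-r}$ at moment order $p(1-r)=\alpha-\epsilon'$ (order $\alpha$ itself is unavailable since $\mathbb{E}|S_1^\alpha|^\alpha=\infty$) and using the $|x|^{-1-\alpha}$ tails, is $\approx\|g\|_\infty^{r}\|g\|_{pol}^{1-r}B^{-\epsilon'}$. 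Optimizing over $B$ leaves a factor $\delta^{\epsilon'/(1+\epsilon')}$, i.e. the rate in $h$ is destroyed. To close the argument you need genuine \emph{weighted} estimates on the density difference, e.g. $\sup_x|x|\,|p_h-p_S|(x)\lesssim\delta'$ or $\int(1+|x|)^{\alpha-\epsilon}|p_h-p_S|\,dx\lesssim\delta|\log h|$, so that the bulk integral grows only logarithmically in $B$; this requires controlling $\partial_u(\phi_h-\phi_S)$ in $L^1(du)$ for \emph{all} $\alpha$ (you invoke the $u$-derivative only for $\alpha>1$), and these derivatives are singular at $u=0$ ($\phi_S$ is not $C^1$ there for $\alpha\le1$) with $\alpha$-dependent integrability that you have not verified. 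Relatedly, your attribution of the third term of \eqref{eq: tesi prop stable} to the $1_{\{\alpha>1\}}$ piece of $R_h$ cannot be right as stated: that piece would feed a pure $\|g\|_\infty$ bulk bound, whereas the third term carries a positive power $\frac{\alpha-1}{p}-\epsilon$ of $\|g\|_{pol}$ and hence must emerge from the tail interpolation; since $\|g\|_{pol}\le\|g\|_\infty$ and in the application $\|g_{i,n}\|_\infty\to\infty$ while $\|g_{i,n}\|_{pol}=O(1)$, replacing it by $h^{1/\alpha}\|g\|_\infty|\log h|$ would make the proposition useless for its intended purpose. Until the weighted Fourier estimates are actually established and shown to reproduce these interpolation exponents, the proof is incomplete.
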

Proposition \ref{prop: estimation stable} requires some Malliavin calculus. 
The proof of Proposition \ref{prop: estimation stable} as well as some technical tools will be found in Section \ref{S:Proof_propositions}. \\
The previous proposition is an extension of Theorem $4.2$ in \cite{Maillavin} and it is useful when $\left \| g \right \|_\infty$ is large, compared to $\left \| g \right \|_{pol}$. For instance, it is the case if consider the function $g(x): = |x|^2 1_{|x| \le M} $ for $M$ large. \\
\\
We need Proposition \ref{prop: estimation stable} to prove the following theorem, in which we consider the difference between the truncated quadratic variation and the discretized volatility. We make explicit its decomposition into the statistical error and the noise term due to the jumps, identified as $\hat{Q}_n$.
\begin{theorem}
Suppose that A1- A4 hold and that $\beta \in (0, \frac{1}{2})$ and $\alpha \in (0,2)$ are given in Definition \ref{eq: definition Qn} and in the third point of A4, respectively. Then, as $n \rightarrow \infty$,
\begin{equation}
Q_n - \frac{1}{n} \sum_{i = 0}^{n - 1} f(X_{t_i}) a^2_{t_i} = \frac{Z_n}{\sqrt{n}}+ (\frac{1}{n})^{\beta(2 - \alpha)}\hat{Q}_n + \mathcal{E}_n, \label{eq:tesi teo 2 e 3}
\end{equation}
where $\mathcal{E}_n$ is always $o_\mathbb{P}((\frac{1}{n})^{\beta(2 - \alpha)})$ and, adding the condition $\beta > \frac{1}{4- \alpha}$, it is also $o_\mathbb{P}((\frac{1}{n})^{(1 - \alpha \beta - \tilde{\epsilon}) \land (\frac{1}{2} - \tilde{\epsilon})})$. Moreover $Z_n \xrightarrow{\mathcal{L}}N(0,2\int_0^T a^4_s f^2(X_s) ds)$ stably with respect to $X$.
\label{th: 2 e 3 insieme}
\end{theorem}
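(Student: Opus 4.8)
The plan is to start from Theorem~\ref{th: estensione Qn}. Subtracting $\frac1n\sum_{i=0}^{n-1}f(X_{t_i})a_{t_i}^2$ from representation \eqref{eq: estensione Qn} gives
$$Q_n-\frac1n\sum_{i=0}^{n-1}f(X_{t_i})a_{t_i}^2=R_n^c+\Big(\tfrac1n\Big)^{\beta(2-\alpha)}\tilde Q_n^J+\mathcal{E}_n,\qquad R_n^c:=\sum_{i=0}^{n-1}f(X_{t_i})\Big(\int_{t_i}^{t_{i+1}}\!a_s\,dW_s\Big)^2-\frac1n\sum_{i=0}^{n-1}f(X_{t_i})a_{t_i}^2,$$
with $\mathcal{E}_n$ already satisfying the conclusions of that theorem. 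It then suffices to show: (i) $R_n^c=Z_n/\sqrt n$ with $Z_n$ satisfying the announced stable CLT, up to a remainder absorbable into $\mathcal{E}_n$; and (ii) $\big(\tfrac1n\big)^{\beta(2-\alpha)}(\tilde Q_n^J-\hat Q_n)$ is $o_\mathbb{P}\big((\tfrac1n)^{\beta(2-\alpha)}\big)$ unconditionally, and also $o_\mathbb{P}\big((\tfrac1n)^{(1-\alpha\beta-\tilde\epsilon)\wedge(\frac12-\tilde\epsilon)}\big)$ when $\beta>\frac1{4-\alpha}$.

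\emph{Step (i).} Set $\zeta_{n,i}:=f(X_{t_i})\big[(\int_{t_i}^{t_{i+1}}a_s\,dW_s)^2-\mathbb{E}((\int_{t_i}^{t_{i+1}}a_s\,dW_s)^2\mid\mathcal F_{t_i})\big]$, a triangular array of $\mathcal F_{t_{i+1}}$-measurable, $\mathcal F_{t_i}$-conditionally centered variables, and $Z_n:=\sqrt n\sum_i\zeta_{n,i}$. Because $a$ is an It\^o semimartingale with bounded coefficients (A2, \eqref{eq: model vol}), $|\mathbb{E}(a_s^2-a_{t_i}^2\mid\mathcal F_{t_i})|\lesssim(s-t_i)(1+|a_{t_i}|)$, so replacing the centering by $\mathbb{E}((\int a_s\,dW_s)^2\mid\mathcal F_{t_i})$ costs $O_\mathbb{P}(n\Delta_n^2)$; exploiting the same structure (rather than a crude bound) the contributions of $b$ and of the cross terms are likewise $O_\mathbb{P}(n^{-1})$, which is $o_\mathbb{P}((\tfrac1n)^{\beta(2-\alpha)})$ as $\beta(2-\alpha)<1$, hence absorbed into $\mathcal{E}_n$; thus $R_n^c=Z_n/\sqrt n+\mathcal{E}_n$. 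For $Z_n$ I would invoke the standard stable CLT for martingale-difference arrays: using $\int_{t_i}^{t_{i+1}}a_s\,dW_s\approx a_{t_i}(W_{t_{i+1}}-W_{t_i})$ up to $L^2$-errors controlled by Lemma~\ref{lemma: Moment inequalities}(1)--(2), one checks $n\sum_i\mathbb{E}(\zeta_{n,i}^2\mid\mathcal F_{t_i})\xrightarrow{\mathbb{P}}2\int_0^T a_s^4f^2(X_s)\,ds$ (from $\mathrm{Var}((W_{t_{i+1}}-W_{t_i})^2\mid\mathcal F_{t_i})=2\Delta_n^2$ and Riemann-sum convergence), the Lyapunov condition $n^2\sum_i\mathbb{E}(\zeta_{n,i}^4\mid\mathcal F_{t_i})\to0$, and the orthogonality conditions against $W$ (odd Gaussian moment) and against bounded martingales orthogonal to $W$. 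This gives $Z_n\xrightarrow{\mathcal L}N\big(0,2\int_0^T a_s^4f^2(X_s)\,ds\big)$ stably with respect to $\mathcal F$, in particular with respect to $X$.

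\emph{Step (ii).} Write $\Delta L_i:=L_{t_{i+1}}-L_{t_i}=\int_{t_i}^{t_{i+1}}\!\int_\mathbb{R} z\,\tilde\mu(ds,dz)$, $g_\zeta(u):=u^2\varphi(\zeta u)$ and $\zeta_i:=\gamma(X_{t_i})n^{\beta-1/\alpha}$ (note $\zeta_i\to0$ since $\beta<\frac12<\frac1\alpha$). First I would \emph{freeze the coefficient and the truncation argument}, replacing $\int_{t_i}^{t_{i+1}}\!\int\gamma(X_{s^-})z\,\tilde\mu(ds,dz)$ and $X_{t_{i+1}}-X_{t_i}$ (inside $\varphi_{\Delta_n^\beta}$) by $\gamma(X_{t_i})\Delta L_i$; the errors are dominated via the Lipschitz property of $\gamma$ and of $\varphi_{\Delta_n^\beta}$ (Lipschitz constant $O(\Delta_n^{-\beta})$), the bound $|\gamma(X_{t_i})\Delta L_i|\le2\Delta_n^\beta$ on the support of $\varphi_{\Delta_n^\beta}(\gamma(X_{t_i})\Delta L_i)$, the fact that the continuous increment is $O_\mathbb{P}(\Delta_n^{1/2})\ll\Delta_n^\beta$, and Lemma~\ref{lemma: Moment inequalities}(4)--(6) (this reduction is of the same nature as the one underlying Theorem~\ref{th: reformulation th T fixed}). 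Next, conditioning on $\mathcal F_{t_i}$ and using the stationary independent increments of $L$,
$$\mathbb{E}\Big[(\gamma(X_{t_i})\Delta L_i)^2\varphi\big(\tfrac{\gamma(X_{t_i})\Delta L_i}{\Delta_n^\beta}\big)\,\Big|\,\mathcal F_{t_i}\Big]=\gamma^2(X_{t_i})\,\Delta_n^{2/\alpha}\,\mathbb{E}\big[g_{\zeta_i}(\Delta_n^{-1/\alpha}L_{\Delta_n})\big],$$
so, since $n^{\beta(2-\alpha)}\Delta_n^{2/\alpha}=(\tfrac1n)^{2/\alpha-\beta(2-\alpha)}$ and $d(\zeta)=\mathbb{E}[g_\zeta(S_1^\alpha)]$, the $\mathcal F_{t_i}$-conditional-mean part of $\tilde Q_n^J$ differs from $\hat Q_n$ only through $\mathbb{E}[g_{\zeta_i}(\Delta_n^{-1/\alpha}L_{\Delta_n})]-\mathbb{E}[g_{\zeta_i}(S_1^\alpha)]$. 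Applying Proposition~\ref{prop: estimation stable} with $g=g_{\zeta_i}$, $h=\Delta_n$, $p=2$, and noting that $g_{\zeta_i}$ is supported on $\{|u|\le2/\zeta_i\}$, so $\|g_{\zeta_i}\|_\infty\asymp\zeta_i^{-2}\asymp n^{2/\alpha-2\beta}$ while $\|g_{\zeta_i}\|_{pol}\asymp1$, the bound \eqref{eq: tesi prop stable}, summed over $i$ and rescaled by the prefactor, has leading term $n^{-\alpha\beta}|\log n|$; after the global factor $(\tfrac1n)^{\beta(2-\alpha)}$ this becomes $n^{-2\beta}|\log n|$, which is $o((\tfrac1n)^{\beta(2-\alpha)})$ always and, since $\frac1{4-\alpha}>\frac14$ (and $\frac1{4-\alpha}\ge\frac1{2+\alpha}$ for $\alpha\ge1$), also $o((\tfrac1n)^{(1-\alpha\beta)\wedge\frac12})$ when $\beta>\frac1{4-\alpha}$. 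Finally the martingale-difference remainder $M_n:=n^{\beta(2-\alpha)}\sum_i f(X_{t_i})\big[(\gamma(X_{t_i})\Delta L_i)^2\varphi(\cdot)-\mathbb{E}(\cdot\mid\mathcal F_{t_i})\big]$ is estimated in $L^2$: using $(\gamma(X_{t_i})\Delta L_i)^4\varphi^2\le4\Delta_n^{2\beta}(\gamma(X_{t_i})\Delta L_i)^2\varphi$ one gets $\mathbb{E}(M_n^2)\lesssim n^{2\beta(2-\alpha)}\,n\,\Delta_n^{2\beta}\,\Delta_n^{2/\alpha}\sup_i d(\zeta_i)\asymp n^{-\alpha\beta}$, hence $(\tfrac1n)^{\beta(2-\alpha)}M_n$ has $L^2$-norm $\asymp n^{-2\beta+\alpha\beta/2}$; this is $o((\tfrac1n)^{\beta(2-\alpha)})$ always, and $o((\tfrac1n)^{(1-\alpha\beta)\wedge\frac12})$ exactly when $2\beta-\frac{\alpha\beta}2>\frac12$, i.e. when $\beta>\frac1{4-\alpha}$ — this is the only place the extra hypothesis enters (a short case analysis on $\alpha\beta\lessgtr\frac12$ handles the two branches of the minimum). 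Combining with Step (i) yields \eqref{eq:tesi teo 2 e 3}.

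\emph{Main obstacle.} I expect the delicate point to be the freezing in Step (ii): replacing the true jump increment $\int_{t_i}^{t_{i+1}}\!\int\gamma(X_{s^-})z\,\tilde\mu(ds,dz)$ — whose law is only approximately a rescaled stable increment — by $\gamma(X_{t_i})\Delta L_i$ simultaneously inside the square and inside the non-linear truncation $\varphi_{\Delta_n^\beta}$, while keeping the accumulated error over the $n\asymp\Delta_n^{-1}$ intervals strictly below $(\tfrac1n)^{\beta(2-\alpha)}$ (and below $(\tfrac1n)^{(1-\alpha\beta)\wedge\frac12}$ when $\beta>\frac1{4-\alpha}$). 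This forces full use of A4 ($\gamma_{\min}>0$, $\gamma$ Lipschitz and bounded, $F(dz)=\bar g(z)|z|^{-1-\alpha}dz$) together with the conditional moment bounds of Lemma~\ref{lemma: Moment inequalities}, and it also explains why the sharpened Proposition~\ref{prop: estimation stable}, with $\|g\|_\infty$ appearing separately from $\|g\|_{pol}$, is indispensable: the test function $g_{\zeta_i}$ has enormous sup-norm ($\asymp n^{2/\alpha-2\beta}$) but bounded polynomial norm, so a bound phrased only through $\|g\|_{pol}$ would be far too lossy.
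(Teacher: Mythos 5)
Your proposal follows essentially the same route as the paper's proof: the same reduction via Theorem \ref{th: estensione Qn} into a martingale CLT part (handled exactly as in the paper through the decomposition $M_n^Q+B_n$ and the Genon-Catalot--Jacod stable limit theorem) and a jump part treated by freezing $\gamma$ (the paper's Proposition \ref{prop: espansione salti}), rescaling to $g_{i,n}$, invoking Proposition \ref{prop: estimation stable} with $\left\|g_{i,n}\right\|_\infty\asymp\Delta_n^{2(\beta-1/\alpha)}$ versus $\left\|g_{i,n}\right\|_{pol}\asymp1$, and an $L^2$ bound on the martingale-difference remainder where the condition $\beta>\frac{1}{4-\alpha}$ enters, just as in the paper. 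The only inaccuracy is minor: for $\alpha>1$ the dominant term of the Proposition \ref{prop: estimation stable} bound after summation is $\Delta_n^{\beta-\epsilon}|\log\Delta_n|$ rather than $\Delta_n^{\alpha\beta}|\log\Delta_n|$, but the required conclusion still holds since $\frac{1}{4-\alpha}>\frac{1}{2(3-\alpha)}$, exactly as the paper's own case analysis shows.
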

We recognize in the expansion \eqref{eq:tesi teo 2 e 3} the statistical error of model without jumps given by $Z_n$, whose variance is equal to the so called quadricity. As said above, the term $\hat{Q}_n$ is a bias term arising from the presence of jumps and given by \eqref{eq: definizione finale hat Q}. From this explicit expression it is possible to remove the bias term (see Section \ref{S: Applications}). \\
The term $\mathcal{E}_n $ is an additional error term that is always negligible compared to the bias deriving from the jump part $(\frac{1}{n})^{\beta(2 - \alpha)}\hat{Q}_n$ (that is of order $(\frac{1}{n})^{\beta(2 - \alpha)}$ by Theorem \ref{th: reformulation th T fixed} below). \\
The bias term admits a first order expansion that does not require the knowledge of the density of $S^\alpha$.
\begin{proposition}
Suppose that A1 - A4 hold and that $\beta \in (0, \frac{1}{2})$ and $\alpha \in (0,2)$ are given in Definition \ref{eq: definition Qn} and in the third point of Assumption 4, respectively. Then 
\begin{equation}
\hat{Q}_n = \frac{1}{n} c_\alpha \sum_{i = 0}^{n-1} f(X_{t_i}) |\gamma(X_{t_i})|^\alpha (\int_\mathbb{R} \varphi(u) | u| ^{1 - \alpha} du) + \tilde{\mathcal{E}}_n,
\label{eq: limite hat Qn con densita}
\end{equation}
with 
\begin{equation}
c_\alpha =
   \begin{cases}
   \frac{\alpha(1 - \alpha)}{4 \Gamma(2 - \alpha) \cos(\frac{\alpha \pi}{2})} \qquad \mbox{if } \alpha \neq 1, \, \alpha <2 \\
   \frac{1}{2 \pi} \qquad \qquad \qquad \quad \mbox{if }  \, \alpha = 1.
   \end{cases}
\label{eq: def calpha}
\end{equation}
$\tilde{\mathcal{E}}_n = o_\mathbb{P}(1)$ and, if $\alpha < \frac{4}{3}$, it is also $n^{\beta(2 - \alpha)} o_\mathbb{P}((\frac{1}{n})^{(1 - \alpha \beta - \tilde{\epsilon}) \land (\frac{1}{2} - \tilde{\epsilon})}) = o_\mathbb{P}((\frac{1}{n})^{(\frac{1}{2} - 2 \beta + \alpha \beta - \tilde{\epsilon})\land(1 - 2 \beta - \tilde{\epsilon})})$.
\label{prop: conv hat Qn}
\end{proposition}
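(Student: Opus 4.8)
The proposition is essentially a deterministic asymptotic statement about the scalar function $d$, transported through a Riemann sum. I would base everything on a small-argument expansion of $d$: for some $\eta>0$ and all $0<|\zeta|\le\eta$, $d(\zeta)=c_\alpha\big(\int_\mathbb{R}\varphi(u)|u|^{1-\alpha}\,du\big)|\zeta|^{-(2-\alpha)}+R(\zeta)$, with $c_\alpha$ as in \eqref{eq: def calpha} and $R$ a remainder controlled below. Granting this, substitute $\zeta=\zeta_i:=\gamma(X_{t_i})n^{\beta-1/\alpha}$ into \eqref{eq: definizione finale hat Q} and use the two identities $\gamma^2(X_{t_i})|\gamma(X_{t_i})|^{-(2-\alpha)}=|\gamma(X_{t_i})|^\alpha$ and $|\zeta_i|^{-(2-\alpha)}=|\gamma(X_{t_i})|^{-(2-\alpha)}n^{(1/\alpha-\beta)(2-\alpha)}$: the factor $(1/n)^{2/\alpha-\beta(2-\alpha)}$ in the definition of $\hat Q_n$ combines with $n^{(1/\alpha-\beta)(2-\alpha)}$ to give exactly $1/n$, producing the leading term of \eqref{eq: limite hat Qn con densita} and leaving $\tilde{\mathcal{E}}_n=(1/n)^{2/\alpha-\beta(2-\alpha)}\sum_{i=0}^{n-1}f(X_{t_i})\gamma^2(X_{t_i})R(\zeta_i)$.

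For the expansion of $d$ I would use the classical behaviour of the symmetric $\alpha$-stable density $g_\alpha$ of $S_1^\alpha$: it is bounded and $C^\infty$, and at infinity $g_\alpha(x)=c_\alpha|x|^{-1-\alpha}+O(|x|^{-1-2\alpha})$, the leading coefficient $c_\alpha$ — including the fact that the $\alpha\neq1$ formula in \eqref{eq: def calpha} has a removable singularity at $\alpha=1$ with limit $1/(2\pi)$ — being read off from the series representation of $g_\alpha$, equivalently from the characteristic function of $S_1^\alpha$ together with the Fourier-transform identity for $|u|^{1-\alpha}$. Writing $d(\zeta)=c_\alpha\int_\mathbb{R}|x|^{1-\alpha}\varphi(x\zeta)\,dx+\int_\mathbb{R}x^2\varphi(x\zeta)\big(g_\alpha(x)-c_\alpha|x|^{-1-\alpha}\big)\,dx=:d_1(\zeta)+d_2(\zeta)$, the change of variables $u=x\zeta$ in $d_1$ gives $d_1(\zeta)=c_\alpha|\zeta|^{-(2-\alpha)}\int_\mathbb{R}|u|^{1-\alpha}\varphi(u)\,du$ exactly (the integral is finite since $1-\alpha>-1$ and $\varphi$ has compact support); in $d_2$ the integrand is $O(|x|^{1-\alpha})$ near $0$ and $O(|x|^{1-2\alpha})$ on the support $\{|x\zeta|\le2\}$, so $R(\zeta)=d_2(\zeta)$ is $O(1)$ for $\alpha>1$, $O(|\log|\zeta||)$ for $\alpha=1$, and $O(|\zeta|^{-(2-2\alpha)})$ for $\alpha<1$ — in every case of strictly smaller order than the main term. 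The value $\alpha=1$ can alternatively be treated directly from the explicit Cauchy density.

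It remains to estimate $\tilde{\mathcal{E}}_n$. Since $\gamma_{\min}\le|\gamma|\le\gamma_{\max}$ (points $1$ and $5$ of \textbf{A4}) and $\beta<\tfrac12<\tfrac1\alpha$, the $|\zeta_i|$ all lie in an interval comparable to $n^{\beta-1/\alpha}\to0$, so the expansion applies uniformly in $i$ and $\sup_i|R(\zeta_i)|\le C\,n^{((1/\alpha-\beta)(2-2\alpha))\vee0}(\log n)^{1_{\{\alpha=1\}}}$. Bounding $|f(X_{t_i})\gamma^2(X_{t_i})|\le C(1+|X_{t_i}|^p)$ and using Lemma \ref{lemma: Moment inequalities} (so that $\tfrac1n\sum_i(1+|X_{t_i}|^p)=O_\mathbb{P}(1)$) gives $|\tilde{\mathcal{E}}_n|\le C\,n^{-(2/\alpha-\beta(2-\alpha))}\cdot n\cdot\sup_i|R(\zeta_i)|\cdot O_\mathbb{P}(1)$, whose deterministic exponent equals $-1+\alpha\beta$ when $\alpha\le1$ and $1-\tfrac2\alpha+\beta(2-\alpha)$ when $1\le\alpha<2$; both are strictly negative for every $\alpha\in(0,2)$, so $\tilde{\mathcal{E}}_n=o_\mathbb{P}(1)$, the logarithm and the slack being absorbed into $\tilde\epsilon$. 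Comparing this exponent with $\beta(2-\alpha)-\big((1-\alpha\beta)\wedge\tfrac12\big)$ (the exponent needed so that, after multiplication by $n^{\beta(2-\alpha)}$, $\tilde{\mathcal{E}}_n$ is $o_\mathbb{P}$ of the rate appearing in Theorem \ref{th: 2 e 3 insieme}), the gain suffices as soon as $\tfrac2\alpha-1\ge\tfrac12$, i.e. $\alpha\le\tfrac43$; in that case $\tilde{\mathcal{E}}_n=o_\mathbb{P}\big((1/n)^{(\frac12-2\beta+\alpha\beta-\tilde\epsilon)\wedge(1-2\beta-\tilde\epsilon)}\big)$, which is the stated conclusion.

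The main obstacle is the second step: establishing the expansion of $d(\zeta)$ with the \emph{sharp} remainder, uniformly as $\zeta\to0$. The delicate regions are the neighbourhood of $x=0$ in the integral defining $d$ — where the tail approximation $g_\alpha(x)\approx c_\alpha|x|^{-1-\alpha}$ is useless and one must use boundedness and the Taylor expansion of $g_\alpha$ at $0$ instead — and the transition zone $1\le|x\zeta|\le2$, where $\varphi$ is neither $0$ nor $1$ and the symmetry of $g_\alpha$ and of $\varphi$ must be used with care; it is precisely the exponent of $R$ obtained there (and the logarithm at $\alpha=1$) that produces the threshold $\alpha<\tfrac43$. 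By contrast, the algebraic collapse of the powers of $n$ and the polynomial-growth control of the weights $f(X_{t_i})\gamma^2(X_{t_i})$ via Lemma \ref{lemma: Moment inequalities} are routine.
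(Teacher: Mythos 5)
Your proposal is correct and follows essentially the same route as the paper: the paper isolates the same first-order expansion of $d(\zeta)$ as $\zeta\to 0$ in its Lemma \ref{lemma: dl d} (using the second-order tail asymptotics of the stable density, with the only cosmetic difference that it changes variables first and splits the $u$-integral at $|u|\le \zeta_n M$ rather than splitting the density into $c_\alpha|x|^{-1-\alpha}$ plus a globally controlled correction), and then substitutes into \eqref{eq: definizione finale hat Q} and bounds the two remainder sums exactly as you do, with the same exponent arithmetic producing $o_\mathbb{P}(1)$ for all $\alpha\in(0,2)$ and the threshold $\alpha<\frac{4}{3}$ for the sharper rate. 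Your remainder bounds $O(1)+O(|\zeta|^{2\alpha-2})$ (with a logarithm at $\alpha=1$) match the paper's $o(|\zeta_n|^{-\hat\epsilon}+|\zeta_n|^{2\alpha-2-\hat\epsilon})$, and your uniformity-in-$i$ argument via $\gamma_{\min}\le|\gamma|\le\gamma_{\max}$ correctly handles the point the paper treats through the $R_i$ notation.
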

We have not replaced directly the right hand side of \eqref{eq: limite hat Qn con densita} in \eqref{eq:tesi teo 2 e 3}, observing that $(\frac{1}{n})^{\beta(2 - \alpha)} \tilde{\mathcal{E}}_n = \mathcal{E}_n$, because $(\frac{1}{n})^{\beta(2 - \alpha)} \tilde{\mathcal{E}}_n$ is always $o_\mathbb{P}((\frac{1}{n})^{\beta(2 - \alpha)})$ but to get it is also $o_\mathbb{P}((\frac{1}{n})^{(1 - \alpha \beta - \tilde{\epsilon}) \land (\frac{1}{2} - \tilde{\epsilon})})$ the additional condition $\alpha < \frac{4}{3}$ is required. \\
Proposition \ref{prop: conv hat Qn} provides the contribution of the jumps in detail, identifying a main term. Recalling we are dealing with some bias, it comes naturally to look for some conditions to make it equal to zero and to study its asymptotic behaviour in order to remove its limit.
\begin{corollary}
Suppose that A1 - A4 hold and that $\alpha \in (0, \frac{4}{3})$, $\beta \in (\frac{1}{4 - \alpha}$, $(\frac{1}{2 \alpha} \land \frac{1}{2}))$. If $\varphi$ is such that $\int_\mathbb{R} | u| ^{1 - \alpha}\varphi(u) du = 0$ then, $\forall \tilde{\epsilon} > 0$, 
\begin{equation}
Q_n - \frac{1}{n} \sum_{i = 0}^{n - 1} f(X_{t_i}) a^2_{t_i} = \frac{Z_n}{\sqrt{n}}+ o_\mathbb{P}((\frac{1}{n})^{\frac{1}{2} -\tilde{\epsilon}}),
\label{eq: eq per le appli}
\end{equation}with $Z_n$ defined as in Theorem \ref{th: 2 e 3 insieme} here above.
\label{cor: cond rimozione rumore}
\end{corollary}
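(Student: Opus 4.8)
The plan is to derive \eqref{eq: eq per le appli} by chaining Theorem \ref{th: 2 e 3 insieme} with Proposition \ref{prop: conv hat Qn}, exploiting the cancellation produced by the assumption $\int_\mathbb{R}|u|^{1-\alpha}\varphi(u)\,du=0$, and then checking that every remainder term is $o_\mathbb{P}((\frac1n)^{\frac12-\tilde\epsilon})$ under the prescribed range of $(\alpha,\beta)$. Since $\beta\in(\frac{1}{4-\alpha},\frac{1}{2\alpha}\wedge\frac12)$ is contained in $(0,\frac12)$ and in particular satisfies $\beta>\frac{1}{4-\alpha}$, Theorem \ref{th: 2 e 3 insieme} applies with its sharp remainder bound and gives
\[
Q_n - \frac{1}{n}\sum_{i=0}^{n-1} f(X_{t_i})a_{t_i}^2 \;=\; \frac{Z_n}{\sqrt n} \;+\; \Big(\frac1n\Big)^{\beta(2-\alpha)}\hat Q_n \;+\; \mathcal{E}_n ,
\]
where $\mathcal{E}_n = o_\mathbb{P}\big((\frac1n)^{(1-\alpha\beta-\tilde\epsilon)\wedge(\frac12-\tilde\epsilon)}\big)$ for every $\tilde\epsilon>0$, and $Z_n \xrightarrow{\mathcal{L}} N(0,2\int_0^T a_s^4 f^2(X_s)\,ds)$ stably with respect to $X$.

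Next I would insert the expansion of $\hat Q_n$ from Proposition \ref{prop: conv hat Qn}. The key point is that the hypothesis $\int_\mathbb{R}|u|^{1-\alpha}\varphi(u)\,du=0$ annihilates the leading term in \eqref{eq: limite hat Qn con densita}, so that $\hat Q_n = \tilde{\mathcal{E}}_n$. Because $\alpha<\frac43$, Proposition \ref{prop: conv hat Qn} furnishes the sharp control $\tilde{\mathcal{E}}_n = n^{\beta(2-\alpha)} o_\mathbb{P}\big((\frac1n)^{(1-\alpha\beta-\tilde\epsilon)\wedge(\frac12-\tilde\epsilon)}\big)$, hence
\[
\Big(\frac1n\Big)^{\beta(2-\alpha)}\hat Q_n = \Big(\frac1n\Big)^{\beta(2-\alpha)}\tilde{\mathcal{E}}_n = o_\mathbb{P}\Big(\big(\tfrac1n\big)^{(1-\alpha\beta-\tilde\epsilon)\wedge(\frac12-\tilde\epsilon)}\Big).
\]
Combining the two displays, everything on the right-hand side beyond $Z_n/\sqrt n$ is $o_\mathbb{P}\big((\frac1n)^{(1-\alpha\beta-\tilde\epsilon)\wedge(\frac12-\tilde\epsilon)}\big)$.

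It then only remains to simplify the exponent. The upper constraint $\beta<\frac{1}{2\alpha}$, i.e.\ $\alpha\beta<\frac12$, i.e.\ $1-\alpha\beta>\frac12$, forces $1-\alpha\beta-\tilde\epsilon>\frac12-\tilde\epsilon$ for every $\tilde\epsilon>0$, so the minimum equals $\frac12-\tilde\epsilon$ and \eqref{eq: eq per le appli} follows, with the same $Z_n$ as in Theorem \ref{th: 2 e 3 insieme}. The only part requiring care — rather than a genuine difficulty — is the bookkeeping of the constraints: one checks that the range $\alpha\in(0,\frac43)$, $\beta\in(\frac{1}{4-\alpha},\frac{1}{2\alpha}\wedge\frac12)$ is nonempty (equivalently $\frac{1}{4-\alpha}<\frac{1}{2\alpha}$, i.e.\ $\alpha<\frac43$), that $\beta>\frac{1}{4-\alpha}$ is precisely what activates the sharp remainder in Theorem \ref{th: 2 e 3 insieme}, that $\alpha<\frac43$ is what activates the sharp remainder in Proposition \ref{prop: conv hat Qn}, and that $\alpha\beta<\frac12$ is exactly what collapses $(1-\alpha\beta-\tilde\epsilon)\wedge(\frac12-\tilde\epsilon)$ to $\frac12-\tilde\epsilon$. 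No new estimate is needed beyond the two cited results.
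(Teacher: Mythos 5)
Your proposal is correct and follows essentially the same route as the paper: the corollary is obtained by combining Theorem \ref{th: 2 e 3 insieme} (whose sharp remainder is activated by $\beta>\frac{1}{4-\alpha}$) with Proposition \ref{prop: conv hat Qn} (whose sharp remainder is activated by $\alpha<\frac{4}{3}$), using the vanishing of $\int_\mathbb{R}|u|^{1-\alpha}\varphi(u)\,du$ to reduce $\hat Q_n$ to $\tilde{\mathcal E}_n$, and using $\beta<\frac{1}{2\alpha}$ to collapse the exponent to $\frac12-\tilde\epsilon$. The paper's own proof is just a more terse version of this same bookkeeping.
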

It is always possible to build a function $\varphi$ for which the condition here above is respected (see Section \ref{S: Applications}). \\
We have supposed $\alpha < \frac{4}{3}$ in order to say that the error we commit identifying the contribution of the jumps as the first term in the right hand side of \eqref{eq: limite hat Qn con densita} is always negligible compared to the statistical error.
Moreover, taking $\beta < \frac{1}{2 \alpha}$ we get $1 - \alpha \beta > \frac{1}{2}$ and therefore also the bias studied in Lemma \ref{lemma: brownian increments} becomes upper bounded by a quantity which is roughly $o_\mathbb{P}(\frac{1}{\sqrt{n}})$. \\
Equation \eqref{eq: eq per le appli} gives us the behaviour of the unbiased estimator, that is the truncated quadratic variation after having removed the noise derived from the presence of jumps.
 Taking $\alpha$ and $\beta$ as discussed above we have, in other words, reduced the error term $\mathcal{E}_n$ to be $o_\mathbb{P}((\frac{1}{n})^{\frac{1}{2} -\tilde{\epsilon}})$, which is roughly the same size as the statistical error.

We observe that, if $\alpha \ge \frac{4}{3}$ but $\gamma = k \in \mathbb{R}$, the result still holds if we choose $\varphi$ such that
$$\int_\mathbb{R} u^{2}\varphi(u)  \, f_\alpha (\frac{1}{k}u (\frac{1}{n})^{ \beta - \frac{1}{\alpha}})du =0,$$
where $f_\alpha$ is the density of the $\alpha$-stable process. 
Indeed, following \eqref{eq: definizione finale hat Q}, the jump bias $\hat{Q}_n$ is now defined as 
$$ (\frac{1}{n})^{\frac{2}{\alpha} - \beta(2 - \alpha)} \sum_{i = 0}^{n-1} f(X_{t_i}) k^2 d(k \, n^{ \beta - \frac{1}{\alpha}}) = (\frac{1}{n})^{\frac{2}{\alpha} - \beta(2 - \alpha)} \sum_{i = 0}^{n-1} f(X_{t_i}) k^2 \int_\mathbb{R} z^2 \varphi(z k (\frac{1}{n})^{\frac{1}{\alpha} - \beta})f_\alpha(z) dz = $$
$$= (\frac{1}{n})^{\frac{2}{\alpha} - \beta(2 - \alpha)} \sum_{i = 0}^{n-1} f(X_{t_i}) k^2 (\frac{1}{n})^{3 (\beta - \frac{1}{\alpha})} \frac{1}{k^3} \int_\mathbb{R} u^{2}\varphi(u)  \, f_\alpha (\frac{1}{k}u (\frac{1}{n})^{ \beta - \frac{1}{\alpha}})du =0,$$
where we have used a change of variable. \\
\\
Another way to construct an unbiased estimator is to study how the main bias detailed in \eqref{eq: limite hat Qn con densita} asymptotically behaves and to remove it from the original estimator.
\begin{theorem}
Suppose that A1 - A4 hold. Then, as $n \rightarrow \infty$,
\begin{equation}
{\hat{Q}}_n \overset{\mathbb{P}}{\rightarrow} c_\alpha \int_\mathbb{R} \varphi(u) |u|^{1 - \alpha} du \int_0^T |\gamma(X_s)|^\alpha f(X_s) ds.
\label{eq: convergenza hat Q tempo corto}
\end{equation}
Moreover
\begin{equation}
Q_n - IV = \frac{Z_n}{\sqrt{n}}+ (\frac{1}{n})^{\beta(2 - \alpha)}c_\alpha \int_\mathbb{R} \varphi(u) |u|^{1 - \alpha} du \int_0^T |\gamma(X_s)|^\alpha f(X_s) ds + o_\mathbb{P}((\frac{1}{n})^{\beta(2 - \alpha)}),
\label{eq: tesi finale tempo corto}
\end{equation}
where $Z_n \xrightarrow{\mathcal{L}} N(0, 2 \int_0^T a^4_s f^2(X_s) ds)$ stably with respect to $X$.
\label{th: reformulation th T fixed}
\end{theorem}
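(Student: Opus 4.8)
The plan is to obtain Theorem~\ref{th: reformulation th T fixed} by assembling the earlier results: the decompositions of Theorem~\ref{th: estensione Qn} and Theorem~\ref{th: 2 e 3 insieme}, the first‑order expansion of Proposition~\ref{prop: conv hat Qn}, and a Riemann‑sum argument that turns the discrete averages appearing there into Lebesgue integrals over $[0,1]$.

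\emph{Step 1 (a Riemann‑sum estimate, and \eqref{eq: convergenza hat Q tempo corto}).} I first prove that for every continuous $h:\mathbb{R}\to\mathbb{R}$ whose local Lipschitz constant grows at most polynomially one has $\frac1n\sum_{i=0}^{n-1}h(X_{t_i})\overset{\mathbb{P}}{\rightarrow}\int_0^1 h(X_s)\,ds$. Writing the difference as $\sum_{i}\int_{t_i}^{t_{i+1}}\big(h(X_{t_i})-h(X_s)\big)\,ds$, bounding $|h(x)-h(y)|\le C(1+|x|^m+|y|^m)|x-y|$, and applying Cauchy--Schwarz together with the uniform moment bound $\sup_t\mathbb{E}[|X_t|^q]<\infty$ and the increment bound $\mathbb{E}[|X_t-X_s|^2]^{1/2}\le c|t-s|^{1/2}$ of Lemma~\ref{lemma: Moment inequalities}, the $L^1$‑norm of this difference is $O(n^{-1/2})\to 0$. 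Since $\gamma$ is Lipschitz and, by A4(1),(5), takes values in a compact subset of $\mathbb{R}\setminus\{0\}$, the map $x\mapsto|\gamma(x)|^{\alpha}$ is Lipschitz, so $h:=f\,|\gamma|^{\alpha}$ is admissible and $\frac1n\sum_i f(X_{t_i})|\gamma(X_{t_i})|^{\alpha}\overset{\mathbb{P}}{\rightarrow}\int_0^1 f(X_s)|\gamma(X_s)|^{\alpha}\,ds$. Combining with Proposition~\ref{prop: conv hat Qn}, whose remainder is $o_\mathbb{P}(1)$, yields
\[
\hat Q_n\overset{\mathbb{P}}{\rightarrow}L:=c_\alpha\Big(\int_\mathbb{R}\varphi(u)|u|^{1-\alpha}\,du\Big)\int_0^1 f(X_s)|\gamma(X_s)|^{\alpha}\,ds,
\]
which is \eqref{eq: convergenza hat Q tempo corto}; in particular $\hat Q_n=L+o_\mathbb{P}(1)$.

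\emph{Step 2 (the expansion \eqref{eq: tesi finale tempo corto}).} Since $(\tfrac1n)^{\beta(2-\alpha)}o_\mathbb{P}(1)=o_\mathbb{P}\big((\tfrac1n)^{\beta(2-\alpha)}\big)$, inserting $\hat Q_n=L+o_\mathbb{P}(1)$ into \eqref{eq:tesi teo 2 e 3} gives $Q_n-\tfrac1n\sum_{i}f(X_{t_i})a^2_{t_i}=\tfrac{Z_n}{\sqrt n}+(\tfrac1n)^{\beta(2-\alpha)}L+o_\mathbb{P}\big((\tfrac1n)^{\beta(2-\alpha)}\big)$, with $Z_n\xrightarrow{\mathcal{L}}N(0,2\int_0^T a^4_s f^2(X_s)\,ds)$ stably. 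It then remains to replace the discretized volatility by $IV=\int_0^1 f(X_s)a^2_s\,ds$. The error $R_n:=\tfrac1n\sum_i f(X_{t_i})a^2_{t_i}-IV=\sum_i\int_{t_i}^{t_{i+1}}\big(f(X_{t_i})a^2_{t_i}-f(X_s)a^2_s\big)\,ds$ is handled exactly as in Step~1: splitting $f(X_{t_i})a^2_{t_i}-f(X_s)a^2_s$ into a volatility‑increment part (controlled by Lemma~\ref{lemma: Moment inequalities}(1), $\mathbb{E}|a_t-a_s|^2\le c|t-s|$) and an $X$‑increment part gives $\mathbb{E}|R_n|=O(n^{-1/2})$, hence $R_n=O_\mathbb{P}(n^{-1/2})$. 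When $\beta(2-\alpha)<\tfrac12$ — the regime in which the jump term is the dominant correction — this is $o_\mathbb{P}\big((\tfrac1n)^{\beta(2-\alpha)}\big)$ and may be absorbed, which yields \eqref{eq: tesi finale tempo corto}. For the complementary range $\beta(2-\alpha)\ge\tfrac12$ I would instead start from \eqref{eq: estensione Qn}: there $\sum_i f(X_{t_i})(\int_{t_i}^{t_{i+1}}a_s\,dW_s)^2-IV$ is the $f$‑weighted realized‑volatility error of the continuous part of $X$, which by the classical stable central limit theorem equals $Z_n/\sqrt n$ with the announced limit, while $(\tfrac1n)^{\beta(2-\alpha)}\tilde Q_n^J=(\tfrac1n)^{\beta(2-\alpha)}L+o_\mathbb{P}\big((\tfrac1n)^{\beta(2-\alpha)}\big)$ follows from $\hat Q_n\to L$ together with $\tilde Q_n^J-\hat Q_n=o_\mathbb{P}(1)$; the latter is either read off (when $\beta(2-\alpha)<\tfrac12$) from comparing the two decompositions, which forces $(\tfrac1n)^{\beta(2-\alpha)}(\tilde Q_n^J-\hat Q_n)=O_\mathbb{P}(n^{-1/2})+o_\mathbb{P}\big((\tfrac1n)^{\beta(2-\alpha)}\big)$, or, in general, obtained directly by a law‑of‑large‑numbers estimate: one replaces each $J_i^2\varphi_{\Delta_{n}^\beta}(\Delta X_i)$, with $J_i:=\int_{t_i}^{t_{i+1}}\int\gamma(X_{s^-})z\,\tilde{\mu}(ds,dz)$, by its $\mathcal{F}_{t_i}$‑conditional expectation — which reconstructs $\hat Q_n$ up to the stable approximation of Proposition~\ref{prop: estimation stable} — the centered remainder being $O_\mathbb{P}(n^{-\alpha\beta/2})$ by a variance computation.

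\emph{Main obstacle.} The two Riemann‑sum estimates are routine given Lemma~\ref{lemma: Moment inequalities} and the regularity in A2/A4. The delicate point is purely the bookkeeping of orders at the scale $(\tfrac1n)^{\beta(2-\alpha)}$: one must verify that both substitutions, $\tfrac1n\sum_i f(X_{t_i})a^2_{t_i}\rightsquigarrow IV$ and $\tilde Q_n^J\rightsquigarrow L$, remain legitimate for \emph{all} admissible $(\alpha,\beta)$, which — since the discrepancies involved are only $O_\mathbb{P}(n^{-1/2})$ — requires selecting, according to whether $\beta(2-\alpha)$ lies below or above $\tfrac12$, the decomposition \eqref{eq:tesi teo 2 e 3} or \eqref{eq: estensione Qn} that is already centered at the right quantity, and using that the $Z_n$ of Theorem~\ref{th: 2 e 3 insieme} and the realized‑volatility error of the continuous part of $X$ share the same stable Gaussian limit $N(0,2\int_0^T a^4_s f^2(X_s)\,ds)$. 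I expect this reconciliation, rather than any single estimate, to be where care is needed.
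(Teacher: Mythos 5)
Your architecture matches the paper's (combine the decomposition \eqref{eq:tesi teo 2 e 3} with Proposition \ref{prop: conv hat Qn} and a control of the discretization error $\frac1n\sum_i f(X_{t_i})a^2_{t_i}-IV$), and Step 1, including \eqref{eq: convergenza hat Q tempo corto}, is fine. The gap is in Step 2. Your Cauchy--Schwarz bound $\mathbb{E}|R_n|=O(n^{-1/2})$ is genuinely insufficient on the nonempty regime $\beta(2-\alpha)\ge\frac12$ (e.g. $\alpha=\frac12$, $\beta=0.4$), and the fallback you propose there begs the question: writing $\sum_i f(X_{t_i})(\int_{t_i}^{t_{i+1}}a_s\,dW_s)^2-IV=M_n^Q+\sum_i\int_{t_i}^{t_{i+1}}(f(X_{t_i})-f(X_s))a_s^2\,ds$, the ``classical stable CLT'' applies to the martingale part $M_n^Q$ only; identifying the whole difference with $Z_n/\sqrt n+o_\mathbb{P}((\frac1n)^{\beta(2-\alpha)})$ still requires the second (Riemann-sum) term to be $o_\mathbb{P}((\frac1n)^{\beta(2-\alpha)})$, which is exactly the estimate your $O(n^{-1/2})$ bound fails to deliver in that regime. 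The case split relocates the difficulty rather than resolving it. (Also note that $\tilde Q_n^J=\hat Q_n+o_\mathbb{P}(1)$ is \eqref{eq:secondo punto teo 2 3}, established inside the proof of Theorem \ref{th: 2 e 3 insieme} with no restriction on $(\alpha,\beta)$, so no new law-of-large-numbers argument is needed for that substitution; likewise \eqref{eq:tesi teo 2 e 3} with $\mathcal{E}_n=o_\mathbb{P}((\frac1n)^{\beta(2-\alpha)})$ holds unconditionally.)

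The missing idea — and what the paper does — is to exploit the martingale structure of the discretization error instead of taking absolute values. Write $a_s^2-a_{t_i}^2=2a_{t_i}(a_s-a_{t_i})+(a_s-a_{t_i})^2$ and $f(X_s)-f(X_{t_i})=f'(X_{t_i})(X_s-X_{t_i})+\frac12 f''(\tilde X)(X_s-X_{t_i})^2$, substitute the It\^o-semimartingale dynamics of $a$ and $X$ into the first-order terms, and observe that each resulting term $\int_{t_i}^{t_{i+1}}\bigl(\int_{t_i}^{s}\cdots\,dW_u\bigr)ds$ (and its compensated-Poisson analogues) has zero $\mathcal{F}_{t_i}$-conditional mean and conditional second moment $O(\Delta_n^3)$, so the conditional variances sum to $O(\Delta_n^2)$, while the drift terms and the quadratic remainders contribute $O(\Delta_n^2)$ per interval in $L^1$. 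Lemma 9 of \cite{Genon Catalot} then yields that the entire error $\frac1n\sum_i f(X_{t_i})a^2_{t_i}-IV$ is $o_\mathbb{P}((\frac1n)^{\beta(2-\alpha)})$ — in fact essentially $O_\mathbb{P}(\Delta_n)$ — uniformly over $\beta\in(0,\frac12)$ and $\alpha\in(0,2)$, with no case split. With that replacement your Step 2 goes through directly from \eqref{eq:tesi teo 2 e 3}.
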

It is worth noting that, in both \cite{Condition Jacod beta} and \cite{Mancini}, the integrated volatility estimation in short time is dealt and they show that the truncated quadratic variation has rate $\sqrt{n}$ if $\beta > \frac{1}{2(2 - \alpha)}$. \\
We remark that the jump part is negligible compared to the statistic error if $n^{- 1} < n^{- \frac{1}{2 \beta(2 - \alpha)}}$ and so $\beta > \frac{1}{2(2 - \alpha)}$, that is the same condition given in the literature. \\
However, if we take $(\alpha, \beta)$ for which such a condition doesn't hold, we can still use that we know in detail the noise deriving from jumps to implement corrections that still make the unbiased estimator well-performed (see Section \ref{S: Applications}). \\
\\
We require the activity $\alpha$ to be known, for conducting bias correction. If it is unknown, we need to estimate it previously (see for example the methods proposed by Todorov in \cite{alpha} and by Mies in \cite{Fabien alpha}). Then, a question could be how the estimation error in $\alpha$ would affect the rate of the bias-corrected estimator. We therefore assume that $\hat{\alpha}_n = \alpha + O_\mathbb{P} (a_n)$, for some rate sequence $a_n$. Replacing $\hat{\alpha}_n$ in \eqref{eq: tesi finale tempo corto} it turns out that the error derived from the estimation of $\alpha$ does not affect the correction if $a_n (\frac{1}{n})^{\beta(2 - \alpha)} < (\frac{1}{n})^\frac{1}{2}$, which means that $a_n$ has to be smaller than $(\frac{1}{n})^{\frac{1}{2}- \beta(2 - \alpha)}$. We recall that $\beta \in (0, \frac{1}{2})$ and $\alpha \in (0,2)$. Hence, such a condition is not a strong requirement and it becomes less and less restrictive when $\alpha$ gets smaller or $\beta$ gets bigger.

\section{Unbiased estimation in the case of constant volatility} \label{S: Applications}
In this section we consider a concrete application of the unbiased volatility estimator in a jump diffusion model and we investigate its numerical performance. \\
We consider our model \eqref{eq: model} in which we assume, in addition, that the functions $a$ and $\gamma$ are both constants. \\
Suppose that we are given a discrete sample $X_{t_0}, ... , X_{t_n}$ with $t_i = i \Delta_n = \frac{i}{n}$ for $i =0, ... , n$. \\
We now want to analyze the estimation improvement; to do it we compare the classical error committed using the truncated quadratic variation with the unbiased estimation derived by our main results. \\
We define the estimator we are going to use, in which we have clearly taken $f \equiv 1$ and we have introduced a threshold $k$ in the function $\varphi$, so it is 
\begin{equation}
Q_n =  \sum_{i = 0}^{n - 1}(X_{t_{i + 1}} - X_{t_i})^2\varphi_{k \Delta_{n}^\beta}(X_{t_{i + 1}} - X_{t_i}).
\label{eq: Qn applications}
\end{equation}
If normalized, the error committed estimating the volatility is $E_1 : = (Q_n - \sigma^2) \sqrt{n}$. \\
We start from \eqref{eq: limite hat Qn con densita} that in our case, taking into account the presence of $k$, is
\begin{equation}
\hat{Q}_n = c_\alpha \gamma^\alpha k^{2 - \alpha}(\int_\mathbb{R} \varphi(u) |u|^{1 - \alpha} du ) + \tilde{\mathcal{E}}_n.
\label{eq: hatQn applications}
\end{equation}
We now get different methods to make the error smaller. \\
First of all we can replace \eqref{eq: hatQn applications} in \eqref{eq:tesi teo 2 e 3} and so we can reduce the error by subtracting a correction term, building the new estimator $Q_n^c : = Q_n - (\frac{1}{n})^{\beta (2 - \alpha)} c_\alpha \gamma^\alpha k^{2 - \alpha}(\int_\mathbb{R} \varphi(u) |u|^{1 - \alpha} du )$. The error committed estimating the volatility with such a corrected estimator is $E_2 : = (Q_n^c - \sigma^2) \sqrt{n}$. \\
Another approach consists of taking a particular function $\tilde{\varphi}$ that makes the main contribution of $\hat{Q}_n$ equal to $0$. We define $\tilde{\varphi}(\zeta) = \varphi(\zeta) + c \psi(\zeta)$, with $\psi$ a $\mathcal{C}^\infty$ function such that $\psi(\zeta) = 0$ for each $\zeta$, $|\zeta| \ge 2$ or $|\zeta | \le 1$. In this way, for any $c \in \mathbb{R} \setminus \left \{ 0\right \}$, $\tilde{\varphi}$ is still a smooth version of the indicator function such that  $\tilde{\varphi}(\zeta) = 0$ for each $\zeta$,  $|\zeta| \ge 2$ and $\tilde{\varphi}(\zeta) = 1$ for each $\zeta$,  $|\zeta| \le 1$. We can therefore leverage the arbitrariness in $c$ to make the main contribution of $\hat{Q}_n$ equal to zero, choosing $\tilde{c} := - \frac{\int_\mathbb{R} \varphi(u)|u|^{1 - \alpha} du }{\int_\mathbb{R} \psi(u)|u|^{1 - \alpha} du}$, which is such that $\int_\mathbb{R} (\varphi + \tilde{c}\psi(u))|u|^{1 - \alpha} du =0$. \\
Hence, it is possible to achieve an improved estimation of the volatility by used the truncated quadratic variation $Q_{n,c} : = \sum_{i = 0}^{ n - 1} (X_{t_{i + 1}} - X_{t_i})^2 (\varphi + \tilde{c}\psi)(\frac{X_{t_{i + 1}} - X_{t_i}}{ k \Delta_{n}^{\beta}}) $. To make it clear we will analyze the quantity $E_3 : = (Q_{n,c} - \sigma^2) \sqrt{n}$. \\
Another method widely used in numerical analysis to improve the rate of convergence of a sequence is the so-called Richardson extrapolation. We observe that the first term on the right hand side of \eqref{eq: hatQn applications} does not depend on $n$ and so we can just write $\hat{Q}_n = \hat{Q} + \tilde{\mathcal{E}}_n$. Replacing it in \eqref{eq:tesi teo 2 e 3} we get
$$Q_n = \sigma^2 + \frac{Z_n}{\sqrt{n}} + \frac{1}{n^{\beta(2 - \alpha)}} \hat{Q} + \mathcal{E}_n \qquad \mbox{and}$$
$$Q_{2n} = \sigma^2 + \frac{Z_{2n}}{\sqrt{2n}} + \frac{1}{2^{\beta(2 - \alpha)}} \frac{1}{n^{\beta(2 - \alpha)}} \hat{Q} + \mathcal{E}_{2n},$$
where we have also used that $(\frac{1}{n})^{\beta(2 - \alpha)} \tilde{\mathcal{E}}_n = \mathcal{E}_n$. We can therefore use $\frac{Q_n - 2^{\beta (2 - \alpha )} Q_{2 n}}{1 - 2^{\beta(2 - \alpha)}}$ as improved estimator of $\sigma^2$. \\
We give simulation results for $E_1$, $E_2$ and $E_3$ in the situation where $\sigma=1$. The given mean and the deviation standard are each based on 500 Monte Carlo samples. We choose to simulate a tempered stable process (that is $F$ satisfies $F(dz) = \frac{e^{-|z|}}{|z|^{1 + \alpha}}$) in the case $\alpha < 1$ while, in the interest of computational efficiency, we will exhibit results gained from the simulation of a stable L\'evy process in the case $\alpha \ge 1$ ($F(dz) = \frac{1}{|z|^{1 + \alpha}}$). \\
We have taken the smooth functions $\varphi$ and $\psi$ as below:
\begin{equation}
\varphi(x) =
    \begin{cases}
    1 \qquad \mbox{if } |x| < 1 \\
    e^{\frac{1}{3} + \frac{1}{|x|^2 - 4}} \qquad \mbox{if } 1 \le |x| < 2 \\
    0 \qquad \mbox{if } |x| \ge 2
    \end{cases}
\end{equation}
\begin{equation}
\psi_M(x) =
    \begin{cases}
    0 \qquad \quad \mbox{if   } |x| \le 1 \mbox{ or } |x| \ge M \\
    e^{\frac{1}{3} + \frac{1}{|3 - x|^2 - 4}} \qquad \mbox{if } 1 < |x| \le \frac{3}{2} \\
    e^{\frac{1}{|x|^2 -M} - \frac{5}{21} + \frac{4}{4M^2 - 9}} \qquad \mbox{if } \frac{3}{2} < |x| < M ;
    \end{cases}
\end{equation}
choosing opportunely the constant $M$ in the definition of $\psi_M$ we can make its decay slower or faster. We observe that the theoretical results still hold even if the support of $\tilde{\varphi}$ changes as $M$ changes and so it is $[-M, M]$ instead of [-2, 2].  \\ 
Concerning the constant $k$ in the definition of $\varphi$, we fix it equal to $3$ in the simulation of the tempered stable process, while its value is $2$ in the case $\alpha > 1$, $\beta = 0.2$ and, in the case $\alpha > 1$ and $\beta = 0.49$, it increases as $\alpha $ and $\gamma$ increase. \\
The results of the simulations are given in columns 3-6 of Table \ref{tab: beta 0.2} for $\beta = 0.2$ and in columns 3-6 of Table \ref{tab: beta 0.49} for $\beta = 0.49$. \\
\begin{table}[h]
    \begin{subtable}[h]{0.45\textwidth}
        \centering
        \begin{tabular}{||c|c|c|c|c|c||}
        \hline
        \multicolumn{1}{||c|}{\textbf{$\alpha$}}  &
        \multicolumn{1}{c|}{\textbf{$\gamma$}}  &
        \multicolumn{1}{c|}{\textbf{Mean}}  &
        \multicolumn{1}{c|}{\textbf{Rms}}  &
        \multicolumn{1}{c|}{\textbf{Mean}}  &
        \multicolumn{1}{c||}{\textbf{Mean}}  \\
            &   & $E_1$ & $E_1$ &  $E_2$ & $E_3$ \\
        \hline \hline
        0.1 & 1 & 3.820 & 3.177 & 0.831 & 0.189 \\
            & 3 & 5.289 & 3.388 & 1.953 & -0.013 \\ 
        0.5 & 1 & 15.168 & 9.411 & 0.955 & 1.706 \\
            & 3 & 14.445 & 5.726 &2.971 & 0.080 \\
        0.9 & 1 & 13.717 & 4.573 & 4.597 & 0.311 \\
            & 3 & 42.419 & 6.980 & 13.664 & -0.711\\ \hline \hline
        1.2 & 1 & 32.507 & 11.573 &0.069 & 2.137\\
            & 3 & 112.648 & 21.279 &-0.915 & 0.800\\   
        1.5 & 1 & 50.305 & 12.680 & 0.195 &0.923\\
            & 3 & 250.832 & 27.170 & -5.749 &3.557\\      
        1.9 & 1 & 261.066 & 20.729 & -0.530 & 9.139\\
            & 3 & 2311.521 &  155.950 &-0.304 & -35.177 \\ \hline
        \end{tabular}
        \caption{$\beta = 0.2$}
        \label{tab: beta 0.2}
    \end{subtable}
    \hfill
    \begin{subtable}[h]{0.45\textwidth}
        \begin{tabular}{||c|c|c|c|c|c||}
        \hline
        \multicolumn{1}{||c|}{\textbf{$\alpha$}}  &
        \multicolumn{1}{c|}{\textbf{$\gamma$}}  &
         \multicolumn{1}{c|}{\textbf{Mean}}  &
        \multicolumn{1}{c|}{\textbf{Rms}}  &
        \multicolumn{1}{c|}{\textbf{Mean}}  &
        \multicolumn{1}{c||}{\textbf{Mean}}  \\
            &   & $E_1$ & $E_1$ &  $E_2$ & $E_3$ \\
        \hline \hline
        0.1 & 1 & 1.092 & 1.535 & 0.307 &  -0.402 \\
            & 3 & 1.254 & 1.627 & 0.378 & -0.372 \\
        0.5 & 1 & 2.503 & 1.690 & 0.754 & -0.753 \\
            & 3 & 4.680 & 2.146 & 1.651 & -0.824 \\
        0.9 & 1 & 2.909 & 1.548 & 0.217 & 0.416 \\
            & 3 & 8.042 & 1.767 & 0.620 & -0.404\\ \hline \hline
        1.2 & 1 & 7.649 & 1.992 & -0.944 &	-0.185\\
            & 3 & 64.937 & 9.918 & -1.692 & -2.275\\   
        1.5 & 1 & 25.713 & 3.653 & -1.697 & 3.653\\
            & 3 & 218.591 & 21.871 & -4.566 & -13.027 \\      
        1.9 & 1 & 238.379 & 14.860 & -6.826 & 16.330 \\
            & 3 & 2357.553 & 189.231 & 3.827 & -87.353 \\ \hline
        \end{tabular}
        \caption{$\beta = 0.49$}
        \label{tab: beta 0.49}
    \end{subtable}
    \caption{Monte Carlo estimates of $E_1$, $E_2$ and $E_3$ from 500 samples. We have here fixed $n = 700$; $\beta = 0.2$ in the first table and $\beta = 0.49$ in the second one.}
    \label{tab:tabella totale}
\end{table}
\\
It appears that the estimation we get using the truncated quadratic variation performs worse as soon as $\alpha$ and $\gamma$ become bigger (see column 3 in both Tables \ref{tab: beta 0.2} and \ref{tab: beta 0.49}). However, after having applied the corrections, the error seems visibly reduced. A proof of which lies, for example, in the comparison between the error and the root mean square: before the adjustment in both Tables \ref{tab: beta 0.2} and \ref{tab: beta 0.49} the third column dominates the fourth one, showing that the bias of the original estimator dominates the standard deviation while, after the implementation of our main results, we get $E_2$ and $E_3$ for which the bias is much smaller. \\
We observe that for $\alpha < 1$, in both cases $\beta =0.2$ and $\beta = 0.49$, it is possible to choose opportunely M (on which $\psi$'s decay depends) to make the error $E_3$ smaller than $E_2$. On the other hand, for $\alpha >1$, the approach who consists of subtracting the jump part to the error results better than the other, since $E_3$ is in this case generally bigger than $E_2$, but to use this method the knowledge of $\gamma$ is required.
It is worth noting that both the approaches used, that lead us respectively to $E_2$ and $E_3$, work well for any $\beta \in (0, \frac{1}{2})$. \\
We recall that, in \cite{Condition Jacod beta}, the condition found on $\beta$ to get a well-performed estimator was 
\begin{equation}
\beta > \frac{1}{2(2 - \alpha)},
\label{eq: cond jacod on beta}
\end{equation}
that is not respected in the case $\beta = 0.2$. Our results match the ones in \cite{Condition Jacod beta}, since the third column in Table \ref{tab: beta 0.49}  (where $\beta =0.49$) is generally smaller than the third one in Table \ref{tab: beta 0.2} (where $\beta =0.2$). We emphasise nevertheless that, comparing columns 5 and 6 in the two tables, there is no evidence of a dependence on $\beta$ of $E_2$ and $E_3$. \\
The price you pay is that, to implement our corrections, the knowledge of $\alpha$ is request. Such corrections turn out to be a clear improvement also because for $\alpha$ that is less than 1 the original estimator \eqref{eq: Qn applications} is well-performed only for those values of the couple $(\alpha, \beta)$ which respect the condition \eqref{eq: cond jacod on beta} while, for $\alpha \ge 1$, there is no $\beta \in (0, \frac{1}{2})$ for which such a condition can hold. That's the reason why, in the lower part of both Tables \ref{tab: beta 0.2} and \ref{tab: beta 0.49},  $E_1$ is so big. \\
Using our main results, instead, we get $E_2$ and $E_3$ that are always small and so we obtain two corrections which make the unbiased estimator always well-performed without adding any requirement on $\alpha$ or $\beta$.

\section{Preliminary results}\label{S:Propositions}
In the sequel, for $\delta \ge 0$, we will denote as $R_i(\Delta_n^\delta )$ any random variable which is $\mathcal{F}_{t_i}$ measurable and such that, for any $q \ge 1$, 
\begin{equation}
\exists c > 0: \quad  \left \| \frac{R_i(\Delta_n^\delta )}{\Delta_n^\delta } \right \|_{L^q} \le c  < \infty,
\label{eq: definition R}
\end{equation}
with $c$ independent of $i,n$. \\
$R_i$ represent the term of rest and have the following useful property, consequence of the just given definition:
\begin{equation}
R_i(\Delta_{n}^\delta)= \Delta_{n}^\delta R_i(\Delta_{n}^0).
\label{propriety power R}
\end{equation}
We point out that it does not involve the linearity of $R_i$, since the random variables $R_i$ on the left and on the right side are not necessarily the same but only two on which the control (\ref{eq: definition R}) holds with $\Delta_{n}^\delta $ and $\Delta_{n}^0$, respectively. \\ \\
In order to prove the main result, the following proposition will be useful. \\
We define, for $i \in \left \{ 0, ... , n-1 \right \}$,
\begin{equation}
\Delta X_i^J : = \int_{t_i}^{t_{i + 1}}\int_{\mathbb{R} \backslash \left \{0 \right \}} \gamma(X_{s^-}) \, z \, \tilde{\mu}(ds, dz) \qquad \mbox{and} \qquad \Delta \tilde{X}_i^J : = \int_{t_i}^{t_{i + 1}}\int_{\mathbb{R} \backslash \left \{0 \right \}} \gamma(X_{t_i}) \, z \, \tilde{\mu}(ds, dz).
\label{eq: def salti}
\end{equation}
We want to bound the error we commit moving from $\Delta X_i^J$ to $\Delta \tilde{X}_i^J$, denoting as $o_{L^1}(\Delta_{n}^k)$ a quantity such that $ \mathbb{E}_i[|o_{L^1}(\Delta_{n}^k)|] = R_i(\Delta_{n}^k)$, with the notation $\mathbb{E}_i[.] = \mathbb{E}[.|\mathcal{F}_{t_i}]$.
\begin{proposition}
Suppose that A1- A4 hold. Then
\begin{equation}
(\Delta X_i^J)^2 \varphi_{\Delta_{n}^\beta}(\Delta X_i) = (\Delta \tilde{X}_i^J )^2 \varphi_{\Delta_{n}^\beta}(\Delta \tilde{X}_i^J ) + o_{L^1}(\Delta_{n}^{\beta(2 - \alpha) + 1)}),
\label{eq: espansione salti}
\end{equation}
\begin{equation}
(\int_{t_i}^{t_{i+1}}a_s dW_s)\Delta X_i^J \varphi_{\Delta_{n}^\beta}(\Delta X_i) =  (\int_{t_i}^{t_{i+1}}a_s dW_s)\Delta \tilde{X}_i^J\varphi_{\Delta_{n}^\beta}(\Delta \tilde{X}_i^J ) + o_{L^1}(\Delta_{n}^{\beta(2 - \alpha) + 1)}).
\label{eq: salti con browniano}
\end{equation}
Moreover, for each $\tilde{\epsilon} > 0$ and $f$ the function introduced in the definition of $Q_n$,
\begin{equation}
\sum_{i = 0}^{n - 1} f(X_{t_i})(\Delta X_i^J)^2 \varphi_{\Delta_{n}^\beta}(\Delta X_i) =  \sum_{i = 0}^{n - 1} f(X_{t_i})(\Delta \tilde{X}_i^J )^2 \varphi_{\Delta_{n}^\beta}(\Delta \tilde{X}_i^J ) + o_\mathbb{P}(\Delta_n^{(1 - \alpha \beta - \tilde{\epsilon}) \land (\frac{1}{2} - \tilde{\epsilon})}), 
\label{eq: aggiunta prop1 salti}
\end{equation}
\begin{equation}
\sum_{i = 0}^{n - 1} f(X_{t_i})(\int_{t_i}^{t_{i+1}}a_s dW_s)\Delta X_i^J \varphi_{\Delta_{n}^\beta}(\Delta X_i) = \sum_{i = 0}^{n - 1} f(X_{t_i})(\int_{t_i}^{t_{i+1}}a_s dW_s)\Delta \tilde{X}_i^J\varphi_{\Delta_{n}^\beta}(\Delta \tilde{X}_i^J ) + o_{\mathbb{P}}(\Delta_n^{(1 - \alpha \beta - \tilde{\epsilon}) \land (\frac{1}{2} - \tilde{\epsilon})}). 
\label{eq: aggiunta prop1 browniano}
\end{equation}
\label{prop: espansione salti}
\end{proposition}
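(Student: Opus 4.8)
\noindent\emph{Proof proposal.}
The four identities will be obtained in the same way: the difference between the two sides is written as a sum of a few elementary pieces, each estimated with the moment bounds of Lemma~\ref{lemma: Moment inequalities}, the Lipschitz regularity of $\gamma$ and $\varphi$, and the following inputs on the small jumps of the stable-like driver $L$. First, the concentration estimates
\[
\mathbb{P}_i\big(|\Delta \tilde{X}_i^J| \ge c\,\Delta_n^\beta\big) = R_i\big(\Delta_n^{1 - \alpha\beta}\big), \qquad \mathbb{E}_i\big[|\Delta \tilde{X}_i^J|^r\, 1_{\{|\Delta \tilde{X}_i^J| \le c\,\Delta_n^\beta\}}\big] = R_i\big(\Delta_n^{1 + \beta(r - \alpha)}\big) \quad (r > \alpha),
\]
which follow from the self-similar approximation $L_h \approx h^{1/\alpha} S_1^\alpha$ --- exactly the content of Proposition~\ref{prop: estimation stable} applied to bounded $g$ --- together with A4. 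Secondly, the coefficient-freezing bound: since $\Delta X_i^J - \Delta \tilde{X}_i^J = \int_{t_i}^{t_{i+1}}\!\int_{\mathbb{R}\setminus\{0\}}(\gamma(X_{s^-}) - \gamma(X_{t_i}))\,z\,\tilde{\mu}(ds,dz)$ is a purely discontinuous martingale, a Kunita/Burkholder--Davis--Gundy inequality, the Lipschitz property of $\gamma$, the finiteness of $\int_{\mathbb{R}} z^2 F(dz)$ (ensured by $\alpha < 2$ and A3) and parts~2, 4, 6 of Lemma~\ref{lemma: Moment inequalities} give $\|\Delta X_i^J - \Delta \tilde{X}_i^J\|_{L^q(\mathbb{P}_i)} = R_i(\Delta_n^{1/2 + 1/(q \vee 2)})$. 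Thirdly, on the set where the truncation does not vanish, all jumps of $L$ over $[t_i, t_{i+1}]$ are of size $\lesssim \Delta_n^\beta$ (up to a lower-order event on which $L$ has two such jumps that nearly cancel), so the relevant jump integrals are effectively restricted to $|z| \lesssim \Delta_n^\beta$, where $\int_{|z| \le c\Delta_n^\beta} z^2 F(dz) \asymp \Delta_n^{\beta(2 - \alpha)}$.

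\smallskip\noindent\emph{Step 1 (local bounds \eqref{eq: espansione salti}, \eqref{eq: salti con browniano}).} For \eqref{eq: espansione salti} I would write
\[
(\Delta X_i^J)^2 \varphi_{\Delta_n^\beta}(\Delta X_i) - (\Delta \tilde{X}_i^J)^2 \varphi_{\Delta_n^\beta}(\Delta \tilde{X}_i^J) = \mathrm{A}_i + \mathrm{B}_i^{(b)} + \mathrm{B}_i^{(a)},
\]
with $\mathrm{A}_i := \big[(\Delta X_i^J)^2 - (\Delta \tilde{X}_i^J)^2\big]\varphi_{\Delta_n^\beta}(\Delta X_i)$, $\mathrm{B}_i^{(b)} := (\Delta \tilde{X}_i^J)^2\big[\varphi_{\Delta_n^\beta}(\Delta X_i^J) - \varphi_{\Delta_n^\beta}(\Delta \tilde{X}_i^J)\big]$ and $\mathrm{B}_i^{(a)} := (\Delta \tilde{X}_i^J)^2\big[\varphi_{\Delta_n^\beta}(\Delta X_i) - \varphi_{\Delta_n^\beta}(\Delta X_i^J)\big]$, the last term isolating the effect of adding the continuous increment $\Delta X_i^c := X^c_{t_{i+1}} - X^c_{t_i}$ inside $\varphi$. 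Outside the good event $G_i := \{|\Delta X_i^c| \le \Delta_n^{\beta'}\} \cap \{|\Delta X_i^J - \Delta \tilde{X}_i^J| \le \Delta_n^{\beta'}\}$ (any fixed $\beta < \beta' < \tfrac12$), crude $L^q$-estimates together with the fact that $\mathbb{P}_i(G_i^c)$ is smaller than any power of $\Delta_n$ (sub-Gaussianity of $\Delta X_i^c$ as $\beta' < \tfrac12$, and Markov plus the freezing bound) make all contributions negligible. On $G_i$ the truncation forces $|\Delta X_i^J|, |\Delta \tilde{X}_i^J| \lesssim \Delta_n^\beta$; factoring $(\Delta X_i^J)^2 - (\Delta \tilde{X}_i^J)^2 = (\Delta X_i^J - \Delta \tilde{X}_i^J)(\Delta X_i^J + \Delta \tilde{X}_i^J)$ and combining Cauchy--Schwarz, the freezing bound and the truncated second moment gives $\mathbb{E}_i[|\mathrm{A}_i| 1_{G_i}] = R_i(\Delta_n^{3/2 + \beta(2 - \alpha)/2})$; the bracket in $\mathrm{B}_i^{(b)}$ is $\le \|\varphi'\|_\infty \Delta_n^{-\beta}|\Delta X_i^J - \Delta \tilde{X}_i^J|$ and supported where $|\Delta \tilde{X}_i^J| \asymp \Delta_n^\beta$, giving again $R_i(\Delta_n^{3/2 + \beta(2 - \alpha)/2})$; and the bracket in $\mathrm{B}_i^{(a)}$ is supported where $|\Delta X_i^J| \asymp \Delta_n^\beta$, on which (within $G_i$) $(\Delta \tilde{X}_i^J)^2 \lesssim \Delta_n^{2\beta}$ deterministically, whence $\mathbb{E}_i[|\mathrm{B}_i^{(a)}| 1_{G_i}] \le \Delta_n^{2\beta}\, \mathbb{P}_i(|\Delta X_i^J| \asymp \Delta_n^\beta) = R_i(\Delta_n^{1 + \beta(2 - \alpha)})$. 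Since $\beta(2 - \alpha) < 1$ (because $\beta < \tfrac12 < \tfrac1{2 - \alpha}$), every exponent is $\ge 1 + \beta(2 - \alpha)$, which is \eqref{eq: espansione salti}. Identity \eqref{eq: salti con browniano} is proved identically, one factor $\Delta X_i^J$ being replaced by $\int_{t_i}^{t_{i+1}} a_s\,dW_s$; the only new point is that, when the additional $L^2(\mathbb{P}_i)$-factor $\Delta_n^{1/2}$ does not suffice, the missing $\Delta_n^{\beta(2 - \alpha)/2}$ is supplied by the third input above (the freezing error is driven by jumps of size $\lesssim \Delta_n^\beta$ on the relevant set).

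\smallskip\noindent\emph{Step 2 (summed bounds \eqref{eq: aggiunta prop1 salti}, \eqref{eq: aggiunta prop1 browniano}).} Summing the $L^1$-estimates of Step~1 over the $n = \Delta_n^{-1}$ blocks bounds $\sum_i f(X_{t_i})\mathrm{A}_i$ and $\sum_i f(X_{t_i})\mathrm{B}_i^{(b)}$ by $O_\mathbb{P}(\Delta_n^{1/2 + \beta(2 - \alpha)/2}) \subset o_\mathbb{P}(\Delta_n^{1/2 - \tilde{\epsilon}})$. For $\sum_i f(X_{t_i})\mathrm{B}_i^{(a)}$ the naive bound $O_\mathbb{P}(\Delta_n^{\beta(2 - \alpha)})$ is insufficient when $\beta \le \tfrac1{2(2 - \alpha)}$, so I would center: $\mathrm{B}_i^{(a)} = \mathbb{E}_i[\mathrm{B}_i^{(a)}] + (\mathrm{B}_i^{(a)} - \mathbb{E}_i[\mathrm{B}_i^{(a)}])$. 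Taylor-expanding $\varphi_{\Delta_n^\beta}(\Delta X_i^J + \Delta X_i^c) - \varphi_{\Delta_n^\beta}(\Delta X_i^J)$ in $\Delta X_i^c$ and using $\Delta X_i^c = a_{t_i}(W_{t_{i+1}} - W_{t_i}) + r_i$ with $\|r_i\|_{L^q(\mathbb{P}_i)} \lesssim \Delta_n^{1/2 + 1/q}$ (Lemma~\ref{lemma: Moment inequalities}, part~2), the linear term has zero $\mathcal{F}_{t_i}$-conditional mean --- because $W_{t_{i+1}} - W_{t_i}$ is independent of $\mathcal{F}_{t_i}$ and of $L$, hence of $\Delta \tilde{X}_i^J$ --- up to a remainder that is $R_i(\Delta_n^{3/2 + \beta(2 - \alpha)/2})$, while the quadratic remainder is dominated on $G_i$ by $(\Delta X_i^c)^2\, 1_{\{|\Delta X_i^J| \asymp \Delta_n^\beta\}}$, and the same independence gives $\mathbb{E}_i[(\Delta X_i^c)^2 1_{\{|\Delta \tilde{X}_i^J| \asymp \Delta_n^\beta\}}] = a_{t_i}^2 \Delta_n\, \mathbb{P}_i(|\Delta \tilde{X}_i^J| \asymp \Delta_n^\beta) + R_i(\Delta_n^{2 - \alpha\beta/2}) = R_i(\Delta_n^{2 - \alpha\beta})$. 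Summing, $\sum_i \mathbb{E}_i[\mathrm{B}_i^{(a)}] = O_\mathbb{P}(\Delta_n^{1 - \alpha\beta}) + o_\mathbb{P}(\Delta_n^{1/2 - \tilde{\epsilon}})$. For the martingale part, on $G_i$ one has $(\mathrm{B}_i^{(a)})^2 \lesssim \Delta_n^{-2\beta}(\Delta X_i^c)^2(\Delta \tilde{X}_i^J)^4 1_{\{|\Delta X_i^J| \asymp \Delta_n^\beta\}} \lesssim \Delta_n^{2\beta}(\Delta X_i^c)^2 1_{\{|\Delta X_i^J| \asymp \Delta_n^\beta\}}$, so $\sum_i \mathbb{E}_i[(\mathrm{B}_i^{(a)})^2] \lesssim \Delta_n^{1 + \beta(2 - \alpha)}$ and Doob's inequality yields $\sum_i (\mathrm{B}_i^{(a)} - \mathbb{E}_i[\mathrm{B}_i^{(a)}]) = O_\mathbb{P}(\Delta_n^{1/2 + \beta(2 - \alpha)/2})$. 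Altogether $\sum_i f(X_{t_i})\mathrm{B}_i^{(a)} = o_\mathbb{P}(\Delta_n^{(1 - \alpha\beta - \tilde{\epsilon}) \wedge (1/2 - \tilde{\epsilon})})$, which is \eqref{eq: aggiunta prop1 salti}; \eqref{eq: aggiunta prop1 browniano} follows verbatim. Inserting the polynomial weight $f(X_{t_i})$ changes nothing, since every $R_i$ carries a factor $1 + |X_{t_i}|^p$ with finite moments by Lemma~\ref{lemma: Moment inequalities}.

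\smallskip\noindent\emph{Main obstacle.} The decisive inputs are the concentration estimates of the first paragraph: the tail $\mathbb{P}_i(|\Delta \tilde{X}_i^J| \ge c\Delta_n^\beta) = R_i(\Delta_n^{1 - \alpha\beta})$ and the truncated second moment $\mathbb{E}_i[|\Delta \tilde{X}_i^J|^2 1_{\{|\Delta \tilde{X}_i^J| \le c\Delta_n^\beta\}}] = R_i(\Delta_n^{1 + \beta(2 - \alpha)})$ cannot be obtained from crude moment inequalities; they encode the genuine self-similar behaviour of the stable-like driver (concentration of $\Delta \tilde{X}_i^J$ at scale $\Delta_n^{1/\alpha} \ll \Delta_n^\beta$), and it is precisely here that Proposition~\ref{prop: estimation stable} is indispensable. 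The second delicate point, needed only for the summed statements \eqref{eq: aggiunta prop1 salti}--\eqref{eq: aggiunta prop1 browniano}, is the quasi-independence of the continuous and jump increments over a block: one must peel off the genuinely jump-independent piece $a_{t_i}(W_{t_{i+1}} - W_{t_i})$ of $\Delta X_i^c$ so that the leading, $O(\Delta_n^{1 + \beta(2 - \alpha)})$-size contributions to $\mathrm{B}_i^{(a)}$ vanish in conditional mean --- leaving only the strictly smaller $\Delta_n^{2 - \alpha\beta}$ term --- and then control the residual coupling between the correction $r_i$ and $\Delta \tilde{X}_i^J$ induced by the jump component of the volatility.
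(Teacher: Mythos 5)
Your overall architecture (telescoping the difference into a ``change of argument in $\varphi$'' piece, a ``coefficient-freezing'' piece, and a ``difference of squares'' piece, then using the support of $\varphi$, tail and truncated-moment estimates for the jump increment, an $L^2$ freezing bound, and a conditional-centering argument for the summed versions) is essentially the paper's. But there is one step that fails as written: your claim that $\mathbb{P}_i(G_i^c)$ is \emph{smaller than any power of} $\Delta_n$. This is true for the component $\{|\Delta X_i^c|>\Delta_n^{\beta'}\}$ (all moments of $\Delta X_i^c$ scale like $\Delta_n^{1/2}$ per power), but it is false for the component $\{|\Delta X_i^J-\Delta\tilde X_i^J|>\Delta_n^{\beta'}\}$: all conditional moments $\mathbb{E}_i[|\Delta X_i^J-\Delta\tilde X_i^J|^q]$, $q\ge 2$, are of order $\Delta_n^{2}$ and do \emph{not} improve as $q$ grows (they are dominated by two-jump configurations of $L$, as the Kunita bound in the proof of Lemma \ref{lemma: differenza dei salti} shows), so Markov gives only $\Delta_n^{2-q\beta'}$, which \emph{degrades} with $q$; the best you get is a fixed power $\approx\Delta_n^{2-2\beta'}$. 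Consequently ``crude $L^q$-estimates outside $G_i$'' do not make those contributions negligible for free, and for small $\alpha$ and $\beta$ near $1/2$ the resulting exponent genuinely falls short of $1+\beta(2-\alpha)$. The paper avoids this entirely: it never localizes on the size of the freezing error, but treats $(\Delta X_i^J-\Delta\tilde X_i^J)^2\varphi_{\Delta_n^\beta}(\Delta\tilde X_i^J)$ and the cross term directly by Cauchy--Schwarz with the unrestricted bound $\mathbb{E}[|\Delta X_i^J-\Delta\tilde X_i^J|^2]\le c\Delta_n^2$, and handles the change of argument inside $\varphi$ by splitting only on events of the form $\{|\Delta X_i^J|\lessgtr c\Delta_n^\beta\}$, $\{|\Delta\tilde X_i^J|\lessgtr c\Delta_n^\beta\}$, whose probabilities are controlled by \eqref{eq: prob parte continua}, \eqref{eq: proba Nin c} and Lemma \ref{lemma: estensione 10 capitolo 1}. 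Related to this, your stated freezing bound $\|\Delta X_i^J-\Delta\tilde X_i^J\|_{L^q(\mathbb{P}_i)}=R_i(\Delta_n^{1/2+1/(q\vee 2)})$ is too strong: for $q>2$ the correct rate is $\Delta_n^{2/q}$, and the $q\in[1,2)$ refinement $\Delta_n^{1/2+1/q}$ holds only for $\alpha<1$ (Lemma \ref{lemma: differenza dei salti}); only the $q=2$ case, which is what you actually use via Cauchy--Schwarz, is correct as stated.

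A secondary point of attribution: the concentration estimates you call ``indispensable'' and ascribe to Proposition \ref{prop: estimation stable} are obtained in the paper by elementary Lévy-measure computations (splitting on whether $L$ has a jump exceeding $\Delta_n^\beta/(2\gamma_{\min})$ over the block, cf.\ \eqref{eq: proba Nin c} and Lemma \ref{lemma: estensione 10 capitolo 1}); the Malliavin-based Proposition \ref{prop: estimation stable} is not used anywhere in the proof of Proposition \ref{prop: espansione salti} — it enters only later, to identify the limiting constant in $\hat Q_n$. Your Step 2 centering of $\mathrm{B}_i^{(a)}$ (peeling off $a_{t_i}(W_{t_{i+1}}-W_{t_i})$, using independence of $W$ and $L$ so the linear term is conditionally centered, then bounding the conditional variances and applying a Lenglart/Genon-Catalot--Jacod type lemma) is the right idea and matches the paper's treatment of the analogous terms, but it inherits the gap above since your Step-1 estimates were only established on $G_i$.
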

Proposition \ref{prop: espansione salti} will be showed in the Appendix. \\
In the proof of our main results, also the following lemma will be repeatedly used. 
\begin{lemma}
Let us consider $\Delta X_i^J$ and $\Delta \tilde{X}_i^J$ as defined in \eqref{eq: def salti}. Then 
\begin{enumerate}
    \item For each $q \ge 2$ $\exists \epsilon > 0$ such that
    \begin{equation}
    \mathbb{E}[|\Delta X_i^J 1_{\left \{ |\Delta X_i^J| \le 4 \Delta_{n}^\beta \right \}}|^q |\mathcal{F}_{t_i} ] = R_i(\Delta_{n}^{1 + \beta(q - \alpha)}) = R_i(\Delta_{n}^{1 + \epsilon}).
    \label{eq: estensione salti lemma 10}
    \end{equation}
    \begin{equation}
    \mathbb{E}[|\Delta \tilde{X}_i^J 1_{\left \{ |\Delta \tilde{X}_i^J| \le 4 \Delta_{n}^\beta \right \}}|^q |\mathcal{F}_{t_i} ] = R_i( \Delta_{n}^{1 + \beta(q - \alpha)}) = R_i(\Delta_{n}^{1 + \epsilon}).
    \label{eq: estensione tilde salti lemma 10}
    \end{equation}
    \item For each $q \ge 1$ we have
    \begin{equation}
    \mathbb{E}[|\Delta X_i^J 1_{\left \{ \frac{\Delta_{n}^\beta}{4} \le |\Delta X_i^J| \le 4 \Delta_{n}^\beta \right \}}|^{q} |\mathcal{F}_{t_i} ] = R_i( \Delta_{n}^{1 + \beta(q - \alpha)}).
    \label{eq: estensione q = 1 + epsilon lemma 10}
    \end{equation}
\end{enumerate}
\begin{proof}
Reasoning as in Lemma 10 in \cite{Chapitre 1} we easily get \eqref{eq: estensione salti lemma 10}. Observing that $\Delta \tilde{X}_i^J$ is a particular case of $\Delta X_i^J$ where $\gamma$ is fixed, evaluated in $X_{t_i}$, it follows that \eqref{eq: estensione tilde salti lemma 10} can be obtained in the same way of \eqref{eq: estensione salti lemma 10}. Using the bound on $\Delta X_i^J$ obtained from the indicator function we get that the left hand side of \eqref{eq: estensione q = 1 + epsilon lemma 10} is upper bounded by 
$$c \Delta_{n}^{\beta q} \mathbb{E}[ 1_{\left \{ \frac{\Delta_{n}^\beta}{4} \le |\Delta X_i^J| \le 4 \Delta_{n}^\beta \right \}} |\mathcal{F}_{t_i} ] \le \Delta_{n}^{\beta q} R_i(\Delta_{n}^{1 - \alpha \beta}),$$
where in the last inequality we have used Lemma 11 in \cite{Chapitre 1} on the interval $[t_i, t_{i + 1}]$ instead of on $[0, h]$. From property \eqref{propriety power R} of $R_i$ we get \eqref{eq: estensione q = 1 + epsilon lemma 10}.
\end{proof}
\label{lemma: estensione 10 capitolo 1}
\end{lemma}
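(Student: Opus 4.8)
The plan is to read all three bounds as successive corollaries of two facts about the compensated pure-jump increment $\Delta X_i^J$ of \eqref{eq: def salti}, specialised to the single interval $[t_i,t_{i+1}]$: a conditional moment bound for the part of $\Delta X_i^J$ carried by jumps of $L$ of size $\le\Delta_n^\beta$, and a bound of order $\Delta_n^{1-\alpha\beta}$ for the conditional probability that a jump of size of order $\Delta_n^\beta$ occurs in $[t_i,t_{i+1}]$. Both are Lemmas~10 and~11 of \cite{Chapitre 1}, proved there on $[0,h]$; the first thing I would do is transcribe them onto the generic interval $[t_i,t_{i+1}]$, checking only that the constants stay uniform in $i,n$ — which they do, thanks to A2--A4, the two-sided bound $\gamma_{min}\le|\gamma|\le\gamma_{max}$, and the conditional estimates of Lemma~\ref{lemma: Moment inequalities}.

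For \eqref{eq: estensione salti lemma 10} I would split the driving jumps of $X$ over $\{|z|\le\Delta_n^\beta\}$ and $\{|z|>\Delta_n^\beta\}$, writing $\Delta X_i^J=M_i+N_i$. By symmetry of $\bar g$ the compensator $\int_{|z|>\Delta_n^\beta}zF(dz)$ vanishes, so $N_i$ is just the finite sum of the large jumps of $X$ over $[t_i,t_{i+1}]$, and in particular $N_i=0$ on the event that $L$ has no jump of size $>\Delta_n^\beta$ there. Burkholder--Davis--Gundy together with the third point of A4 gives $\mathbb{E}_i[|M_i|^q]\lesssim(\Delta_n^{1+\beta(2-\alpha)})^{q/2}+\Delta_n^{1+\beta(q-\alpha)}$, and since $\alpha\beta<1$ the second term dominates for every $q\ge2$. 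To handle $N_i$ on $\{|\Delta X_i^J|\le4\Delta_n^\beta\}$ I would split that event according to whether $N_i$ vanishes: on $\{N_i=0\}$ the increment reduces to $M_i$; on $\{N_i\neq0\}$ one has the crude bound $|N_i|\le4\Delta_n^\beta+|M_i|$ together with $\mathbb{E}_i[1_{\{N_i\neq0\}}]\lesssim\Delta_n\int_{|z|>\Delta_n^\beta}F(dz)\lesssim\Delta_n^{1-\alpha\beta}$, so that this part contributes $R_i(\Delta_n^{\beta q}\Delta_n^{1-\alpha\beta})=R_i(\Delta_n^{1+\beta(q-\alpha)})$. Adding the two contributions gives \eqref{eq: estensione salti lemma 10}, and $\Delta_n^{1+\beta(q-\alpha)}=\Delta_n^{1+\epsilon}$ with $\epsilon=\beta(q-\alpha)>0$ simply because $q\ge2>\alpha$. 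Estimate \eqref{eq: estensione tilde salti lemma 10} then follows by the same argument, $\Delta\tilde X_i^J$ being the particular case in which the predictable coefficient is frozen at the $\mathcal{F}_{t_i}$-measurable value $\gamma(X_{t_i})$, still satisfying $\gamma_{min}\le|\gamma(X_{t_i})|\le\gamma_{max}$. For \eqref{eq: estensione q = 1 + epsilon lemma 10} I would use the indicator to bound the left-hand side by $(4\Delta_n^\beta)^q\,\mathbb{E}_i[1_{\{\Delta_n^\beta/4\le|\Delta X_i^J|\le4\Delta_n^\beta\}}]$; since $M_i$ is of strictly smaller order than $\Delta_n^\beta$ when $\alpha\beta<1$, the lower bound $|\Delta X_i^J|\ge\Delta_n^\beta/4$ essentially forces a jump of $L$ of size of order $\Delta_n^\beta$, so the localised Lemma~11 of \cite{Chapitre 1} makes the conditional expectation an $R_i(\Delta_n^{1-\alpha\beta})$, and multiplying and invoking property \eqref{propriety power R} yields $R_i(\Delta_n^{1+\beta(q-\alpha)})$.

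The step I expect to be the real obstacle — and the reason one routes the computation through \cite{Chapitre 1} rather than reproducing it — is the control, in \eqref{eq: estensione salti lemma 10}, of the large-jump part on the truncation event: one has to be sure that two or more large jumps (together with the small-jump compensator, which is harmless here only because of the symmetry of $\bar g$) cannot conspire to keep $|\Delta X_i^J|$ below $4\Delta_n^\beta$ with probability larger than the $\Delta_n^{1-\alpha\beta}$ produced by a single large jump. This is exactly where the comparison of the jump sizes of $X$ and $L$ given by the first and fifth points of A4 enters, together with the elementary tail estimate $\int_{|z|>u}F(dz)\asymp u^{-\alpha}$ as $u\to0$; everything else is bookkeeping within the classes $R_i(\cdot)$ of \eqref{eq: definition R} and the algebraic identity $\beta q+1-\alpha\beta=1+\beta(q-\alpha)$.
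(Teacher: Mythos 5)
Your proposal is correct and follows essentially the same route as the paper: the first two bounds are the single-interval, conditional version of Lemma 10 of \cite{Chapitre 1} (with the tilde case obtained by freezing $\gamma$ at $X_{t_i}$), and the third follows by bounding $|\Delta X_i^J|^q$ by $c\Delta_n^{\beta q}$ on the indicated event and invoking the localised Lemma 11 of \cite{Chapitre 1} for the conditional probability, exactly as in the paper. The only difference is that you spell out the small-jump/large-jump decomposition underlying the cited Lemma 10, which the paper leaves implicit; your bookkeeping there (BDG/Kunita on the small-jump martingale, the $\Delta_n^{1-\alpha\beta}$ cost of a large jump, and the comparison $(q/2-1)(1-\alpha\beta)\ge 0$ showing which term dominates) is accurate.
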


\section{Proof of main results} \label{S: Proof main}
We show Lemma \ref{lemma: brownian increments}, required for the proof of Theorem \ref{th: estensione Qn}.

\subsection{Proof of Lemma \ref{lemma: brownian increments}.}
\begin{proof}
By the definition of $X^c$ we have 
$$|\sum_{i = 0}^{n - 1} f(X_{t_i})(X^c_{t_{i + 1}} - X^c_{t_i})^2(\varphi_{\Delta_{n}^\beta}(\Delta X_i)- 1)| \le$$
$$ \le c \sum_{i = 0}^{n - 1} |f(X_{t_i})|\big(|\int_{t_i}^{t_{i + 1}}a_s dW_s|^2 + |\int_{t_i}^{t_{i + 1}} b_s ds|^2\big)|\varphi_{\Delta_{n}^\beta}(\Delta X_i)- 1| = : |I_{2,1}^n| + |I_{2,2}^n|. $$
In the sequel the constant $c$ may change value from line to line. \\
Concerning $I_{2,1}^n$, using Holder inequality we have
\begin{equation}
\mathbb{E}[|I_{2,1}^n|] \le c \sum_{i = 0}^{n - 1} \mathbb{E}[|f(X_{t_i})|\mathbb{E}_i[|\int_{t_i}^{t_{i + 1}}a_s dW_s|^{2p}]^\frac{1}{p} \mathbb{E}_i[|\varphi_{\Delta_{n}^\beta}(\Delta X_i)- 1|^q]^\frac{1}{q}],
\label{eq: I21 start}
\end{equation}
where $\mathbb{E}_i$ is the conditional expectation wit respect to $\mathcal{F}_{t_i}$. \\
We now use Burkholder-Davis-Gundy inequality to get, for $p_1 \ge 2$,
\begin{equation}
\mathbb{E}_i[|\int_{t_i}^{t_{i+1}}a_s dW_s|^{p_1}]^\frac{1}{p_1} \le \mathbb{E}_i[|\int_{t_i}^{t_{i+1}}a^2_s ds|^\frac{p_1}{2}]^\frac{1}{p_1} \le R_i(\Delta_{n}^\frac{p_1}{2})^\frac{1}{p_1} = R_i(\Delta_{n}^\frac{1}{2}),
\label{eq: bdg}
\end{equation}
where in the last inequality we have used that $a^2_s$ has bounded moments as a consequence of Lemma \ref{lemma: Moment inequalities}. We now observe that, from the definition of $\varphi$ we know that $\varphi_{\Delta_{n}^\beta}(\Delta X_i)- 1$ is different from $0$ only if $|\Delta X_i| > \Delta_{n}^\beta$. We consider two different sets: $|\Delta X_i^J| < \frac{1}{2} \Delta_{n}^\beta$ and $|\Delta X_i^J| \ge \frac{1}{2} \Delta_{n}^\beta$. We recall that $\Delta X_i = \Delta X_i^c + \Delta X_i^J$ and so, if $|\Delta X_i| > \Delta_{n}^\beta$ and $|\Delta X_i^J| < \frac{1}{2} \Delta_{n}^\beta$, then it means that $|\Delta X_i^c|$ must be more than $\frac{1}{2} \Delta_{n}^\beta$.
Using a conditional version of Tchebychev inequality we have that, $\forall r > 1$,
\begin{equation}
\mathbb{P}_i(|\Delta X^c_i| \ge \frac{1}{2} \Delta_{n}^\beta ) \le c \frac{\mathbb{E}_i[|\Delta X^c_i|^r]}{\Delta_{n}^{\beta r }} \le R_i(\Delta_{n}^{(\frac{1}{2} - \beta) r}),
\label{eq: prob parte continua}
\end{equation}
where $\mathbb{P}_i$ is the conditional probability with respect to $\mathcal{F}_{t_i}$; the last inequality follows from the sixth point of Lemma \ref{lemma: Moment inequalities}. If otherwise $|\Delta X_i^J| \ge \frac{1}{2} \Delta_{n}^\beta$, then we introduce the set $N_{i,n}: = \left \{ |\Delta L_s| \le \frac{2 \Delta_{n}^\beta}{\gamma_{min}}; \forall s \in (t_i, t_{i + 1}]  \right \}$. We have
$\mathbb{P}_i(\left \{|\Delta X_i^J| \ge \frac{1}{2} \Delta_{n}^\beta \right \} \cap (N_{i,n})^c) \le \mathbb{P}_i((N_{i,n})^c)$, with 
\begin{equation}
\mathbb{P}_i((N_{i,n})^c) = \mathbb{P}_i(\exists s \in (t_i, t_{i+ 1}] : |\Delta L_s| > \frac{\Delta_{n}^\beta}{2\gamma_{min}} ) \le c \int_{t_i}^{t_{i+1}} \int_{\frac{ \Delta_{n}^\beta}{2\gamma_{min}}}^\infty F(z) dz ds \le c \Delta_{n}^{1 - \alpha \beta},
\label{eq: proba Nin c}
\end{equation}
where we have used the third point of A4. Furthermore, using Markov inequality,
\begin{equation}
\mathbb{P}_i(\left \{|\Delta X_i^J| \ge \frac{1}{2} \Delta_{n}^\beta \right \} \cap N_{i,n}) \le c \mathbb{E}_i[|\Delta X_i^J|^r 1_{N_{i,n}}] \Delta_{n}^{- \beta r} \le R_i( \Delta_{n}^{- \beta r + 1 + \beta(r - \alpha)}) = R_i(\Delta_{n}^{1 - \beta \alpha}),
\label{eq: salti con Nin}
\end{equation}
where we have used the first point of Lemma \ref{lemma: estensione 10 capitolo 1}, observing that $1_{N_{i,n}}$ acts like the indicator function in \eqref{eq: estensione salti lemma 10} (see also (219) in \cite{Chapitre 1}). 
 Now using \eqref{eq: prob parte continua}, \eqref{eq: proba Nin c}, \eqref{eq: salti con Nin} and the arbitrariness of $r$ we have 
\begin{equation}
\mathbb{P}_i(|\Delta X_i| > \Delta_{n}^\beta) = \mathbb{P}_i(|\Delta X_i| > \Delta_{n}^\beta, |\Delta X_i^J| < \frac{1}{2} \Delta_{n}^\beta ) + \mathbb{P}_i(|\Delta X_i| > \Delta_{n}^\beta, |\Delta X_i^J| \ge \frac{1}{2} \Delta_{n}^\beta) \le  R_i(\Delta_{n}^{1 - \alpha \beta}).
\label{eq: proba varphi diverso da 1}
\end{equation}
Taking $p$ big and $q$ next to $1$ in \eqref{eq: I21 start} and replacing there \eqref{eq: bdg} with $p_1 = 2p$ and \eqref{eq: proba varphi diverso da 1} we get, $\forall \epsilon > 0$,
$$n^{1 - \alpha \beta - \tilde{\epsilon}}\mathbb{E}[|I_{2,1}^n|] \le n^{1 - \alpha \beta - \tilde{\epsilon}} c \sum_{i=1}^{n - 1}\mathbb{E}[|f(X_{t_i})| R_i(\Delta_{n}) R_i(\Delta_{n}^{1 - \alpha \beta - \epsilon})] \le (\frac{1}{n})^{\tilde{\epsilon} - \epsilon} \frac{c}{n} \sum_{i=1}^{n - 1} \mathbb{E}[|f(X_{t_i})| R_i(1)].  $$
Now, for each $\tilde{\epsilon} > 0$, we can always find an $\epsilon$ smaller than it, that is enough to get that $\frac{I_{2,1}^n}{(\frac{1}{n})^{1 - \alpha \beta - \tilde{\epsilon}}}$ goes to zero in $L^1$ and so in probability.
Let us now consider $I_{2,2}^n$. We recall that $b$ is uniformly bounded by a constant, therefore
\begin{equation}
(\int_{t_i}^{t_{i + 1}} b_s ds)^2 \le c \Delta_{n}^2.
\label{eq: stima I22}
\end{equation}
Acting moreover on $|\varphi_{\Delta_{n,i}^\beta}(\Delta X_i)- 1|$ as we did here above it follows
$$n^{1 - \alpha \beta - \tilde{\epsilon}}\mathbb{E}[|I_{2,2}^n|] \le n^{1 - \alpha \beta - \tilde{\epsilon}} c \sum_{i=1}^{n - 1}\mathbb{E}[|f(X_{t_i})| R_i(\Delta_{n}^2) R_i(\Delta_{n}^{1 - \alpha \beta - \epsilon})] \le (\frac{1}{n})^{ 1 + \tilde{\epsilon} - \epsilon} \frac{c}{n} \sum_{i=1}^{n - 1} \mathbb{E}[|f(X_{t_i})| R_i(1)]  $$
and so $I_{2,2}^n = o_\mathbb{P}((\frac{1}{n})^{1 - \alpha \beta - \tilde{\epsilon}})$.
\end{proof}

\subsection{Proof of Theorem \ref{th: estensione Qn}.}
We observe that, using the dynamic \eqref{eq: model} of $X$ and the definition of the continuous part $X^c$, we have that
\begin{equation}
X_{t_{i + 1}} - X_{t_i} = (X^c_{t_{i + 1}} - X^c_{t_i}) + \int_{t_i}^{t_{i + 1}}\int_{\mathbb{R} \backslash \left \{0 \right \}} \gamma(X_{s^-}) \, z \, \tilde{\mu}(ds, dz).
\label{eq: incremento X funzione di Xc}
\end{equation}
Replacing \eqref{eq: incremento X funzione di Xc} in definition \eqref{eq: definition Qn} of $Q_n$ we have
$$Q_n = \sum_{i = 0}^{n - 1} f(X_{t_i})(X^c_{t_{i + 1}} - X^c_{t_i})^2 +  \sum_{i = 0}^{n - 1} f(X_{t_i})(X^c_{t_{i + 1}} - X^c_{t_i})^2(\varphi_{\Delta_{n}^\beta}(\Delta X_i)- 1)+  $$
\begin{equation}
+ 2 \sum_{i = 0}^{n - 1} f(X_{t_i})(X^c_{t_{i + 1}} - X^c_{t_i})(\Delta X_i^J) \varphi_{\Delta_{n}^\beta}(\Delta X_i)  +  \sum_{i = 0}^{n - 1} f(X_{t_i})(\Delta X_i^J)^2\varphi_{\Delta_{n}^\beta}(\Delta X_i)= : \sum_{j= 1}^4 I_j^n.
\label{eq: riformulazione Qn}
\end{equation}
Comparing \eqref{eq: riformulazione Qn} with \eqref{eq: Qn parte continua}, using also definition \eqref{eq: definition tilde Qn} of $\tilde{Q}_n$, it follows that our goal is to show that $I_2^n + I_3^n = \mathcal{E}_n$, that is both $o_\mathbb{P}(\Delta_n^{\beta(2 - \alpha)})$ and $o_\mathbb{P}(\Delta_n^{(1 - \alpha \beta - \tilde{\epsilon}) \land (\frac{1}{2} - \tilde{\epsilon})})$. We have already shown in Lemma \ref{lemma: brownian increments} that $I_2^n = o_{\mathbb{P}}(\Delta_n^{1 - \alpha \beta - \tilde{\epsilon}})$. As $(1 - \alpha \beta - \tilde{\epsilon}) \land (\frac{1}{2} - \tilde{\epsilon}) < 1 - \alpha \beta - \tilde{\epsilon}$ and $\beta(2 - \alpha) < 1 - \alpha \beta - \tilde{\epsilon}$, we immediately get $I_2^n = \mathcal{E}_n$. \\
Let us now consider $I_3^n$. From the definition of the process $(X_t^c)$ it is
$$ 2 \sum_{i = 0}^{n - 1} f(X_{t_i})[\int_{t_i}^{t_{i+ 1}} b_s ds + \int_{t_i}^{t_{i+ 1}} a_s dW_s] \Delta X_i^J \varphi_{\Delta_{n}^\beta}(\Delta X_i)= : I_{3,1}^n + I_{3,2}^n.$$
We use on $I_{3,1}^n$ Cauchy-Schwartz inequality, \eqref{eq: stima I22} and Lemma 10 in \cite{Chapitre 1}, getting
$$\mathbb{E}[|I_{3,1}^n|] \le 2 \sum_{i = 0}^{n - 1} \mathbb{E}[ |f(X_{t_i})| R_i(\Delta_{n}^{1 + \beta(2 - \alpha)})^\frac{1}{2} R_i( \Delta_{n}^2)^\frac{1}{2}] \le \Delta_n^{\frac{1}{2} + \frac{\beta}{2}(2 - \alpha)} \frac{1}{n} \sum_{i = 0}^{n - 1} \mathbb{E}[|f(X_{t_i})| R_i(1)], $$
where we have also used property \eqref{propriety power R} on $R$. We observe it is $\frac{1}{2} + \beta -\frac{\alpha \beta}{2} > \frac{1}{2}$ if and only if $\beta(1 - \frac{\alpha}{2}) > 0$, that is always true. We can therefore say that $I_{3,1}^n = o_\mathbb{P}(\Delta_n^\frac{1}{2})$ and so
\begin{equation}
I_{3,1}^n = o_\mathbb{P}(\Delta_n^{(\frac{1}{2} - \tilde{\epsilon}) \land (1 - \alpha \beta - \tilde{\epsilon})}).
\label{eq: I31 nuovo}
\end{equation}
Moreover,
\begin{equation}
\frac{\mathbb{E}[|I_{3,1}^n|]}{\Delta_n^{\beta(2 - \alpha)}} \le \Delta_n^{\frac{1}{2} - \beta + \frac{\alpha \beta}{2}} \frac{1}{n}\sum_{i = 0}^{n - 1} \mathbb{E}[|f(X_{t_i})| R_i( 1)], 
\label{eq: conv I31}
\end{equation}
that goes to zero using the polynomial growth of $f$, the definition of $R$, the fifth point of Lemma \ref{lemma: Moment inequalities}. Moreover, we have observed that the exponent on $\Delta_n$ is positive for $\beta < \frac{1}{2} \frac{1}{(1 - \frac{\alpha}{2})}$, that is always true. \\
Concerning $I_{3,2}^n$, we start proving that $I_{3,2}^n = o_\mathbb{P}(\Delta_n^{\beta(2 - \alpha)})$. From \eqref{eq: salti con browniano} in Proposition \ref{prop: espansione salti} we have
\begin{equation}
\frac{I_{3,2}^n}{\Delta_n^{\beta(2 - \alpha)}} = \frac{2}{\Delta_n^{\beta(2 - \alpha)}} \sum_{i = 0}^{n - 1} f(X_{t_i}) \Delta \tilde{X}_i^J \varphi_{\Delta_{n}^\beta}(\Delta \tilde{X}_i^J) \int_{t_i}^{t_{i + 1}} a_s dW_s + \frac{2}{\Delta_n^{\beta(2 - \alpha)}} \sum_{i = 0}^{n - 1} f(X_{t_i}) o_{L^1}(\Delta_{n}^{\beta(2 - \alpha) + 1}).
\label{eq: riformulo dopo prop 1}
\end{equation}
By the definition of $o_{L^1}$ the last term here above goes to zero in norm $1$ and so in probability. The first term of \eqref{eq: riformulo dopo prop 1} can be seen as 
\begin{equation}
\frac{2}{\Delta_n^{\beta(2 - \alpha)}} \sum_{i = 0}^{n - 1} f(X_{t_i}) \Delta \tilde{X}_i^J \varphi_{\Delta_{n}^\beta}(\Delta \tilde{X}_i^J) [\int_{t_i}^{t_{i + 1}} a_{t_i} dW_s + \int_{t_i}^{t_{i + 1}} (a_s- a_{t_i}) dW_s].
\label{eq: main I32}
\end{equation}
On the first term of \eqref{eq: main I32} here above we want to use Lemma 9 of \cite{Genon Catalot} in order to get that it converges to zero in probability, so we have to show the following:
\begin{equation}
\frac{2}{\Delta_n^{\beta(2 - \alpha)}} \sum_{i = 0}^{n - 1} \mathbb{E}_i[f(X_{t_i}) \Delta \tilde{X}_i^J \varphi_{\Delta_{n}^\beta}(\Delta \tilde{X}_i^J)\int_{t_i}^{t_{i + 1}} a_{t_i} dW_s] \xrightarrow{\mathbb{P}} 0,
\label{eq: tesi 1 Genon Catalot}
\end{equation}
\begin{equation}
\frac{4}{\Delta_n^{2\beta(2 - \alpha)}} \sum_{i = 0}^{n - 1} \mathbb{E}_i[f^2(X_{t_i}) (\Delta \tilde{X}_i^J)^2 \varphi^2_{\Delta_{n}^\beta}(\Delta \tilde{X}_i^J)(\int_{t_i}^{t_{i + 1}} a_{t_i} dW_s)^2] \xrightarrow{\mathbb{P}} 0,
\label{eq: tesi 2 Genon Catalot}
\end{equation}
where $\mathbb{E}_i[.] = \mathbb{E}[. | \mathcal{F}_{t_i}]$. \\
Using the independence between $W$ and $L$ we have that the left hand side of \eqref{eq: tesi 1 Genon Catalot} is
\begin{equation}
\frac{2}{\Delta_n^{\beta(2 - \alpha)}} \sum_{i = 0}^{n - 1} f(X_{t_i}) \mathbb{E}_i[ \Delta \tilde{X}_i^J \varphi_{\Delta_{n}^\beta}(\Delta \tilde{X}_i^J)] \mathbb{E}_i[\int_{t_i}^{t_{i + 1}} a_{t_i} dW_s] = 0.
\label{eq: I32 centrato}
\end{equation}
Now, in order to prove \eqref{eq: tesi 2 Genon Catalot}, we use Holder inequality with $p$ big and $q$ next to $1$ on its left hand side, getting it is upper bounded by
$$\Delta_n^{ - 2 \beta(2 - \alpha)} \sum_{i = 0}^{n - 1} f^2(X_{t_i}) \mathbb{E}_i[(\int_{t_i}^{t_{i + 1}} a_{t_i} dW_s)^{2p}]^\frac{1}{p} \mathbb{E}_i[ |\Delta \tilde{X}_i^J \varphi_{\Delta_{n}^\beta}(\Delta \tilde{X}_i^J)|^{2q}]^\frac{1}{q} \le $$
\begin{equation}
\le \Delta_n^{ - 2 \beta(2 - \alpha)} \sum_{i = 0}^{n - 1} f^2(X_{t_i})R_i( \Delta_{n})R_i( \Delta_{n}^{\frac{1}{q} + \frac{\beta}{q}(2q - \alpha)}) \le \Delta_n^{1 - 2 \beta(2 - \alpha) + 2 \beta - \alpha \beta - \epsilon} \frac{1}{n} \sum_{i = 0}^{n - 1} f^2(X_{t_i}) R_i( 1),
\label{eq: fine per tesi 2 Catalot}
\end{equation}
where we have used \eqref{eq: bdg}, \eqref{eq: estensione tilde salti lemma 10} and property \eqref{propriety power R} of $R$. We observe that the exponent on $\Delta_n$ is positive if $\beta < \frac{1}{2 - \alpha} - \epsilon$ and we can always find an $\epsilon >0$ such that it is true. Hence \eqref{eq: fine per tesi 2 Catalot} goes to zero in norm $1$ and so in probability. \\
Concerning the second term of \eqref{eq: main I32}, using Cauchy-Schwartz inequality and \eqref{eq: estensione tilde salti lemma 10} we have 
$$\mathbb{E}_i[|\Delta \tilde{X}_i^J \varphi_{\Delta_{n}^\beta}(\Delta \tilde{X}_i^J)| |\int_{t_i}^{t_{i + 1}} [a_s- a_{t_i}] dW_s|] \le \mathbb{E}_i[|\Delta \tilde{X}_i^J \varphi_{\Delta_{n}^\beta}(\Delta \tilde{X}_i^J)|^2]^\frac{1}{2} \mathbb{E}_i[|\int_{t_i}^{t_{i + 1}} [a_s- a_{t_i}] dW_s|^2]^\frac{1}{2} \le $$
\begin{equation}
\le R_i(\Delta_{n}^{\frac{1}{2} + \frac{\beta}{2}(2 - \alpha)}) \mathbb{E}_i[\int_{t_i}^{t_{i + 1}} |a_s - a_{t_i}|^2 ds]^\frac{1}{2} \le \Delta_{n}^{\frac{1}{2} + \frac{\beta}{2}(2 - \alpha)}R_i( 1) \Delta_n \le \Delta_{n,i}^{\frac{3}{2} + \frac{\beta}{2}(2 - \alpha)}R_i(1),
\label{eq: altra parte I32}
\end{equation}
where we have also used the second point of Lemma \ref{lemma: Moment inequalities} and the property \eqref{propriety power R} of $R$. Replacing \eqref{eq: altra parte I32} in the second term of \eqref{eq: main I32} we get it is upper bounded in norm 1 by
\begin{equation}
\Delta_n^{\frac{1}{2} - \beta + \frac{\alpha \beta}{2}} \frac{1}{n} \sum_{i = 0}^{n-1} \mathbb{E}[ |f(X_{t_i})| R_i( 1)],
\label{eq: fine I32}
\end{equation}
that goes to zero since the exponent on $\Delta_n$ is more than $0$ for $\beta < \frac{1}{2} \frac{1}{(1 - \frac{\alpha}{2})}$, that is always true. Using
\eqref{eq: riformulo dopo prop 1} - \eqref{eq: tesi 2 Genon Catalot} and \eqref{eq: fine I32} we get
\begin{equation}
\frac{I_{3,2}^n}{\Delta_n^{\beta(2 - \alpha)}} \xrightarrow{\mathbb{P}} 0.
\label{eq: convergence I32}
\end{equation}
We now want to show that $I_{3,2}^n$ is also $o_\mathbb{P}(\Delta_n^{(\frac{1}{2} - \tilde{\epsilon}) \land (1 - \alpha \beta - \tilde{\epsilon})})$. \\
Using \eqref{eq: aggiunta prop1 browniano} in Proposition \ref{prop: espansione salti} we get it is enough to prove that
\begin{equation}
 \frac{1}{\Delta_n^{\frac{1}{2} - \tilde{\epsilon}}} \sum_{i = 0}^{n - 1}f(X_{t_i}) [\Delta \tilde{X}_i^J \varphi_{\Delta_{n}^\beta}(\Delta \tilde{X}_i^J) \int_{t_i}^{t_{i + 1}} a_s dW_s ]\xrightarrow{\mathbb{P}} 0,
\label{eq: I32 nuovo inizio}
\end{equation}
where the left hand side here above can be seen as \eqref{eq: main I32}, with the only difference that now we have $\Delta_n^{\frac{1}{2} - \tilde{\epsilon}}$ instead of $\Delta_n^{\beta(2 - \alpha)}$. We have again, acting like we did in \eqref{eq: I32 centrato} and \eqref{eq: fine per tesi 2 Catalot},
\begin{equation}
\frac{2}{\Delta_n^{\frac{1}{2} - \tilde{\epsilon}}}  \sum_{i = 0}^{n - 1} f(X_{t_i}) \mathbb{E}_i [\Delta \tilde{X}_i^J \varphi_{\Delta_{n}^\beta}(\Delta \tilde{X}_i^J) \int_{t_i}^{t_{i + 1}} a_{t_i} dW_s ] \xrightarrow{\mathbb{P}} 0
\label{eq: I32 nuovo conv}
\end{equation}
and 
\begin{equation}
\frac{4}{\Delta_n^{2(\frac{1}{2} - \tilde{\epsilon})}} \sum_{i = 0}^{n - 1} \mathbb{E}_i[f^2(X_{t_i}) (\Delta \tilde{X}_i^J)^2 \varphi^2_{\Delta_{n}^\beta}(\Delta \tilde{X}_i^J)(\int_{t_i}^{t_{i + 1}} a_{t_i} dW_s)^2] \le \Delta_n^{2 \tilde{\epsilon} + 2 \beta - \alpha \beta - \epsilon} \frac{1}{n} \sum_{i = 0}^{n - 1} f^2(X_{t_i}) R_i(1),
\label{eq: I32 nuovo carre}
\end{equation}
that goes to zero in norm 1 and so in probability. Using also \eqref{eq: altra parte I32} we have that
\begin{equation}
\frac{2}{\Delta_n^{\frac{1}{2} - \tilde{\epsilon}}}  \sum_{i = 0}^{n - 1} \mathbb{E}_i[ |f(X_{t_i}) \Delta \tilde{X}_i^J \varphi_{\Delta_{n}^\beta}(\Delta \tilde{X}_i^J) \int_{t_i}^{t_{i + 1}}[ a_s - a_{t_i}] dW_s|] \le \Delta_n^{\frac{\beta}{2}(2 - \alpha) + \tilde{\epsilon}} \frac{1}{n} \sum_{i = 0}^{n - 1} |f(X_{t_i})| R_i(1),
\label{eq: I32 nuovo fine}
\end{equation}
that, again, goes to zero in norm 1 and so in probability since the exponent on $\Delta_n$ is always positive. Using \eqref{eq: I32 nuovo inizio} - \eqref{eq: I32 nuovo fine} we get $I_{3,2}^n = o_\mathbb{P}(\Delta_n^{\frac{1}{2} - \tilde{\epsilon}})$ and so
\begin{equation}
I_{3,2}^n = o_\mathbb{P}(\Delta_n^{(\frac{1}{2} - \tilde{\epsilon}) \land (1 - \alpha \beta - \tilde{\epsilon})}).
\label{eq: finale I32 nuovo}
\end{equation}
From Lemma \ref{lemma: brownian increments}, \eqref{eq: I31 nuovo}, \eqref{eq: conv I31}, \eqref{eq: convergence I32} and \eqref{eq: finale I32 nuovo} it follows \eqref{eq: Qn parte continua}. \\
\\
Now, in order to prove \eqref{eq: estensione Qn}, we recall the definition of $X_t^c$:
\begin{equation}
X^c_{t_{i + 1}} - X^c_{t_i} = \int_{t_i}^{t_{i + 1}} b_s ds + \int_{t_i}^{t_{i + 1}} a_s dW_s.
\label{eq: definition Xc}
\end{equation}
Replacing \eqref{eq: definition Xc} in \eqref{eq: Qn parte continua} and comparing it with \eqref{eq: estensione Qn} it follows that our goal is to show that 
$$A_1^n + A_2^n : =  \sum_{i = 0}^{n - 1} f(X_{t_i}) (\int_{t_i}^{t_{i + 1}} b_s ds)^2 + 2 \sum_{i = 0}^{n-1} f(X_{t_i}) (\int_{t_i}^{t_{i + 1}} b_s ds)(\int_{t_i}^{t_{i + 1}} a_s dW_s) = \mathcal{E}_n.  $$
Using \eqref{eq: stima I22} and property \eqref{propriety power R} of $R$ we know that 
\begin{equation}
\frac{\mathbb{E}[|A_1^n|]}{\Delta_n^{\beta(2 - \alpha)}} \le \frac{1}{\Delta_n^{\beta(2 - \alpha)}} \sum_{i = 0}^{n-1} \mathbb{E}[|f(X_{t_i})|R_i( \Delta_{n}^2)] \le \Delta_n^{1 - \beta(2 - \alpha)} \frac{1}{n} \sum_{i = 0}^{n-1}\mathbb{E}[|f(X_{t_i})| R_i(1)]
\label{eq: estim A1}
\end{equation}
and 
\begin{equation}
\frac{\mathbb{E}[|A_1^n|]}{\Delta_n^{\frac{1}{2} - \tilde{\epsilon}}} \le \Delta_n^{\frac{1}{2} + \tilde{\epsilon}} \frac{1}{n} \sum_{i = 0}^{n-1}\mathbb{E}[|f(X_{t_i})| R_i(1)],
\label{eq: estim A1 nuovo}
\end{equation}
that go to zero since the exponent on $\Delta_n$ is always more than $0$, $f$ has both polynomial growth and the moment are bounded. \\
Let us now consider $A_2^n$. By adding and subtracting $b_{t_i}$ in the first integral, as we have already done, we get that 
$$A_2^n = \sum_{i = 0}^{n-1} \zeta_{n,i} + A_{2,2}^n : = 2 \sum_{i = 0}^{n-1} f(X_{t_i})(\int_{t_i}^{t_{i + 1}} b_{t_i} ds)(\int_{t_i}^{t_{i + 1}} a_s dW_s) + 2\sum_{i = 0}^{n-1} f(X_{t_i})(\int_{t_i}^{t_{i + 1}} [b_s - b_{t_i}] ds)(\int_{t_i}^{t_{i + 1}} a_s dW_s).$$
Using Lemma 9 in \cite{Genon Catalot}, we want to show that
\begin{equation}
\sum_{i=0}^{n-1} \zeta_{n,i} = \mathcal{E}_n
\label{eq: A21 nuovo}
\end{equation}
and so that the following convergences hold:
\begin{equation}
\frac{1}{\Delta_n^{\beta(2 - \alpha)}}\sum_{i=0}^{n-1} \mathbb{E}_i[ \zeta_{n,i}] \xrightarrow{\mathbb{P}} 0 \qquad \frac{1}{\Delta_n^{\frac{1}{2} - \tilde{\epsilon}}}\sum_{i=0}^{n-1} \mathbb{E}_i[ \zeta_{n,i}] \xrightarrow{\mathbb{P}} 0;
\label{eq: estim A21}
\end{equation}
\begin{equation}
\frac{1}{\Delta_n^{2\beta(2 - \alpha)}}\sum_{i=0}^{n-1} \mathbb{E}_i[ \zeta^2_{n,i}] \xrightarrow{\mathbb{P}} 0 \qquad \frac{1}{\Delta_n^{2(\frac{1}{2} - \tilde{\epsilon})}}\sum_{i=0}^{n-1} \mathbb{E}_i[ \zeta^2_{n,i}] \xrightarrow{\mathbb{P}} 0.
\label{eq: estim A21 carre}
\end{equation}
We have
$$\sum_{i = 0}^{n-1} \mathbb{E}_i[ \zeta_{n,i}]= \frac{2}{\Delta_n^{\beta(2 - \alpha)}}  \sum_{i = 0}^{n-1} f(X_{t_i}) \Delta_{n} b_{t_i} \mathbb{E}_i[\int_{t_i}^{t_{i + 1}} a_s dW_s] = 0$$
and so the two convergences in \eqref{eq: estim A21} both hold.
Concerning \eqref{eq: estim A21 carre}, using \eqref{eq: bdg} we have
$$\Delta_n^{1 - 2 \beta(2 - \alpha)} \frac{c}{n} \sum_{i = 0}^{n-1} f^2(X_{t_i})b^2_{t_i} \mathbb{E}_i[(\int_{t_i}^{t_{i + 1}} a_s dW_s)^2] \le \Delta_n^{2 - 2 \beta(2 - \alpha)}\frac{c}{n} \sum_{i = 0}^{n-1} f^2(X_{t_i})b^2_{t_i}R_i(1) $$
and 
$$\Delta_n^{1 - 2 (\frac{1}{2} - \tilde{\epsilon})} \frac{c}{n} \sum_{i = 0}^{n-1} f^2(X_{t_i})b^2_{t_i} \mathbb{E}_i[(\int_{t_i}^{t_{i + 1}} a_s dW_s)^2] \le \Delta_n^{1 + 2 \tilde{\epsilon}} \frac{c}{n} \sum_{i = 0}^{n-1} f^2(X_{t_i})b^2_{t_i}R_i(1), $$
that go to zero in norm $1$ and so in probability since $\Delta_n$ is always positive. It follows \eqref{eq: estim A21 carre} and so \eqref{eq: A21 nuovo}.
Concerning $A_{2,2}^n$, using Holder inequality, \eqref{eq: bdg}, the assumption on $b$ gathered in A2 and Jensen inequality it is
$$\mathbb{E}[|A_{2,2}^n|]\le c  \sum_{i = 0}^{n-1} \mathbb{E}[|f(X_{t_i})| \mathbb{E}_i[(\int_{t_i}^{t_{i + 1}} | b_s - b_{t_i}| ds)^q]^\frac{1}{q} R_i(\Delta_{n}^\frac{1}{2}) ]\le $$
$$ \le c \sum_{i = 0}^{n-1} \mathbb{E}[|f(X_{t_i})| ( \Delta_{n}^{q - 1} \int_{t_i}^{t_{i + 1}}\mathbb{E}_i[|b_s - b_{t_i}|^q] ds)^\frac{1}{q} R_i(\Delta_{n}^\frac{1}{2})] \le c \sum_{i = 0}^{n-1} \mathbb{E}[|f(X_{t_i})| ( \Delta_{n}^{q - 1} \int_{t_i}^{t_{i + 1}} \Delta_{n} ds)^\frac{1}{q} R_i(\Delta_{n}^\frac{1}{2})]. $$
So we get
\begin{equation}
 \frac{\mathbb{E}[|A_{2,2}^n|]}{\Delta_n^{\beta(2 - \alpha)}}\le \Delta_n^{\frac{1}{q} + \frac{1}{2} - \beta(2 - \alpha)} \frac{c}{n} \sum_{i = 0}^{n-1}\mathbb{E}[|f(X_{t_i})|R_i(1)] \qquad \mbox{and} 
\label{eq: estim A22}
\end{equation}
\begin{equation}
 \frac{\mathbb{E}[|A_{2,2}^n|]}{\Delta_n^{\frac{1}{2} - \tilde{\epsilon}}}\le \Delta_n^{\frac{1}{q} + \tilde{\epsilon}} \frac{c}{n} \sum_{i = 0}^{n-1}\mathbb{E}[|f(X_{t_i})|R_i(1)].
\label{eq: estim A22 nuovo}
\end{equation}
Since it holds for $q\ge 2$, the best choice is to take $q=2$, in this way we get that \eqref{eq: estim A22} and \eqref{eq: estim A22 nuovo} go to $0$ in norm $1$, using the polynomial growth of $f$, the boundedness of the moments, the definition of $R_i$ and the fact that the exponent on $\Delta_n$ is in both cases more than zero, because of $\beta < \frac{1}{2 - \alpha}$. \\
From \eqref{eq: estim A1}, \eqref{eq: estim A1 nuovo}, \eqref{eq: estim A21}, \eqref{eq: estim A22} and \eqref{eq: estim A22 nuovo} it follows \eqref{eq: estensione Qn}.

\subsection{Proof of Theorem \ref{th: 2 e 3 insieme}}
\begin{proof}
From Theorem \ref{th: estensione Qn} it is enough to prove that 
\begin{equation}
\sum_{i = 0}^{n-1} f(X_{t_i}) (\int_{t_i}^{t_{i+1}} a_s dW_s)^2  - \frac{1}{n} \sum_{i = 0}^{n-1} f (X_{t_i}) a^2_{t_i} = \frac{Z_n}{\sqrt{n}} + \mathcal{E}_n,
\label{eq:primo punto teo 2 3}
\end{equation}
and 
$$\tilde{Q}_n^J = \hat{Q}_n + \frac{1}{\Delta_n^{\beta(2 - \alpha)}} \mathcal{E}_n,$$
where $\mathcal{E}_n$ is always $o_\mathbb{P}(\Delta_n^{\beta(2 - \alpha)})$ and, if $\beta > \frac{1}{4 - \alpha}$, then it is also $o_\mathbb{P}(\Delta_n^{(\frac{1}{2} - \tilde{\epsilon}) \land (1 - \alpha \beta - \tilde{\epsilon})})$. We can rewrite the last equation here above as
\begin{equation}
\tilde{Q}^J_n = \hat{Q}_n + o_\mathbb{P}(1)
\label{eq:secondo punto teo 2 3}
\end{equation}
and, for $\beta > \frac{1}{4 - \alpha}$, 
\begin{equation}
\tilde{Q}^J_n = \hat{Q}_n + \frac{1}{\Delta_n^{\beta(2 - \alpha)}} o_\mathbb{P}(\Delta_n^{(\frac{1}{2} - \tilde{\epsilon}) \land (1 - \alpha \beta - \tilde{\epsilon})}).
\label{eq:nuovo punto teo 2 3}
\end{equation}
Indeed, using them and \eqref{eq: estensione Qn} it follows \eqref{eq:tesi teo 2 e 3}. Hence we are now left to prove \eqref{eq:primo punto teo 2 3} - \eqref{eq:nuovo punto teo 2 3}.\\ \\
\textit{Proof of \eqref{eq:primo punto teo 2 3}}.\\
We can see the left hand side of \eqref{eq:primo punto teo 2 3} as 
\begin{equation}
\sum_{i = 0}^{n-1} f(X_{t_i}) [(\int_{t_i}^{t_{i+1}} a_s dW_s)^2- \int_{t_i}^{t_{i+1}} a^2_s ds] +  \sum_{i = 0}^{n-1} f(X_{t_i})\int_{t_i}^{t_{i+1}} [a^2_s - a^2_{t_i}] ds = : M_n^Q + B_n.
\label{eq: def Bn}
\end{equation}
We want to show that $B_n = \mathcal{E}_n$, it means that it is both $o_\mathbb{P}(\Delta_n^{\beta(2 - \alpha)})$ and $o_\mathbb{P}(\Delta_n^{(\frac{1}{2} - \tilde{\epsilon}) \land (1 - \alpha \beta - \tilde{\epsilon})})$. We write
\begin{equation}
a^2_s - a^2_{t_i} = 2a_{t_i}(a_s - a_{t_i}) + (a_s - a_{t_i})^2,
\label{eq: development a2}
\end{equation}
replacing \eqref{eq: development a2} in the definition of $B_n$ it is $B_n = B_1^n + B_2^n$.
We start by proving that $B_2^n = o_\mathbb{P}(\Delta_n^{\beta(2 - \alpha)})$. Indeed, from the second point of Lemma \ref{lemma: Moment inequalities}, it is
$$\mathbb{E}[|B_2^n|] \le c \sum_{i = 0}^{n - 1}\mathbb{E}[|f(X_{t_i})| \int_{t_i}^{t_{i + 1}} \mathbb{E}_i[|a_s - a_{t_i}|^{2}] ds] \le c \Delta_n^2 \sum_{i = 0}^{n - 1}\mathbb{E}[|f(X_{t_i})|].$$
It follows
\begin{equation}
\frac{\mathbb{E}[|B_2^n|]}{\Delta_n^{\beta(2 - \alpha)}} \le \Delta_n^{1 - \beta(2 - \alpha)}\frac{1}{n} \sum_{i = 0}^{n - 1}\mathbb{E}[ |f|(X_{t_i})] \qquad \mbox{and} \quad \frac{\mathbb{E}[|B_2^n|]}{\Delta_n^{\frac{1}{2} - \tilde{\epsilon}}} \le \Delta_n^{\frac{1}{2} + \tilde{\epsilon}}\frac{1}{n} \sum_{i = 0}^{n - 1}\mathbb{E}[ |f|(X_{t_i})],
\label{eq: stima B2n stabile}
\end{equation}
that go to zero using the polynomial growth of $f$ and the fact that the moments are bounded. We have also observed that the exponent on $\Delta_n$ is always more than $0$. \\
Concerning $B_1^n$, we recall that from \eqref{eq: model vol} it follows
$$a_s - a_{t_i} = \int_{t_i}^s \tilde{b}_u du + \int_{t_i}^s \tilde{a}_u dW_u + \int_{t_i}^s \hat{a}_u d\hat{W}_u + \int_{t_i}^s \int_{\mathbb{R} \backslash \left \{0 \right \}} \tilde{\gamma}_u \, z \, \tilde{\mu}(du, dz) + \int_{t_i}^s \int_{\mathbb{R} \backslash \left \{0 \right \}} \hat{\gamma}_u \, z \, \tilde{\mu}_2(du, dz)$$
and so, replacing it in the definition of $B_1^n$, we get $B_1^n:= I_1^n + I_2^n + I_3^n + I_4^n + I_5^n$. \\
We start considering $I_1^n$ on which we use that $\tilde{b}$ is bounded
$$\mathbb{E}[|I_1^n|] \le 2 \sum_{i = 0}^{n - 1}\mathbb{E}[|f(X_{t_i})||a_{t_i}| \int_{t_i}^{t_{i + 1}}(\int_{t_i}^s c du) ds] \le \Delta_n  \frac{1}{n} \sum_{i = 0}^{n - 1} \mathbb{E}[|f(X_{t_i})||a_{t_i}|].  $$
It follows
\begin{equation}
\frac{\mathbb{E}[|I_1^n|] }{\Delta_n^{\beta(2 - \alpha)}} \le \Delta_n^{1 - \beta(2 - \alpha)} \frac{1}{n} \sum_{i = 0}^{n - 1} \mathbb{E}[|f(X_{t_i})||a_{t_i}|] \qquad \mbox{and }
\label{eq: conv I1n stabile}
\end{equation}
\begin{equation}
\frac{\mathbb{E}[|I_1^n|]}{\Delta_n^{\frac{1}{2} - \tilde{\epsilon}}} \le \Delta_n^{\frac{1}{2} + \tilde{\epsilon}}\frac{1}{n} \sum_{i = 0}^{n - 1} \mathbb{E}[|f(X_{t_i})||a_{t_i}|],
\label{eq: stima I1n nuovo}
\end{equation}
that go to zero because of the polynomial growth of $f$, the boundedness of the moments and the fact that $1 - \beta(2 - \alpha) > 0$. \\
We now act on $I_2^n$ and $I_3^n$ in the same way. Considering $I_2^n$, we define $\zeta_{n,i} : =  2 f(X_{t_i}) a_{t_i} \int_{t_i}^{t_{i + 1}}(\int_{t_i}^s \tilde{a}_u dW_u) ds $. We want to use Lemma 9 in \cite{Genon Catalot} to get that 
\begin{equation}
\frac{I_2^n}{\Delta_n^{\beta(2 - \alpha)}} \xrightarrow{\mathbb{P}} 0 \qquad \mbox{and } \quad \frac{I_2^n}{\Delta_n^{(\frac{1}{2} - \tilde{\epsilon}) \land (1 - \alpha \beta - \tilde{\epsilon} )}} \xrightarrow{\mathbb{P}} 0
\label{eq: conv I2n stabile}
\end{equation}
and so we have to show the following : 
\begin{equation}
\frac{1}{\Delta_n^{\beta (2 - \alpha)}}\sum_{i = 0}^{n-1} \mathbb{E}_i[\zeta_{n,i}]  \xrightarrow{\mathbb{P}} 0, \qquad \frac{1}{\Delta_n^{\frac{1}{2} - \tilde{\epsilon}}}\sum_{i = 0}^{n-1} \mathbb{E}_i[\zeta_{n,i}]  \xrightarrow{\mathbb{P}} 0;
\label{eq: cond 1 genon catalot stabile}
\end{equation}
\begin{equation}
\frac{1}{\Delta_n^{2\beta (2 - \alpha)}}\sum_{i = 0}^{n-1} \mathbb{E}_i[\zeta_{n,i}^2]  \xrightarrow{\mathbb{P}} 0,
\label{eq: cond 2 genon catalot stabile}
\end{equation}
\begin{equation}
\frac{1}{\Delta_n^{2(\frac{1}{2} - \tilde{\epsilon})}}\sum_{i = 0}^{n-1} \mathbb{E}_i[\zeta^2_{n,i}]  \xrightarrow{\mathbb{P}} 0.
\label{eq:cond 2 genon catalot stabile nuovo }
\end{equation}
By the definition of $\zeta_{n,i}$ it is $\mathbb{E}_i[\zeta_{n,i}] = 0$ and so \eqref{eq: cond 1 genon catalot stabile} is clearly true. The left hand side of \eqref{eq: cond 2 genon catalot stabile} is 
\begin{equation}
\Delta_n^{ - 2\beta(2 - \alpha)} 4 \sum_{i = 0}^{n-1} f^2(X_{t_i})a_{t_i}^2 \mathbb{E}_i [(\int_{t_i}^{t_{i + 1}}(\int_{t_i}^s \tilde{a}_u dW_u) ds )^2]. 
\label{eq: I2n intermedio}
\end{equation}
Using Fubini theorem and Ito isometry we have
\begin{equation}
 \mathbb{E}_i [(\int_{t_i}^{t_{i + 1}}(\int_{t_i}^s \tilde{a}_u dW_u) ds )^2] =  \mathbb{E}_i [(\int_{t_i}^{t_{i + 1}}(t_{i + 1} - s) \tilde{a}_s dW_s)^2] = \mathbb{E}_i [\int_{t_i}^{t_{i + 1}}(t_{i + 1} - s^2) \tilde{a}_s^2 ds] \le R_i(\Delta_{n}^3).
\label{eq: utile carre I2n}
\end{equation}
 Because of \eqref{eq: utile carre I2n}, we get that \eqref{eq: I2n intermedio} is upper bounded by
$$ \Delta_n^{2 - 2\beta(2 - \alpha)} \frac{1}{n} \sum_{i = 0}^{n-1}f^2(X_{t_i})a_{t_i}^2 R_i(1),$$
that converges to zero in norm $1$ and so \eqref{eq: cond 2 genon catalot stabile} follows, since $2 - 2\beta(2 - \alpha) > 0$ for $\beta < \frac{1}{2 - \alpha}$, that is always true.
Acting in the same way we get that the left hand side of \eqref{eq:cond 2 genon catalot stabile nuovo } is upper bounded by 
$$ \Delta_n^{1 + 2 \tilde{\epsilon} } \frac{1}{n} \sum_{i = 0}^{n-1}f^2(X_{t_i})a_{t_i}^2 R_i(1),$$
that goes to zero in norm $1$. The same holds clearly for $I_3^n$ instead of $I_2^n$.
In order to show also 
\begin{equation}
\frac{I_4^n}{\Delta_n^{\beta(2 - \alpha)}} \xrightarrow{\mathbb{P}} 0 \qquad \mbox{and } \quad \frac{I_4^n}{\Delta_n^{(\frac{1}{2} - \tilde{\epsilon}) \land (1 - \alpha \beta - \tilde{\epsilon} )}} \xrightarrow{\mathbb{P}} 0,
\label{eq: convergence I3n stabile}
\end{equation}
we define $\tilde{\zeta}_{n,i} :=  2 f(X_{t_i}) a_{t_i} \int_{t_i}^{t_{i + 1}}(\int_{t_i}^s \int_\mathbb{R} \tilde{\gamma}_u z \tilde{\mu}(du, dz)) ds $. We have again $\mathbb{E}_i[\tilde{\zeta}_{n,i}]= 0$ and so \eqref{eq: cond 1 genon catalot stabile} holds with $\tilde{\zeta}_{n,i}$ in place of $\zeta_{n,i}$.
We now act like we did in \eqref{eq: utile carre I2n}, using Fubini theorem and Ito isometry. It follows
$$\mathbb{E}_i [(\int_{t_i}^{t_{i + 1}}(\int_{t_i}^s \int_\mathbb{R}  \tilde{\gamma}_u z \tilde{\mu}(du,dz) ds )^2] =  \mathbb{E}_i [(\int_{t_i}^{t_{i + 1}}\int_\mathbb{R}(t_{i + 1} - s) \tilde{\gamma}_s z \tilde{\mu}(ds,dz))^2]=$$
\begin{equation}
= \mathbb{E}_i[\int_{t_i}^{t_{i + 1}}(t_{i + 1} - s)^2 \tilde{\gamma}_s^2 ds (\int_\mathbb{R} z^2 F(z)dz)] \le R_i(\Delta_{n}^3),
\label{eq: isometria salti}
\end{equation}
having used in the last inequality the definition of $\bar{\mu}(ds, dz)$, the fact that $\int_\mathbb{R} z^2 F(z)dz < \infty$ and the boundedness of $\tilde{\gamma}$. Replacing \eqref{eq: isometria salti} in the left hand side of \eqref{eq: cond 2 genon catalot stabile} and \eqref{eq:cond 2 genon catalot stabile nuovo }, with $\tilde{\zeta}_{n,i}$ in place of $\zeta_{n,i}$, we have
$$\frac{1}{\Delta_n^{2\beta(2 - \alpha)}}\sum_{i=0}^{n-1} \mathbb{E}_i[\tilde{\zeta}^2_{n,i}] \le c \Delta_n^{ - 2\beta(2 - \alpha)} \sum_{i = 0}^{n-1}f^2(X_{t_i}) a^2_{t_i} R_i(\Delta^3_{n}) \le \Delta_n^{2 - 2\beta(2 - \alpha)} \frac{1}{n} \sum_{i = 0}^{n-1} f^2(X_{t_i}) a^2_{t_i} R_i(1)$$
$$\mbox{and } \frac{1}{\Delta_n^{1 - 2 \tilde{\epsilon}}}\sum_{i=0}^{n-1} \mathbb{E}_i[\tilde{\zeta}^2_{n,i}] \le \Delta_n^{1 + 2 \tilde{\epsilon}} \frac{1}{n} \sum_{i = 0}^{n-1}f^2(X_{t_i}) a^2_{t_i} R_i(1).$$
Again, they converge to zero in norm $1$ and thus in probability since $2 - 2\beta(2 - \alpha) > 0$  always holds. Therefore, we get \eqref{eq: convergence I3n stabile}. Clearly, \eqref{eq: convergence I3n stabile} holds also with $I_5^n$ replacing $I_4^n$; the reasoning here above joint with the sixth point of A4 on $F_2$ is proof of that. \\
From \eqref{eq: stima B2n stabile}, \eqref{eq: conv I1n stabile}, \eqref{eq: stima I1n nuovo}, \eqref{eq: conv I2n stabile} and \eqref{eq: convergence I3n stabile} it follows that 
\begin{equation}
B_n = \mathcal{E}_n.
\label{eq: Bn trascurabile in punto 1 teo 2 3}
\end{equation}
Concerning $M_n^Q: = \sum_{i= 0}^{n-1} \hat{\zeta}_{n,i}$, Genon - Catalot and Jacod have proved in \cite{Genon Catalot} that, in the continuous framework, the following conditions are enough to get $\sqrt{n}M_n^Q \rightarrow N(0, 2 \int_0^T f^2(X_s) a^4_s ds)$ stably with respect to $X$:
\begin{itemize}
    \item $\mathbb{E}_i[\hat{\zeta}_{n,i}] = 0$;
    \item $\sum_{i = 0}^{n-1}\mathbb{E}_i[\hat{\zeta}^2_{n,i}] \xrightarrow{\mathbb{P}} 2 \int_0^T  f^2(X_s) a^4_s ds$ ;
    \item $\sum_{i = 0}^{n-1}\mathbb{E}_i[\hat{\zeta}^4_{n,i}] \xrightarrow{\mathbb{P}} 0$;
    \item $\sum_{i = 0}^{n-1} \mathbb{E}_i[\hat{\zeta}_{n,i}(W_{t_{i + 1}} - W_{t_i})] \xrightarrow{\mathbb{P}} 0$;
    \item $\sum_{i = 0}^{n-1} \mathbb{E}_i[\hat{\zeta}_{n,i}(\hat{W}_{t_{i + 1}} - \hat{W}_{t_i})] \xrightarrow{\mathbb{P}} 0$.
\end{itemize}
 Theorem 2.2.15 in \cite{13 in Maillavin} adapts the previous theorem to our framework, in which there is the presence of jumps. \\
We observe that the conditions here above are respected, hence 
\begin{equation}
M_n^Q = \frac{Z_n}{\sqrt{n}}, \mbox{ where } Z_n \xrightarrow{n} N(0,2 \int_0^T  f^2(X_s) a^4_s ds),
\label{eq: conv stabile}
\end{equation}
stably with respect to $X$.
From \eqref{eq: Bn trascurabile in punto 1 teo 2 3} and \eqref{eq: conv stabile}, it follows \eqref{eq:primo punto teo 2 3}. \\ \\
\textit{Proof of \eqref{eq:secondo punto teo 2 3}.} \\
We use Proposition \ref{prop: espansione salti} replacing \eqref{eq: espansione salti} in the definition \eqref{eq: definition tilde Qn} of $\tilde{Q}^J_n$. Recalling that the convergence in norm $1$ implies the convergence in probability it is clear that we have to prove the result on 
$$n^{\beta(2 - \alpha)} \sum_{i = 0}^{n-1} f(X_{t_i})(\Delta \tilde{X}_i^J)^2\varphi_{\Delta_{n}^\beta}(\Delta \tilde{X}_i^J) =$$
\begin{equation}
= n^{\beta(2 - \alpha)} \sum_{i = 0}^{n-1} f(X_{t_i}) \gamma^2(X_{t_i}) \Delta_{n}^\frac{2}{\alpha} (\frac{\Delta \tilde{X}_i^J}{\gamma(X_{t_i})\Delta_{n}^\frac{1}{\alpha}})^2\varphi_{\Delta_{n}^\beta}(\frac{\Delta \tilde{X}_i^J}{\gamma(X_{t_i})\Delta_{n}^\frac{1}{\alpha}}\gamma(X_{t_i})\Delta_{n}^\frac{1}{\alpha}),
\label{eq: ref tesi salti}
\end{equation}
where we have also rescaled the process in order to apply Proposition \ref{prop: estimation stable}. We now define 
\begin{equation}
g_{i,n}(y):= y^2\varphi_{\Delta_{n}^\beta}(y \gamma(X_{t_i})\Delta_{n}^\frac{1}{\alpha}), 
\label{eq: definition g}
\end{equation}
hence we can rewrite \eqref{eq: ref tesi salti} as
$$(\frac{1}{n})^{\frac{2}{\alpha} - \beta(2 - \alpha)} \sum_{i = 0}^{n-1} f(X_{t_i}) \gamma^2(X_{t_i}) [g_{i,n}(\frac{\Delta \tilde{X}_i^J}{\gamma(X_{t_i})\Delta_{n}^\frac{1}{\alpha}}) - \mathbb{E}[g_{i,n}(S_1^\alpha)]] +$$
\begin{equation}
+ (\frac{1}{n})^{\frac{2}{\alpha} - \beta(2 - \alpha)} \sum_{i = 0}^{n-1} f(X_{t_i}) \gamma^2(X_{t_i}) \mathbb{E}[g_{i,n}(S_1^\alpha)] =: \sum_{i = 0}^{n-1}A_{1, i}^n + \hat{Q}_n,
\label{eq: def B1 B2}
\end{equation}
where $S_1^\alpha$ is the $\alpha$-stable process at time $t=1$. We want to show that $\sum_{i = 0}^{n-1}A_{1, i}^n $ converges to zero in probability. With this purpose in mind, we take the conditional expectation of $A_{1, i}^n $ and we apply Proposition \ref{prop: estimation stable} on the interval $[t_i, t_{i+1}]$ instead of on $[0, h]$, observing that property \eqref{eq: conditon on h} holds on $g_{i,n}$ for $p=2$.
By the definition \eqref{eq: definition g} of $g_{i,n}$, we have $\left \| g_{i,n} \right \|_\infty = R_i(\Delta_{n}^{2(\beta - \frac{1}{\alpha})}) $ and  $\left \| g_{i,n} \right \|_{pol} = R_i(1)$. Replacing them in \eqref{eq: tesi prop stable} we have that
$$|\mathbb{E}_i[g_{i,n}(\frac{\Delta \tilde{X}_i^J}{\gamma(X_{t_i})\Delta_{n}^\frac{1}{\alpha}})] - \mathbb{E}[g_{i,n}(S_1^\alpha)]| \le c_{\epsilon, \alpha} \Delta_{n}|\log(\Delta_{n})|R_i(\Delta_{n}^{2(\beta - \frac{1}{\alpha})}) + $$
$$ + c_{\epsilon, \alpha} \Delta_{n}^\frac{1}{\alpha}|\log(\Delta_{n})|R_i(\Delta_{n}^{2(\beta - \frac{1}{\alpha})(1 - \frac{\alpha}{2} - \epsilon)}) + c_{\epsilon, \alpha} \Delta_{n}^\frac{1}{\alpha}|\log(\Delta_{n})|R_i(\Delta_{n}^{2(\beta - \frac{1}{\alpha})(\frac{3}{2} - \frac{\alpha}{2} - \epsilon)})1_{\alpha > 1}.  $$
To get $\sum_{i = 0}^{n-1}A_{1, i}^n : = o_\mathbb{P}(1)$, we want to use Lemma 9 of \cite{Genon Catalot}. We have
$$\sum_{i = 0}^{n-1}|\mathbb{E}_i[A_{1, i}^n ] |\le (\frac{1}{n})^{\frac{2}{\alpha} - \beta(2 - \alpha)} \sum_{i = 0}^{n-1} |f(X_{t_i})| |\gamma^2(X_{t_i})||\log(\Delta_{n})|[\Delta_{n}^{1 + 2 (\beta - \frac{1}{\alpha})} + \Delta_{n}^{\frac{1}{\alpha} + (2 - \alpha - \epsilon)(\beta - \frac{1}{\alpha})}+  $$
\begin{equation}
 + \Delta_{n}^{\frac{1}{\alpha} + (3 - \alpha - \epsilon)(\beta - \frac{1}{\alpha})}1_{\alpha > 1}]R_i(1) \le (\Delta_n^{\alpha \beta} + \Delta_n^{\frac{1}{\alpha} - \epsilon} + \Delta_n^{\beta - \epsilon}1_{\alpha > 1})  \frac{| \log(\Delta_n)|}{n}\sum_{i = 0}^{n-1} |f(X_{t_i})| |\gamma^2(X_{t_i})|R_i(1),
\label{eq: stima B1}
\end{equation}
where we have used property \eqref{propriety power R}. Using the polynomial growth of $f$, the boundedness of the moments and the fifth point of Assumption 4 in order to bound $\gamma$, \eqref{eq: stima B1} converges to $0$ in norm $1$ and so in probability since $\Delta_n^{\alpha \beta} \log(\Delta_n) \rightarrow 0$ for $n\rightarrow \infty$ and we can always find an $\epsilon > 0$ such that $\Delta_n^{\frac{1}{\alpha} - \epsilon}$ does the same. \\
To use Lemma 9 of \cite{Genon Catalot} we have also to show that 
\begin{equation}
(\frac{1}{n})^{\frac{4}{\alpha}  - 2 \beta(2 - \alpha)} \sum_{i = 0}^{n - 1} f^2(X_{t_i}) \gamma^4(X_{t_i}) \mathbb{E}_i[(g_{i,n}(\frac{\Delta \tilde{X}_i^J}{\gamma(X_{t_i})\Delta_{n}^\frac{1}{\alpha}}) - \mathbb{E}[g_{i,n}(S_1^\alpha)])^2] \xrightarrow{P} 0.
\label{eq: conv Ai alla seconda}
\end{equation}
We observe that $\mathbb{E}_i[(g_{i,n}(\frac{\Delta \tilde{X}_i^J}{\gamma(X_{t_i})\Delta_{n}^\frac{1}{\alpha}}) - \mathbb{E}[g_{i,n}(S_1^\alpha)])^2] \le c \mathbb{E}_i[g_{i,n}^2(\frac{\Delta \tilde{X}_i^J}{\gamma(X_{t_i})\Delta_{n}^\frac{1}{\alpha}})] + c \mathbb{E}_i[\mathbb{E}[g_{i,n}(S_1^\alpha)]^2] $. Now, using equation \eqref{eq: estensione tilde salti lemma 10} of Lemma \ref{lemma: estensione 10 capitolo 1}, we observe it is
\begin{equation}
\mathbb{E}_i[g_{i,n}^2(\frac{\Delta \tilde{X}_i^J}{\gamma(X_{t_i})\Delta_{n}^\frac{1}{\alpha}})] = \frac{\Delta_{n}^{- \frac{4}{\alpha}}}{\gamma^4(X_{t_i})} \mathbb{E}_i[(\Delta \tilde{X}_i^J)^4 \varphi^2_{\Delta_n^\beta}(\Delta \tilde{X}_i^J)] = \frac{\Delta_{n}^{- \frac{4}{\alpha}}}{\gamma^4(X_{t_i})}R_i(\Delta_{n}^{1 + \beta(4 - \alpha)}),
\label{eq: estim g2x}
\end{equation}
where $\varphi$ acts as the indicator function. Moreover we observe that
\begin{equation}
\mathbb{E}[g_{i,n}(S_1^\alpha)] = \int_\mathbb{R} z^2 \varphi(\Delta_{n}^{\frac{1}{\alpha} - \beta}\gamma(X_{t_i})z) f_\alpha(z) dz = d(\gamma(X_{t_i})\Delta_{n}^{\frac{1}{\alpha} - \beta}),
\label{eq: val atteso g}
\end{equation}
with $f_\alpha(z)$ the density of the stable process. 
We now introduce the following lemma, that will be shown in the Appendix:
\begin{lemma}
Suppose that Assumptions 1-4 hold. Then, for each $\zeta_n$ such that $\zeta_n \rightarrow 0$ and for each $\hat{\epsilon} > 0$, 
\begin{equation}
d(\zeta_n) = |\zeta_n|^{\alpha - 2} c_\alpha \int_\mathbb{R} |u|^{1 - \alpha} \varphi(u) du + o(|\zeta_n|^{- \hat{\epsilon}} + |\zeta_n|^{2 \alpha - 2 - \hat{\epsilon}}),
\label{eq: dl d}
\end{equation}
where $c_\alpha$ has been defined in \eqref{eq: def calpha}.
\label{lemma: dl d}
\end{lemma}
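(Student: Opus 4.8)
The plan is to reduce $d$ to an explicit integral against the $\alpha$-stable density and then to isolate the pure power tail, which — after the natural rescaling $u=z\zeta$ — already produces the announced leading term. Since $S_1^\alpha$ has a bounded, smooth, symmetric density $f_\alpha$, one writes $d(\zeta)=\int_\mathbb{R} z^2\varphi(z\zeta)f_\alpha(z)\,dz$, and the symmetry of $S_1^\alpha$ makes $d$ an even function of $\zeta$ (replace $S_1^\alpha$ by $-S_1^\alpha$); hence it suffices to treat $\zeta_n>0$, and in particular $\zeta_n<1$ for $n$ large. The only fact about $S_1^\alpha$ that will be used is the classical second-order asymptotics of the symmetric $\alpha$-stable density,
\begin{equation}
f_\alpha(x)=c_\alpha|x|^{-1-\alpha}+O(|x|^{-1-2\alpha})\qquad(|x|\to\infty),
\label{eq:plan-stable}
\end{equation}
with $c_\alpha$ as in \eqref{eq: def calpha} (for $\alpha=1$ the density is Cauchy, so \eqref{eq:plan-stable} is in fact exact and no logarithm enters at this order); equivalently $|z^2f_\alpha(z)-c_\alpha|z|^{1-\alpha}|\le C|z|^{1-2\alpha}$ for all $|z|\ge1$.

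I would then decompose $d(\zeta)=D_1(\zeta)+D_2(\zeta)$ with $D_1(\zeta):=c_\alpha\int_\mathbb{R} z^2\varphi(z\zeta)|z|^{-1-\alpha}1_{\{|z|\ge1\}}\,dz$ and $D_2(\zeta)$ the remainder. For $D_1$, the substitution $u=z\zeta$ maps $\{|z|\ge1\}$ onto $\{\zeta\le|u|\le2\}$ (since $\varphi$ is supported in $[-2,2]$) and gives $D_1(\zeta)=c_\alpha\zeta^{\alpha-2}\int_{\zeta\le|u|\le2}|u|^{1-\alpha}\varphi(u)\,du$; using $\varphi\equiv1$ on $(-1,1)\supseteq(-\zeta,\zeta)$ together with $\int_{|u|<\zeta}|u|^{1-\alpha}\,du=\tfrac{2}{2-\alpha}\zeta^{2-\alpha}$ one obtains
\begin{equation}
D_1(\zeta)=c_\alpha\,\zeta^{\alpha-2}\int_\mathbb{R}|u|^{1-\alpha}\varphi(u)\,du-\frac{2c_\alpha}{2-\alpha},
\label{eq:plan-D1}
\end{equation}
which is the leading term of the lemma plus a constant, hence correct up to an $o(\zeta^{-\hat\epsilon})$ error. (The integral $\int_\mathbb{R}|u|^{1-\alpha}\varphi(u)\,du$ is finite because $\alpha<2$ and $\varphi$ has compact support.) For $D_2$, I would split at $|z|=1$: on $\{|z|<1\}$ we have $\varphi(z\zeta)=1$ and the indicator vanishes, so this piece equals the $\zeta$-independent constant $\int_{|z|<1}z^2f_\alpha(z)\,dz$; on $\{|z|\ge1\}$ the integrand is $z^2\varphi(z\zeta)\big(f_\alpha(z)-c_\alpha|z|^{-1-\alpha}\big)$, whose modulus is, by \eqref{eq:plan-stable} and the fact that $\varphi(z\zeta)=0$ for $|z|\ge2/\zeta$, at most $C|z|^{1-2\alpha}1_{\{|z|\le2/\zeta\}}$. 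Therefore $|D_2(\zeta)|\le C'+C\int_1^{2/\zeta}z^{1-2\alpha}\,dz$.

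It then remains to read off the size of $\int_1^{2/\zeta}z^{1-2\alpha}\,dz$ in the three regimes: for $\alpha<1$ it is $O(\zeta^{2\alpha-2})=o(\zeta^{2\alpha-2-\hat\epsilon})$; for $\alpha=1$ it is $O(|\log\zeta|)=o(\zeta^{-\hat\epsilon})$; for $\alpha>1$ it is $O(1)=o(\zeta^{-\hat\epsilon})$. Combining this with \eqref{eq:plan-D1} and the constant coming from $\{|z|<1\}$ gives exactly \eqref{eq: dl d}. The one point that needs care is that \eqref{eq:plan-stable} must be invoked \emph{with} its second-order remainder: the bare first-order tail $f_\alpha(x)\sim c_\alpha|x|^{-1-\alpha}$ would only deliver an error $o(\zeta^{\alpha-2})$, too weak to reach the $o(\zeta^{2\alpha-2-\hat\epsilon})$ claimed when $\alpha<1$. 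The slack encoded by $\hat\epsilon$ is precisely what absorbs the genuine $\zeta^{2\alpha-2}$ term (for $\alpha<1$) and the logarithmic factor (for $\alpha=1$) produced by that remainder integral; everything else is elementary bookkeeping.
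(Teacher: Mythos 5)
Your proof is correct and follows essentially the same route as the paper's: reduce to $\zeta_n>0$ by symmetry, invoke the second-order tail expansion $f_\alpha(z)=c_\alpha|z|^{-1-\alpha}+O(|z|^{-1-2\alpha})$, rescale $u=z\zeta$ to extract the leading term $c_\alpha\zeta^{\alpha-2}\int|u|^{1-\alpha}\varphi(u)\,du$, and bound the near-origin piece and the tail remainder by quantities of order $O(1)+O(\zeta^{2\alpha-2})$ (with a logarithm at $\alpha=1$), all absorbed by the $\hat\epsilon$ slack. The only differences are cosmetic (you split at $|z|=1$ rather than at $|u|=\zeta_n M$, and you spell out the three regimes $\alpha<1$, $\alpha=1$, $\alpha>1$ explicitly), so no further comment is needed.
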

Since $\frac{1}{\alpha} - \beta > 0$, $\gamma(X_{t_i}) \Delta_{n}^{\frac{1}{\alpha} - \beta}$ goes to zero for $n \rightarrow \infty$ and so we can take $\zeta_n$ as $\gamma(X_{t_i}) \Delta_{n}^{\frac{1}{\alpha} - \beta}$, getting that
\begin{equation}
\mathbb{E}[g_{i,n}(S_1^\alpha)] = d(\gamma(X_{t_i})\Delta_{n}^{\frac{1}{\alpha} - \beta}) = R_i(\Delta_{n}^{(\frac{1}{\alpha} - \beta)(\alpha - 2)}).
\label{eq: replaced dl density}
\end{equation}
Replacing \eqref{eq: estim g2x} and \eqref{eq: replaced dl density} in the left hand side of \eqref{eq: conv Ai alla seconda} we get it is upper bounded by 
$$\sum_{i=0}^{n-1} \mathbb{E}_i[(A_{1,i}^n)^2] = (\frac{1}{n})^{\frac{4}{\alpha}  - 2 \beta(2 - \alpha)} \sum_{i=0}^{n-1} f^2(X_{t_i})\gamma^4(X_{t_i})(R_i(\Delta_{n}^{1 + \beta(4 - \alpha)}) + R_i(\Delta_{n}^{4 \beta - \frac{4}{\alpha} + 2 - 2 \alpha \beta})) \le$$
\begin{equation}
\le \Delta_n^{\alpha \beta \land 1 } \frac{1}{n} \sum_{i=0}^{n-1} f^2(X_{t_i})\gamma^4(X_{t_i})R_i(1),
\label{eq: quadrato A1}
\end{equation}
that converges to zero in norm $1$ and so in probability, as a consequence of the polynomial growth of $f$ and the fact that the exponent on $\Delta_n$ is always positive. From \eqref{eq: stima B1} and \eqref{eq: quadrato A1} it follows 
\begin{equation}
\sum_{i=0}^{n - 1} A_{1,i}^n = o_\mathbb{P}(1).
\label{eq: estim A1n}
\end{equation}
and so \eqref{eq:secondo punto teo 2 3}. \\ \\
\textit{Proof of \eqref{eq:nuovo punto teo 2 3}.} \\
We use Proposition \ref{prop: espansione salti} replacing \eqref{eq: aggiunta prop1 salti} in definition \eqref{eq: definition tilde Qn} of $\tilde{Q}^J_n$. Our goal is to prove that
$$n^{\beta(2 - \alpha)} \sum_{i = 0}^{n-1} f(X_{t_i}) (\Delta \tilde{X}_i^J)^2\varphi_{\Delta_{n}^\beta}(\Delta \tilde{X}_i^J) = \hat{Q}_n + o_\mathbb{P}(\Delta_n^{(\frac{1}{2} - 2 \beta + \alpha \beta - \tilde{\epsilon}) \land (1 - 2 \beta - \tilde{\epsilon})}).$$
On the left hand side of the equation here above we can act like we did in \eqref{eq: ref tesi salti} - \eqref{eq: def B1 B2}. 
To get \eqref{eq:nuovo punto teo 2 3} we are therefore left to show that , if $\beta > \frac{1}{4 - \alpha}$, then $\sum_{i = 0}^{n - 1} A_{1,i}^n$ is also $o_\mathbb{P}(\Delta_n^{(\frac{1}{2} - 2 \beta + \alpha \beta - \tilde{\epsilon}) \land (1 - 2 \beta - \tilde{\epsilon})})$. To prove it, we want to use Lemma 9 of \cite{Genon Catalot}, hence we want to prove the following:
\begin{equation}
\frac{1}{\Delta_n^{\frac{1}{2} - 2 \beta + \alpha \beta - \tilde{\epsilon}}} \sum_{i = 0}^{n - 1} \mathbb{E}_i[A_{1,i}^n] \xrightarrow{\mathbb{P}} 0 \quad \mbox{and }
\label{eq: conv A1n nuovo}
\end{equation}
\begin{equation}
\frac{1}{\Delta_n^{2(\frac{1}{2} - 2 \beta + \alpha \beta - \tilde{\epsilon})}} \sum_{i = 0}^{n - 1} \mathbb{E}_i[(A_{1,i}^n)^2] \xrightarrow{\mathbb{P}} 0.
\label{eq: conv A1n carre nuovo}
\end{equation}
Using \eqref{eq: stima B1} we have that, if $\alpha > 1$, then the left hand side of \eqref{eq: conv A1n nuovo} is in module upper bounded by
$$\frac{\Delta_n^{\beta - \epsilon} |\log(\Delta_n)|}{\Delta_n^{\frac{1}{2} - 2 \beta + \alpha \beta - \tilde{\epsilon}}} \frac{1}{n} \sum_{i = 0}^{n - 1}|f(X_{t_i})| |\gamma^2(X_{t_i})|R_i(1) = \Delta_n^{3 \beta - \alpha \beta - \frac{1}{2} + \tilde{\epsilon} - \epsilon }|\log(\Delta_n)| \frac{1}{n} \sum_{i = 0}^{n - 1}|f(X_{t_i})| |\gamma^2(X_{t_i})|R_i(1), $$
that goes to zero since we have chosen $\beta > \frac{1}{4 - \alpha} > \frac{1}{2(3 - \alpha)}$. Otherwise, if $\alpha \le 1$, then \eqref{eq: stima B1} gives us that the left hand side of \eqref{eq: conv A1n nuovo} is in module upper bounded by
$$\frac{\Delta_n^{ \alpha \beta} |\log(\Delta_n)|}{\Delta_n^{\frac{1}{2} - 2 \beta + \alpha \beta - \tilde{\epsilon}}}  \frac{1}{n} \sum_{i = 0}^{n - 1}|f(X_{t_i})| |\gamma^2(X_{t_i})|R_i(1) = \Delta_n^{2 \beta - \frac{1}{2} + \tilde{\epsilon} }|\log(\Delta_n)| \frac{1}{n} \sum_{i = 0}^{n - 1}|f(X_{t_i})| |\gamma^2(X_{t_i})|R_i(1),$$
that goes to zero because $\beta > \frac{1}{4 - \alpha} > \frac{1}{4}$. \\
 Using also \eqref{eq: quadrato A1}, the left hand side of \eqref{eq: conv A1n carre nuovo} turns out to be upper bounded by \\
 $\Delta_n^{-1 + 4 \beta - 2 \alpha \beta + 2 \tilde{\epsilon}} \Delta_n^{\alpha \beta \land 1} \frac{1}{n} \sum_{i=0}^{n-1} f^2(X_{t_i})\gamma^4(X_{t_i})R_i(1),$
 that goes to zero in norm 1 and so in probability since we have chosen $\beta > \frac{1}{4 - \alpha}$.
It follows \eqref{eq: conv A1n carre nuovo} and so \eqref{eq:tesi teo 2 e 3}. \\
\end{proof} 

\subsection{Proof of Proposition \ref{prop: conv hat Qn}} 
\begin{proof}
To prove the proposition we replace \eqref{eq: dl d} in the definition of $\hat{Q}_n$. It follows that our goal is to show that 
$$I_1^n + I_2^n :=( \frac{1}{n})^{\frac{2}{\alpha} - \beta(2 - \alpha)}\sum_{i = 0}^{n - 1} f(X_{t_i}) \gamma^2(X_{t_i})(o(|\Delta_{n}^{\frac{1}{\alpha} - \beta} \gamma(X_{t_i})|^{- \hat{\epsilon}} + |\Delta_{n}^{\frac{1}{\alpha} - \beta} \gamma(X_{t_i})|^{2 \alpha - 2- \hat{\epsilon}} )) = \tilde{\mathcal{E}}_n,$$
where $\tilde{\mathcal{E}}_n$ is always $o_\mathbb{P}(1)$ and, if $\alpha < \frac{4}{3}$, it is also $\frac{1}{\Delta_n^{\beta(2 - \alpha)}} o_\mathbb{P}(\Delta_n^{(\frac{1}{2} - \tilde{\epsilon}) \land (1 - \alpha \beta - \tilde{\epsilon} )})$. \\
We have that $I_1^n = o_\mathbb{P}(1)$ since it is upper bounded by 
$$\Delta_n^{\frac{2}{\alpha} - 1 - 2 \beta + \alpha \beta - \hat{\epsilon}(\frac{1}{\alpha} - \beta) } \frac{1}{n}\sum_{i = 0}^{n - 1} R_i(1)\,o(1),$$
that goes to zero in norm $1$ and so in probability since we can always find an $\hat{\epsilon} > 0$ such that the exponent on $\Delta_n$ is positive. \\
Also $I_2^n$ is $o_\mathbb{P}(1)$. Indeed it is upper bounded by
\begin{equation}
\Delta_n^{\frac{2}{\alpha} - 1 - 2 \beta + \alpha \beta -2(\frac{1}{\alpha} - \beta) +2 (1 - \alpha \beta) - \hat{\epsilon}(\frac{1}{\alpha} - \beta) } \frac{1}{n}\sum_{i = 0}^{n - 1} R_i(1)\,o(1).
\label{eq: remplace density per hat Q}
\end{equation}
We observe that the exponent on $\Delta_n$ is $1 - \alpha \beta  - \hat{\epsilon}(\frac{1}{\alpha} - \beta) $ and we can always find $\hat{\epsilon}$ such that it is more than zero, hence \eqref{eq: remplace density per hat Q} converges in norm $1$ and so in probability. \\
In order to show that $I_1^n = \frac{1}{\Delta_n^{\beta(2 - \alpha)}} o_\mathbb{P}(\Delta_n^{\frac{1}{2} - \tilde{\epsilon}}) = o_\mathbb{P}(\Delta_n^{\frac{1}{2} - \tilde{\epsilon} - \beta(2 - \alpha)})$ we observe that
$$\frac{I_1^n}{\Delta_n^{\frac{1}{2} - \tilde{\epsilon} - \beta(2 - \alpha)}} \le \Delta_n^{\frac{2}{\alpha } - 1  - \frac{1}{2} + \tilde{\epsilon} -\hat{\epsilon}(\frac{1}{\alpha} - \beta)}\frac{1}{n}\sum_{i = 0}^{n - 1} R_i(1)\,o(1). $$
If $\alpha < \frac{4}{3}$ we can always find $\tilde{\epsilon}$ and $\hat{\epsilon}$ such that the exponent on $\Delta_n$ is more than zero, getting the convergence wanted. It follows $I_1^n = \frac{1}{\Delta_n^{\beta(2 - \alpha)}} o_\mathbb{P}(\Delta_n^{(\frac{1}{2} - \tilde{\epsilon}) \land (1 - \alpha \beta - \tilde{\epsilon} )})$. \\
To conclude, $I_2^n = \frac{1}{\Delta_n^{\beta(2 - \alpha)}} o_\mathbb{P}(\Delta_n^{1 - \alpha \beta - \tilde{\epsilon}}) = o_\mathbb{P}(\Delta_n^{1 - 2 \beta - \tilde{\epsilon} })$. Indeed, 
\begin{equation}
\frac{I_2^n}{\Delta_n^{1 - 2 \beta - \tilde{\epsilon} }} \le \Delta_n^{\frac{2}{\alpha} - 1 - 1 + \alpha \beta + \tilde{\epsilon} - 2(\frac{1}{\alpha} - \beta) + 2 (1 - \alpha \beta) - \hat{\epsilon}(\frac{1}{\alpha} - \beta)} \frac{1}{n}\sum_{i = 0}^{n - 1} R_i(1)\,o(1).
\label{eq: estimation con dl f}
\end{equation}
The exponent on $\Delta_n$ is $2 \beta - \alpha \beta + \tilde{\epsilon} - \hat{\epsilon}(\frac{1}{\alpha} - \beta) $ and so we can always find $\tilde{\epsilon}$ and $\hat{\epsilon}$ such that it is positive. It follows the convergence in norm 1 and so in probability of \eqref{eq: estimation con dl f}. The proposition is therefore proved.
\end{proof} 
 
\subsection{Proof of Corollary \ref{cor: cond rimozione rumore}}
\begin{proof}
We observe that \eqref{eq: eq per le appli} is a consequence of \eqref{eq: limite hat Qn con densita} in the case where $\hat{Q}_n = 0$. Moreover, $\beta < \frac{1}{2 \alpha}$ implies that $\Delta_n^{1 - \alpha \beta - \tilde{\epsilon}}$ is negligible compared to $\Delta_n^{\frac{1}{2} - \tilde{\epsilon}}$. It follows \eqref{eq: eq per le appli}.
\end{proof}

\subsection{Proof of Theorem \ref{th: reformulation th T fixed}.}
\begin{proof}
The convergence \eqref{eq: convergenza hat Q tempo corto} clearly follows from \eqref{eq: limite hat Qn con densita}. \\
Concerning the proof of \eqref{eq: tesi finale tempo corto}, we can see its left hand side as $$Q_n - \frac{1}{n} \sum_{i= 0}^{n - 1} f(X_{t_i}) a^2_{t_i} + \frac{1}{n} \sum_{i= 0}^{n - 1} f(X_{t_i}) a^2_{t_i} - IV_1 $$
and so, using \eqref{eq:tesi teo 2 e 3} and the definition of $IV_1$, it turns out that our goal is to show that 
\begin{equation}
\frac{1}{n} \sum_{i= 0}^{n - 1} f(X_{t_i}) a^2_{t_i} - \int_0^1 f(X_s) \, a^2_s ds = o_\mathbb{P}(\Delta_n^{\beta(2 - \alpha)}).
\label{eq: tesi th tempo corto}
\end{equation}
The left hand side of \eqref{eq: tesi th tempo corto} is
$$\sum_{i= 0}^{n - 1} f(X_{t_i}) \int_{t_i}^{t_{i + 1}}(a^2_{t_i} - a^2_s ) ds + \sum_{i= 0}^{n - 1}  \int_{t_i}^{t_{i + 1}} a^2_s (f (X_{t_i}) - f (X_s)) ds = : B_n + R_n.  $$
$B_n$ in the equation here above is exactly the same term we have already dealt with in the proof of Theorem \ref{th: 2 e 3 insieme} (see \eqref{eq: def Bn}). As showed in \eqref{eq: Bn trascurabile in punto 1 teo 2 3} it is $\mathcal{E}_n$ and so, in particular, it is also $o_\mathbb{P}(\Delta_n^{\beta (2 - \alpha)})$. \\
On $R_n$ we act like we did on $B_n$,considering this time the development up to second order of the function $f$, getting
\begin{equation}
f(X_s) = f(X_{t_i}) + f'(X_{t_i}) (X_s - X_{t_i}) + \frac{f''(\tilde{X}_{t_i})}{2} (X_s - X_{t_i})^2,
\label{eq: development f}
\end{equation}
where $\tilde{X}_{t_i} \in [X_{t_i}, X_s]$. Replacing \eqref{eq: development f} in $R_n$ we get two terms that we denote $R_n^1$ and $R_n^2$. On them we can act like we did on \eqref{eq: development a2}. The estimations gathered in Lemma \ref{lemma: Moment inequalities} about the increments of $X$ and of $a$ have the same size (see points 2 and 4) and provide on $B_2^n$ and $R_n^2$ the same upper bound:
$$\mathbb{E}[|R_n^2|] \le c \sum_{i = 0}^{n - 1}\mathbb{E}[|f''(X_{t_i})| \int_{t_i}^{t_{i + 1}} \mathbb{E}_i[|a_s||X_s - X_{t_i}|^{2}] ds] \le c \Delta_n^2 \sum_{i = 0}^{n - 1}\mathbb{E}[|f''(X_{t_i})|R_i(1)],$$
where we have used Cauchy Schwartz inequality and the fourth point of Lemma \ref{lemma: Moment inequalities}.
It yields $R_n^2 = o_\mathbb{P}(\Delta_n^{\beta (2 - \alpha)})$, which is the same result found in the first inequality of \eqref{eq: stima B2n stabile} for the increments of $a$.  \\
To deal with $R_n^1$ we replace the dynamic of $X$ (as done with the dynamic of $a$ for $B_1^n$). Even if the volatility coefficient in the dynamic of $X$ is no longer bounded, the condition $\sup_{s \in [t_i, t_{i + 1}]} \mathbb{E}_i[|a_s|] < \infty$ (which is true according with Lemma \ref{lemma: Moment inequalities}) is enough to say that \eqref{eq: utile carre I2n} keep holding. \\
Following the method provided in the proof of Theorem \ref{th: 2 e 3 insieme} to show that $B_1^n = \mathcal{E}_n$ we obtain $R_n^1= \mathcal{E}_n$ and therefore $R_n^1 =o_\mathbb{P}(\Delta_n^{\beta (2 - \alpha)})$.
It yields \eqref{eq: tesi th tempo corto} and so the theorem is proved.
\end{proof}

\section{Proof of Proposition \ref{prop: estimation stable}.}\label{S:Proof_propositions}
This section is dedicate to the proof of Proposition \ref{prop: estimation stable}.
To do it, it is convenient to introduce an adequate truncation function and to consider a rescaled process, as explained in the next subsections. Moreover, the proof of Proposition \ref{prop: estimation stable} requires some Malliavin calculus; we recall in what follows all the technical tools to make easier the understanding of the paper.

\subsection{Localization and rescaling}\label{subsection: construction per maillavin}
We introduce a truncation function in order to suppress the big jumps of $(L_t)$. Let $\tau : \mathbb{R}  \rightarrow [0, 1]$ be a symmetric function, continuous with continuous derivative, such that $\tau = 1$ on $\left \{ |z| \le \frac{1}{4} \eta \right \}$ and $\tau = 0$ on $\left \{ |z| \ge \frac{1}{2} \eta \right \}$, with $\eta$ defined in the fourth point of Assumption 4. \\
On the same probability space $(\Omega, \mathcal{F}, (\mathcal{F}_t), \mathbb{P})$ we consider the L\'evy process $(L_t)$ defined below \eqref{eq: model} which measure is $F(dz) = \frac{\bar{g}(z)}{|z|^{1 + \alpha}} 1_{\mathbb{R} \setminus \left \{ 0 \right \}} (z) dz$, according with the third point of A4, and the truncated  L\'evy process $(L^\tau_t)$ with measure $F^\tau(dz)$ given by $F^\tau(dz) = \frac{\bar{g}(z) \tau(z)}{|z|^{1 + \alpha}} 1_{\mathbb{R} \setminus \left \{ 0 \right \}} (z) dz$. This can be done by setting $L_t := \int_0^t \int_\mathbb{R} z \tilde{\mu}(ds, dz)$, as we have already done, and $L_t^\tau : = \int_0^t \int_\mathbb{R} z \tilde{\mu}^\tau(ds, dz)$, where $\tilde{\mu}$ and $\tilde{\mu}^\tau$ are the compensated Poisson random measures associated respectively to
$$\mu(A): = \int_{[0,1]} \int_\mathbb{R} \int_{[0,T]} 1_A(t,z) \mu^{\bar{g}}(dt, dz,du), \qquad A \subset [0, T] \times \mathbb{R},$$
$$\mu^\tau(A): = \int_{[0,1]} \int_\mathbb{R} \int_{[0,T]} 1_A(t,z) 1_{u \le \tau(z)} \mu^{\bar{g}}(dt, dz,du), \qquad A \subset [0, T] \times \mathbb{R},$$
for $\mu^{\bar{g}}$ a Poisson random measure on $[0,T] \times \mathbb{R} \times [0,1]$ with compensator $\bar{\mu}^{\bar{g}}(dt, dz, du) = dt \frac{\bar{g}(z)}{|z|^{1 + \alpha}} 1_{\mathbb{R} \setminus \left \{ 0 \right \}} (z) dz du$. \\
By construction, the restrictions of the measures $\mu$ and $\mu^\tau$ to $[0, h] \times \mathbb{R}$ coincide on the set \\
$\left \{ (u, z) \mbox{ such that } u \le \tau(z) \right \}$, and thus coincide on the event 
$$\Omega_h := \left \{ \omega \in \Omega; \mu^{\bar{g}}([0, h] \times \left \{ z \in \mathbb{R}: |z| \ge \frac{\eta}{4}  \right \} \times [0,1]) = 0 \right \}.$$
Since $\mu^{\bar{g}}([0, h] \times \left \{ z \in \mathbb{R}: |z| \ge \frac{\eta}{4}  \right \} \times [0,1])$ has a Poisson distribution with parameter 
$$\lambda_h : = \int_0^{h} \int_{|z| \ge \frac{\eta}{4}} \int_0^1 \frac{\bar{g}(z)}{|z|^{1 + \alpha}} du \, dz\, dt \le c h;$$
we deduce that 
\begin{equation}
\mathbb{P}(\Omega_h^c) \le c \,h.
\label{eq: prob omega n}
\end{equation}
Then we have
\begin{equation}
\mathbb{P}((L_t)_{t \le h} \neq (L_t^\tau)_{t \le h}) \le \mathbb{P}(\Omega_h^c) \le c \, h.
\label{eq: proba diff processi}
\end{equation}
To prove Proposition \ref{prop: estimation stable} we have to rescale the process $(L_t)_{t \in [0,1]}$, we therefore introduce an auxiliary L\'evy process $(L_t^h)_{t \in [0,1]}$ defined possibly on another filtered space $(\tilde{\Omega}, \tilde{\mathcal{F}}, (\tilde{\mathcal{F}_t}), \tilde{\mathbb{P}})$ and admitting the decomposition $L_t^h := \int_0^t \int_\mathbb{R} z \tilde{\mu}^h(dt, dz)$, with $t \in  [0,1]$; where $\tilde{\mu}^h$ is a compensated Poisson random measure $\tilde{\mu}^h = \mu^h - \bar{\mu}^h$, with compensator 
\begin{equation}
\bar{\mu}^h(dt, dz) = dt \frac{\bar{g}(z h^\frac{1}{\alpha})}{|z|^{1 + \alpha}} \tau(z h^{\frac{1}{\alpha}})1_{\mathbb{R} \setminus \left \{ 0 \right \}} (z) dz.
\label{eq: bar mu n}
\end{equation}
By construction, the process $(L_t^h)_{t \in [0,1]}$ is equal in law to the rescaled truncated process $(h^{-\frac{1}{\alpha}} L^\tau_{h t})_{t \in [0,1]}$ that coincides with $(h^{ -\frac{1}{\alpha}} L_{h t})_{t \in [0,1]}$ on $\Omega_n$.

\subsection{Malliavin calculus}\label{section: Maillavin theory}
In this section, we recall some results on Malliavin calculus for jump processes. We refer to \cite{Spieg Maillavin} for a complete presentation and to \cite{Maillavin} for the adaptation to our framework. We will work on the Poisson space associated to the measure $\mu^h$ defining the process $(L_t^h)_{t \in [0,1]}$ of the previous section, assuming that $h$ is fixed. By construction, the support of $\mu^h$ is contained in $[0,1] \times E_h$, where $E_h := \left \{ z \in  \mathbb{R} : |z| < \frac{\eta}{2} \frac{1}{h^\frac{1}{\alpha}}  \right \}$, with $\eta$ defined in the fourth point of A4. We recall that the measure $\mu^h$ has compensator
\begin{equation}
\bar{\mu}^h(dt, dz) = dt \frac{\bar{g}(z h^\frac{1}{\alpha})}{|z|^{1 + \alpha}} \tau(z h^{\frac{1}{\alpha}})1_{\mathbb{R} \setminus \left \{ 0 \right \}} (z) dz := dt F_h(z) dz. 
\label{eq: definition Fn}
\end{equation}
In this section we assume that the truncation function $\tau$ satisfies the additional assumption
$$\int_\mathbb{R}|\frac{\tau '(z)}{\tau(z)}|^p \tau(z) dz < \infty, \qquad \forall p \ge 1. $$
We now define the Malliavin operators $L$ and $\Gamma$ (omitting their dependence in $h$) and their basic properties (see \cite{Spieg Maillavin} Chapter IV, sections 8-9-10). For a test function $f: [0,1] \times \mathbb{R} \rightarrow \mathbb{R}$ measurable, $\mathcal{C}^2$ with respect the second variable, with bounded derivative and such that $f \in \cap_{p \ge 1} L^p(\bar{\mu}^h(dt,dz))$, we set $\mu^h(f) = \int_0^1 \int_\mathbb{R} f(t,z) \mu^h(dt,dz).$ As auxiliary function, we consider $\rho : \mathbb{R} \rightarrow [0, \infty)$ such that $\rho$ is symmetric, two times differentiable and such that $\rho(z) = z^4$ if $z \in [0, \frac{1}{2}]$ and $\rho(z) = z^2$ if $z \ge 1$. Thanks to the truncation $\tau$, we do not need that $\rho$ vanishes at infinity. Assuming the fourth point of Assumption 4, we check that $\rho$, $\rho'$ and $\rho \frac{F_h'}{F_h}$ belong to $\cap_{p \ge 1} L^p(F_h(z)dz)$. With these notations, we define the Malliavin operator $L$ on the functional $\mu^h(f)$ as follows:
$$L(\mu^h(f)) := \frac{1}{2} \mu^h(\rho'f' + \rho \frac{F_h'}{F_h} f' + \rho f''),$$
where $f'$ and $f''$ are derivative with respect to the second variable. This definition permits to construct a linear operator on the space $D \subset \cap_{p \ge 1} L^p(F_h(z)dz)$ which is self-adjoint: $\forall \Phi, \Psi \in D$, $\mathbb{E}\Phi L \Psi = \mathbb{E}L \Phi \Psi$ (see Section 8 in \cite{Spieg Maillavin} for the details on the construction of $D$). \\
We associate to $L$ the symmetric bilinear operator $\Gamma$:
$$\Gamma(\Phi, \Psi)= L(\Phi, \Psi) - \Phi L(\Psi) - \Psi L(\Phi).$$
If $f$ and $g$ are two test functions, we have
\begin{equation}
\Gamma(\mu^h(f), \mu^h(g)) = \mu^h(\rho f'g').
\label{eq: definition Gamma}
\end{equation}
The operators $L$ and $\Gamma$ satisfy the chain rule property:
$$LF(\Phi) = F'(\Phi) L \Phi + \frac{1}{2} F''(\Phi) \Gamma (\Phi, \Phi), \qquad \Gamma(F(\Phi), \Psi) = F'(\Phi)\Gamma (\Phi, \Psi).$$
These operators permit to establish the following integration by parts formula (see \cite{Spieg Maillavin} Theorem 8-10 p.103).
\begin{theorem}
Let $\Phi$ and $\Psi$ be random variable in $D$ and $f$ be a bounded function with bounded derivatives up to order two. If $\Gamma(\Phi, \Phi)$ is invertible and $\Gamma^{-1}(\Phi, \Phi) \in \cap_{p \ge 1} L^p $, then we have
\begin{equation}
\mathbb{E}f'(\Phi) \Psi = \mathbb{E} f(\Phi) \mathcal{H}_\Phi(\Psi),
\label{eq: int per parti}
\end{equation}
with 
\begin{equation}
\mathcal{H}_\Phi(\Psi) = -2 \Psi \Gamma^{-1}(\Phi, \Phi) L\Phi - \Gamma(\Phi, \Psi \Gamma^{-1}(\Phi,\Phi)).
\label{eq: def peso Maillavin}
\end{equation}
\label{th: regola catena maillavin}
\end{theorem}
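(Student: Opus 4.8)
The plan is to obtain the formula \eqref{eq: int per parti} from three structural properties of the pair $(L,\Gamma)$ already recorded in Section~\ref{section: Maillavin theory}. The first is the invariance identity $\mathbb{E}[L\Phi]=0$ for every $\Phi\in D$, which follows from self-adjointness applied to the constant functional: $\mathbb{E}[L\Phi]=\mathbb{E}[1\cdot L\Phi]=\mathbb{E}[\Phi\cdot L1]=0$ because $L1=0$. The second is the bilinear identity $\Gamma(\Phi_1,\Phi_2)=L(\Phi_1\Phi_2)-\Phi_1 L\Phi_2-\Phi_2 L\Phi_1$. The third is the chain rule $\Gamma(F(\Phi),\Psi)=F'(\Phi)\Gamma(\Phi,\Psi)$ together with its Leibniz form $\Gamma(\Phi,\Psi_1\Psi_2)=\Psi_1\Gamma(\Phi,\Psi_2)+\Psi_2\Gamma(\Phi,\Psi_1)$. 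Taking expectations in the bilinear identity and using $\mathbb{E}[L(\cdot)]=0$ and self-adjointness yields the reduction
$$\mathbb{E}[\Gamma(\Phi_1,\Phi_2)]=-2\,\mathbb{E}[\Phi_1 L\Phi_2]=-2\,\mathbb{E}[\Phi_2 L\Phi_1],$$
valid whenever $\Phi_1,\Phi_2\in D$ and the products are integrable; this is the only analytic input, and everything that follows is algebra.

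Before the algebra I would discharge the domain bookkeeping, which I expect to be the main obstacle. Writing $G:=\Psi\,\Gamma^{-1}(\Phi,\Phi)$, one must check that $G\in D$ and $f(\Phi)\,G\in D$, and that all of $\Phi$, $\Psi$, $L\Phi$, $\Gamma(\Phi,\Phi)$, $\Gamma^{-1}(\Phi,\Phi)$, and the functionals built from them lie in $\cap_{p\ge1}L^p$. This is precisely what the hypotheses $\Gamma^{-1}(\Phi,\Phi)\in\cap_{p\ge1}L^p$ and the boundedness of $f$, $f'$, $f''$ are there to guarantee; in practice one first proves the statement for $f$ smooth and compactly supported, where stability of $D$ under such composition and the applicability of the chain rule and of self-adjointness are standard from the construction of $D$ in \cite{Spieg Maillavin}, and then removes the support restriction by a truncation/density argument relying on the uniform bounds on $f$ and $f'$.

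For the computation, I would apply the chain rule as $\Gamma(\Phi,f(\Phi))=f'(\Phi)\Gamma(\Phi,\Phi)$, so that $G\,\Gamma(\Phi,f(\Phi))=f'(\Phi)\Psi$ since $G\,\Gamma(\Phi,\Phi)=\Psi$, and combine it with the Leibniz rule $\Gamma(\Phi,f(\Phi)\,G)=f(\Phi)\,\Gamma(\Phi,G)+G\,\Gamma(\Phi,f(\Phi))$ to arrive at the pointwise identity
$$f(\Phi)\,\Gamma(\Phi,G)=\Gamma(\Phi,f(\Phi)\,G)-f'(\Phi)\,\Psi.$$
Taking expectations and applying the reduction with $\Phi_1=\Phi$ and $\Phi_2=f(\Phi)\,G$ gives
$$\mathbb{E}\big[f(\Phi)\,\Gamma(\Phi,G)\big]=-2\,\mathbb{E}\big[f(\Phi)\,G\,L\Phi\big]-\mathbb{E}\big[f'(\Phi)\,\Psi\big]=-2\,\mathbb{E}\big[f(\Phi)\,\Psi\,\Gamma^{-1}(\Phi,\Phi)\,L\Phi\big]-\mathbb{E}\big[f'(\Phi)\,\Psi\big].$$

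Finally I would substitute into the definition \eqref{eq: def peso Maillavin} of $\mathcal{H}_\Phi(\Psi)$. Since
$$\mathbb{E}\big[f(\Phi)\,\mathcal{H}_\Phi(\Psi)\big]=-2\,\mathbb{E}\big[f(\Phi)\,\Psi\,\Gamma^{-1}(\Phi,\Phi)\,L\Phi\big]-\mathbb{E}\big[f(\Phi)\,\Gamma(\Phi,\Psi\,\Gamma^{-1}(\Phi,\Phi))\big],$$
inserting the displayed expression for $\mathbb{E}[f(\Phi)\,\Gamma(\Phi,G)]$ makes the two occurrences of $\mathbb{E}[f(\Phi)\,\Psi\,\Gamma^{-1}(\Phi,\Phi)\,L\Phi]$ cancel and leaves exactly $\mathbb{E}[f'(\Phi)\,\Psi]$, which is \eqref{eq: int per parti}. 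The only loose end is then the finiteness of each expectation appearing along the way, which follows from H\"older's inequality using the boundedness of $f$, $f'$, $f''$, the assumed $L^p$-control on $\Gamma^{-1}(\Phi,\Phi)$, and $\Phi,\Psi,L\Phi\in D\subset\cap_{p\ge1}L^p$.
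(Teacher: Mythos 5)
Your derivation is algebraically correct: the reduction $\mathbb{E}[\Gamma(\Phi_1,\Phi_2)]=-2\,\mathbb{E}[\Phi_1 L\Phi_2]$, the chain rule applied to $f(\Phi)$, and the Leibniz expansion of $\Gamma(\Phi,f(\Phi)G)$ with $G=\Psi\,\Gamma^{-1}(\Phi,\Phi)$ combine exactly as you describe, and the two occurrences of $\mathbb{E}[f(\Phi)\Psi\Gamma^{-1}(\Phi,\Phi)L\Phi]$ cancel to leave \eqref{eq: int per parti} with the weight \eqref{eq: def peso Maillavin}. Be aware, though, that the paper does not prove this statement at all: it imports it verbatim from Theorem 8-10, p.~103 of \cite{Spieg Maillavin}, so there is no in-paper argument to compare against; what you have written is essentially the standard carr\'e-du-champ derivation that underlies that reference. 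Two ingredients you invoke are not among the properties the paper explicitly records in Section 7.2 and should be flagged as borrowed from the construction in \cite{Spieg Maillavin}: the identity $\mathbb{E}[L\Phi]=0$ (equivalently $1\in D$ with $L1=0$, which you use through self-adjointness), and the derivation property $\Gamma(\Phi,\Psi_1\Psi_2)=\Psi_1\Gamma(\Phi,\Psi_2)+\Psi_2\Gamma(\Phi,\Psi_1)$ in the second argument --- the paper only states the one-variable chain rule $\Gamma(F(\Phi),\Psi)=F'(\Phi)\Gamma(\Phi,\Psi)$. Both do hold for the operators built from \eqref{eq: definition Gamma}, and your plan for the domain and integrability bookkeeping (prove the identity for smooth compactly supported $f$, where stability of $D$ is part of the construction, then pass to general bounded $f$ with bounded derivatives using H\"older and the $L^p$ hypotheses on $\Gamma^{-1}(\Phi,\Phi)$) is the appropriate way to close the argument.
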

The random variable $L_1^h$ belongs to the domain of the operators $L$ and $\Gamma$. Computing $L(L_1^h)$, $\Gamma(L_1^h, L_1^h)$ and $\mathcal{H}_{L_1^h}(1)$ it is possible to deduce the following useful inequalities, proved in Lemma 4.3 in \cite{Maillavin}.
\begin{lemma}
We have 
$$\sup_n \mathbb{E}|\mathcal{H}_{L_1^h}(1)|^p \le C_p \qquad \forall p \ge 1,$$
$$\sup_n \mathbb{E}|\int_0^1 \int_{|z| > 1} |z| \mu^h(ds, dz) \mathcal{H}_{L_1^h}(1)|^p \le C_p \qquad \forall p \ge 1.$$
\label{lemma: lemma 4.3 Maillavin}
\end{lemma}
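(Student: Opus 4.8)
The plan is to first write $\mathcal{H}_{L_1^h}(1)$ explicitly in terms of Poisson integrals of $L^h$, and then to bound its $L^p$-norm by separating small and large jumps. Since $L_1^h=\mu^h(f)$ with $f(t,z)=z$, one has $f'\equiv1$, $f''\equiv0$, so the definition of the operator $L$ gives $L(L_1^h)=\tfrac12\mu^h\!\big(\rho'+\rho\,F_h'/F_h\big)$, while \eqref{eq: definition Gamma} gives $\Gamma(L_1^h,L_1^h)=\mu^h(\rho)=:V$; by symmetry of $\Gamma$ and the chain rule, $\Gamma(L_1^h,V^{-1})=-V^{-2}\Gamma(L_1^h,V)=-V^{-2}\mu^h(\rho\rho')$. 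Substituting into \eqref{eq: def peso Maillavin},
$$\mathcal{H}_{L_1^h}(1)=-V^{-1}\mu^h\big(\rho'+\rho\,F_h'/F_h\big)+V^{-2}\mu^h(\rho\rho'),\qquad V:=\mu^h(\rho).$$
Thus everything reduces to bounding, uniformly in $h$ (equivalently in $n$, since $h=\Delta_n$), the positive moments of the Poisson integrals $\mu^h(\rho')$, $\mu^h(\rho\,F_h'/F_h)$, $\mu^h(\rho\rho')$ against the negative moments of $V$, and the same quantities multiplied by $W_h:=\int_0^1\int_{|z|>1}|z|\,\mu^h(ds,dz)$ for the second inequality.

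Decompose $\mu^h=\mu^{h,s}+\mu^{h,\ell}$ into its restrictions to $\{|z|\le1\}$ and $\{|z|>1\}$, and $V=V^s+V^\ell$ accordingly; the two pieces are independent. For the negative moments I would use $\mathbb{E}[e^{-\lambda V^s}]=\exp\!\big(\int_{|z|\le1}(e^{-\lambda\rho(z)}-1)F_h(z)\,dz\big)$ together with the fact that, since $\bar g$ is continuous with $\bar g(0)=1$ and $\tau\equiv1$ near $0$, one has $F_h(z)\ge c\,|z|^{-1-\alpha}$ on a fixed neighbourhood of $0$ uniformly for $h\le h_0$; combined with $\rho(z)=z^4$ near $0$, the substitution $u=\lambda^{1/4}z$ gives $\mathbb{E}[e^{-\lambda V^s}]\le e^{-c'\lambda^{\alpha/4}}$ for $\lambda$ large, hence $\sup_{h\le h_0}\mathbb{E}[(V^s)^{-q}]<\infty$ for all $q$, and a fortiori the same for $V\ge V^s$ (the finitely many remaining $h$ being bounded individually, and this also supplies the invertibility hypothesis of Theorem \ref{th: regola catena maillavin}). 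For the positive moments of the small-jump functionals $\mu^{h,s}(\rho')$, $\mu^{h,s}(\rho\,F_h'/F_h)$, $\mu^{h,s}(\rho\rho')$ I would invoke Kunita's inequality, which reduces matters to the finiteness and uniform boundedness of $\int_{|z|\le1}|g(z)|^k F_h(z)\,dz$ for $k\ge1$; these hold because $F_h(z)\le\|\bar g\|_\infty|z|^{-1-\alpha}$ and $\rho,\rho',\rho\,F_h'/F_h$ vanish like $|z|^4,|z|^3,|z|^3$ at the origin — the last using the fourth point of A4, $\sup_{0<|w|\le\eta}|\bar g'/\bar g|<\infty$, and $\tau'\equiv0$ near $0$.

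It remains to absorb the large jumps. Their number $N_h:=\mu^h([0,1]\times\{|z|>1\})$ is Poisson with parameter $\int_{|z|>1}F_h(z)\,dz\le\|\bar g\|_\infty\int_{|z|>1}|z|^{-1-\alpha}dz$, uniformly bounded in $h$, hence $N_h$ has uniformly bounded moments of all orders. The key structural fact is that a jump of size $z_j$ with $|z_j|>1$ contributes $\rho(z_j)=z_j^2$ to the denominator $V$, but only $O(|z_j|)$, $O(|z_j|^3)$, $O(|z_j|^2|F_h'/F_h(z_j)|)$ to the respective numerators; so pointwise $V^{-1}|\mu^{h,\ell}(\rho')|\le 2\big(\sum_j|z_j|\big)/\big(\sum_j z_j^2\big)\le 2N_h$ by Cauchy--Schwarz, $V^{-2}|\mu^{h,\ell}(\rho\rho')|\le 2\big(\sum_j|z_j|^3\big)/\big(\sum_j z_j^2\big)^2\le 2\big(\sum_j z_j^2\big)^{-1/2}\le 2$, and for the $F_h'/F_h$-term one splits $F_h'/F_h(z)=h^{1/\alpha}(\bar g'/\bar g)(zh^{1/\alpha})+h^{1/\alpha}(\tau'/\tau)(zh^{1/\alpha})-(1+\alpha)/z$, the first being $O(h^{1/\alpha})$ by A4, the last contributing $\le(1+\alpha)N_h$, and the $\tau'/\tau$-term controlled in $L^p$ using the weight $\tau(zh^{1/\alpha})$ present in $F_h$ together with the assumption $\int|\tau'/\tau|^p\tau<\infty$. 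Splitting each $\mu^h(g)$ in the formula for $\mathcal{H}_{L_1^h}(1)$, bounding the small-jump part by Hölder (negative moments of $V^s$ against positive moments of $\mu^{h,s}(g)$, using independence) and the large-jump part as above yields the first inequality. For the second, $W_h\le\sum_{|z_j|>1}|z_j|$ while $V\ge\sum_{|z_j|>1}z_j^2\ge W_h^2/N_h$, so $W_h\le(N_h V)^{1/2}$; the extra $V^{1/2}$ is absorbed by one of the $V^{-1}$ factors in $\mathcal{H}_{L_1^h}(1)$, after which the same argument applies.

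The main obstacle is precisely this last cancellation. Taken individually, $W_h$ and $\mu^h(\rho')$ have $L^p$-norms that diverge as $h\to0$ as soon as $p>\alpha$, since $L^h$ can have a single jump of size $\sim h^{-1/\alpha}$; hence one cannot estimate the products by a naive Hölder inequality. One must exploit that the very jumps causing the blow-up appear in the denominator $V=\mu^h(\rho)$ (to a power), and that the Cauchy--Schwarz-type bounds above convert these ratios into bounded powers of the well-behaved count $N_h$. A secondary technical point is the $\tau'/\tau$-contribution to $F_h'/F_h$, which is exactly what the integrability assumption on the truncation function $\tau$ is designed to handle.
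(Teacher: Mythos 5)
Your proof is correct; note that the paper does not prove this lemma itself but quotes it from Lemma 4.3 of \cite{Maillavin}, and your argument reconstructs essentially that proof: the explicit weight formula $\mathcal{H}_{L_1^h}(1)=-V^{-1}\mu^h\big(\rho'+\rho F_h'/F_h\big)+V^{-2}\mu^h(\rho\rho')$ with $V=\mu^h(\rho)$, uniform negative moments of $V$ via the Laplace transform of its small-jump part, and the cancellation by which each large jump's contribution $\rho(z_j)=z_j^2$ to the denominator dominates its contributions to the numerators and to $\int_0^1\int_{|z|>1}|z|\,\mu^h(ds,dz)$. The individual estimates you sketch — including the $\tau'/\tau$ term, which is exactly where the assumption $\int_{\mathbb{R}}|\tau'/\tau|^p\tau\,dz<\infty$ and the support constraint $|z|\le \tfrac{\eta}{2}h^{-1/\alpha}$ enter — all check out.
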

With this background we can proceed to the proof of Proposition \ref{prop: estimation stable}.

\subsection{Proof of Proposition \ref{prop: estimation stable}}
\begin{proof}
The first step is to construct on the same probability space two random variables whose laws are close to the laws of $h^{- \frac{1}{\alpha}}L_{h}$ and $S_1^\alpha$. We recall briefly the notation of Section \ref{subsection: construction per maillavin}: $\mu^h$ is a Poisson random measure with compensator $\bar{\mu}^h(dt, dz)$ defined in \eqref{eq: bar mu n} and the process $L_t^h$ is defined by 
\begin{equation}
L_t^h = \int_0^t \int_\mathbb{R} z \tilde{\mu}^h(ds,dz) = \int_0^t \int_{|z| \le h^{- \frac{1}{\alpha}}\frac{\eta}{2}} z \tilde{\mu}^h(ds, dz)
\label{eq: definition L n}
\end{equation}
with $\tilde{\mu}^h = \mu^h - \bar{\mu}^h$.
Using triangle inequality we have 
\begin{equation}
|\mathbb{E}[g(h^{- \frac{1}{\alpha}}L_{h})] - \mathbb{E}[g(S_1^\alpha)]| \le |\mathbb{E}[g(h^{- \frac{1}{\alpha}}L_{h})] - \mathbb{E}[g(L_1^h)]|+|\mathbb{E}[g(L_1^h) - g(S_1^\alpha)]|.
\label{eq: inizio prop maillavin}
\end{equation}
By the definition of $L_1^h$ it is 
\begin{equation}
|\mathbb{E}[g(h^{- \frac{1}{\alpha}}L_{h})] - \mathbb{E}[g(L_1^h)] | = |\mathbb{E}[g(h^{- \frac{1}{\alpha}}L_{h}) - g(h^{- \frac{1}{\alpha}}L^\tau_{h})] | \le 2 \left \| g \right \|_\infty \mathbb{P}(\Omega_n^c)\le c\left \| g \right \|_\infty h,
\label{eq: stima primo termine Maillavin}
\end{equation}
where in the last inequality we have used \eqref{eq: proba diff processi}. In order to get an estimation to the second term of \eqref{eq: inizio prop maillavin} we now construct a variable approximating the law of $S_1^\alpha$ and based on the Poisson measure $\mu^h$ :
\begin{equation}
L_t^{\alpha, h} := \int_0^t \int_{|z| \le h^{- \frac{1}{\alpha}}\frac{\eta}{2}} g_h(z)\tilde{\mu}^h(ds, dz),
\label{eq: definition L alpha n}
\end{equation}
where $g_h$ is an odd function built in the proof of Theorem 4.1 in \cite{Maillavin} for which the following lemma holds:
\begin{lemma}
\begin{enumerate}
    \item For each test function $f$, defined as in Section \ref{section: Maillavin theory}, we have
    \begin{equation}
    \int_0^1 \int_{|z| \le \frac{\eta}{2} h^{- \frac{1}{\alpha}}} f(t,g_h(z)) \bar{\mu}^h(dt, dz) = \int_0^1 \int_{|\omega| \le \frac{\eta}{2} h^{- \frac{1}{\alpha}}} f(t,\omega) \bar{\mu}^{\alpha, h}(dt, d\omega),
    \label{eq: trasformazione legge da salti a uniforme}
    \end{equation}
    where $\bar{\mu}^h(dt, dz)$ is the compensator defined in \eqref{eq: bar mu n} and 
    $$\bar{\mu}^{\alpha, h}(dt, d\omega) = dt \frac{\tau(\omega h^\frac{1}{\alpha})}{|\omega|^{1 + \alpha}} d\omega$$
    is the compensator of a measure associated to an $\alpha$- stable process whose jumps are truncated with the function $\tau$.
    \item There exists $\epsilon_0 > 0$ such that, for $|z| \le \epsilon_0 h^{- \frac{1}{\alpha}}$,
    $$|g_h(z) - z| \le c z^2 h^{\frac{1}{\alpha}} + c |z|^{1 + \alpha }h \qquad \mbox{if } \alpha \neq 1,$$
    $$|g_h(z) - z| \le c z^2 h |\log(|z|h)| \qquad \mbox{if } \alpha =1.$$
    \item The function $g_h$ is $\mathcal{C}^1$ on $(- \epsilon_0 h^{- \frac{1}{\alpha}}, \epsilon_0 h^{- \frac{1}{\alpha}})$ and for $|z| < \epsilon_0 h^{- \frac{1}{\alpha}}$,
    $$|g'_h(z) - 1| \le c |z| h^{\frac{1}{\alpha}} + c |z|^{ \alpha }h \qquad \mbox{if } \alpha \neq 1,$$
    $$|g'_h(z) - 1| \le c |z| h |\log(|z|h)| \qquad \mbox{if } \alpha =1.$$
\end{enumerate}
\label{lemma: 4.5 in Maillavin}
\end{lemma}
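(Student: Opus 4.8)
The plan is to realise $g_h$ as the monotone transport (quantile) map sending the Lévy density $F_h$ of \eqref{eq: definition Fn} onto the truncated stable density $T_h(u):=\tau(uh^{1/\alpha})/|u|^{1+\alpha}$ of $\bar\mu^{\alpha,h}$. Concretely, for $z>0$ I would define $g_h(z)$ by matching tails, $\int_z^{+\infty}F_h(u)\,du=\int_{g_h(z)}^{+\infty}T_h(v)\,dv$, set $g_h(0)=0$, and extend $g_h$ oddly. Both tail functions are continuous, strictly decreasing bijections of $(0,\tfrac{\eta}{2}h^{-1/\alpha})$ onto $(0,+\infty)$: here one uses the fourth point of A4, since $\sup_{0<|y|\le\eta}|\bar g'/\bar g|<\infty$ together with $\bar g(0)=1$ forces $e^{-c|y|}\le\bar g(y)\le e^{c|y|}$ on $\{|y|\le\eta\}$, so $F_h>0$ on the support and $g_h$ is a well-defined increasing $\mathcal C^1$ bijection of $(0,\tfrac{\eta}{2}h^{-1/\alpha})$ onto itself. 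Point 1 is then immediate: differentiating the defining identity gives $T_h(g_h(z))\,g_h'(z)=F_h(z)$, so the substitution $\omega=g_h(z)$ turns $\int f(t,g_h(z))F_h(z)\,dz$ into $\int f(t,\omega)T_h(\omega)\,d\omega$ on the positive half-line, the symmetry of $F_h$ and $T_h$ and the oddness of $g_h$ give the same on the negative half-line, and integrating over $t\in[0,1]$ yields \eqref{eq: trasformazione legge da salti a uniforme}.

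For points 2 and 3 I would split $F_h=T_h+E_h$ with $E_h(u)=(\bar g(uh^{1/\alpha})-1)T_h(u)$; from $|\bar g(y)-1|\le c|y|$ on $|y|\le\eta$ one gets $|E_h(u)|\le c\,h^{1/\alpha}|u|^{-\alpha}$ for $|u|\le\tfrac{\eta}{2}h^{-1/\alpha}$. The tail identity rearranges to $\int_{g_h(z)}^{z}T_h(u)\,du=\int_z^{+\infty}E_h(u)\,du=:\delta_h(z)$, and integrating the bound on $|E_h|$ over $(z,\tfrac{\eta}{2}h^{-1/\alpha})$ gives $|\delta_h(z)|\le c\,h^{1/\alpha}z^{1-\alpha}$ when $\alpha>1$, $|\delta_h(z)|\le c\,h$ when $\alpha<1$ (the integral of $u^{-\alpha}$ being controlled by its upper endpoint), and $|\delta_h(z)|\le c\,h|\log(zh)|$ when $\alpha=1$. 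Since $T_h(u)\asymp|u|^{-1-\alpha}$ near $z$, this converts into $|z-g_h(z)|\asymp z^{1+\alpha}|\delta_h(z)|$, which is exactly the bound claimed in point 2 (for $\alpha>1$ the term $cz^2h^{1/\alpha}$ dominates, for $\alpha<1$ the term $c|z|^{1+\alpha}h$). Point 3 then follows by writing $g_h'(z)-1=(F_h(z)-T_h(g_h(z)))/T_h(g_h(z))$, bounding the numerator by $|E_h(z)|+\sup|T_h'|\cdot|z-g_h(z)|$ with $|T_h'(u)|\asymp|u|^{-2-\alpha}$, and inserting the estimates just obtained; the even symmetry of $g_h'$ extends the bound to $z<0$.

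The step I expect to be the main obstacle is the passage from the integral identity $\int_{g_h(z)}^{z}T_h=\delta_h(z)$ to the pointwise estimate $|z-g_h(z)|\asymp z^{1+\alpha}|\delta_h(z)|$: this requires a two-sided control of $T_h$ on the a priori unknown interval with endpoints $g_h(z)$ and $z$, which I would obtain by a short bootstrap — first a crude inclusion such as $g_h(z)\in[z/2,2z]$ from a rough bound on $\delta_h$, then the sharp asymptotic. The borderline case $\alpha=1$ also needs extra bookkeeping because $\int u^{-1}\,du$ diverges logarithmically; this is the sole source of the $|\log(zh)|$ factors in points 2 and 3, the estimate on $\bar g$ itself carrying no logarithm.
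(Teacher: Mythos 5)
Your proposal is correct, and it is essentially the argument behind the result the paper invokes. The paper does not prove this lemma itself: it defers points 2 and 3 to Lemma 4.5 of \cite{Maillavin} and point 1 to Theorem 4.1 of \cite{Maillavin}, and the function $g_h$ there is built precisely as you describe, as the odd increasing map matching the tails of the two L\'evy densities, $\int_z^{\infty}F_h(u)\,du=\int_{g_h(z)}^{\infty}T_h(v)\,dv$ with $T_h(v)=\tau(vh^{1/\alpha})|v|^{-1-\alpha}$. Differentiating the tail identity gives $T_h(g_h(z))\,g_h'(z)=F_h(z)$, which yields point 1 by substitution exactly as you say, and your decomposition $F_h=T_h+E_h$ with $|E_h(u)|\le c\,h^{1/\alpha}|u|^{-\alpha}$ (a consequence of the fourth point of A4 together with $\bar{g}(0)=1$, which give $|\bar{g}(y)-1|\le c|y|$ and $\bar{g}>0$ on $\{|y|\le\eta\}$) reproduces the stated bounds in all three regimes, including the logarithm at $\alpha=1$. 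The only step with real content is the one you flagged yourself: passing from $\int_{g_h(z)}^{z}T_h(u)\,du=\delta_h(z)$ to $|z-g_h(z)|\le c\,z^{1+\alpha}|\delta_h(z)|$ requires first knowing $g_h(z)\in[z/2,2z]$, and this follows by contradiction, since $g_h(z)\notin[z/2,2z]$ would force $|\delta_h(z)|\ge c_\alpha z^{-\alpha}$, which is incompatible with your bounds on $\delta_h$ once $|z|\le\epsilon_0 h^{-1/\alpha}$ with $\epsilon_0$ small. With that bootstrap written out, the proof is complete.
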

The second and the third point of the lemma here above are proved in Lemma 4.5 of \cite{Maillavin}, while the first point is proved in Theorem 4.1 \cite{Maillavin} and it shows us, using the exponential formula for Poisson measure, that $g_h$ is the function that turns our measure $\mu^h$ into the measure associated to an $\alpha$-stable process truncated with the function $\tau$. Thus $(L_t^{\alpha, h})_{t \in [0,1]}$ is a L\'evy process with jump intensity $\omega \mapsto \frac{\tau(\omega h^\frac{1}{\alpha})}{|\omega|^{1 + \alpha}}$ and we recognize the law of an $\alpha$-stable truncated process.  We deduce, similarly to \eqref{eq: stima primo termine Maillavin},
\begin{equation}
|\mathbb{E}[g(L_1^{\alpha, h})] - \mathbb{E}[g(S_1^{\alpha})]| \le  c\left \| g \right \|_\infty h.
\label{eq: differenza S1 e L1 alpha n}
\end{equation}
Proposition \ref{prop: estimation stable} is a consequence of \eqref{eq: inizio prop maillavin}, \eqref{eq: stima primo termine Maillavin}, \eqref{eq: differenza S1 e L1 alpha n} and the following lemma:
\begin{lemma}
Suppose that Assumptions 1 to 4 hold. Let $g$ be as in Proposition \ref{prop: estimation stable}. Then, for any $\epsilon > 0$ and for $p \ge \alpha$,
$$|\mathbb{E}[g(L_1^{h}) - g(L_1^{\alpha, h})]| \le   C_\epsilon h |\log(h)| \left \| g \right \|_\infty + C_\epsilon h^\frac{1}{\alpha}\left \| g \right \|_\infty^{1 - \frac{\alpha}{p} + \epsilon} \left \| g \right \|_{pol}^{\frac{\alpha}{p} - \epsilon} |\log(h)| +$$
$$+ C_\epsilon h^\frac{1}{\alpha}\left \| g \right \|_\infty^{1+ \frac{1}{p} - \frac{\alpha}{p} + \epsilon} \left \| g \right \|_{pol}^{- \frac{1}{p} + \frac{\alpha}{p} - \epsilon} |\log(h)|1_{\alpha > 1} .$$
\label{lemma: main prop stable}
\end{lemma}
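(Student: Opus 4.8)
The plan is to adapt the interpolation-plus-Malliavin-integration-by-parts scheme of the proof of Theorem~4.2 in \cite{Maillavin}; the only genuinely new ingredient is a H\"older interpolation of $g$ between $\left\| g\right\|_\infty$ and $\left\| g\right\|_{pol}$, tuned so as to remain compatible with the moments of the rescaled L\'evy process. I would work entirely on the Poisson space of $\mu^h$, where both $L_1^h=\int_0^1\int z\,\tilde\mu^h(ds,dz)$ and $L_1^{\alpha,h}=\int_0^1\int g_h(z)\,\tilde\mu^h(ds,dz)$ live, set $U:=L_1^h-L_1^{\alpha,h}=\int_0^1\int (z-g_h(z))\,\tilde\mu^h(ds,dz)$ and, for $\lambda\in[0,1]$, $\Phi_\lambda:=L_1^{\alpha,h}+\lambda U=\int_0^1\int\big(g_h(z)+\lambda(z-g_h(z))\big)\tilde\mu^h(ds,dz)$, so that $\Phi_0=L_1^{\alpha,h}$ and $\Phi_1=L_1^h$. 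Since $g$ is only measurable and bounded, I would first replace it by a smooth $g_\delta:=g*\phi_\delta$ with $\left\| g_\delta\right\|_\infty\le\left\| g\right\|_\infty$ and $\left\| g_\delta\right\|_{pol}\le c\left\| g\right\|_{pol}$, write
\begin{equation}
\mathbb{E}[g_\delta(L_1^h)-g_\delta(L_1^{\alpha,h})]=\int_0^1\mathbb{E}\big[g_\delta'(\Phi_\lambda)\,U\big]\,d\lambda ,
\end{equation}
and remove the regularization at the end, using that the laws of $L_1^h$, $L_1^{\alpha,h}$ and of each $\Phi_\lambda$ are absolutely continuous with bounded density, a by-product of the integration by parts formula.

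Next, for each fixed $\lambda$ I would apply Theorem~\ref{th: regola catena maillavin} with $\Phi=\Phi_\lambda$ and $\Psi=U$. By \eqref{eq: definition Gamma} one has $\Gamma(\Phi_\lambda,\Phi_\lambda)=\mu^h\big(\rho\,((1-\lambda)g_h'+\lambda)^2\big)$, and since the third point of Lemma~\ref{lemma: 4.5 in Maillavin} keeps $(1-\lambda)g_h'+\lambda$ uniformly close to $1$ on the relevant range, the invertibility of $\Gamma(\Phi_\lambda,\Phi_\lambda)$ and the bound $\Gamma^{-1}(\Phi_\lambda,\Phi_\lambda)\in\cap_{q\ge1}L^q$ are obtained exactly as for $L_1^h$ in the proof of Lemma~4.3 of \cite{Maillavin}, possibly after restricting to an event $\Omega_h'$ with $\mathbb{P}((\Omega_h')^c)\le c\,h|\log h|$; on its complement I simply bound $|g_\delta(L_1^h)-g_\delta(L_1^{\alpha,h})|\le 2\left\| g\right\|_\infty$, which produces the first term $C_\epsilon\,h|\log h|\left\| g\right\|_\infty$. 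Integration by parts then yields $\mathbb{E}[g_\delta'(\Phi_\lambda)U]=\mathbb{E}[g_\delta(\Phi_\lambda)\,\mathcal H_{\Phi_\lambda}(U)]$ with $\mathcal H_{\Phi_\lambda}(U)$ as in \eqref{eq: def peso Maillavin}, which after the chain rule is a finite sum of terms each carrying a ``small'' factor equal either to $U$ or to $\Gamma(\Phi_\lambda,U)=\mu^h\big(\rho\,((1-\lambda)g_h'+\lambda)(1-g_h')\big)$, multiplied by factors ($\Gamma^{-1}(\Phi_\lambda,\Phi_\lambda)$, $L\Phi_\lambda$, and powers of $\Phi_\lambda$) whose contribution is controlled as in \cite{Maillavin}.

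The smallness of the two factors comes from Lemma~\ref{lemma: 4.5 in Maillavin}: for $\alpha\neq1$, $|z-g_h(z)|\le c z^2h^{1/\alpha}+c|z|^{1+\alpha}h$ and $|1-g_h'(z)|\le c|z|h^{1/\alpha}+c|z|^{\alpha}h$ (with the $\log$-corrected versions when $\alpha=1$). Integrating against $F_h(z)\,dz\le c|z|^{-1-\alpha}1_{\{|z|\le \frac{\eta}{2}h^{-1/\alpha}\}}dz$ and using the Burkholder/Rosenthal inequalities for compensated Poisson integrals, the parts $z^2h^{1/\alpha}$ and $|z|h^{1/\alpha}$ contribute an $h^{1/\alpha}|\log h|$ factor, whereas the parts $|z|^{1+\alpha}h$ and $|z|^{\alpha}h$ contribute an additional $h$-gain that survives in the final estimate only when $\alpha>1$. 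The price is that $\Phi_\lambda$ itself has moments blowing up: $\mathbb{E}|\Phi_\lambda|^r$ is bounded uniformly in $h$ (and $\lambda$) only for $r<\alpha$ (for $r\ge\alpha$ they must blow up, since $\Phi_\lambda\Rightarrow S_1^\alpha$ and $S_1^\alpha$ has only moments of order $<\alpha$).

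This is exactly where the interpolation of $g$ enters: rather than estimating $|g_\delta(\Phi_\lambda)|$ by $\left\| g\right\|_\infty$, I would use, for any $\theta\in[0,1]$,
\begin{equation}
|g_\delta(\Phi_\lambda)|\le \left\| g\right\|_\infty^{1-\theta}\big(c\left\| g\right\|_{pol}\big)^{\theta}(1+|\Phi_\lambda|^p)^{\theta},
\end{equation}
and pair this with the weighted Malliavin terms via H\"older, imposing that the total power of $|\Phi_\lambda|$ stays strictly below $\alpha$; taking $\theta=\alpha/p-\epsilon$ for the term carrying the $h^{1/\alpha}|\log h|$ factor gives $\left\| g\right\|_\infty^{1-\alpha/p+\epsilon}\left\| g\right\|_{pol}^{\alpha/p-\epsilon}h^{1/\alpha}|\log h|$, and taking $\theta=(\alpha-1)/p-\epsilon$ (which lies in $(0,1)$, and is needed, only for $\alpha>1$) for the extra-gain term gives $\left\| g\right\|_\infty^{1+1/p-\alpha/p+\epsilon}\left\| g\right\|_{pol}^{-1/p+\alpha/p-\epsilon}h^{1/\alpha}|\log h|1_{\{\alpha>1\}}$; integrating over $\lambda\in[0,1]$ and letting $\delta\to0$ yields the claimed bound. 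I expect the main obstacle to be precisely this bookkeeping: matching each small $h$-power produced by $U$ and $\Gamma(\Phi_\lambda,U)$ with the correct interpolation exponent $\theta$ so that, after using H\"older together with the $h$-uniform moment bounds on the leftover powers of $|\Phi_\lambda|$ coming from $L\Phi_\lambda$ and the $\Gamma$-terms, the net exponent on $h$ comes out to be exactly $1/\alpha$ in both surviving terms; the constants $C_\epsilon$ and the $\epsilon$-losses are what one pays for taking $\theta$ a hair below the critical value and for the near-critical (logarithmic) integrals against $F_h$, and the case $\alpha=1$ is routine given the $\log$-corrected estimates of Lemma~\ref{lemma: 4.5 in Maillavin}.
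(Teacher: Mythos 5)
Your proposal assembles the same essential ingredients as the paper's proof --- the localization $W^h$, the regularization of $g$, the estimates of Lemma \ref{lemma: 4.5 in Maillavin} on $g_h(z)-z$ and $g_h'(z)-1$, and above all the interpolation of $g$ between $\|g\|_\infty$ and $\|g\|_{pol}$ with the exponent tuned so that the leftover moment of the process is exactly of order $\alpha$ (hence only logarithmic) --- but the skeleton is genuinely different. You differentiate $\lambda\mapsto\mathbb{E}[g_\delta(\Phi_\lambda)]$ along the segment $\Phi_\lambda=L_1^{\alpha,h}+\lambda U$ and integrate by parts at every intermediate $\Phi_\lambda$, so the whole error is carried by the single weight $\mathcal{H}_{\Phi_\lambda}(U)$. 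The paper instead integrates by parts only at $L_1^h$, writing $\mathbb{E}[g(L_1^h)W^h]=\mathbb{E}[G(L_1^h)\mathcal{H}_{L_1^h}(W^h)]$ for a primitive $G$, and splits the difference into $\tilde{T}_2$ (the Lipschitz bound $|G(L_1^h)-G(L_1^{\alpha,h})|\le|g(\hat{L}_1)|\,|U|$ tested against the fixed weight) and $\tilde{T}_1$ (the mismatch between $\mathcal{H}_{L_1^h}$ and the weight adapted to $L_1^{\alpha,h}$, controlled through the ratio $\Gamma(L_1^h-L_1^{\alpha,h},L_1^h)/\Gamma(L_1^h,L_1^h)$ and the bound on $g_h'-1$). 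Your route is more symmetric and removes the $\tilde{T}_1$/$\tilde{T}_2$ bookkeeping, but it buys this at the price of establishing, uniformly in $\lambda$, the invertibility of $\Gamma(\Phi_\lambda,\Phi_\lambda)$, the $L^q$ bounds on its inverse, and in particular the analogue of the second estimate of Lemma \ref{lemma: lemma 4.3 Maillavin} (moments of $\int_0^1\int_{|z|>1}|z|\,\mu^h(ds,dz)$ paired with the weight), which is precisely the compensation that makes the large-jump part of $U$ tractable; you assert these ``as in \cite{Maillavin}'', and since $\lambda+(1-\lambda)g_h'$ stays uniformly close to $1$ on the localized event this is plausible, but it is where the remaining technical work actually sits. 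One slip in your accounting: the term carrying $1_{\{\alpha>1\}}$ with the $1/p$-shifted exponents does not originate from the $h|z|^{1+\alpha}$ part of $|g_h(z)-z|$ (that part only produces $h|\log(h)|\,\|g\|_\infty$), but from the $h^{1/\alpha}z^2$ part on the large jumps, where one factor $|z|$ is absorbed by the weight and the surviving factor forces the interpolation exponent to be shifted by $1/p$; your final choice of $\theta$ is nevertheless the correct one, so the bookkeeping closes.
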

\begin{proof}
The proof is based of the comparison of the representation of \eqref{eq: definition L n} and \eqref{eq: definition L alpha n}. Since in Lemma \ref{lemma: 4.5 in Maillavin} the difference $g_h(z) - z$ is controlled for $|z| \le \epsilon_0 h^{- \frac{1}{\alpha}}$, we need to introduce a localization procedure consisting in regularizing $1_{\left \{ \mu^h([0,1] \times \left \{ z \in \mathbb{R}: |z| > \epsilon_0 h^{- \frac{1}{\alpha}} \right \}) = 0 \right \}}$. Let $\mathcal{I}$ be a smooth function defined on $\mathbb{R}$ and with values in $[0,1]$, such that $\mathcal{I}(x) = 1$ for $x \le \frac{1}{2}$ and $\mathcal{I}(x) = 0$ for $x \ge 1$. Moreover, we denote by $\zeta$ a smooth function on $\mathbb{R}$, with values in $[0,1]$ such that $\zeta(z)=0$ for $|z| \le \frac{1}{2}$ and $\zeta(z) = 1$ for $|z| \ge 1$ and we set
$$V^h : = \int_0^1 \int_\mathbb{R} \zeta(\frac{z h^\frac{1}{\alpha}}{\epsilon_0}) \mu^h(ds, dz)= \int_0^1 \int_{\left \{ \frac{1}{2} \epsilon_0 h^{- \frac{1}{\alpha}} \le |z| \le \epsilon_0 h^{- \frac{1}{\alpha}} \right \}} \zeta(\frac{z h^{\frac{1}{\alpha}}}{\epsilon_0}) \mu^h(ds, dz) + \int_0^1 \int_{\left \{ |z| \ge \epsilon_0 h^{- \frac{1}{\alpha}} \right \}} \mu^h(ds, dz), $$
$$W^h:= \mathcal{I}(V^h).$$
From the construction, $W^h$ is a Malliavin differentiable random variable such that $W^h \neq 0$ implies $\mu^h([0,1] \times \left \{ z \in \mathbb{R}: |z| > \epsilon_0 h^{- \frac{1}{\alpha}} \right \}) = 0 $. It is possible to show, acting as we did in \eqref{eq: prob omega n}, that $\mathbb{P}(W^h \neq 1) \le \mathbb{P}(\mu^h \mbox{ has a jump of size} > \frac{1}{2} \epsilon_0 h^{- \frac{1}{\alpha}} ) = O(h)$. From the latter, it is clear that the proof of the lemma reduces in proving the result on 
$|\mathbb{E}[g(L_1^{h})W^h] - \mathbb{E}[g(L_1^{\alpha, h})W^h]|.$
Considering a regularizing sequence $(g_p)$ converging to $g$ in $L^1$ norm, such that $\forall p$ $g_p$ is $\mathcal{C}^1$ with bounded derivative and $\left \| g_p \right \|_\infty \le \left \| g \right \|_\infty$, we may assume that $g$ is $\mathcal{C}^1$ with bounded derivative too. Using the integration by part formula \eqref{eq: int per parti} and denoting by $G$ any primitive function of $g$ we can write $\mathbb{E}[g(L_1^h)W^h] = \mathbb{E}[G(L_1^h) \mathcal{H}_{L_1^h}(W^h)]$ where the Malliavin weight can be written, using \eqref{eq: def peso Maillavin} and the chain rule property of the operator $\Gamma$, as
\begin{equation}
\mathcal{H}_{L_1^h}(W^h) = W^h \mathcal{H}_{L_1^h}(1) - \frac{\Gamma (W^h, L_1^h)}{\Gamma(L_1^h, L_1^h)}.
\label{eq: peso maillavin su L1n}
\end{equation}
Using the triangle inequality, we are now left to find upper bounds for the following two terms:
$$\tilde{T}_1 := |\mathbb{E}[g(L_1^{\alpha, h})W^h] - \mathbb{E}[G(L_1^{ \alpha, h}) \mathcal{H}_{L_1^h}(W^h)]|,$$
$$\tilde{T}_2 := |\mathbb{E}[G(L_1^{ \alpha, h}) \mathcal{H}_{L_1^h}(W^h)] - \mathbb{E}[G(L_1^{ h}) \mathcal{H}_{L_1^h}(W^h)]|.$$
Let us start considering $\tilde{T}_2$. Using the Lipschitz property of the function $G$ and \eqref{eq: peso maillavin su L1n} we have it is upper bounded by
$$\mathbb{E}[|g(\hat{L}_1)||L_1^{ \alpha, h} - L_1^{h}||\mathcal{H}_{L_1^h}(W^h)|] \le \mathbb{E}[|g(\hat{L}_1)||L_1^{ \alpha, h} - L_1^{h}||W^h \mathcal{H}_{L_1^h}(1)|] + \mathbb{E}[|g(\hat{L}_1)||L_1^{ \alpha, h} - L_1^{h}||\frac{\Gamma (W^h, L_1^h)}{\Gamma(L_1^h, L_1^h)}|] =$$
$$ = : \tilde{T}_{2,1} + \tilde{T}_{2,2}, $$
where $\hat{L}_1$ is between $L_1^{ \alpha, h}$ and $L_1^{ h}$. We focus on $\tilde{T}_{2,1}$. Using the definitions \eqref{eq: definition L n} and \eqref{eq: definition L alpha n} of $L_1^h$ and $L_1^{\alpha, h}$ it is
$$\tilde{T}_{2,1} \le \mathbb{E}[|g(\hat{L}_1)||\int_0^1 \int_\mathbb{R} (g_h(z) - z) \tilde{\mu}^h(ds, dz) ||\mathcal{H}_{L_1^h}(1)W^h|] \le \mathbb{E}[|g(\hat{L}_1)||\int_0^1 \int_{|z| \le 1} (g_h(z) - z) \tilde{\mu}^h(ds, dz) ||\mathcal{H}_{L_1^h}(1)W^h|] +$$
\begin{equation}
 + \mathbb{E}[|g(\hat{L}_1)||\int_0^1 \int_{ 1 \le |z| \le \epsilon_0 h^{- \frac{1}{\alpha}} } (g_h(z) - z) {\mu}^h(ds, dz) ||\mathcal{H}_{L_1^h}(1)W^h|],
\label{eq: splitto T21}
\end{equation}
where we have used that $g_h$ is an odd function with the symmetry of the compensator $\bar{\mu}^h$ and the fact that on $W_h \neq 0$ we have $\mu^h([0,1] \times \left \{ z \in \mathbb{R}: |z| > \epsilon_0 h^{- \frac{1}{\alpha}} \right \}) = 0 $. 
For the sake of shortness, we only give the details of the proof in the case $\alpha \neq 1$. In the case $\alpha = 1$, one needs to modify this control with an additional logarithmic term. For the small jumps term, from inequality 2.1.37 in \cite{13 in Maillavin} and the second point of Lemma \ref{lemma: 4.5 in Maillavin} we deduce $\mathbb{E}[|\int_0^1 \int_{|z| \le 1} (g_h(z) - z) \tilde{\mu}^h(ds, dz) |^{q_1}] \le C_{q_1}(h + h^{\frac{1}{\alpha}})^{q_1}$, $\forall q_1 \ge 2$. Using it and Holder inequality with $q_1$ big and $q_2$ close to $1$ we have 
$$\mathbb{E}[|g(\hat{L}_1)||\int_0^1 \int_{|z| \le 1} (g_h(z) - z) \tilde{\mu}^h(ds, dz) ||\mathcal{H}_{L_1^h}(1)W^h|] \le C_{q_1}(h + h^{\frac{1}{\alpha}}) \mathbb{E}[|g(\hat{L}_1)|^{q_2}|\mathcal{H}_{L_1^h}(1)|^{q_2}W^h]^\frac{1}{q_2} \le $$
\begin{equation}
\le C_{q_1}(h + h^{\frac{1}{\alpha}}) \mathbb{E}[|g(\hat{L}_1)|^{p_1\,q_2}W^h]^\frac{1}{q_2 p_1} \mathbb{E}[|\mathcal{H}_{L_1^h}(1)|^{q_2 p_2}]^\frac{1}{q_2 p_2},
\label{eq: dettata da gigi}
\end{equation}
where in the last inequality we have used again Holder inequality, with $p_2$ big and $p_1$ close to $1$. Using the first point of Lemma \ref{lemma: lemma 4.3 Maillavin}, we know that $\mathbb{E}[|\mathcal{H}_{L_1^h}(1)|^{q_2 p_2}]^\frac{1}{q_2 p_2}$ is bounded, hence \eqref{eq: dettata da gigi} is upper bounded by
\begin{equation}
C_{q_1 q_2 p_2} h \left \| g \right \|_\infty + C_{q_1 q_2 p_2} h^\frac{1}{\alpha} \mathbb{E}[|g(\hat{L}_1)W^h|^{p_1\,q_2}]^\frac{1}{q_2 p_1}, 
\label{eq: prima parte T21}
\end{equation}
where we have bounded $|g(\hat{L}_1)|$ with its infinity norm and used that $0 \le W^h \le 1$. We remind that we are considering $q_2$ and $p_1$ next to $1$, hence we can write $q_2 p_1$ as $1 + \epsilon$. We now introduce $r$ in the following way:
$$\mathbb{E}[|g(\hat{L}_1)|^{1 + \epsilon}W^h]^\frac{1}{1 + \epsilon} = \mathbb{E}[|g(\hat{L}_1)|^{(1 + \epsilon) r}|g(\hat{L}_1)|^{(1 + \epsilon)(1 - r)}W^h]^\frac{1}{1 + \epsilon} \le \left \| g \right \|_\infty^r \mathbb{E}[|g(\hat{L}_1)|^{(1 + \epsilon)(1 - r)}W^h]^\frac{1}{1 + \epsilon} \le $$
\begin{equation}
\left \| g \right \|_\infty^r \left \| g \right \|_{pol}^{1 - r} \mathbb{E}[(1 + |\hat{L}_1|^p)^{(1 + \epsilon)(1 - r)}W^h]^\frac{1}{1 + \epsilon} \le c \left \| g \right \|_\infty^r \left \| g \right \|_{pol}^{1 - r} + c \left \| g \right \|_\infty^r \left \| g \right \|_{pol}^{1 - r} \mathbb{E}[|\hat{L}_1|^{p(1 + \epsilon)(1 - r)}W^h]^\frac{1}{1 + \epsilon};
\label{eq: spezzo h su norma inf e norma pol}
\end{equation}
where we have estimated $g$ with its norm $\infty$ and we have used the property \eqref{eq: conditon on h} of $g$ and that $0 \le W^h\le 1$. We observe that $\hat{L}_1$ is between $L_1^h$ and $L_1^{\alpha,h}$ hence $|\hat{L}_1| \le |L_1^h| + |L_1^{\alpha,h}|$. Moreover we choose $r$ such that $p(1 + \epsilon)(1 - r) = \alpha$; therefore $r = 1 - \frac{\alpha}{p(1 + \epsilon)}$. In this way we have that \eqref{eq: spezzo h su norma inf e norma pol} is upper bounded by
\begin{equation}
c \left \| g \right \|_\infty^{1 - \frac{\alpha}{p(1 + \epsilon)}} \left \| g \right \|_{pol}^{\frac{\alpha}{p(1 + \epsilon)}} \log(h^{- \frac{1}{\alpha}}),
\label{eq: finale per z <1}
\end{equation}
where we have used that $\mathbb{E}[|\hat{L}_1|^\alpha W^h] \le c \log(h^{- \frac{1}{\alpha}})$, that we justify now. Indeed, using Lemma 2.1.5 in the appendix of \cite{13 in Maillavin} if $\alpha \in [1,2]$ and Jensen inequality if $\alpha \in [0,1)$, we have
$$\mathbb{E}[|\hat{L}_1|^\alpha W^h] \le c \mathbb{E}[(|L_1^h|^\alpha + |L_1^{\alpha,h}|^\alpha ) W^h] \le c \mathbb{E}[|\int_0^1 \int_{|z| \le 1}z \tilde{\mu}^h(ds, dz)|] + c \mathbb{E}[|\int_0^1 \int_{|z| \le 1}g_h(z) \tilde{\mu}^h(ds, dz)|] +$$
$$+  c \mathbb{E}[\int_0^1 \int_{1 \le |z| \le \epsilon_0 h^{- \frac{1}{\alpha}}}|z|^\alpha \bar{\mu}^h(ds, dz)] +  c \mathbb{E}[\int_0^1 \int_{1 \le |z| \le \epsilon_0 h^{- \frac{1}{\alpha}}}|g_h(z)|^\alpha \bar{\mu}^h(ds, dz)].$$
We observe that, using Kunita inequality, the first term here above is bounded in $L^p$ and, as a consequence of the second point of Lemma \ref{lemma: 4.5 in Maillavin}, the second term here above so does.
Concerning the third term here above (and so, again, we act on the fourth in the same way), we have
\begin{equation}
 c \mathbb{E}[\int_0^1 \int_{1 \le |z| \le \epsilon_0 h^{- \frac{1}{\alpha}}}|z|^\alpha \bar{\mu}^h(ds, dz)] \le c \int_{1 \le |z| \le \epsilon_0 h^{- \frac{1}{\alpha}}}|z|^{\alpha - 1 - \alpha} dz \le c \, \log(h^{- \frac{1}{\alpha}}) \le c |\log(h)|,
\label{eq: hat L1 z < 1}
\end{equation}
where we have also used definition \eqref{eq: bar mu n} of $\bar{\mu}^h$. \\
Replacing \eqref{eq: finale per z <1} in \eqref{eq: prima parte T21} we get
\begin{equation}
\mathbb{E}[|g(\hat{L}_1)||\int_0^1 \int_{|z| \le 1} (g_h(z) - z) \tilde{\mu}^h(ds, dz) ||\mathcal{H}_{L_1^h}(1)W^h|] \le C_{q_1 q_2 p_2} h \left \| g \right \|_\infty + C_{q_1 q_2 p_2} h^\frac{1}{\alpha}\left \| g \right \|_\infty^{1 - \frac{\alpha}{p} + \epsilon} \left \| g \right \|_{pol}^{\frac{\alpha}{p} - \epsilon}\log(h^{- \frac{1}{\alpha}}),
\label{eq: finale unito z<1}
\end{equation}
where we have taken another $\epsilon$, using its arbitrariness. The constants depend also on it. \\
Let us now consider the large jumps term in \eqref{eq: splitto T21}. Using the second point of Lemma \ref{lemma: 4.5 in Maillavin} and the following basic inequality
$$\int_0^1 \int_{1 < |z| \le \epsilon_0 h^{- \frac{1}{\alpha}}}|z|^\delta \mu^h(ds,dz) \le \int_0^1 \int_{1 < |z| \le \epsilon_0 h^{- \frac{1}{\alpha}}}|z|^{\delta - 1} \mu^h(ds,dz) \int_0^1 \int_{1 < |z| \le \epsilon_0 h^{- \frac{1}{\alpha}}}|z| \mu^h(ds,dz) $$
for $\delta \ge 1$, we get it is upper bounded by
\begin{equation}
\mathbb{E}[|g(\hat{L}_1)|\int_0^1 \int_{1 < |z| \le \epsilon_0 h^{- \frac{1}{\alpha}}}(h^\frac{1}{\alpha}|z| + h |z|^\alpha )  \mu^h(ds,dz) \int_0^1 \int_{1 < |z| \le \epsilon_0 h^{- \frac{1}{\alpha}}}|z| \mu^h(ds,dz)|\mathcal{H}_{L_1^h}(1)|W^h].
\label{ eq: T21  seconda parte}
\end{equation}
We now use Holder inequality with $p_2$ big and $p_1$ next to $1$ and we observe that, from the second point of Lemma \ref{lemma: lemma 4.3 Maillavin}, it follows
$$\mathbb{E}[|\int_0^1 \int_{1 < |z| \le \epsilon_0 h^{- \frac{1}{\alpha}}}|z| \mu^h(ds,dz) \mathcal{H}_{L_1^h}(1)|^{p_2}]^\frac{1}{p_2} \le C_{p_2}.$$
Hence \eqref{ eq: T21  seconda parte} is upper bounded by
\begin{equation}
C_{p_2} \mathbb{E}[|g(\hat{L}_1)|^{p_1} |\int_0^1 \int_{1 < |z| \le \epsilon_0 h^{- \frac{1}{\alpha}}}(h^\frac{1}{\alpha}|z| + h |z|^\alpha)  \mu^h(ds,dz) |^{p_1} W^h]^\frac{1}{p_1} \le
\label{eq: uguale al caso con Gamma}
\end{equation}
\begin{equation}
\le  C_{p_2} \left \| g \right \|_\infty h \mathbb{E}[|\int_0^1 \int_{1 < |z| \le \epsilon_0 h^{- \frac{1}{\alpha}}}|z|^\alpha  \mu^h(ds,dz) |^{p_1}]^\frac{1}{p_1} + C_{p_2} h^\frac{1}{\alpha} \mathbb{E}[|g(\hat{L}_1)|^{p_1} |\int_0^1 \int_{1 < |z| \le \epsilon_0 h^{- \frac{1}{\alpha}}} |z|\mu^h(ds,dz) |^{p_1} W^h]^\frac{1}{p_1}. 
\label{eq: T21 riformulata}
\end{equation}
Concerning the first term of \eqref{eq: T21 riformulata}, we use Lemma 2.1.5 in the appendix of \cite{13 in Maillavin} with $p_1 = (1 + \epsilon) \in [1,2]$ and the definition of $F_h$ given in \eqref{eq: definition Fn}, getting
$$\mathbb{E}[|\int_0^1 \int_{1 < |z| \le \epsilon_0 h^{- \frac{1}{\alpha}}}|z|^\alpha  \mu^h(ds,dz) |^{1 + \epsilon}]^\frac{1}{1 + \epsilon} \le \mathbb{E}[\int_0^1 \int_{1 < |z| \le \epsilon_0 h^{- \frac{1}{\alpha}}}|z|^{\alpha(1 + \epsilon)}  \bar{\mu}^h(ds,dz)]^\frac{1}{1 + \epsilon} \le $$
\begin{equation}
\le c(\int_{1 < |z| \le \epsilon_0 h^{- \frac{1}{\alpha}}}|z|^{\alpha(1 + \epsilon) - 1 - \alpha} dz)^\frac{1}{1 + \epsilon} \le c h^{- \frac{\epsilon}{1 + \epsilon}}=c h^{- \epsilon},
\label{eq: stima int z alla alpha(1 + epsilon)}
\end{equation}
where we have used the arbitrariness of $\epsilon$ in the last equality. \\
On the second term of \eqref{eq: T21 riformulata} we act differently depending on whether or not $\alpha$ is more than $1$. If it does, we act as we did in \eqref{eq: spezzo h su norma inf e norma pol}, considering $p_1= 1 + \epsilon < \alpha$ and  introducing $r$, this time we set it such that the following equality holds: 
\begin{equation}
p(1 + \epsilon )(1 - r) + (1 + \epsilon) = \alpha.
\label{eq: definition r}
\end{equation}
We also use the property \eqref{eq: conditon on h} on $g$, hence it is upper bounded by
\begin{equation}
C_{p_2} h^\frac{1}{\alpha}  \left \| g \right \|_\infty^r  \left \| g \right \|_{pol}^{1 - r} \mathbb{E}[(1 + |\hat{L}_1|^{p(1 + \epsilon )(1 - r)})|\int_0^1 \int_{1 < |z| \le \epsilon_0 h^{- \frac{1}{\alpha}}} |z|\mu^h(ds,dz) |^{1 + \epsilon} W^h]^\frac{1}{1 + \epsilon}.
\label{eq: L1 hat}
\end{equation}
Now on the first term here above we use that $0 \le W^h \le 1$ and Lemma 2.1.5 in the appendix of \cite{13 in Maillavin} as we did in \eqref{eq: stima int z alla alpha(1 + epsilon)} in order to get
\begin{equation}
\mathbb{E}[|\int_0^1 \int_{1 < |z| \le \epsilon_0 h^{- \frac{1}{\alpha}}} |z|\mu^h(ds,dz) |^{1 + \epsilon} ]^\frac{1}{1 + \epsilon} \le c.
\label{eq: stima z alla 1+ epsilon}
\end{equation}
Moreover we observe, as we have already done, that $|\hat{L}_1| \le |L_1^h| + |L_1^{\alpha, h}|$ and that, from the second point of Lemma \ref{lemma: 4.5 in Maillavin}, there exists $c > 0$ such that $|g_h(z)| \le c |z|$; so we get
\begin{multline}
\mathbb{E}[|\hat{L}_1|^{p(1 + \epsilon )(1 - r)}|\int_0^1 \int_{1 < |z| \le \epsilon_0 h^{- \frac{1}{\alpha}}} |z|\mu^h(ds,dz) |^{1 + \epsilon} W^h]^\frac{1}{1 + \epsilon} \le \\
\le c + \mathbb{E}[|\int_0^1 \int_{1 < |z| \le \epsilon_0 h^{- \frac{1}{\alpha}}} |z|\mu^h(ds,dz) |^{p(1 + \epsilon)(1 - r) + (1 + \epsilon)}]^\frac{1}{1 + \epsilon} \le \\
\le c (\int_{1 < |z| \le \epsilon_0 h^{- \frac{1}{\alpha}}} |z|^\alpha |z|^{- 1 - \alpha}dz)^\frac{1}{1 + \epsilon} \le c \, \frac{1}{1 + \epsilon} \log(h^{- \frac{1}{\alpha}}) \le c|\log(h)|,
\label{eq: stima L1 alla alpha}
\end{multline}
having chosen a particular $r$ just in order to have the exponent here above equal to $\alpha$ and so having found out the same computation of \eqref{eq: hat L1 z < 1}. We have not considered the integral on $|z| \le 1$ because, as we have already seen above \eqref{eq: hat L1 z < 1}, the integral is bounded in $L^p$ and so we simply get \eqref{eq: stima z alla 1+ epsilon} again. From \eqref{eq: definition r} we obtain $r = 1 + \frac{1}{p} - \frac{\alpha}{p(1 + \epsilon)}$. Replacing it and using \eqref{eq: stima z alla 1+ epsilon} and \eqref{eq: stima L1 alla alpha} we get \eqref{eq: L1 hat} is upper bounded by 
\begin{equation}
C_{p_2} h^\frac{1}{\alpha}  \left \| g \right \|_\infty^{1 + \frac{1}{p} - \frac{\alpha}{p(1 + \epsilon)}}  \left \| g \right \|_{pol}^{-\frac{1}{p} + \frac{\alpha}{p(1 + \epsilon)}}(c + |\log(h)|) = C_{p_2} h^\frac{1}{\alpha}  \left \| g \right \|_\infty^{1 + \frac{1}{p} - \frac{\alpha}{p(1 + \epsilon)}}  \left \| g \right \|_{pol}^{-\frac{1}{p} + \frac{\alpha}{p(1 + \epsilon)}} |\log(h)|.
\label{eq: fine parte 2 T21}
\end{equation}
If otherwise $\alpha$ is less than $1$, then the second term of \eqref{eq: T21 riformulata} is upper bounded by
\begin{equation}
C_{p_2} h^{\frac{1}{\alpha}}\left \| g \right \|_\infty \mathbb{E}[|\int_0^1 \int_{1 < |z| \le \epsilon_0 h^{- \frac{1}{\alpha}}} |z|\mu^h(ds,dz) |^{p_1} W^h]^\frac{1}{p_1} \le C_{p_2} h^{\frac{1}{\alpha}}\left \| g \right \|_\infty h^{\frac{1}{1 + \epsilon} - \frac{1}{\alpha}} = C_{p_2} h^{\frac{1}{1 + \epsilon}}\left \| g \right \|_\infty,
\label{eq: secondo termine, alpha < 1}
\end{equation}
where we have taken $p_1 = 1 + \epsilon$ and we have used the fact that $0 \le W^h \le 1$ and that, for $\alpha < 1$,
$$\mathbb{E}[|\int_0^1 \int_{1 < |z| \le \epsilon_0 h^{- \frac{1}{\alpha}}} |z|\mu^h(ds,dz) |^{1 + \epsilon} ]^\frac{1}{1 + \epsilon} \le c h^{\frac{1}{1 + \epsilon} - \frac{1}{\alpha}}.$$
Using \eqref{eq: T21 riformulata}, \eqref{eq: stima int z alla alpha(1 + epsilon)}, \eqref{eq: fine parte 2 T21} and \eqref{eq: secondo termine, alpha < 1} it follows
$$\mathbb{E}[|g(\hat{L}_1)||\int_0^1 \int_{ 1 \le |z| \le \epsilon_0 h^{- \frac{1}{\alpha}} } (g_h(z) - z) {\mu}^h(ds, dz) ||\mathcal{H}_{L_1^h}(1)W^h|] \le$$
\begin{equation}
\le C_{p_2} h^{1 - \epsilon} \left \| g \right \|_\infty + C_{p_2} h^\frac{1}{\alpha}  \left \| g \right \|_\infty^{1 + \frac{1}{p} - \frac{\alpha}{p(1 + \epsilon)}}  \left \| g \right \|_{pol}^{-\frac{1}{p} + \frac{\alpha}{p(1 + \epsilon)}}  |\log(h)|1_{\alpha > 1}.
\label{eq: grandi salti T21}
\end{equation}
Now from \eqref{eq: splitto T21}, \eqref{eq: finale unito z<1}, and \eqref{eq: grandi salti T21} it follows \begin{equation}
\tilde{T}_{2,1} \le  C_{q_1 q_2 p_2} h^{1 - \epsilon} \left \| g \right \|_\infty + C_{q_1 q_2 p_2} h^\frac{1}{\alpha}\left \| g \right \|_\infty^{1 - \frac{\alpha}{p} + \epsilon} \left \| g \right \|_{pol}^{\frac{\alpha}{p} - \epsilon}  |\log(h)| + C_{q_1 q_2 p_2} h^\frac{1}{\alpha}\left \| g \right \|_\infty^{1 + \frac{1}{p}- \frac{\alpha}{p} + \epsilon} \left \| g \right \|_{pol}^{- \frac{1}{p } + \frac{\alpha}{p} - \epsilon}  |\log(h)|1_{\alpha > 1}.
\label{es: stima T21 completa}
\end{equation}
Concerning $\tilde{T}_{2,2}$, it is already proved in Theorem 4.2 in \cite{Maillavin} that
\begin{equation}
\tilde{T}_{2,2} \le c h \left \| g \right \|_\infty.
\label{eq: estimation T22}
\end{equation}
Let us now consider $\tilde{T}_1$. Using \eqref{eq: definition Gamma} and \eqref{eq: def peso Maillavin} we can write
$$\mathcal{H}_{L_1^h}(W^h) = \frac{- W^h \, L(L_1^h)}{\Gamma(L_1^h, L_1^h)} + L(\frac{W^h}{\Gamma(L_1^h, L_1^h)}) L_1^h - L(\frac{L_1^h \, W^h}{\Gamma(L_1^h, L_1^h)}).$$
With computations using that L is a self-adjoint operator we get 
\begin{equation}
\tilde{T}_1 = |\mathbb{E}[g(L_1^{\alpha, h}) W^h] - \mathbb{E}[g(L_1^{\alpha, h}) \frac{\Gamma(L_1^{\alpha, h}, L_1^h)}{\Gamma(L_1^h, L_1^h)} W^h]| \le \mathbb{E}[|g(\hat{L}_1)||\frac{\Gamma(L_1^h - L_1^{\alpha, h}, L_1^h)}{\Gamma(L_1^h, L_1^h)}| W^h].
\label{eq: T1}
\end{equation}
Using equation \eqref{eq: definition Gamma}, we have 
$$\Gamma(L_1^h - L_1^{\alpha, h}, L_1^h) = \int_0^1 \int_{|z| < \frac{\eta}{2} h^{- \frac{1}{\alpha}}} \rho(z) (1 - g_h'(z)) \mu^h(ds, dz).$$
Using the third point of Lemma \ref{lemma: 4.5 in Maillavin} we deduce the following on the event $W^h \neq 0$:
$$|\Gamma(L_1^h - L_1^{\alpha, h}, L_1^h)| \le c \int_0^1 \int_{|z| \le \epsilon_0 h^{- \frac{1}{\alpha}}} \rho(z) (h^{\frac{1}{\alpha}} |z| + h |z|^\alpha) \mu^h(ds, dz) \le  c \int_0^1 \int_{ |z| \le 1} \rho(z) (h^{\frac{1}{\alpha}} |z| + h |z|^\alpha) \mu^h(ds, dz) +$$
$$ + c \int_0^1 \int_{1 < |z| \le \epsilon_0 h^{- \frac{1}{\alpha}}} \rho(z) \mu^h(ds, dz)\int_0^1 \int_{ 1 < |z| \le \epsilon_0 h^{- \frac{1}{\alpha}}}(h^{\frac{1}{\alpha}} |z| + h |z|^\alpha) \mu^h(ds, dz) \le $$ 
$$\le c \int_0^1 \int_{\mathbb{R}} \rho(z) \mu^h(ds, dz)( h^\frac{1}{\alpha} + h) + c\int_0^1 \int_{\mathbb{R}} \rho(z) \mu^h(ds, dz)\int_0^1 \int_{ 1 < |z| \le \epsilon_0 h^{- \frac{1}{\alpha}}}(h^{\frac{1}{\alpha}} |z| + h |z|^\alpha) \mu^h(ds, dz) = $$
\begin{equation}
= c( h^\frac{1}{\alpha} + h) \Gamma(L_1^h, L_1^h) + c \Gamma(L_1^h, L_1^h)(\int_0^1 \int_{ 1 < |z| \le \epsilon_0 h^{- \frac{1}{\alpha}}}(h^{\frac{1}{\alpha}} |z| + h |z|^\alpha) \mu^h(ds, dz)),
\label{eq: Gamma in T1}
\end{equation}
where we have used that $z$ is always less than $1$ in the first integral and that, since $\rho$ is a positive function, we can upper bound the integrals considering whole set $\mathbb{R}$. Moreover, we have used the definition of $\Gamma(L_1^h, L_1^h)$. Replacing \eqref{eq: Gamma in T1} in \eqref{eq: T1} we get
\begin{equation}
\tilde{T}_1 \le c ( h^\frac{1}{\alpha} + h) \mathbb{E}[|g(\hat{L}_1)|] + c\mathbb{E}[|g(\hat{L}_1)|\int_0^1 \int_{ 1 < |z| \le \epsilon_0 h^{- \frac{1}{\alpha}}}(h^{\frac{1}{\alpha}} |z| + h |z|^\alpha) \mu^h(ds, dz))] = : \tilde{T}_{1,1} + \tilde{T}_{1,2}.
\label{eq: splitto T1}
\end{equation}
Concerning $\tilde{T}_{1,1}$, we have
\begin{equation}
\tilde{T}_{1,1} \le c h \left \| g \right \|_\infty + c h^\frac{1}{\alpha}\mathbb{E}[|g(\hat{L}_1)|] \le c h \left \| g \right \|_\infty + c h^\frac{1}{\alpha} \left \| g \right \|_\infty^{1 - \frac{\alpha}{p}} \left \| g \right \|_{pol}^{\frac{\alpha}{p}} |\log(h)|,
\label{eq: T11}
\end{equation}
where in the last inequality we have acted exactly like we did in \eqref{eq: spezzo h su norma inf e norma pol} and \eqref{eq: finale per z <1} with the exponent on $g$ that is exactly equal to $1$ instead of $1 + \epsilon$ and so we have chosen $r$ such that $p(1 - r) = \alpha$. Let us now consider $\tilde{T}_{1,2}$. We observe that it is exactly like \eqref{eq: uguale al caso con Gamma} but with $p_1 = 1$ instead of $p_1 = 1 + \epsilon$, with the only difference that computing \eqref{eq: stima int z alla alpha(1 + epsilon)} now we get $c \, \log(h^{- \frac{1}{\alpha}})$ instead of $c h^{- \epsilon}$ and in the definition \eqref{eq: definition r} we choose $r$ such that $p(1 - r) + 1 = \alpha$. Acting exactly like we did above it follows
\begin{equation}
\tilde{T}_{1,2} \le C_{p_2} h |\log(h)| \left \| g \right \|_\infty + C_{p_2} h^\frac{1}{\alpha}  \left \| g \right \|_\infty^{1 + \frac{1}{p} - \frac{\alpha}{p}}  \left \| g \right \|_{pol}^{-\frac{1}{p} + \frac{\alpha}{p}} |\log(h)|1_{\alpha > 1}.
\label{eq: T12}
\end{equation}
Using \eqref{es: stima T21 completa}, \eqref{eq: estimation T22}, \eqref{eq: T11} and \eqref{eq: T12}, the lemma is proved.
\end{proof}
It follows Proposition \ref{prop: estimation stable}, using also \eqref{eq: inizio prop maillavin}, \eqref{eq: stima primo termine Maillavin} and \eqref{eq: differenza S1 e L1 alpha n}.
\end{proof}

\appendix
\section{Appendix}
In this section we will prove the technical proposition and lemmas we have used.

\subsection{Proof of Lemma \ref{lemma: Moment inequalities}}
\begin{proof}
We start proving 1. From the dynamic \eqref{eq: model vol} of $a$ it is
$$ \mathbb{E}[|a_t - a_s|^p] \le \mathbb{E}[|\int_s^t \tilde{b}_u du|^p] + \mathbb{E}[|\int_s^t \tilde{a}_u dW_u|^p] + \mathbb{E}[|\int_s^t \hat{a}_u d\hat{W}_u|^p] +$$
$$ + \mathbb{E}[|\int_s^t \int_{\mathbb{R} \backslash \left \{0 \right \}} \tilde{\gamma}_u \, z \, \tilde{\mu}(du, dz)|^p] + \mathbb{E}[|\int_s^t \int_{\mathbb{R} \backslash \left \{0 \right \}} \hat{\gamma}_u \, z \, \tilde{\mu}_2(du, dz)|^p] = : \sum_{j = 1}^5 I_j.$$
In the following, since we will act on the two Brownian motions $W$ and $\hat{W}$ in the same way, we will not report $I_3$ anymore. Also considering the Poisson random measures, we will deal only with $I_4$ in detail, underlining that on $I_5$ the same reasoning applies.    
We use Burkholder - Davis - Gundy inequalities on the stochastic integral and Kunita inequality on the jump part, in addition to a repeated use of Jensen inequality to get
$$I_1 + I_2 + I_4 \le |t -s|^{p - 1} \int_s^t \mathbb{E}[|\tilde{b}_u|^p] du + \mathbb{E}[|\int_t^s (\tilde{a}_u)^2 du|^\frac{p}{2}] + \mathbb{E}[\int_t^s \int_{\mathbb{R} \backslash \left \{0 \right \}} |\tilde{\gamma}_u |^p  |z|^p \bar{\mu}(du, dz)] + $$
$$ + \mathbb{E}[|\int_t^s \int_{\mathbb{R} \backslash \left \{0 \right \}} (\tilde{\gamma}_u )^2  (z)^2 \bar{\mu}(du, dz)|^\frac{p}{2}] \le c |t -s|^p + |t -s|^{\frac{p}{2} - 1} \int_t^s \mathbb{E}[|\tilde{a}_u|^p ] du +$$
$$ + \int_s^t \mathbb{E}[|\tilde{\gamma}_u|^p] ds (\int_{\mathbb{R} \backslash \left \{0 \right \}} |z|^p F(z) dz ) + |t -s|^{\frac{p}{2} - 1} \int_s^t \mathbb{E}[|\tilde{\gamma}_u|^2] ds (\int_{\mathbb{R} \backslash \left \{0 \right \}} |z|^2 F(z) dz )  \le$$
$$\le c(|t -s|^p + |t -s|^\frac{p}{2} + |t -s| + |t -s|^\frac{p}{2}) \le c |t -s|,$$
with the inequalities above holding true also because all the coefficients in the dynamic of $a$ are supposed to be bounded. The reasoning here above joint with A3 also yields that, for all $q > 0$, $\sup_{t \ge 0} \mathbb{E}[|a_t|^q] < \infty$.\\
The proof of 2 follows the same lines as the proof of 1 above. \\
As we proved in point 1 that the volatility has bounded moments, it is possible to get points 3 and 4 from Theorem 66 of \cite{Protter GLM} and Proposition 3.1 in \cite{Shimizu}. The fifth point is showed in \cite{Chapitre 1}, below Lemma 1, and the last one in Section $8$ of \cite{GLM}.
\end{proof}

\subsection{Proof of Proposition 3}
\begin{proof}
In order to show \eqref{eq: espansione salti}, we reformulate $(\Delta X_i^J)^2 \varphi_{\Delta_{n}^\beta}(\Delta X_i)$ as 
\begin{equation}
(\Delta X_i^J)^2[\varphi_{\Delta_{n}^\beta}(\Delta X_i) - \varphi_{\Delta_{n}^\beta}(\Delta X_i^J)] +(\Delta X_i^J)^2[\varphi_{\Delta_{n}^\beta}(\Delta X_i^J) - \varphi_{\Delta_{n}^\beta}(\Delta \tilde{X}_i^J)] + (\Delta X_i^J - \Delta \tilde{X}_i^J)^2 \varphi_{\Delta_{n}^\beta}(\Delta \tilde{X}_i^J) + 
\label{eq: prop 1 riformulata}
\end{equation}
$$ + 2\Delta \tilde{X}_i^J (\Delta X_i^J - \Delta \tilde{X}_i^J) \varphi_{\Delta_{n}^\beta}(\Delta \tilde{X}_i^J) + (\Delta \tilde{X}_i^J)^2 \varphi_{\Delta_{n}^\beta}(\Delta \tilde{X}_i^J) = : \sum_{k = 1}^{5}I_k^n(i). $$
Comparing \eqref{eq: espansione salti} with \eqref{eq: prop 1 riformulata} it turns out that our goal is to show that $\sum_{k = 1}^{4}I_k^n(i) = o_{L^1}(\Delta_{n}^{\beta(2 - \alpha) + 1)})$. In the sequel will prove that $\sum_{k = 1}^{4}\mathbb{E}[|I_k^n(i)|] \le c \Delta_{n}^{\beta(2 - \alpha) + 1}$; the same reasoning applies to the conditional version, that is  $\sum_{k = 1}^{4}\mathbb{E}_i[|I_k^n(i)|] \le R_i(\Delta_{n}^{\beta(2 - \alpha) + 1})$. \\ Let us start considering $I_1^n(i)$.
We know that $\Delta X_i = \Delta X_i^c + \Delta X_i^J$, where we have denoted by $\Delta X_i^c$ the continuous part of the increments of the process $X$.
We study
\begin{equation}
I_1^n(i) = I_{1,1}^n + I_{1,2}^n := I_1^n(i) 1_{\left \{ |\Delta X_i| \ge 3 \Delta_{n}^\beta \right \} } + I_1^n(i) 1_{ \left \{ |\Delta X_i| < 3 \Delta_{n}^\beta \right \}},
\label{eq: splitto I1}
\end{equation}
having omitted the dependence upon i in $I_{1,1}^n$ and $I_{1,2}^n$ in order to make the notation easier.
Concerning $I_{1,1}^n$, we split again on the sets $\left \{ |\Delta X_i^J| \ge 2 \Delta_{n}^\beta \right \}$ and $\left \{ |\Delta X_i^J| < 2 \Delta_{n}^\beta \right \}$. Recalling that $\varphi(\zeta) = 0$ for $|\zeta| \ge 2 \Delta_{n}^\beta$, we observe that if $|\Delta X_i^J| \ge 2 \Delta_{n}^\beta $ then $I_{1,1}^n$ is just $0$. Otherwise, if $|\Delta X_i^J| < 2 \Delta_{n}^\beta $, then it means that $|\Delta X^c_i|$ must be more than $\Delta_{n}^\beta$, so we can use \eqref{eq: prob parte continua}. In the sequel the constant $c$ may change value from line to line.  Using the bound on $(\Delta X_i^J)^2$ and the boundedness of $\varphi$ we get
\begin{equation}
\mathbb{E}[|I_{1,1}^n|] \le c \Delta_{n}^{2 \beta} \mathbb{E}[1_{\left \{ |\Delta X_i| \ge 3 \Delta_{n}^\beta, |\Delta X_i^J| < 2 \Delta_{n}^\beta \right \} }] \le c \Delta_{n}^{2 \beta} \mathbb{P}(|\Delta X^c_i| \ge \Delta_{n}^\beta ) \le c \Delta_{n}^{2 \beta + (\frac{1}{2} - \beta) r }.
\label{eq: E I11}
\end{equation}
Hence
\begin{equation}
\frac{1}{\Delta_{n}^{1 + \beta(2 - \alpha)}} \mathbb{E}[|I_{1,1}^n|] \le c \Delta_{n}^{(\frac{1}{2} - \beta) r - 1 + \alpha \beta },
\label{eq: conv I11}
\end{equation}
that goes to $0$ for $n \rightarrow \infty$ since for each choice of $\beta \in (0, \frac{1}{2})$ and $\alpha \in (0,2)$ we can always find $r$ big enough such that the exponent on $\Delta_{n}$ is positive. \\
We now consider $I_{1,2}^n$ on the sets $\left \{ |\Delta X_i^J| \ge 4 \Delta_{n}^\beta \right \}$ and $\left \{ |\Delta X_i^J| < 4 \Delta_{n}^\beta \right \}$. Using the boundedness of $\varphi$ we have
$$\mathbb{E}[|I_{1,2}^n|1_{\left \{ |\Delta X_i^J| \ge 4 \Delta_{n}^\beta \right \} } ] \le c \mathbb{E}[(\Delta X_i^J)^2 1_{\left \{ |\Delta X_i| < 3 \Delta_{n}^\beta, |\Delta X_i^J| \ge 4 \Delta_{n}^\beta \right \} } ] . $$
We observe that also in this case $|\Delta X_i| < 3 \Delta_{n}^\beta$ and $|\Delta X_i^J| \ge 4 \Delta_{n}^\beta$ involve $|\Delta X^c_i| \ge \Delta_{n}^\beta $. Moreover $(\Delta X_i^J)^2 \le c (\Delta X_i)^2 + c (\Delta X_i^c)^2 \le c \Delta_{n}^{2 \beta} + c (\Delta X_i^c)^2 $,
hence 
$$\mathbb{E}[|I_{1,2}^n|1_{\left \{ |\Delta X_i^J| \ge 4 \Delta_{n}^\beta \right \} } ] \le c \Delta_{n}^{2 \beta} \mathbb{P}(|\Delta X^c_i| \ge \Delta_{n}^\beta) + c \mathbb{E}[(\Delta X_i^c)^2 1_{\left \{ |\Delta X^c_i| \ge \Delta_{n}^\beta \right \} }] \le $$
\begin{equation}
\le c \Delta_{n}^{2 \beta + r(\frac{1}{2} - \beta)} + c \mathbb{E}[(\Delta X_i^c)^4]^\frac{1}{2}\mathbb{P}(|\Delta X^c_i| \ge \Delta_{n}^\beta)^\frac{1}{2} \le c \Delta_{n}^{[2 \beta + r(\frac{1}{2} - \beta)] \land [1+ \frac{r}{2}(\frac{1}{2} - \beta) ]}, 
\label{eq: E I12 salti grandi}
\end{equation}
where we have used Cauchy Schwartz inequality, \eqref{eq: prob parte continua} and the sixth point of Lemma \ref{lemma: Moment inequalities}.
Therefore we get
\begin{equation}
\frac{1}{\Delta_{n}^{1 + \beta(2 - \alpha)}} \mathbb{E}[|I_{1,2}^n|1_{\left \{ |\Delta X_i^J| \ge 4 \Delta_{n}^\beta \right \} } ] \le c \Delta_{n}^{ [r(\frac{1}{2} - \beta) - 1 + \alpha \beta] \land [\frac{r}{2}(\frac{1}{2} - \beta) - \beta(2 - \alpha)]},
\label{eq: conv prima parte I12}
\end{equation}
that converges to $0$ for $n \rightarrow \infty$ since we can always find $r \ge 1$ such that the exponent $\Delta_{n}$ is positive.\\
In order to conclude the study of $I_1^n(i)$, we study $I_{1,2}^n 1_{\left \{ |\Delta X_i^J| < 4 \Delta_{n}^\beta \right \} }$.
\begin{equation}
\mathbb{E}[|I_{1,2}^n| 1_{\left \{ |\Delta X_i^J| < 4 \Delta_{n}^\beta \right \} }] \le c \left \| \varphi' \right \|_\infty \Delta_{n}^{-\beta} \mathbb{E}[ (\Delta X_i^J)^2|\Delta X_i - \Delta X_i^J|1_{\left \{ |\Delta X_i| \le 3 \Delta_{n}^\beta, |\Delta X_i^J| \le 4 \Delta_{n}^\beta  \right \}}],
\label{eq: E I12 salti piccoli}
\end{equation}
where we have used the smoothness of $\varphi$. Using Holder inequality and the sixth point of Lemma \ref{lemma: Moment inequalities} it is upper bounded by
\begin{equation}
 c \Delta_{n}^{ -\beta} \mathbb{E}[|\Delta X_i^c|^p]^\frac{1}{p} \mathbb{E}[|(\Delta X_i^J)^{2q} 1_{\left \{ |\Delta X_i| \le 3 \Delta_{n}^\beta, |\Delta X_i^J| \le 4 \Delta_{n}^\beta  \right \}} ]^\frac{1}{q} \le c \Delta_{n}^{\frac{1}{2} - \beta} \mathbb{E}[|(\Delta X_i^J)^{2q} 1_{\left \{ |\Delta X_i| \le 3 \Delta_{n}^\beta, |\Delta X_i^J| \le 4 \Delta_{n}^\beta  \right \}}]^\frac{1}{q}. 
\label{eq: passaggio per I1}
\end{equation}
Now, since our indicator function $1_{\left \{ |\Delta X_i| \le 3 \Delta_{n}^\beta, |\Delta X_i^J| \le 4 \Delta_{n}^\beta  \right \}}$ is less then $1_{\left \{ |\Delta X_i^J| \le 4 \Delta_{n}^\beta  \right \}}$ , we can use the first point of Lemma \ref{lemma: estensione 10 capitolo 1}. Through the use of the conditional expectation we get
\begin{equation}
\mathbb{E}[|(\Delta X_i^J)^{2q} 1_{\left \{ |\Delta X_i| \le 3 \Delta_{n}^\beta, |\Delta X_i^J| \le 4 \Delta_{n}^\beta  \right \}}]^\frac{1}{q} \le c\Delta_{n}^\frac{1 + \beta(2 q - \alpha)}{q}\mathbb{E}[R_i( 1)] \le c\Delta_{n}^\frac{1 + \beta(2 q - \alpha)}{q}.
\label{eq: I1 salti}
\end{equation}
Replacing \eqref{eq: I1 salti} in \eqref{eq: passaggio per I1} and taking $q$ small (next to $1$), we obtain 
$\mathbb{E}[|I_{1,2}^n| 1_{\left \{ |\Delta X_i^J| < 4 \Delta_{n}^\beta \right \} }] \le c \Delta_{n}^{\frac{1}{2} + \beta + 1 - \alpha \beta - \epsilon}$.
It follows
\begin{equation}
\frac{\mathbb{E}[|I_{1,2}^n| 1_{\left \{ |\Delta X_i^J| < 4 \Delta_{n}^\beta \right \} }]}{\Delta_{n}^{\beta(2 - \alpha) + 1}} \le c \Delta_{n}^{\frac{1}{2} - \beta - \epsilon},
\label{eq: finale I1}
\end{equation}
that goes to $0$ for $n \rightarrow \infty$ since we can always find an $\epsilon$ as small as the exponent on $\Delta_{n} $ is positive, for $\beta \in (0, \frac{1}{2})$. \\
Let us now consider $I_2^n(i)$. 
\begin{equation}
I_2^n (i) = I_2^n (i) \, 1_{\left \{ |\Delta X_i^J| \le 2 \Delta_{n}^\beta \right \}}  + I_2^n (i) \, 1_{\left \{ |\Delta X_i^J| > 2 \Delta_{n}^\beta \right \}} = : I_{2,1}^n + I_{2,2}^n.
\label{eq: splitto I21 e I22}
\end{equation}
Concerning the first term of \eqref{eq: splitto I21 e I22}, we have
$$\mathbb{E}[|I_{2,1}^n|] \le \Delta_{n}^{- \beta} \left \| \varphi' \right \|_\infty  \mathbb{E}[(\Delta X_i^J)^2 |\Delta X_i^J - \Delta \tilde{X}_i^J|1_{\left \{ |\Delta X_i^J| \le 2 \Delta_{n}^\beta \right \}}] \le$$
\begin{equation}
 \le c \Delta_{n}^{- \beta} \mathbb{E}[(\Delta X_i^J )^41_{\left \{ |\Delta X_i^J| \le 2 \Delta_{n}^\beta \right \}} ]^\frac{1}{2} \mathbb{E}[|\Delta X_i^J - \Delta \tilde{X}_i^J|^2]^\frac{1}{2} , 
\label{eq: I21}
\end{equation}
where we have used the smoothness of $\varphi$ and Cauchy-Schwartz inequality. Using again the first point of Lemma \ref{lemma: estensione 10 capitolo 1}, we have that
\begin{equation}
\mathbb{E}[(\Delta X_i^J )^41_{\left \{ |\Delta X_i^J| \le 2 \Delta_{n}^\beta \right \}} ]^\frac{1}{2} = \mathbb{E}[\mathbb{E}_i[(\Delta X_i^J )^41_{\left \{ |\Delta X_i^J| \le 2 \Delta_{n}^\beta \right \}}] ]^\frac{1}{2} \le \Delta_{n}^{\frac{1 + \beta(4 - \alpha)}{2}} \mathbb{E}[R_i( 1)]\le c\Delta_{n}^{\frac{1}{2} + 2 \beta - \frac{\alpha \beta}{2} }.
\label{eq: I21 salti}
\end{equation}
We now introduce a lemma that will be proved later:
\begin{lemma}
Suppose that A1 -A4 hold. Then
\begin{enumerate}
\item $\forall q \ge 2$ we have
    \begin{equation}
    \mathbb{E}[|\Delta X_i^J - \Delta \tilde{X}_i^J|^q] \le c \Delta_{n}^2,
    \label{eq: diff salti alpha >1}
     \end{equation}
    \item for $q \in [1, 2]$ and $\alpha < 1$, we have 
    \begin{equation}
    \mathbb{E}[|\Delta X_i^J - \Delta \tilde{X}_i^J|^q]^\frac{1}{q} \le c \Delta_{n}^{\frac{1}{2} + \frac{1}{q}}.
    \label{eq: diff salti alpha <1}
    \end{equation}
\end{enumerate}
\label{lemma: differenza dei salti}
\end{lemma}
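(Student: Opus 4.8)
The plan is to start from the elementary identity
$$\Delta X_i^J-\Delta\tilde X_i^J=\int_{t_i}^{t_{i+1}}\int_{\mathbb{R}\setminus\{0\}}\big(\gamma(X_{s^-})-\gamma(X_{t_i})\big)\,z\,\tilde\mu(ds,dz),$$
which is a bona fide compensated Poisson integral, since $\gamma$ is bounded by A4 and $\int_{\mathbb{R}}|z|^{2}F(dz)<\infty$ (the small jumps because $\alpha<2$, the large ones by A3). Two ingredients then drive everything. First, $\gamma$ being Lipschitz, $|\gamma(X_{s^-})-\gamma(X_{t_i})|\le c\,|X_{s^-}-X_{t_i}|$. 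Second, combining point 3 of Lemma \ref{lemma: Moment inequalities} with Jensen's inequality, for fixed $s$ one has $\mathbb{E}[|X_{s^-}-X_{t_i}|^{r}]\le c\,(s-t_i)^{r/2}$ when $r\in[1,2]$ and $\mathbb{E}[|X_{s^-}-X_{t_i}|^{r}]\le c\,(s-t_i)$ when $r\ge 2$. These two facts are all that is needed.

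For \eqref{eq: diff salti alpha >1}, i.e. $q\ge 2$, I would apply Kunita's inequality to this stochastic integral, which bounds $\mathbb{E}[|\Delta X_i^J-\Delta\tilde X_i^J|^{q}]$ by a ``quadratic'' term $\mathbb{E}\big[(\int_{t_i}^{t_{i+1}}\int|\gamma(X_{s^-})-\gamma(X_{t_i})|^{2}|z|^{2}\,\bar\mu)^{q/2}\big]$ plus an $L^{q}$ term $\mathbb{E}[\int_{t_i}^{t_{i+1}}\int|\gamma(X_{s^-})-\gamma(X_{t_i})|^{q}|z|^{q}\,\bar\mu]$. Since $\int_{\mathbb{R}}|z|^{2}F(dz)<\infty$ and $\int_{\mathbb{R}}|z|^{q}F(dz)<\infty$ for $q\ge 2>\alpha$ (small jumps because $q-1-\alpha>-1$, large jumps by A3), I pull out these constants; applying Jensen in the time variable to the quadratic term and the Lipschitz and moment bounds above, both contributions are $\le c\,\Delta_n^{q/2+1}+c\,\Delta_n^{2}\le c\,\Delta_n^{2}$, because $q/2+1\ge 2$. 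This is exactly \eqref{eq: diff salti alpha >1}.

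For \eqref{eq: diff salti alpha <1}, i.e. $q\in[1,2]$ and $\alpha<1$, I would use instead the $L^{q}$-moment inequality for compensated Poisson integrals valid in that exponent range (the ``Lemma 2.1.5''-type estimate of \cite{13 in Maillavin} already invoked elsewhere in the paper), $\mathbb{E}[|\int H\,\tilde\mu|^{q}]\le c\,\mathbb{E}[\int|H|^{q}\,\bar\mu]$. Here the hypothesis $\alpha<1$ is used precisely to ensure $\int_{\mathbb{R}}|z|^{q}F(dz)<\infty$ all the way down to $q=1$ (one needs $q>\alpha$ for the small jumps; the large jumps are again handled by A3). Then $\mathbb{E}[\int_{t_i}^{t_{i+1}}\int|\gamma(X_{s^-})-\gamma(X_{t_i})|^{q}|z|^{q}\,\bar\mu]=(\int_{\mathbb{R}}|z|^{q}F(dz))\int_{t_i}^{t_{i+1}}\mathbb{E}[|\gamma(X_{s^-})-\gamma(X_{t_i})|^{q}]\,ds\le c\int_{t_i}^{t_{i+1}}(s-t_i)^{q/2}\,ds=c\,\Delta_n^{q/2+1}$, so that $\mathbb{E}[|\Delta X_i^J-\Delta\tilde X_i^J|^{q}]^{1/q}\le c\,\Delta_n^{1/2+1/q}$. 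The only genuinely delicate choice is picking the right moment inequality for the Poisson integral in each regime — Kunita's inequality for $q\ge 2$ and the $L^{q}$ estimate for $q\in[1,2]$ — and, in the second case, verifying that $\int_{\mathbb{R}}|z|^{q}F(dz)<\infty$, which is exactly the place where the restriction $\alpha<1$ enters; everything else is routine Jensen/Hölder bookkeeping built on the increment bounds of Lemma \ref{lemma: Moment inequalities}.
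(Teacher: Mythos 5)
Your proposal is correct. For \eqref{eq: diff salti alpha >1} your argument (Kunita's inequality applied to the compensated integral of $(\gamma(X_{s^-})-\gamma(X_{t_i}))\,z$, the Lipschitz property of $\gamma$, the increment bound from point 3 of Lemma \ref{lemma: Moment inequalities}, and Jensen in the time variable) is essentially the paper's proof; the paper only treats $q=2$ separately via the It\^o isometry, which is a special case of the same computation. For \eqref{eq: diff salti alpha <1}, however, you take a genuinely different and in fact shorter route. The paper first establishes the case $q=1$ by hand: it splits the compensated integral over $\{|z|\ge 2\Delta_n^{\beta}\}$ and $\{|z|\le 2\Delta_n^{\beta}\}$, applies the compensation formula and Cauchy--Schwarz to each piece, and uses $\alpha<1$ to make the small-jump integral $\int|z|F(dz)$ manageable, obtaining $\mathbb{E}[|\Delta X_i^J-\Delta\tilde X_i^J|]\le c\Delta_n^{3/2}$; it then covers the whole range $q\in[1,2]$ by interpolating this $L^1$ bound against the $L^2$ bound of the first point, using the interpolation theorem of \cite{Interpolation}. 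You instead apply the $L^q$ moment inequality for compensated Poisson integrals (the Lemma 2.1.5 / inequality (2.1.37) estimate of \cite{13 in Maillavin}, already invoked elsewhere in the paper) directly for each $q\in[1,2]$, with $\alpha<1\le q$ guaranteeing $\int_{\mathbb{R}}|z|^{q}F(dz)<\infty$; combined with $\mathbb{E}[|X_{s}-X_{t_i}|^{q}]\le c\,(s-t_i)^{q/2}$ this yields $c\,\Delta_n^{1+q/2}$ for the $q$-th moment, i.e.\ exactly the exponent $\frac{1}{2}+\frac{1}{q}$ after taking $q$-th roots. Both arguments are valid and give the same rate at every $q$ (they agree at the endpoints $q=1$ and $q=2$); yours avoids the jump-size splitting and the interpolation step, at the mild cost of invoking the general $L^q$ estimate rather than only the elementary $q=1$ compensation bound.
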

Replacing \eqref{eq: I21 salti} and \eqref{eq: diff salti alpha >1} in \eqref{eq: I21} we get
\begin{equation}
\mathbb{E}[|I_{2,1}^n|] \le c \Delta_{n}^{- \beta + \frac{1}{2} + 2 \beta - \frac{\alpha \beta}{2} + 1 }= c \Delta_{n}^{\frac{3}{2} + \beta - \frac{\alpha \beta}{2}}.
\label{eq: E I21}
\end{equation}
Hence
\begin{equation}
\frac{\mathbb{E}[|I_{2,1}^n|]}{ \Delta_{n}^{1 + \beta(2 - \alpha)}} \le c \Delta_{n}^{\frac{1}{2} - \beta + \frac{\alpha \beta}{2}},
\label{eq: con I21}
\end{equation}
that goes to $0$ for $n \rightarrow \infty$ since the exponent on $\Delta_{n}$ is positive for $\beta < \frac{1}{2(1 - \frac{\alpha}{2})}$, that is always true with $\alpha$ and $\beta$ in the intervals chosen. \\
We now want to show that also $I_{2,2}^n$ is $o_{L^1}(\Delta_{n}^{\beta(2 - \alpha) + 1})$. We split $I_{2,2}^n$ on the sets $\left \{ |\Delta \tilde{X}_i^J| \le 2 \Delta_{n}^\beta  \right \}$ and $\left \{ |\Delta \tilde{X}_i^J| > 2 \Delta_{n}^\beta  \right \}$. We observe that, by the definition of $\varphi$, $I_{2,2}^n$ is null on the second set. Adding and subtracting $\Delta \tilde{X}_i^J $ in $I_{2,2}^n 1_{\left \{ |\Delta \tilde{X}_i^J| \le 2 \Delta_{n}^\beta  \right \}}$ we have 
$$\mathbb{E}[|I_{2,2}^n| 1_{\left \{ |\Delta \tilde{X}_i^J| \le 2 \Delta_{n}^\beta  \right \}}] \le c\mathbb{E}[(\Delta X_i^J - \Delta \tilde{X}_i^J)^2|\varphi_{\Delta_{n}^\beta}(\Delta X_i^J) - \varphi_{\Delta_{n}^\beta}(\Delta \tilde{X}_i^J)|1_{\left \{ |\Delta \tilde{X}_i^J| \le 2 \Delta_{n}^\beta , |\Delta X_i^J| > 2 \Delta_{n}^\beta   \right \}}] +$$
\begin{equation}
+ c\mathbb{E}[(\Delta \tilde{X}_i^J)^2|\varphi_{\Delta_{n}^\beta}(\Delta X_i^J) - \varphi_{\Delta_{n}^\beta}(\Delta \tilde{X}_i^J)|1_{\left \{ |\Delta \tilde{X}_i^J| \le 2 \Delta_{n}^\beta    \right \}}].
\label{eq: I22 inizio}
\end{equation}
On the second term of \eqref{eq: I22 inizio} we can act exactly as we have done in $I_{2,1}^n$, with $\Delta \tilde{X}_i^J$ instead of $\Delta X_i^J$ (and so using \eqref{eq: estensione tilde salti lemma 10} instead of \eqref{eq: estensione salti lemma 10}). We get
\begin{equation}
\mathbb{E}[(\Delta \tilde{X}_i^J)^2|\varphi_{\Delta_{n}^\beta}(\Delta X_i^J) - \varphi_{\Delta_{n}^\beta}(\Delta \tilde{X}_i^J)|1_{\left \{ |\Delta \tilde{X}_i^J| \le 2 \Delta_{n}^\beta \right \}}] \le c \Delta_{n}^{\frac{3}{2} + \beta - \frac{\alpha \beta}{2}}.
\label{eq: primo termine I22}
\end{equation}
Concerning the first term of \eqref{eq: I22 inizio}, by the definition of $\varphi$ we know it is 
\begin{equation}
\mathbb{E}[(\Delta X_i^J - \Delta \tilde{X}_i^J)^2|- \varphi_{\Delta_{n}^\beta}(\Delta \tilde{X}_i^J)|1_{\left \{ |\Delta \tilde{X}_i^J| \le 2 \Delta_{n}^\beta , |\Delta X_i^J| > 2 \Delta_{n}^\beta   \right \}}] \le c\mathbb{E}[(\Delta X_i^J - \Delta \tilde{X}_i^J)^2] \le c\Delta_{n}^2,
\label{eq: secondo termine I22}
\end{equation}
where in the last inequality we have used \eqref{eq: diff salti alpha >1}. Using \eqref{eq: I22 inizio} - \eqref{eq: secondo termine I22} it follows
\begin{equation}
\mathbb{E}[|I_{2,2}^n|] = \mathbb{E}[|I_{2,2}^n| 1_{\left \{ |\Delta \tilde{X}_i^J| \le 2 \Delta_{n}^\beta  \right \}}] \le c \Delta_{n}^{\frac{3}{2} + \beta - \frac{\alpha \beta}{2}} + c\Delta_{n}^2 = c\Delta_{n}^{\frac{3}{2} + \beta - \frac{\alpha \beta}{2}},
\label{eq: E I22}
\end{equation}
considering that $\Delta_{n}^2$ is negligible compared to $\Delta_{n}^{\frac{3}{2} + \beta - \frac{\alpha \beta}{2}}$ since $\beta < \frac{1}{2(1 - \frac{\alpha}{2})}$. Hence
\begin{equation}
\frac{\mathbb{E}[|I_{2,2}^n|]}{ \Delta_{n}^{1 + \beta(2 - \alpha)}} \le c \Delta_{n}^{\frac{1}{2} - \beta + \frac{\alpha \beta}{2}},
\label{eq: I22 finale}
\end{equation}
that goes to $0$ for $n \rightarrow \infty$. \\
Concerning $I_3^n(i)$, we have 
\begin{equation}
\mathbb{E}[|I_3^n(i)|] \le c \mathbb{E}[(\Delta X_i^J - \Delta \tilde{X}_i^J)^2] \le c \Delta_{n}^2,
\label{eq: E I3}
\end{equation}
where the last inequality follows from \eqref{eq: diff salti alpha >1}. Hence $I_3^n(i) = o_{L^1}(\Delta_{n}^{\beta(2 - \alpha) + 1})$, indeed
\begin{equation}
\frac{\mathbb{E}[|I_3^n (i)|]}{ \Delta_{n}^{1 + \beta(2 - \alpha)}} \le c \Delta_{n}^{1 - 2\beta + \alpha \beta},
\label{eq: I3}
\end{equation}
that goes to $0$ for $n \rightarrow \infty$ considering that the exponent on $\Delta_{n}$ is positive for $\beta < \frac{1}{2 - \alpha}$, condition that is always satisfied for $\beta \in (0, \frac{1}{2})$ and $\alpha \in (0,2)$. \\
Let us now consider $I_4^n(i)$. Using Cauchy-Schwartz inequality it is
\begin{equation}
\mathbb{E}[|I_4^n(i)|] \le c \mathbb{E}[(\Delta X_i^J - \Delta \tilde{X}_i^J)^2]^\frac{1}{2} \mathbb{E}[(\Delta \tilde{X}_i^J)^2 \varphi^2_{\Delta_{n}^\beta}(\Delta \tilde{X}_i^J) ]^\frac{1}{2} \le c \Delta_{n} \Delta_{n}^{\frac{1}{2} + \frac{\beta}{2}(2 - \alpha)} = c \Delta_{n}^{\frac{3}{2} + \beta - \frac{\alpha \beta}{2} },
\label{eq: E I4}
\end{equation}
where we have used \eqref{eq: diff salti alpha >1} and the first point of Lemma \ref{lemma: estensione 10 capitolo 1}.
It follows
\begin{equation}
\frac{\mathbb{E}[|I_4^n(i)|]}{ \Delta_{n}^{1 + \beta(2 - \alpha)}} \le c \Delta_{n}^{\frac{1}{2} - \beta + \frac{\alpha \beta}{2}},
\label{eq: I4}
\end{equation}
that goes to $0$ for $n \rightarrow \infty$ since the exponent on $\Delta_{n}$ is more than $0$ if $\beta < \frac{1}{2(1 - \frac{\alpha}{2})}$, that is always true.
Using \eqref{eq: prop 1 riformulata}, \eqref{eq: conv I11}, \eqref{eq: conv prima parte I12}, \eqref{eq: finale I1}, \eqref{eq: con I21}, \eqref{eq: I22 finale}, \eqref{eq: I3} and \eqref{eq: I4} we obtain \eqref{eq: espansione salti}. \\ \\
In order to prove \eqref{eq: aggiunta prop1 salti}, we use again reformulation \eqref{eq: prop 1 riformulata}. Replacing it in the left hand side of \eqref{eq: aggiunta prop1 salti} it turns out that our goal is to show that 
\begin{equation}
\sum_{ i = 0}^{n - 1}(\sum_{k = 1}^4 I_k^n(i)) f(X_{t_i}) = o_\mathbb{P}(\Delta_n^{(\frac{1}{2} - \tilde{\epsilon}) \land (1 - \alpha \beta - \tilde{\epsilon} )}).
\label{eq: tesi per aggiunta 1 salti}
\end{equation}
Using a conditional on $\mathcal{F}_{t_i}$ version of \eqref{eq: splitto I21 e I22}, \eqref{eq: E I21} and \eqref{eq: E I22} we have
$$ \sum_{i = 0}^{n - 1} \mathbb{E}_i[|I_2^n(i) f(X_{t_i})|] \le \frac{1}{n} \sum_{ i = 0}^{n - 1} R_i (\Delta_n^{\frac{3}{2} + \beta - \frac{\alpha \beta}{2} - 1 - \epsilon}) = \frac{1}{n} \sum_{ i = 0}^{n - 1} R_i( \Delta_n^{\frac{1}{2} + \beta - \frac{\alpha \beta}{2} - \epsilon}).$$
Since $\beta (1 - \frac{\alpha}{2})$ is always more than zero and, $\forall \tilde{\epsilon} > 0$ we can always find $\epsilon$ smaller than it, we get
\begin{equation}
\sum_{ i = 0}^{n - 1}I_2^n(i) f(X_{t_i}) = o_{L^1}(\Delta_n^{\frac{1}{2} - \tilde{\epsilon}}) = o_{L^1}(\Delta_n^{(\frac{1}{2} - \tilde{\epsilon}) \land (1 - \alpha \beta - \tilde{\epsilon} )}).
\label{eq: I2 nuovo salti}
\end{equation}
From a conditional version of \eqref{eq: E I3} we get that $ \sum_{ i = 0}^{n - 1}I_3^n(i) f(X_{t_i})$ is upper bounded in $L^1$ norm by the $L^1$ norm of $\frac{1}{n} \sum_{ i = 0}^{n - 1}R_i( \Delta_n^{2 - 1 - \epsilon}) =\frac{1}{n} \sum_{ i = 0}^{n - 1} R_i( \Delta_n^{1 - \epsilon})$ and so
\begin{equation}
\sum_{ i = 0}^{n - 1}I_3^n(i) f(X_{t_i}) = o_{L^1}(\Delta_n^{(\frac{1}{2} - \tilde{\epsilon}) \land (1 - \alpha \beta - \tilde{\epsilon} )}).
\label{eq: I3 nuovo salti}   
\end{equation}
Using a conditional version of \eqref{eq: E I4} we get that $ \sum_{ i = 0}^{n - 1}I_4^n(i)f(X_{t_i})$ is upper bounded in $L^1$ norm by the $L^1$ norm of $\frac{1}{n} \sum_{ i = 0}^{n - 1}R_i(\Delta_n^{\frac{3}{2} + \beta - \frac{\alpha \beta}{2}- 1 - \epsilon}) = \frac{1}{n} \sum_{ i = 0}^{n - 1} R_i( \Delta_n^{\frac{1}{2} + \beta - \frac{\alpha \beta}{2} - \epsilon})$, hence
\begin{equation}
\sum_{ i = 0}^{n - 1}I_4^n(i) f(X_{t_i}) = o_{L^1}(\Delta_n^{(\frac{1}{2} - \tilde{\epsilon}) \land (1 - \alpha \beta - \tilde{\epsilon} )}).
\label{eq: I4 nuovo salti}   
\end{equation}
Concerning $I_1^n(i)$, we consider $I_{1,1}^n(i)$ and $I_{1,2}^n(i)$ as defined in \eqref{eq: splitto I1}. Using a conditional version of \eqref{eq: E I11} on $I_{1,1}^n(i)$ it follows that $n^{\frac{1}{2} - \tilde{\epsilon}} \sum_{ i = 0}^{n - 1}I_{1,1}^n(i) f(X_{t_i})$  is upper bounded in $L^1$ norm by the $L^1$ norm of $\frac{1}{n} \sum_{ i = 0}^{n - 1}R_i( \Delta_n^{(\frac{1}{2} - \beta)r + 2 \beta - 1 - \frac{1}{2} + \tilde{\epsilon}}) = \frac{1}{n} \sum_{ i = 0}^{n - 1} R_i(\Delta_n^{(\frac{1}{2} - \beta)r + 2 \beta - \frac{3}{2} + \tilde{\epsilon}})$, that goes to zero because we can find $r$ big enough such that the exponent on $\Delta_n$ is positive, hence
\begin{equation}
 \sum_{ i = 0}^{n - 1}I_{1,1}^n(i) f(X_{t_i}) = o_{L^1}(\Delta_n^{\frac{1}{2} - \tilde{\epsilon}}) = o_{L^1}(\Delta_n^{(\frac{1}{2} - \tilde{\epsilon}) \land (1 - \alpha \beta - \tilde{\epsilon} )}).
\label{eq: I11 nuovo salti}
\end{equation}
Acting as we did in the proof of \eqref{eq: espansione salti}, we consider $I_{1,2}^n(i)$ on the sets $\left \{ |\Delta X_i^J| \ge 4 \Delta_{n}^\beta \right \}$ and $\left \{ |\Delta X_i^J| < 4 \Delta_{n}^\beta \right \}$. Again, from \eqref{eq: E I12 salti grandi} and the arbitrariness of $r > 0$ it follows 
\begin{equation}
 \sum_{ i = 0}^{n - 1}I_{1,2}^n(i) 1_{\left \{ |\Delta X_i^J| \ge 4 \Delta_{n,i}^\beta \right \} } f(X_{t_i}) = o_{L^1}(\Delta_n^{(\frac{1}{2} - \tilde{\epsilon}) \land (1 - \alpha \beta - \tilde{\epsilon} )}).
\label{eq: I12 salti grandi nuovo salti}
\end{equation}
When $|\Delta X_i^J| < 4 \Delta_{n}^\beta$ we act in a different way, considering the development up to second order of $\varphi_{\Delta_{n}^\beta}$, centered in $\Delta X_i^J$:
$$I_{1,2}^n(i)1_{\left \{ |\Delta X_i^J| < 4 \Delta_{n}^\beta \right \} } = [(\Delta X_i^J)^2 \Delta X_i^c \varphi'_{\Delta_{n}^\beta}(\Delta X_i^J) \Delta_{n}^{- \beta} + (\Delta X_i^J)^2 (\Delta X_i^c)^2 \varphi''_{\Delta_{n}^\beta}(X_u) \Delta_{n}^{- 2\beta}]  1_{\left \{ |\Delta X_i| \le 3 \Delta_{n}^\beta, \, |\Delta X_i^J| < 4 \Delta_{n}^\beta \right \} } =$$
$$ = : \hat{I}_1^n(i) 1_{\left \{ |\Delta X_i| \le 3 \Delta_{n}^\beta, \, |\Delta X_i^J| < 4 \Delta_{n}^\beta \right \} } + \hat{I}_2^n (i) 1_{\left \{ |\Delta X_i| \le 3 \Delta_{n}^\beta, \, |\Delta X_i^J| < 4 \Delta_{n}^\beta \right \} },$$
where $X_u \in [\Delta X_i^J, \Delta X_i]$.
Now, acting like we did in \eqref{eq: E I12 salti piccoli}, \eqref{eq: passaggio per I1} and \eqref{eq: I1 salti}, taking $q$ next to $1$ we get
$$\mathbb{E}_i[|\hat{I}_2^n(i)  1_{\left \{ |\Delta X_i| \le 3 \Delta_{n}^\beta, \, |\Delta X_i^J| < 4 \Delta_{n}^\beta \right \} }|] \le R_i( \Delta_n^{1 + \beta(2 - \alpha) - \epsilon + 1 - 2 \beta}) = R_i( \Delta_n^{2 - \alpha \beta - \epsilon}).$$
Since for each $\tilde{\epsilon} > 0$ we can find an $\epsilon$ such that $\tilde{\epsilon} - \epsilon > 0$ it follows, taking the conditional expectation
\begin{equation}
\sum_{ i = 0}^{n - 1}\hat{I}_{2}^n (i) 1_{\left \{ |\Delta X_i| \le 3 \Delta_{n}^\beta, \, |\Delta X_i^J| < 4 \Delta_{n}^\beta \right \} } f(X_{t_i}) = o_{L^1}(\Delta_n^{1 - \alpha \beta - \tilde{\epsilon} }) =  o_{L^1}(\Delta_n^{(\frac{1}{2} - \tilde{\epsilon}) \land (1 - \alpha \beta - \tilde{\epsilon} )}).
\label{eq: hat I2 nuovo salti}
\end{equation}
Concerning $\hat{I}_1^n(i) 1_{\left \{ |\Delta X_i| \le 3 \Delta_{n}^\beta, \, |\Delta X_i^J| < 4 \Delta_{n}^\beta \right \} }$, we no longer consider the indicator function because it is
$$(\Delta X_i^J)^2 \Delta X_i^c \varphi'_{\Delta_{n}^\beta}(\Delta X_i^J) \Delta_{n}^{- \beta} + (\Delta X_i^J)^2 \Delta X_i^c \varphi'_{\Delta_{n}^\beta}(\Delta X_i^J) \Delta_{n}^{- \beta}( 1_{\left \{ |\Delta X_i| \le 3 \Delta_{n}^\beta, \, |\Delta X_i^J| < 4 \Delta_{n}^\beta \right \} } - 1) $$
and the second term here above is different from zero only on a set smaller that ${\left \{ |\Delta X_i| \ge 3 \Delta_{n}^\beta \right \} }$ or ${\left \{ |\Delta X_i^J| \ge 4 \Delta_{n}^\beta \right \} }$ , on which we have already proved the result (see the study of $I_{1,1}^n(i)$ in \eqref{eq: I11 nuovo salti} and $I_{1,2}^n(i)$ in \eqref{eq: I12 salti grandi nuovo salti}).
We want to show that
\begin{equation}
\sum_{ i = 0}^{n - 1}\hat{I}_{1}^n(i) f(X_{t_i})  =  o_{\mathbb{P}}(\Delta_n^{(\frac{1}{2} - \tilde{\epsilon}) \land (1 - \alpha \beta - \tilde{\epsilon} )}).
\label{eq: I1 nuovo salti}
\end{equation}
We start from the reformulation
$$\hat{I}_{1}^n (i)= \Delta X_i^c \Delta_{n}^{- \beta}[(\Delta X_i^J)^2(\varphi'_{\Delta_{n}^\beta}(\Delta X_i^J) - \varphi'_{\Delta_{n}^\beta}(\Delta \tilde{X}_i^J)) + (\Delta X_i^J - \Delta \tilde{X}_i^J)^2 \varphi'_{\Delta_{n}^\beta}(\Delta \tilde{X}_i^J)  + $$
$$ +2 \Delta \tilde{X}_i^J (\Delta X_i^J - \Delta \tilde{X}_i^J) \varphi'_{\Delta_{n}^\beta}(\Delta \tilde{X}_i^J) +(\Delta \tilde{X}_i^J)^2 \varphi'_{\Delta_{n}^\beta}(\Delta \tilde{X}_i^J)] = \sum_{j =1}^4 \hat{I}_{1, j}^n(i).$$
and we observe that, after have used Holder inequality and have remarked that $\varphi'_{\Delta_{n}^\beta}$ acts like $\varphi_{\Delta_{n}^\beta}$, we can act on $\hat{I}_{1, 1}^n$ as we did on $I_2^n$, on $\hat{I}_{1, 2}^n$ as on $I_3^n$ and on $\hat{I}_{1, 3}^n$ as on $I_4^n$. So we get, using also Holder inequality and the sixth point of Lemma \ref{lemma: Moment inequalities},
\begin{equation}
\mathbb{E}_i[|\hat{I}_{1, 1}^n(i) + \hat{I}_{1, 2}^n(i) + \hat{I}_{1, 3}^n(i)|] \le R_i(\Delta_{n}^{\frac{1}{2} - \beta})(\mathbb{E}_i[|I_2^n(i) |^q]^\frac{1}{q} +\mathbb{E}[|I_3^n(i) |^q]^\frac{1}{q} + \mathbb{E}[|I_4^n(i) |^q]^\frac{1}{q}).
\label{eq: da hat I a I}
\end{equation}
Now, taking $q$ next to $1$, we need the following lemma that we will prove later:
\begin{lemma}
Suppose that A1 - A4 hold. Then, $\forall \epsilon >0$,
\begin{equation}
\mathbb{E}_i[|I_2^n(i) |^{1 + \epsilon} +|I_3^n(i) |^{1 + \epsilon} + |I_4^n(i) |^{1 + \epsilon}]^{\frac{1}{1 + \epsilon}} \le R_i(\Delta_{n}^{\frac{3}{2} + \beta - \frac{\alpha \beta}{2} - \epsilon}),
\label{eq: tesi I con 1+epsilon}
\end{equation}
with $I_2^n(i)$, $I_3^n(i)$ and $I_4^n(i)$ as defined in \eqref{eq: prop 1 riformulata}.
\label{lemma: da hatI a I}
\end{lemma}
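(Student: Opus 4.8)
The plan is to bound $\mathbb{E}_i[|I_k^n(i)|^{1+\epsilon}]$ separately for $k=2,3,4$, reproducing at the conditional $L^{1+\epsilon}$ level the $L^1$ estimates \eqref{eq: E I21}, \eqref{eq: E I22}, \eqref{eq: E I3}, \eqref{eq: E I4} already obtained in the proof of \eqref{eq: espansione salti}, with Cauchy--Schwarz replaced by Hölder's inequality. The only effect of this replacement is a loss of order $\epsilon$ in the final power of $\Delta_n$, which is exactly the $-\epsilon$ appearing in the statement. Since $\|\cdot\|_{L^{1+\epsilon}(\mathbb{E}_i)}$ is non-decreasing in $\epsilon$ while $R_i(\Delta_n^\delta)\Rightarrow R_i(\Delta_n^{\delta'})$ for every $\delta'\le\delta$ (because $\Delta_n\le 1$), it suffices to treat $\epsilon$ small. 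The two inputs are Lemma \ref{lemma: estensione 10 capitolo 1} (moments of $\Delta X_i^J$ and $\Delta\tilde X_i^J$ truncated at $4\Delta_n^\beta$, of order $\Delta_n^{1+\beta(q-\alpha)}$ for every $q$) and Lemma \ref{lemma: differenza dei salti} (moments of $\Delta X_i^J-\Delta\tilde X_i^J$, of order $\Delta_n^2$ for every $q\ge 2$, irrespective of $\alpha$).

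The representative computation is for $I_4^n(i)=2\Delta\tilde X_i^J(\Delta X_i^J-\Delta\tilde X_i^J)\varphi_{\Delta_n^\beta}(\Delta\tilde X_i^J)$. Using $|\varphi_{\Delta_n^\beta}(\Delta\tilde X_i^J)|\le 1_{\{|\Delta\tilde X_i^J|\le 2\Delta_n^\beta\}}$ and Hölder with conjugate exponents $a=\tfrac{2}{1-\epsilon}$ and $b=\tfrac{2}{1+\epsilon}$, I would write
$$\mathbb{E}_i[|I_4^n(i)|^{1+\epsilon}]\le c\,\mathbb{E}_i[|\Delta\tilde X_i^J\,1_{\{|\Delta\tilde X_i^J|\le 2\Delta_n^\beta\}}|^{(1+\epsilon)a}]^{1/a}\;\mathbb{E}_i[|\Delta X_i^J-\Delta\tilde X_i^J|^{(1+\epsilon)b}]^{1/b},$$
then bound the first factor by \eqref{eq: estensione tilde salti lemma 10} (with $(1+\epsilon)a\ge 2$) and the second by \eqref{eq: diff salti alpha >1} (with $(1+\epsilon)b=2$), obtaining that $\mathbb{E}_i[|I_4^n(i)|^{1+\epsilon}]^{1/(1+\epsilon)}$ is an $R_i$ of $\Delta_n$ to the power $\beta+\tfrac{3-\alpha\beta+(1+\alpha\beta)\epsilon}{2(1+\epsilon)}$, which is $\ge \tfrac32+\beta-\tfrac{\alpha\beta}{2}-\epsilon$ since that inequality reduces to $\alpha\beta+\epsilon\ge0$. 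For $I_3^n(i)=(\Delta X_i^J-\Delta\tilde X_i^J)^2\varphi_{\Delta_n^\beta}(\Delta\tilde X_i^J)$ the bound $|I_3^n(i)|^{1+\epsilon}\le c|\Delta X_i^J-\Delta\tilde X_i^J|^{2(1+\epsilon)}$ combined with \eqref{eq: diff salti alpha >1} directly gives $R_i(\Delta_n^{2/(1+\epsilon)})$, which lies below the required power for $\epsilon$ small because $2>\tfrac32+\beta(1-\tfrac\alpha2)$ (using $\beta<\tfrac{1}{2(1-\alpha/2)}$).

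For $I_2^n(i)=(\Delta X_i^J)^2[\varphi_{\Delta_n^\beta}(\Delta X_i^J)-\varphi_{\Delta_n^\beta}(\Delta\tilde X_i^J)]$ I would follow the $I_{2,1}^n/I_{2,2}^n$ splitting of \eqref{eq: splitto I21 e I22}: on $\{|\Delta X_i^J|\le 2\Delta_n^\beta\}$, use the Lipschitz bound $|\varphi_{\Delta_n^\beta}(\Delta X_i^J)-\varphi_{\Delta_n^\beta}(\Delta\tilde X_i^J)|\le\|\varphi'\|_\infty\Delta_n^{-\beta}|\Delta X_i^J-\Delta\tilde X_i^J|$ and bound $\Delta_n^{-\beta}(\Delta X_i^J)^2|\Delta X_i^J-\Delta\tilde X_i^J|$ by Hölder exactly as for $I_4^n(i)$, now invoking \eqref{eq: estensione salti lemma 10}; on $\{|\Delta X_i^J|>2\Delta_n^\beta\}$ the difference of the two cut-offs vanishes unless $|\Delta\tilde X_i^J|\le 2\Delta_n^\beta$, in which case $\varphi_{\Delta_n^\beta}(\Delta X_i^J)=0$ and one is reduced to $(\Delta X_i^J)^2\varphi_{\Delta_n^\beta}(\Delta\tilde X_i^J)1_{\{|\Delta X_i^J|>2\Delta_n^\beta,\,|\Delta\tilde X_i^J|\le 2\Delta_n^\beta\}}$; there $|\Delta X_i^J|\le 2\Delta_n^\beta+|\Delta X_i^J-\Delta\tilde X_i^J|$, hence $(\Delta X_i^J)^2\le c\Delta_n^{2\beta}+c|\Delta X_i^J-\Delta\tilde X_i^J|^2$, and everything reduces again to moments of $\Delta X_i^J-\Delta\tilde X_i^J$ and of $\Delta\tilde X_i^J1_{\{|\cdot|\le 2\Delta_n^\beta\}}$. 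In every case the same arithmetic yields a $\Delta_n$-power $\ge\tfrac32+\beta-\tfrac{\alpha\beta}{2}-\epsilon$, and summing gives \eqref{eq: tesi I con 1+epsilon}. The routine but delicate step — and the main obstacle — is the exponent bookkeeping: for each sub-term one must check that the Hölder exponents can be chosen both $\ge 2$, so as to stay within the ranges of Lemmas \ref{lemma: estensione 10 capitolo 1} and \ref{lemma: differenza dei salti}, while preventing the exponent on $\Delta_n$ from dropping below $\tfrac32+\beta-\tfrac{\alpha\beta}{2}-\epsilon$; this uses only $\beta\in(0,\tfrac12)$ and $\alpha\in(0,2)$, and no separate treatment of $\alpha\le 1$ versus $\alpha>1$ is needed, since \eqref{eq: diff salti alpha >1} holds for all such $\alpha$ with $q=2(1+\epsilon)\ge2$.
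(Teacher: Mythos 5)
Your route is genuinely different from the paper's: you propose to redo the conditional estimates directly in $L^{1+\epsilon}$, replacing Cauchy--Schwarz by H\"older and absorbing the loss into the $-\epsilon$, whereas the paper first computes conditional $L^2$ bounds for $I_2^n(i)$, $I_3^n(i)$, $I_4^n(i)$ (equations \eqref{eq: Ei I2 carre}--\eqref{eq: Ei I4 carre}) and then interpolates between the already-established conditional $L^1$ bounds and these $L^2$ bounds via \eqref{eq: interpolation I}. Your computations for $I_3^n(i)$, $I_4^n(i)$ and for $I_2^n(i)$ on $\{|\Delta X_i^J|\le 2\Delta_n^\beta\}$ are correct (the exponent arithmetic checks out, and $2>\tfrac32+\beta-\tfrac{\alpha\beta}{2}$ indeed follows from $\beta<\tfrac{1}{2-\alpha}$).

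However, your treatment of $I_2^n(i)$ on $\{|\Delta X_i^J|>2\Delta_n^\beta\}$ has a genuine gap. After writing $(\Delta X_i^J)^2\le c\Delta_n^{2\beta}+c|\Delta X_i^J-\Delta\tilde X_i^J|^2$ and bounding $|\varphi_{\Delta_n^\beta}(\Delta\tilde X_i^J)|1_{\{|\Delta X_i^J|>2\Delta_n^\beta,\,|\Delta\tilde X_i^J|\le 2\Delta_n^\beta\}}$ by $1_{\{|\Delta\tilde X_i^J|\le 2\Delta_n^\beta\}}$, you are left with the term $c\,\Delta_n^{2\beta}1_{\{|\Delta X_i^J|>2\Delta_n^\beta,\,|\Delta\tilde X_i^J|\le 2\Delta_n^\beta\}}$, and this does \emph{not} reduce to moments of $\Delta X_i^J-\Delta\tilde X_i^J$ or of $\Delta\tilde X_i^J1_{\{|\cdot|\le 2\Delta_n^\beta\}}$ in a way that closes: the best available control of the event is $\mathbb{P}_i(|\Delta X_i^J|>2\Delta_n^\beta)\le R_i(\Delta_n^{1-\alpha\beta})$ (as in \eqref{eq: proba varphi diverso da 1}), which yields a conditional $L^{1+\epsilon}$ norm of order $\Delta_n^{2\beta+\frac{1-\alpha\beta}{1+\epsilon}}\approx\Delta_n^{1+2\beta-\alpha\beta}$; since $\tfrac32+\beta-\tfrac{\alpha\beta}{2}-(1+2\beta-\alpha\beta)=\tfrac{1-\beta(2-\alpha)}{2}>0$ for every admissible $(\alpha,\beta)$, this falls strictly short of the required power, and the same shortfall occurs if you keep $(\Delta\tilde X_i^J)^21_{\{|\Delta\tilde X_i^J|\le 2\Delta_n^\beta\}}$ instead of $\Delta_n^{2\beta}$. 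The missing ingredient is the extra small factor coming from the regularity of $\varphi$: on this region $\varphi_{\Delta_n^\beta}(\Delta X_i^J)=0$, hence $|\varphi_{\Delta_n^\beta}(\Delta\tilde X_i^J)|=|\varphi_{\Delta_n^\beta}(\Delta\tilde X_i^J)-\varphi_{\Delta_n^\beta}(\Delta X_i^J)|\le \|\varphi'\|_\infty\Delta_n^{-\beta}|\Delta X_i^J-\Delta\tilde X_i^J|$, which contributes an additional $\Delta_n^{1-\beta}$ (in $L^2$) and restores the correct exponent; this is precisely the decomposition the paper uses in \eqref{eq: I22 inizio}--\eqref{eq: secondo termine I22}. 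A minor further point: your reduction ``it suffices to treat $\epsilon$ small'' is not justified by monotonicity (conditional $L^{1+\epsilon}$ norms increase with $\epsilon$), though this is harmless since the lemma is only invoked with $\epsilon$ near $1$ in $q=1+\epsilon$, and the paper's interpolation likewise requires $\epsilon\le 1$.
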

From \eqref{eq: da hat I a I} and \eqref{eq: tesi I con 1+epsilon} it follows
\begin{equation}
 \sum_{ i = 0}^{n - 1}[\hat{I}_{1, 1}^n(i) + \hat{I}_{1, 2}^n(i) + \hat{I}_{1, 3}^n(i)] f(X_{t_i})  = o_{L^1}(\Delta_n^{\frac{1}{2} - \tilde{\epsilon}}) =  o_{L^1}(\Delta_n^{(\frac{1}{2} - \tilde{\epsilon}) \land (1 - \alpha \beta - \tilde{\epsilon} )}).
\label{eq: nuovo hat 123}
\end{equation}
On $ \sum_{ i = 0}^{n - 1}\hat{I}_{1, 4}^n  f(X_{t_i})  = : \sum_{ i = 0}^{n - 1} \zeta_{n,i} $ we want to use Lemma 9 in \cite{Genon Catalot}. By the independence between $L$ and $W$ we get 
\begin{equation}
\frac{1}{\Delta_n^{\frac{1}{2} - \tilde{\epsilon}}} \sum_{i = 0}^{n - 1} \mathbb{E}_i[\zeta_{n,i}] = \frac{1}{\Delta_n^{\frac{1}{2} - \tilde{\epsilon}}} \sum_{i = 0}^{n - 1} f(X_{t_i}) \Delta_{n,i}^{ - \beta} \mathbb{E}_i[(\Delta \tilde{X}_i^J)^2 \varphi'_{\Delta_{n}^\beta}(\Delta \tilde{X}_i^J)] \mathbb{E}_i[\Delta X_i^c] = 0
\label{eq: hat I1 genon catalot}
\end{equation}
and 
\begin{equation}
\Delta_n^{- 2(\frac{1}{2} - \tilde{\epsilon})} \sum_{i = 0}^{n - 1} f^2(X_{t_i}) \Delta_{n,i}^{ - 2\beta} \mathbb{E}_i[(\Delta \tilde{X}_i^J)^4 \varphi'^2_{\Delta_{n}^\beta}(\Delta \tilde{X}_i^J)] \mathbb{E}_i[(\Delta X_i^c)^2] \le  c\Delta_n^{2\tilde{\epsilon} + 2 \beta - \alpha \beta },
\label{eq: hat I1 carre genon catalot}
\end{equation}
where we have also used the sixth point of Lemma \ref{lemma: Moment inequalities} and the first point of Lemma \ref{lemma: estensione 10 capitolo 1}. Using \eqref{eq: hat I1 genon catalot} and \eqref{eq: hat I1 carre genon catalot} we have 
$$ \sum_{ i = 0}^{n - 1}\hat{I}_{1, 4}^n  f(X_{t_i})  = o_{\mathbb{P}}(\Delta_n^{(\frac{1}{2} - \tilde{\epsilon}) \land (1 - \alpha \beta - \tilde{\epsilon} )})$$
that, joint with \eqref{eq: nuovo hat 123} and the fact that the convergence in norm 1 implies the convergence in probability, give us \eqref{eq: I1 nuovo salti}. Using also \eqref{eq: I2 nuovo salti} - \eqref{eq: hat I2 nuovo salti} we get \eqref{eq: tesi per aggiunta 1 salti} and so \eqref{eq: aggiunta prop1 salti}. \\ \\
In order to prove \eqref{eq: salti con browniano}, we reformulate $\Delta X_i^J \varphi_{\Delta_{n}^\beta}(\Delta X_i)$ as we have already done in \eqref{eq: prop 1 riformulata} getting
$$(\int_{t_i}^{t_{i+1}}a_s dW_s)\Delta X_i^J \varphi_{\Delta_{n}^\beta}(\Delta X_i) = (\int_{t_i}^{t_{i+1}}a_s dW_s)(\Delta X_i^J)[\varphi_{\Delta_{n}^\beta}(\Delta X_i) - \varphi_{\Delta_{n}^\beta}(\Delta X_i^J)] +$$
\begin{equation}
 + (\int_{t_i}^{t_{i+1}}a_s dW_s)(\Delta X_i^J)[\varphi_{\Delta_{n}^\beta}(\Delta X_i^J) - \varphi_{\Delta_{n}^\beta}(\Delta \tilde{X}_i^J)] + (\int_{t_i}^{t_{i+1}}a_s dW_s)(\Delta X_i^J - \Delta \tilde{X}_i^J) \varphi_{\Delta_{n}^\beta}(\Delta \tilde{X}_i^J) +
\label{eq: a con salti riformulata}
\end{equation}
$$ + (\int_{t_i}^{t_{i+1}}a_s dW_s)(\Delta \tilde{X}_i^J)\varphi_{\Delta_{n}^\beta}(\Delta \tilde{X}_i^J) = : \sum_{j= 1}^4 \tilde{I}_j^n(i).$$
Comparing \eqref{eq: a con salti riformulata} with \eqref{eq: salti con browniano} it turns out that our goal is to prove that $ \frac{1}{\Delta_{n,i}^{\beta(2  - \alpha) + 1}}\sum_{j = 1}^3 \mathbb{E}[|\tilde{I}_j^n(i)|] \rightarrow 0$, for $n \rightarrow \infty$ (again, acting as we do in the sequel it is also possible to show that $\sum_{j = 1}^3 \mathbb{E}_i[|\tilde{I}_j^n(i)|] \le R_i(\Delta_{n,i}^{\beta(2  - \alpha) + 1})$. Let us start considering $\tilde{I}_1^n(i)$. Using Holder inequality, its expected value is upper bounded by
\begin{equation}
\mathbb{E}[|\int_{t_i}^{t_{i+1}}a_s dW_s|^{p_1}]^\frac{1}{p_1} \mathbb{E}[|\Delta X_i^J|^{p_2}|\varphi_{\Delta_{n}^\beta}(\Delta X_i) - \varphi_{\Delta_{n}^\beta}(\Delta X_i^J)|^{p_2}]^\frac{1}{p_2}.
\label{eq: tilde I1}
\end{equation}
We now act on $\mathbb{E}[|\Delta X_i^J|^{p_2}|\varphi_{\Delta_{n}^\beta}(\Delta X_i) - \varphi_{\Delta_{n}^\beta}(\Delta X_i^J)|^{p_2}]^\frac{1}{p_2}$ as we did in the study of $I_1^n(i)$:
$$|\Delta X_i^J|^{p_2}|\varphi_{\Delta_{n}^\beta}(\Delta X_i) - \varphi_{\Delta_{n}^\beta}(\Delta X_i^J)|^{p_2} = |\Delta X_i^J|^{p_2}|\varphi_{\Delta_{n}^\beta}(\Delta X_i) - \varphi_{\Delta_{n}^\beta}(\Delta X_i^J)|^{p_2} 1_{\left \{ |\Delta X_i|\ge 3 \Delta_{n}^\beta \right \}} + $$
$$ + |\Delta X_i^J|^{p_2}|\varphi_{\Delta_{n}^\beta}(\Delta X_i) - \varphi_{\Delta_{n}^\beta}(\Delta X_i^J)|^{p_2} 1_{\left \{ |\Delta X_i| < 3 \Delta_{n}^\beta \right \}} = : \tilde{I}_{1,1}^n + \tilde{I}_{1,2}^n. $$
Concerning $\tilde{I}_{1,1}^n$, if $|\Delta X_i^J| \ge 2 \Delta_{n}^\beta$ it is just $0$, otherwise we can act exactly as we have done on $I_{1,1}^n$, taking $p_2 = 2$. Hence, $\forall r \ge 1$, 
\begin{equation}
\mathbb{E}[|\tilde{I}_{1,1}^n|]^\frac{1}{2} \le (c \Delta_{n}^{2 \beta + r(\frac{1}{2} - \beta)})^\frac{1}{2} = c \Delta_{n}^{\beta + \frac{r}{2}(\frac{1}{2} - \beta)}.
\label{eq: tilde I11}
\end{equation}
Let us now consider $\tilde{I}_{1,2}^n$. If $|\Delta X_i^J| \ge 4 \Delta_{n}^\beta$, we act again like we did on $I_{1,2}^n$, taking $p_2 = 2$. It yields again
\begin{equation}
\mathbb{E}[|\tilde{I}_{1,2}^n|1_{\left \{ |\Delta X_i^J| \ge 4 \Delta_{n}^\beta \right \}}]^\frac{1}{2} \le c \Delta_{n}^{\beta + \frac{r}{2}(\frac{1}{2} - \beta)}.
\label{eq: prima parte tildeI12}
\end{equation}
If $|\Delta X_i^J| < 4 \Delta_{n}^\beta$ we use the smoothness of $\varphi$ and Holder inequality getting
$$\mathbb{E}[|\tilde{I}_{1,2}^n|1_{\left \{ |\Delta X_i^J| < 4 \Delta_{n}^\beta \right \}}] \le \Delta_{n}^{- \beta} \mathbb{E}[|\Delta X_i^J|^{p_2}|\varphi'(\zeta)|^{p_2}|\Delta X_i^c|^{p_2}1_{\left \{ |\Delta X_i| < 3 \Delta_{n}^\beta, |\Delta X_i^J| < 4 \Delta_{n}^\beta \right \}}]^\frac{1}{p_2} \le$$
\begin{equation}
\le \Delta_{n}^{- \beta} \mathbb{E}[|\Delta X_i^c|^{p_2 \, p}]^\frac{1}{p_2 \, p} \mathbb{E}[|\varphi'(\zeta)|^{p_2\, q}|\Delta X_i^J|^{p_2 \, q}1_{\left \{ |\Delta X_i| < 3 \Delta_{n}^\beta, |\Delta X_i^J| < 4 \Delta_{n}^\beta \right \}}]^\frac{1}{p_2 \, q},
\label{eq: intermedia I12 parte 2}
\end{equation}
with $\zeta$ a point between $\Delta X_i^J$ and $\Delta X_i$. \\
Now we observe that, if $|\Delta X_i^c| \ge \frac{\Delta_{n}^\beta}{4}$, then taking $p_2 \, q = 1 + \epsilon$ we have
$$\mathbb{E}[|\varphi'(\zeta)|^{1 + \epsilon}|\Delta X_i^J|^{1 + \epsilon}1_{\left \{ |\Delta X_i| < 3 \Delta_{n}^\beta, |\Delta X_i^J| < 4 \Delta_{n}^\beta, |\Delta X_i^c| \ge \frac{\Delta_{n}^\beta}{4} \right \}}]^\frac{1}{1 + \epsilon} \le c \Delta_{n}^{\beta + r(\frac{1}{2} - \beta)\frac{1}{1 + \epsilon}}$$
where we have used the bound on $|\Delta X_i^J|$ given by the indicator function, the boundedness of $\varphi'$ and \eqref{eq: prob parte continua}. Otherwise, by the definition of $\varphi$, we know that $|\varphi'(\zeta)| \neq 0$ only if $|\zeta| \in (\Delta_{n}^\beta, 2 \Delta_{n}^\beta)$. Then $\Delta_{n}^\beta \le |\zeta| \le |\Delta X_i| + |\Delta X_i^J| \le 2 |\Delta X_i^J| + |\Delta X_i^c|\le 2|\Delta X_i^J| + \frac{\Delta_{n}^\beta}{4} $, hence $|\Delta X_i^J| \ge \frac{3}{8} \Delta_{n}^\beta \ge \frac{\Delta_{n}^\beta}{4}$ and so we can say it is 
$$\mathbb{E}[|\varphi'(\zeta)|^{1 + \epsilon}|\Delta X_i^J|^{1 + \epsilon}1_{\left \{ |\Delta X_i| < 3 \Delta_{n}^\beta, |\Delta X_i^J| < 4 \Delta_{n}^\beta, |\Delta X_i^c| < \frac{\Delta_{n}^\beta}{4} \right \}}]^\frac{1}{1 + \epsilon} \le c \mathbb{E}[|\Delta X_i^J|^{1 + \epsilon}1_{\left \{  \frac{\Delta_{n}^\beta}{4} \le |\Delta X_i^J| < 4 \Delta_{n}^\beta,  \right \}}].$$
Using the second point of Lemma \ref{lemma: estensione 10 capitolo 1}, passing through the conditional expected value we get it is upper bounded by
$$\Delta_{n}^{1 + \beta (1 + \epsilon - \alpha)} \mathbb{E} [R_i( 1)] \le c \Delta_{n}^{1 + \beta(1 + \epsilon -  \alpha)}.$$
 Hence 
\begin{equation}
\mathbb{E}[|\varphi'(\zeta)|^{1 + \epsilon}|\Delta X_i^J|^{1 + \epsilon}1_{\left \{ |\Delta X_i| < 3 \Delta_{n}^\beta, |\Delta X_i^J| < 4 \Delta_{n}^\beta \right \}}]^\frac{1}{1 + \epsilon} \le c \Delta_{n}^{[\beta + r(\frac{1}{2} - \beta) - \epsilon] \land [1 + \beta(1 + \epsilon -  \alpha)]\frac{1}{1 + \epsilon}} = c\Delta_{n}^{[1 + \beta(1 + \epsilon -  \alpha)] \frac{1}{1 + \epsilon}}. 
\label{eq:I12 parte finale}
\end{equation}
The last equality follows from the fact that, for each choice of $\beta \in (0, \frac{1}{2})$ and $\alpha \in (0,2)$, we can always find $r \ge 1$ and $\epsilon > 0$ such that $\beta + r(\frac{1}{2} - \beta) - \epsilon > 1 + \beta(1 + \epsilon -  \alpha) $. \\
Replacing \eqref{eq:I12 parte finale} in \eqref{eq: intermedia I12 parte 2} and using the sixth point of Lemma \ref{lemma: Moment inequalities} we have that 
\begin{equation}
\mathbb{E}[|\tilde{I}_{1,2}^n|1_{\left \{ |\Delta X_i^J| < 4 \Delta_{n}^\beta \right \}}]^\frac{1}{p_2} \le c \Delta_{n}^{[\frac{1}{2} - \beta + 1 + \beta(1 + \epsilon - \alpha)]\frac{1}{p_2}} = c \Delta_{n}^{(\frac{3}{2} - \alpha \beta - \epsilon) \frac{1}{p_2}} = c \Delta_{n}^{\frac{3}{2} - \alpha \beta - \epsilon},
\label{eq: tilde I12 finale}
\end{equation}
the last equality follows from the choice of both $p_2$ and $q$ next to $1$.
Using \eqref{eq: tilde I11}, \eqref{eq: prima parte tildeI12} and \eqref{eq: tilde I12 finale} we get 
\begin{equation}
\mathbb{E}[|\Delta X_i^J|^{p_2}|\varphi_{\Delta_{n}^\beta}(\Delta X_i) - \varphi_{\Delta_{n}^\beta}(\Delta X_i^J)|^{p_2}]^\frac{1}{p_2} \le c \Delta_{n}^{[\beta + \frac{r}{2}(\frac{1}{2} - \beta)] \land [\frac{3}{2} - \alpha \beta - \epsilon]} =  c \Delta_{n}^{\frac{3}{2} - \alpha \beta - \epsilon}.  
\label{eq: finale tilde I1}
\end{equation}
Replacing \eqref{eq: bdg} and \eqref{eq: finale tilde I1} in \eqref{eq: tilde I1} it follows
\begin{equation}
\mathbb{E}[|\tilde{I}_1^n(i)|] \le c\Delta_{n}^{2 - \alpha \beta - \epsilon},
\label{eq: tilde I1 nuovo}
\end{equation}
hence
\begin{equation}
\frac{\mathbb{E}[|\tilde{I}_1^n(i)|]}{\Delta_{n}^{1 + \beta(2 - \alpha)}} \le c \Delta_{n}^{1 - 2 \beta - \epsilon}.
\label{eq: tilde I1 finale}
\end{equation}
Since we can always find an $\epsilon > 0$ such that $1 - 2 \beta - \epsilon > 0$, the expected value above goes to $0$ for $n \rightarrow \infty$. \\
Concerning $\tilde{I}_2^n(i)$, we split again on $\tilde{I}_{2,1}^n: = \tilde{I}_2^n(i) 1_{\left \{  |\Delta X_i^J|  \le 2 \Delta_{n}^\beta \right \}}$ and $\tilde{I}_{2,2}^n: = \tilde{I}_2^n (i) 1_{\left \{  |\Delta X_i^J|  > 2 \Delta_{n}^\beta \right \}}$.
$$\mathbb{E}[|\tilde{I}_{2,1}^n|] = \mathbb{E}[|\tilde{I}_2^n(i)| 1_{\left \{ |\Delta X_i^J|  \le 2 \Delta_{n}^\beta \right \}}] \le c \Delta_{n}^{- \beta} \mathbb{E}[|\int_{t_i}^{t_{i+1}}a_s dW_s||\Delta X_i^J||\Delta X_i^J - \Delta \tilde{X}_i^J|1_{\left \{ |\Delta X_i^J|  \le 2 \Delta_{n}^\beta \right \}}] \le$$
$$\le c \Delta_{n}^{- \beta} \mathbb{E}[|\int_{t_i}^{t_{i+1}}a_s dW_s|^2|\Delta X_i^J|^21_{\left \{ |\Delta X_i^J|  \le 2 \Delta_{n}^\beta \right \}}]^\frac{1}{2} \mathbb{E}[|\Delta X_i^J - \Delta \tilde{X}_i^J|^2]^\frac{1}{2} \le$$
$$ \le c\Delta_{n,i}^{1 - \beta}\mathbb{E}[|\int_{t_i}^{t_{i+1}}a_s dW_s|^{2p}]^\frac{1}{2p} \mathbb{E}[|\Delta X_i^J|^{2q} 1_{\left \{ |\Delta X_i^J|  \le 2 \Delta_{n}^\beta \right \}}]^\frac{1}{2q},$$
where we have used Cauchy-Schwartz inequality, \eqref{eq: diff salti alpha >1} and Holder inequality. Now we take $p$ big and $q$ next to $1$, using \eqref{eq: bdg} and the first point of Lemma \ref{lemma: estensione 10 capitolo 1} we get
\begin{equation}
\mathbb{E}[|\tilde{I}_{2,1}^n|] \le c \Delta_{n}^{1 - \beta + \frac{1}{2} + \frac{1}{2} + \frac{\beta}{2}(2 - \alpha) - \epsilon} 
\label{eq: tilde I21 nuovo}
\end{equation}
and so
\begin{equation}
\frac{1}{\Delta_{n}^{1 + \beta (2 - \alpha)}} \mathbb{E}[|\tilde{I}^n_{2,1}|]\le \Delta_{n}^{1 - 2 \beta + \frac{\alpha \beta}{2} - \epsilon}.
\label{eq: conv I21}
\end{equation}
It goes to $0$ for $n \rightarrow \infty$ because we can always find an $\epsilon > 0$ such that the exponent in $\Delta_{n}$ is positive. Let us now consider $\tilde{I}_{2,2}^n= \tilde{I}_{2,2}^n1_{\left \{ |\Delta \tilde{X}_i^J| \le 2 \Delta_{n}^\beta \right \}} + \tilde{I}_{2,2}^n1_{\left \{ |\Delta \tilde{X}_i^J| > 2 \Delta_{n}^\beta \right \}}$. From the definition of $\varphi$, $\tilde{I}_{2,2}^n1_{\left \{ |\Delta \tilde{X}_i^J| > 2 \Delta_{n}^\beta \right \}} = 0$. 
$$\mathbb{E}[|\tilde{I}_{2,2}^n|1_{\left \{ |\Delta \tilde{X}_i^J| \le 2 \Delta_{n}^\beta \right \}}] = \mathbb{E}[|\int_{t_i}^{t_{i+1}}a_s dW_s||\Delta \tilde{X}_i^J||\varphi_{\Delta_{n}^\beta}(\Delta X_i^J) - \varphi_{\Delta_{n}^\beta}(\Delta \tilde{X}_i^J)| 1_{\left \{ |\Delta \tilde{X}_i^J| \le 2 \Delta_{n}^\beta, |\Delta X_i^J| > 2 \Delta_{n}^\beta \right \}}] +$$
$$+ \mathbb{E}[|\int_{t_i}^{t_{i+1}}a_s dW_s||\Delta X_i^J - \Delta \tilde{X}_i^J||\varphi_{\Delta_{n}^\beta}(\Delta X_i^J) - \varphi_{\Delta_{n}^\beta}(\Delta \tilde{X}_i^J)| 1_{\left \{ |\Delta \tilde{X}_i^J| \le 2 \Delta_{n}^\beta, |\Delta X_i^J| > 2 \Delta_{n,i}^\beta \right \}}] \le $$
$$\le c \Delta_{n}^{2 - \frac{\alpha \beta}{2} - \epsilon} + \mathbb{E}[|\int_{t_i}^{t_{i+1}}a_s dW_s||\Delta X_i^J - \Delta \tilde{X}_i^J||-\varphi_{\Delta_{n}^\beta}(\Delta \tilde{X}_i^J)|], $$
where we have acted exactly like we did in $\tilde{I}_{2,1}^n$, using that $\Delta \tilde{X}_i^J$ is less then $2 \Delta_{n}^\beta$. We have also used that, by the definition of $\varphi$, evaluated in $\Delta X_i^J$ it is zero. Now we use Holder inequality, \eqref{eq: bdg} and the boundedness of $\varphi$ to get
$$\mathbb{E}[|\tilde{I}_{2,2}^n|] \le c \Delta_{n}^{2 - \frac{\alpha \beta}{2} - \epsilon} + \mathbb{E}[|\int_{t_i}^{t_{i+1}}a_s dW_s|^p]^\frac{1}{p} \mathbb{E}[|\Delta X_i^J - \Delta \tilde{X}_i^J|^q]^\frac{1}{q} \le c \Delta_{n}^{2 - \frac{\alpha \beta}{2} - \epsilon} + c \Delta_{n}^{\frac{1}{2}}\mathbb{E}[|\Delta X_i^J - \Delta \tilde{X}_i^J|^q]^\frac{1}{q}. $$
Now, if $\alpha < 1$ we use \eqref{eq: diff salti alpha <1}, with $q = 1 + \epsilon$, getting
\begin{equation}
\mathbb{E}[|\tilde{I}_{2,2}^n|] \le c \Delta_{n}^{2 - \frac{\alpha \beta}{2} - \epsilon} + c \Delta_{n}^{\frac{1}{2} + \frac{1}{2} + \frac{1}{1 + \epsilon}} = c \Delta_{n}^{2 - \frac{\alpha \beta}{2} - \epsilon}.
\label{eq: tilde I22 nuovo}
\end{equation}
Therefore, for $\alpha < 1$, we have 
\begin{equation}
\frac{1}{\Delta_{n}^{1 + \beta (2 - \alpha)}} \mathbb{E}[|I_{2,2}^n|] \le c \Delta_{n}^{1 - 2 \beta + \frac{\alpha \beta}{2} - \epsilon}.
\label{eq: conv I22 alpha <1}
\end{equation}
We can find an $\epsilon > 0$ such that the exponent on $\Delta_{n}$ is positive hence, if $\alpha < 1$, then $I_{2,2}^n = o_{L^1}(\Delta_{n}^{1 + \beta(2 - \alpha)})$. Otherwise, if $\alpha \ge 1$, we use \eqref{eq: diff salti alpha >1} having taken $q = 2$. We get
$$\mathbb{E}[|\tilde{I}_{2,2}^n|] \le c \Delta_{n}^{2 - \frac{\alpha \beta}{2} - \epsilon} + c \Delta_{n}^{\frac{1}{2} + 1} = c \Delta_{n}^{\frac{3}{2}}.$$
It follows that, for $\alpha \ge 1$, it is
\begin{equation}
\frac{1}{\Delta_{n}^{1 + \beta (2 - \alpha)}} \mathbb{E}[|I_{2,2}^n|] \le c \Delta_{n}^{\frac{1}{2} -\beta(2 - \alpha)}.
\label{eq: conv I22 alpha >1}
\end{equation}
We observe that the exponent on $\Delta_{n}$ is more than $0$ if $\beta < \frac{1}{2}\frac{1}{(2 - \alpha)}$, that is always true for $\beta \in (0, \frac{1}{2})$ and $\alpha \in [1,2)$. \\
To conclude, we use on $\tilde{I}_3(i)$ Holder inequality, \eqref{eq: bdg}, the boundedness of $\varphi$ and then we act as we did on $\tilde{I}_{2,2}^n$, using \eqref{eq: diff salti alpha <1} or \eqref{eq: diff salti alpha >1}, depending on whether or not $\alpha$ is less than $1$. 
In the case $\alpha < 1$ we get
\begin{equation}
 \frac{1}{\Delta_{n}^{1 + \beta(2 - \alpha)}} \mathbb{E}[|\tilde{I}_3^n(i)|] \le \frac{1}{\Delta_{n}^{1 + \beta(2 - \alpha)}} c \Delta_{n}^{\frac{1}{2} + \frac{1}{2} + \frac{1}{1 + \epsilon}}= c \Delta_{n}^{1 - \beta(2 - \alpha) - \epsilon},
\label{eq: conv tilde I3 alpha <1}
 \end{equation}
that goes to $0$ for $n \rightarrow \infty$ since we can always find $\epsilon > 0$ such that the exponent on $\Delta_{n}$ is positive. Otherwise it follows 
\begin{equation}
 \frac{1}{\Delta_{n}^{1 + \beta(2 - \alpha)}} \mathbb{E}[|\tilde{I}_3^n(i)|] \le \frac{1}{\Delta_{n}^{1 + \beta(2 - \alpha)}} c \Delta_{n}^{\frac{3}{2}}= c \Delta_{n,i}^{\frac{1}{2} - \beta(2 - \alpha)}.
\label{eq: conv tilde I3 alpha >1}
 \end{equation}
 The exponent on $\Delta_{n}$ is positive if $\beta < \frac{1}{2} \frac{1}{(2 - \alpha)}$, that is always true since we are in the case $\alpha \ge 1$. Hence $\tilde{I}_3^n(i) = o_{L^1}(\Delta_{n}^{1 + \beta(2 - \alpha)})$. \\
 From \eqref{eq: tilde I1 finale} - \eqref{eq: conv tilde I3 alpha >1} and the reformulation \eqref{eq: a con salti riformulata}, it follows \eqref{eq: salti con browniano}. \\ \\
 Replacing reformulation \eqref{eq: a con salti riformulata} in the left hand side of \eqref{eq: aggiunta prop1 browniano}, it turns out that the theorem is proved if 
 \begin{equation}
  \sum_{ i = 0}^{n - 1}(\sum_{k = 1}^3 \tilde{I}_k^n(i)) f(X_{t_i}) = o_{L^1}(\Delta_n^{(\frac{1}{2} - \tilde{\epsilon}) \land (1 - \alpha \beta - \tilde{\epsilon} )}).
 \label{eq: tesi prop1 aggiunta brown}
 \end{equation}
 Using a conditional version of equations \eqref{eq: tilde I1 nuovo}, \eqref{eq: tilde I21 nuovo}, \eqref{eq: tilde I22 nuovo}, \eqref{eq: conv tilde I3 alpha <1} and \eqref{eq: conv tilde I3 alpha >1} (adding in the last two $\beta(2 - \alpha)$ in the exponent of $\Delta_{n}$) we easily get \eqref{eq: tesi prop1 aggiunta brown} and so \eqref{eq: aggiunta prop1 browniano}.
\end{proof}

\subsection{Proof of Lemma \ref{lemma: dl d}}
\begin{proof}
By the definition of $d(\zeta_n)$, as in law we have that $S_1^\alpha = - S_1^\alpha$, we get $d(\zeta_n) = d(|\zeta_n|)$ and thus we can assume that $\zeta_n > 0$.
Using a change of variable we obtain
\begin{equation}
d(\zeta_n) = \mathbb{E}[(S_1^\alpha)^2\varphi(S_1^\alpha \zeta_n)] = \int_\mathbb{R} z^2 \varphi(z \zeta_n) f_\alpha(z)dz = (\zeta_n)^{- 3}\int_\mathbb{R} u^2 \varphi(u) f_\alpha(\frac{u}{\zeta_n})du.
\label{eq: dl d start}
\end{equation}
We want to use an asymptotic expansion of the density (see Theorem 7.22 in \cite{Menozzi}, with $d=1$ and $\sigma = 1$) which states that, if $z$ is big enough, then a development up to order $N$ of $f_\alpha(z)$ is
\begin{equation}
\frac{c_\alpha}{|z|^{1 + \alpha}} + \frac{1}{\pi} \frac{1}{|z|} \sum_{k = 2}^N \frac{a_k}{k!} (|z|^{- \alpha})^k + o(|z|^{- \alpha N}),
\label{eq: asymptotic expansion density}
\end{equation}
for some coefficients $a_k$.
We therefore take $M > 0$ big enough such that, for $\frac{u}{\zeta_n} > M$, we can use \eqref{eq: asymptotic expansion density}. Hence the right hand side of \eqref{eq: dl d start} can be seen as 
\begin{equation}
(\zeta_n)^{- 3}\int_{|u| \le \zeta_n M} u^2 \varphi(u) f_\alpha(\frac{u}{\zeta_n})du + (\zeta_n)^{- 3}\int_{|u| > \zeta_n M} u^2 \varphi(u) f_\alpha(\frac{u}{\zeta_n})du= : I_1^n + I_2^n.
\label{eq: dl d meta}
\end{equation}
We have that, $\forall \hat{\epsilon} > 0$, $I_1^n = o(\zeta_n^{- \hat{\epsilon}})$. Indeed, using that $\varphi$ and $f_\alpha$ are both bounded, we get
\begin{equation}
\frac{I_1^n}{\zeta_n^{- \hat{\epsilon}}} \le \zeta_n^{- 3 + \hat{\epsilon}}\int_{|u| \le \zeta_n M} u^2 du \le c \zeta_n^{\hat{\epsilon}},
\label{eq: estim I1 density}
\end{equation}
that goes to zero because we have assumed that $\zeta_n \rightarrow 0$. $I_2^n$ is
\begin{equation}
(\zeta_n)^{- 3}\int_{|u| > \zeta_n M} u^2 \varphi(u) c_\alpha (\zeta_n)^{1 + \alpha}|u|^{- 1 - \alpha} du  + (\zeta_n)^{- 3} \int_{|u| > \zeta_n M} u^2 \varphi(u) [f_\alpha(\frac{u}{\zeta_n}) - \frac{c_\alpha}{|u|^{1 + \alpha}} |\zeta_n|^{1 + \alpha}] du. 
\label{eq: estim I2 density}
\end{equation}
The first term here above can be seen as 
$$(\zeta_n)^{\alpha - 2} c_\alpha \int_\mathbb{R} |u|^{1 - \alpha} \varphi(u) du - (\zeta_n)^{\alpha - 2} c_\alpha\int_{|u| \le \zeta_n M} |u|^{1 - \alpha} \varphi(u) du  = (\zeta_n)^{\alpha - 2} c_\alpha \int_\mathbb{R} |u|^{1 - \alpha} \varphi(u) du + o((\zeta_n)^{-\hat{\epsilon}}). $$
Indeed, using that $\varphi$ is bounded, we have
\begin{equation}
\frac{1}{(\zeta_n)^{-\hat{\epsilon}}}|(\zeta_n)^{\alpha - 2} c_\alpha \int_{|u| \le \zeta_n M} |u|^{1 - \alpha} \varphi(u) du| \le c (\zeta_n)^{\hat{\epsilon} + \alpha - 2}\int_{|u| \le \zeta_n M} |u|^{1 - \alpha}du \le c (\zeta_n)^{\hat{\epsilon}}.
\label{eq: densita su r}
\end{equation}
that goes to zero for $n \rightarrow \infty$. \\
Replacing \eqref{eq: estim I1 density}, \eqref{eq: estim I2 density} and \eqref{eq: densita su r} in \eqref{eq: dl d meta} and comparing it with \eqref{eq: dl d}, it turns out that our goal is to show that the second term of \eqref{eq: estim I2 density} is $o(\zeta_n^{( -\hat{\epsilon} ) \land (2 \alpha - 2 - \hat{\epsilon})})$. Using on it \eqref{eq: asymptotic expansion density} with $N = 2$, which implies $|f_\alpha(z) - \frac{c_\alpha}{|z|^{1 + \alpha}}| \le \frac{c}{|z|^{1 + 2 \alpha}}$ for $|z| > M$ and some $c > 0$, we can upper bound it with $c(\zeta_n)^{2 \alpha - 2} \int_{|u| \le \zeta_n M} |u|^{1 - 2\alpha}du$ .  By the definition of $\varphi$ we have
\begin{equation}
\int_{|u| > \zeta_n M} |u|^{1 - 2\alpha} \varphi(u) du = \int_{-2}^{-\zeta_n M} (-u)^{1 - 2\alpha} \varphi(u) du + \int_2^{\zeta_n M} u^{1 - 2\alpha} \varphi(u) du \le c + c (\zeta_n)^{2 - 2\alpha }.
\label{eq: integrale u grande}
\end{equation}
Therefore we get that the second term of \eqref{eq: estim I2 density} is upper bounded by $$c \zeta_n^{2 \alpha - 2} + c.$$
The first term here above is clearly $o(\zeta_n^{2 \alpha - 2 - \hat{\epsilon}})$ while the second is $o(\zeta_n^{ -\hat{\epsilon}})$, hence the sum is $o(\zeta_n^{( -\hat{\epsilon} ) \land (2 \alpha - 2 - \hat{\epsilon})})$. 
The lemma is therefore proved.
\end{proof}

\subsection{Proof of Lemma \ref{lemma: differenza dei salti}}
\begin{proof}
We observe that, $\forall \alpha \in [0, 2]$, we have
$$\mathbb{E}[|\Delta X_i^J - \Delta \tilde{X}_i^J|^2] = \mathbb{E}[(\int_{t_i}^{t_{i + 1}} \int_{\mathbb{R}} [\gamma(X_{s-}) - \gamma (X_{t_i})] z \tilde{\mu}(ds,dz))^2] = \mathbb{E}[\int_{t_i}^{t_{i + 1}} \int_{\mathbb{R}} [\gamma(X_{s-}) - \gamma (X_{t_i})]^2 |z|^2 \bar{\mu}(ds,dz)] \le $$
\begin{equation}
\le c \int_{t_i}^{t_{i + 1}}\mathbb{E}[ |X_s - X_{t_i}|^2]ds \int_\mathbb{R} |z|^2 F(z) dz \le c \int_{t_i}^{t_{i + 1}} \Delta_{n} ds \le c \Delta_{n}^2,
\label{eq: I21 incrementi}
\end{equation}
where we have used Ito isometry, the regularity of $\gamma$ and the third point of Lemma \ref{lemma: Moment inequalities}. \\ 
We have in this way proved \eqref{eq: diff salti alpha >1} and showed that \eqref{eq: diff salti alpha <1} holds with $q= 2$. 
For $q > 2$, using Kunita inequality and acting like we did here above we get
$$\mathbb{E}[|\Delta X_i^J - \Delta \tilde{X}_i^J|^q] \le \mathbb{E}[\int_{t_i}^{t_{i + 1}} \int_{\mathbb{R}} [\gamma(X_{s-}) - \gamma (X_{t_i})]^q |z|^q \bar{\mu}(ds,dz)] + \mathbb{E}[(\int_{t_i}^{t_{i + 1}} \int_{\mathbb{R}} [\gamma(X_{s-}) - \gamma (X_{t_i})]^2 |z|^2 \bar{\mu}(ds,dz))^\frac{q}{2}] \le$$
$$\le c \int_{t_i}^{t_{i + 1}}\mathbb{E}[ |X_s - X_{t_i}|^q]ds + \mathbb{E}[(\int_{t_i}^{t_{i + 1}} |X_s - X_{t_i}|^2ds )^\frac{q}{2}] \le c \Delta_{n}^2 + c  \Delta_{n}^{\frac{q}{2} - 1} \int_{t_i}^{t_{i + 1}} \mathbb{E}[|X_s - X_{t_i}|^q]ds = c \Delta_{n}^2 + c \Delta_{n}^{\frac{q}{2} - 1} \le  c \Delta_{n}^2, $$
where we have also used Jensen inequality. \\
In order to prove \eqref{eq: diff salti alpha <1} we observe that, if $\alpha < 1$, then we have
\begin{equation}
\mathbb{E}[|\Delta X_i^J - \Delta \tilde{X}_i^J|] \le \mathbb{E}[|\int_{t_i}^{t_{i + 1}} \int_{|z| \ge 2 \Delta_{n}^\beta} [\gamma(X_{s-}) - \gamma (X_{t_i})] z \tilde{\mu}(ds,dz)|] +  \mathbb{E}[|\int_{t_i}^{t_{i + 1}} \int_{|z| \le 2 \Delta_{n}^\beta} [\gamma(X_{s-}) - \gamma (X_{t_i})] z \tilde{\mu}(ds,dz)|].
\label{eq: incrementi salti q=1}
\end{equation}
The first term in the right hand side of \eqref{eq: incrementi salti q=1} is upper bounded by
$$\left \| \gamma' \right \|_\infty \mathbb{E}[\int_{t_i}^{t_{i+ 1}}\int_{|z| \ge 2 \Delta_{n}^\beta}|X_{s-} - X_{t_i}| |z| F(z) dz ds] \le c \int_{t_i}^{t_{i+ 1}}\int_{|z| \ge 2 \Delta_{n}^\beta}\mathbb{E}[|X_{s-} - X_{t_i}|^2]^\frac{1}{2} ds |z| F(z) dz \le$$
\begin{equation}
\le c \int_{t_i}^{t_{i+ 1}} \Delta_{n}^\frac{1}{2} (\int_{|z| \ge 2 \Delta_{n}^\beta} |z| F(z) dz) ds \le c \Delta_{n}^\frac{3}{2},  
\label{eq: q=1 primo termine}
\end{equation}
where we have used the compensation formula, the regularity of $\gamma$, Cauchy-Schwartz inequality in order to use the third point of Lemma \ref{lemma: Moment inequalities}  and the boundedness of the integral for $|z| \ge 2 \Delta_{n}^\beta$. Moreover, acting in the same way, the second term in the right hand side of \eqref{eq: incrementi salti q=1} is upper bounded by
\begin{equation}
\left \| \gamma' \right \|_\infty \mathbb{E}[\int_{t_i}^{t_{i+ 1}}\int_{|z| \le 2 \Delta_{n}^\beta}|X_{s-} - X_{t_i}| |z| F(z) dz ds] \le c\int_{t_i}^{t_{i+ 1}} \Delta_{n}^\frac{1}{2} (\int_{|z| \ge 2 \Delta_{n}^\beta} |z|^{- \alpha} dz) ds \le c \Delta_{n}^{\frac{3}{2} + \beta(1 - \alpha)}, 
\label{eq: q=1 secondo termine}
\end{equation}
using again compensation formula, the regularity of $\gamma$ and Cauchy-Schwartz inequality in order to use the third point of Lemma \ref{lemma: Moment inequalities}. We have also used the third point of A4 and computed the integral on $z$. Using \eqref{eq: incrementi salti q=1} - \eqref{eq: q=1 secondo termine} we get
\begin{equation}
\mathbb{E}[|\Delta X_i^J - \Delta \tilde{X}_i^J|] \le c \Delta_{n}^{\frac{3}{2} \land [\frac{3}{2} + \beta(1 - \alpha)]} = c \Delta_{n}^\frac{3}{2},
\label{eq: incrementi q=1 finale}
\end{equation}
since $\alpha < 1$ and so $(1 - \alpha ) > 0$.
We now use interpolation theorem (see below Theorem 1.7 in Chapter 4 of \cite{Interpolation}) getting
$$\mathbb{E}[|\Delta X_i^J - \Delta \tilde{X}_i^J|^q]^\frac{1}{q} \le \mathbb{E}[|\Delta X_i^J - \Delta \tilde{X}_i^J|]^{\theta}(\mathbb{E}[|\Delta X_i^J - \Delta \tilde{X}_i^J|^2]^\frac{1}{2})^{ 1 -\theta},$$
with $\frac{1}{q} = \theta + \frac{1 - \theta}{2}$, hence $\theta = \frac{2}{q} - 1$. Using \eqref{eq: I21 incrementi} and \eqref{eq: incrementi q=1 finale} it follows
$$\mathbb{E}[|\Delta X_i^J - \Delta \tilde{X}_i^J|^q]^\frac{1}{q} \le c\Delta_{n}^{\frac{3}{2}\theta} \Delta_{n,i}^{1 - \theta} = c\Delta_{n}^{\frac{1}{2}\theta + 1} = c\Delta_{n}^{\frac{1}{q}+ \frac{1}{2}}, $$
where we have also replaced $\theta$.
\end{proof}

\subsection{Proof of Lemma \ref{lemma: da hatI a I}}
\begin{proof}
We want to use a conditional version of the interpolation theorem, therefore we have to estimate the norm $2$ of $I_2^n(i)$, $I_3^n(i)$ and $I_4^n(i)$. Observing that $\varphi$ is a bounded function and using Kunita inequality we get
$$\mathbb{E}_i[|I_2^n(i)|^2] \le \mathbb{E}_i[|\Delta X_i^J|^4] \le c \mathbb{E}_i[\int_{t_i}^{t_{i + 1}} \int_{\mathbb{R}} |\gamma(X_{s-})|^4 |z|^4 \bar{\mu}(ds,dz)] + c \mathbb{E}_i[(\int_{t_i}^{t_{i + 1}} \int_{\mathbb{R}} |\gamma(X_{s-})|^2 |z|^2 \bar{\mu}(ds,dz))^2] \le$$
$$\le c (\int_{\mathbb{R}}|z|^4 F(z)dz) \mathbb{E}_i[\int_{t_i}^{t_{i + 1}} |\gamma(X_{s-})|^4ds] + c \mathbb{E}_i[(\int_{\mathbb{R}}|z|^2 F(z)dz)^2(\int_{t_i}^{t_{i + 1}}|\gamma(X_{s-})|^2 ds)^2] \le$$
\begin{equation}
\le R_i(\Delta_{n}) + R_i(\Delta_{n}^2) = R_i(\Delta_{n}),
\label{eq: Ei I2 carre}
\end{equation}
where in the last inequality we have also used the polynomial growth of $\gamma$ and the fifth point of Lemma \ref{lemma: Moment inequalities}. \\
Concerning the norm $2$ of $I_3^n(i)$, we use the conditional version of the first point of Lemma \ref{lemma: differenza dei salti} for $q=2$ to get
\begin{equation}
\mathbb{E}_i[|I_3^n(i)|^2] \le \mathbb{E}_i[|\Delta X_i^J - \Delta \tilde{X}_i^J|^4] \le R_i(\Delta_{n}^2). 
\label{eq: Ei I3 carre}
\end{equation}
We now consider $I_4^n(i)$. Using Cauchy-Schwartz inequality and a conditional version of both the first point of Lemma \ref{lemma: differenza dei salti} for $q=2$ and \eqref{eq: estensione tilde salti lemma 10} in Lemma \ref{lemma: estensione 10 capitolo 1}, where $\varphi$ acts like the indicator function, we have
\begin{equation}
\mathbb{E}_i[|I_4^n(i)|^2]^\frac{1}{2} \le c\mathbb{E}_i[|\Delta X_i^J - \Delta \tilde{X}_i^J|^4]^\frac{1}{2} \mathbb{E}_i[|\Delta \tilde{X}_i^J \varphi_{\Delta_{n}^\beta}(\Delta \tilde{X}_i^J)|^4]^\frac{1}{2} \le R_i(\Delta_{n}^{\frac{3}{2} + \frac{\beta}{2}(4 - \alpha)}). 
\label{eq: Ei I4 carre}
\end{equation}
Using interpolation theorem it follows, $\forall j \in \left \{ 2,3,4 \right \}$,
\begin{equation}
\mathbb{E}_i[|I_j^n(i)|^{1 + \epsilon}]^\frac{1}{1 + \epsilon} \le \mathbb{E}_i[|I_j^n(i)|]^\theta (\mathbb{E}_i[|I_j^n(i)|^2]^\frac{1}{2})^{1 - \theta},
\label{eq: interpolation I}
\end{equation}
with $\theta$ such that $\frac{1}{1 + \epsilon} = \theta + \frac{1 - \theta}{2}$, hence $\theta = \frac{2}{1 + \epsilon} - 1 = 1 - \frac{2 \epsilon}{1 + \epsilon}$. \\
From a conditional version of  \eqref{eq: splitto I21 e I22}, \eqref{eq: E I21}, \eqref{eq: E I22} and equations \eqref{eq: Ei I2 carre} and \eqref{eq: interpolation I} it follows
$$$$
\begin{equation}
\mathbb{E}_i[|I_2^n(i)|^{1 + \epsilon}]^\frac{1}{1 + \epsilon} \le R_i(\Delta_{n}^{\frac{3}{2} + \beta - \frac{\alpha \beta}{2}})^\theta R_i(\Delta_{n}^\frac{1}{2})^{1 - \theta} = R_i(\Delta_{n}^{(\frac{3}{2} + \beta - \frac{\alpha \beta}{2})(1 - \frac{2 \epsilon}{1 + \epsilon} ) + \frac{ \epsilon}{1 + \epsilon}}) = R_i(\Delta_{n,i}^{\frac{3}{2} + \beta - \frac{\alpha \beta}{2} - \frac{ \epsilon}{1 + \epsilon}(2 + 2 \beta - \alpha \beta)}).
\label{eq: Ei I2 1 + epsilon}
\end{equation}
Since $2 + 2 \beta - \alpha \beta $ is always more than zero we can just see the exponent on $\Delta_{n,i}$ as $\frac{3}{2} + \beta - \frac{\alpha \beta}{2} - \epsilon.$ \\
From a conditional version of \eqref{eq: E I3}, \eqref{eq: Ei I3 carre} and \eqref{eq: interpolation I} it follows
\begin{equation}
\mathbb{E}_i[|I_3^n(i)|^{1 + \epsilon}]^\frac{1}{1 + \epsilon} \le R_i(\Delta_{n}^2)^\theta R_i(\Delta_{n})^{1 - \theta} = R_i(\Delta_{n}^{1 + \theta}) = R_i(\Delta_{n}^{2 - \frac{2 \epsilon}{1 + \epsilon}}).
\label{eq: Ei I3 1 + epsilon}
\end{equation}
In the same way, using a conditional version of \eqref{eq: E I4}, \eqref{eq: Ei I4 carre} and \eqref{eq: interpolation I} it follows
\begin{equation}
\mathbb{E}_i[|I_4^n(i)|^{1 + \epsilon}]^\frac{1}{1 + \epsilon} \le R_i(\Delta_{n}^{(\frac{3}{2} + \beta - \frac{\alpha \beta}{2})(1 - \frac{2 \epsilon}{1 + \epsilon} ) + \frac{ 2\epsilon}{1 + \epsilon}(\frac{3}{2} + 2\beta - \frac{\alpha \beta}{2})}) = R_i(\Delta_{n}^{\frac{3}{2} + \beta - \frac{\alpha \beta}{2} + \frac{2 \beta \epsilon}{1 + \epsilon}}).
\label{eq: Ei I4 1 + epsilon}
\end{equation}
The result \eqref{eq: tesi I con 1+epsilon} is a consequence of \eqref{eq: Ei I2 1 + epsilon}, \eqref{eq: Ei I3 1 + epsilon}, \eqref{eq: Ei I4 1 + epsilon} and that $2$ is always more than $\frac{3}{2} + \beta - \frac{\alpha \beta}{2}$.
\end{proof}

\end{document}